\documentclass[oneside,11pt,reqno]{amsart}
 
\usepackage{amsmath}

%%%%%%%%%%%%%%%%%%%%%%%%%%%%%%%%%%%%%%%%%%%%%%%%%%%%%%%%%%%%%%%%%%%%%%%%%%%%%%%%%%%%%%%%%%%%%%%%%%%%%%%%%%%%%%%%%%%%%%%%%%%%%%%%%%%%%%%%%%%%%%%%%%%%%%
%%%%% Fonts
%%%%%%%%%%%%%%%%%%%%%%%%%%%%%%%%%%%%%%%%%%%%%%%%%%%%%%%%%%%%%%%%%%%%%%%%%%%%%%%%%%%%%%%%%%%%%%%%%%%%%%%%%%%%%%%%%%%%%%%%%%%%%%%%%%%%%%%%%%%%%%%%%%%%%%

%% Euler Math Fonts and Palatino Text Font
%\usepackage{amssymb}
%\usepackage{amsxtra}
%\usepackage{palatino}
%\usepackage[mathcal]{euler}

%% Computer Modern Fonts (with mathcal = euler script)
\usepackage{amsfonts}
\usepackage{amssymb}
\usepackage{amsxtra}
\usepackage{tikz-cd}
\usepackage{hyperref}
\usepackage{enumitem}
\usepackage{amsthm}
\usepackage[colorinlistoftodos,textwidth=1in]{todonotes}
\usepackage{tabularray}

\DefTblrTemplate{caption}{default}{}    % Removes a caption
\DefTblrTemplate{capcont}{default}{}    % Removes a caption on subsequent pages
\DefTblrTemplate{contfoot}{default}{}   % Removes text denoting continuation on next page
%%%%%%%%%%%%%%%%%%%%%%%%%%%%%%%%%%%%%%%%%%%%%%%%%%%%%%%%%%%%%%%%%%%%%%%%%%%%%%%%%%%%%%%%%%%%%%%%%%%%%%%%%%%%%%%%%%%%%%%%%%%%%%%%%%%%%%%%%%%%%%%%%%%%%%

\usepackage{amsmath}
\usepackage{amssymb}
\usepackage{mathtools}
\usepackage{graphicx, overpic}
\usepackage{caption}
\usepackage{subcaption}
\usepackage{pinlabel}
\usepackage{floatrow}
\usepackage{soul}
\usepackage{tikz}
\usetikzlibrary{decorations.pathreplacing}
\usetikzlibrary{decorations.markings}
\usetikzlibrary{shapes.geometric}
\usetikzlibrary{calc}
\usepackage{color}
\usetikzlibrary{shapes.gates.logic.US,trees,positioning,arrows}
\usetikzlibrary{knots}

\newcommand*\circled[1]{\tikz[baseline=(char.base)]{
            \node[shape=circle,draw,inner sep=2pt] (char) {#1};}}

\usepackage{float}
 
\usepackage[all]{xy}
\usepackage{multicol}

\usepackage{mathtools}
 
\usepackage{xcolor}

\definecolor{flaggreen}{RGB}{4,106,56}
\definecolor{flagnavy}{RGB}{6,3,141}
\definecolor{flagsaffron}{RGB}{255,103,31}
\usepackage[dvipsnames]{xcolor}
%%%%%%%%%%%%%%%%%%%%%%%%%%%%%%%%%%%%%%%%%%%%%%%%%%%%%%%%%%%%%%%%%%%%%%%%%%%%%%%%%%%%%%%%%%%%%%%%%%%%%%%%%%%%%%%%%%%%%%%%%%%%%%%%%%%%%%%%%%%%%%%%%%%%%%
%%%%% Margins
%%%%%%%%%%%%%%%%%%%%%%%%%%%%%%%%%%%%%%%%%%%%%%%%%%%%%%%%%%%%%%%%%%%%%%%%%%%%%%%%%%%%%%%%%%%%%%%%%%%%%%%%%%%%%%%%%%%%%%%%%%%%%%%%%%%%%%%%%%%%%%%%%%%%%%

\setlength{\oddsidemargin}{0in}
\setlength{\textwidth}{6.25in}
\setlength{\topmargin}{-.33in}
\setlength{\textheight}{9in}
%\addtolength{\textheight}{-\footskip}
%\addtolength{\textheight}{-\headheight}
%\addtolength{\textheight}{-\headsep}
%%%%%%%%%%%%%%%%%%%%%%%%%%%%%%%%%%%%%%%%%%%%%%%%%%%%%%%%%%%%%%%%%%%%%%%%%%%%%%%%%%%%%%%%%%%%%%%%%%%%%%%%%%%%%%%%%%%%%%%%%%%%%%%%%%%%%%%%%%%%%%%%%%%%%%

%%%%%%%%%%%%%%%%%%%%%%%%%%%%%%%%%%%%%%%%%%%%%%%%%%%%%%%%%%%%%%%%%%%%%%%%%%%%%%%%%%%%%%%%%%%%%%%%%%%%%%%%%%%%%%%%%%%%%%%%%%%%%%%%%%%%%%%%%%%%%%%%%%%%%%
%%%%% Create normal closure brackets
%%%%%%%%%%%%%%%%%%%%%%%%%%%%%%%%%%%%%%%%%%%%%%%%%%%%%%%%%%%%%%%%%%%%%%%%%%%%%%%%%%%%%%%%%%%%%%%%%%%%%%%%%%%%%%%%%%%%%%%%%%%%%%%%%%%%%%%%%%%%%%%%%%%%%%

\makeatletter
\DeclareFontFamily{OMX}{MnSymbolE}{}
\DeclareSymbolFont{MnLargeSymbols}{OMX}{MnSymbolE}{m}{n}
\SetSymbolFont{MnLargeSymbols}{bold}{OMX}{MnSymbolE}{b}{n}
\DeclareFontShape{OMX}{MnSymbolE}{m}{n}{
    <-6>  MnSymbolE5
   <6-7>  MnSymbolE6
   <7-8>  MnSymbolE7
   <8-9>  MnSymbolE8
   <9-10> MnSymbolE9
  <10-12> MnSymbolE10
  <12->   MnSymbolE12
}{}
\DeclareFontShape{OMX}{MnSymbolE}{b}{n}{
    <-6>  MnSymbolE-Bold5
   <6-7>  MnSymbolE-Bold6
   <7-8>  MnSymbolE-Bold7
   <8-9>  MnSymbolE-Bold8
   <9-10> MnSymbolE-Bold9
  <10-12> MnSymbolE-Bold10
  <12->   MnSymbolE-Bold12
}{}

\let\llangle\@undefined
\let\rrangle\@undefined
\DeclareMathDelimiter{\llangle}{\mathopen}%
                     {MnLargeSymbols}{'164}{MnLargeSymbols}{'164}
\DeclareMathDelimiter{\rrangle}{\mathclose}%
                     {MnLargeSymbols}{'171}{MnLargeSymbols}{'171}
\makeatother
%%%%%%%%%%%%%%%%%%%%%%%%%%%%%%%%%%%%%%%%%%%%%%%%%%%%%%%%%%%%%%%%%%%%%%%%%%%%%%%%%%%%%%%%%%%%%%%%%%%%%%%%%%%%%%%%%%%%%%%%%%%%%%%%%%%%%%%%%%%%%%%%%%%%%%

%\usepackage{fancyhdr}
%\pagestyle{fancy}
%\lhead{}
%\chead{\scriptsize{NONPOSITIVE CURVATURE: G. CHRISTOPHER HRUSKA}}
%\rhead{\scriptsize{\thepage}}
%\lfoot{}
%\cfoot{}
%\rfoot{}
%\renewcommand{\headrulewidth}{0pt}
%\renewcommand{\footrulewidth}{0pt}
%\addtolength{\headheight}{8pt}

\newtheorem{thm}{Theorem}[section]
\newtheorem{prop}[thm]{Proposition}
\newtheorem{lem}[thm]{Lemma}
\newtheorem{cor}[thm]{Corollary}

\theoremstyle{definition}
\newtheorem{defn}[thm]{Definition}
\newtheorem{rem}[thm]{Remark}

\newtheorem{exmp}[thm]{Example}

\newtheorem*{prop-MC}{Proposition 4.28}

\renewcommand{\bar}[1]{\overline{#1}}

\renewcommand{\emptyset}{\varnothing}

\newcommand{\field}[1]{\mathbb{#1}}
\newcommand{\Z}{\field{Z}}

\newcommand{\R}{\field{R}}

\newcommand{\N}{\field{N}}

%Comment colors

%\renewcommand{\hat}{\widehat}
\renewcommand*\circled[1]{\tikz[baseline=(char.base)]{
            \node[shape=circle,draw,inner sep=2pt] (char) {#1};}}

\DeclareMathOperator{\Area}{Area}
\DeclareMathOperator{\Ker}{Ker}
%Ball{center}{radius}

 % Closed ball

 % Neighborhood{center}{radius}

%\newcommand{\path}[1]{\mathfrak{#1}}
%\renewcommand\thesubfigure{\roman{subfigure}}

 %Saturation

\DeclareMathOperator{\dehnleq}{\preccurlyeq}

\newcommand{\laproj}{{\rm pr}^{\rm L}}
\newcommand{\grpproj}{{\rm pr}^{\rm G}}

\allowdisplaybreaks

\hyphenation{ge-o-des-ic ge-o-des-ics quasi-convex quasi-convex-ity
     neu-tered}

%%%%%%%%%%%%%%%%%%%%%%%%%%%%%%%%%%%%%%%%%%%%%%%%%%%%%%%%%%%%%%%%%%%%%%%%%%%%%%%%%%%%%%%%%%%%%%%%%%%%%%%%%%%%%%%%%%%%%%%%%%%%%%%%%%%%%%%%%%%%%%%%%%%%%%
%%%%% Margin comments, obtained from Dani Wise
%%%%%%%%%%%%%%%%%%%%%%%%%%%%%%%%%%%%%%%%%%%%%%%%%%%%%%%%%%%%%%%%%%%%%%%%%%%%%%%%%%%%%%%%%%%%%%%%%%%%%%%%%%%%%%%%%%%%%%%%%%%%%%%%%%%%%%%%%%%%%%%%%%%%%%

\newcommand{\showcomments}{yes}

\usepackage{ifthen}
\newsavebox{\commentbox}
%
% begin comment
{\ifthenelse{\equal{\showcomments}{yes}}%
% then begin comment in margin
{\footnotemark
    \begin{lrbox}{\commentbox}
    \begin{minipage}[t]{1.25in}\raggedright\sffamily\tiny
    \footnotemark[\arabic{footnote}]}
% else eat contents of the environment
{\begin{lrbox}{\commentbox}}}%
% end comment
{\ifthenelse{\equal{\showcomments}{yes}}%
% then end comment
{\end{minipage}\end{lrbox}\marginpar{\usebox{\commentbox}}}
% else finish eating
{\end{lrbox}}}

\makeatletter
\newtheorem*{rep@theorem}{\rep@title}
\newcommand{\newreptheorem}[2]{%
\newenvironment{rep#1}[1]{%
 \def\rep@title{#2 \ref{##1}}%
 \begin{rep@theorem}}%
 {\end{rep@theorem}}}
\makeatother

\newreptheorem{thm}{Theorem}

%%%%%%%%%%%%%%%%%%%%%%%%%%%%%%%%%%%%%%%%%%%%%%%%%%%%%%%%%%%%%%%%%%%%%%%%%%%%%%%%%%%%%%%%%%%%%%%%%%%%%%%%%%%%%%%%%%%%%%%%%%%%%%%%%%%%%%%%%%%%%%%%%%%%%%
%%%%% Start of document
%%%%%%%%%%%%%%%%%%%%%%%%%%%%%%%%%%%%%%%%%%%%%%%%%%%%%%%%%%%%%%%%%%%%%%%%%%%%%%%%%%%%%%%%%%%%%%%%%%%%%%%%%%%%%%%%%%%%%%%%%%%%%%%%%%%%%%%%%%%%%%%%%%%%%%

\begin{document}

\title[Isoperimetric behavior of generalized Stallings-Bieri groups]{Isoperimetric behavior of generalized Stallings-Bieri groups}

\begin{abstract}
We introduce the notion of $n$-split for an epimorphism from a group to a finite rank free abelian group. This is used to provide bounds for the Dehn functions of certain coabelian subgroups of direct products of finitely presented groups. Such subgroups include and significantly generalize the Stallings-Bieri groups. 
\end{abstract} 

\author{Noel Brady}
\address{University of Oklahoma, Norman, OK 73019-3103, USA}
\email{nbrady@ou.edu}

\author{Pratit Goswami}
\address{University of Oklahoma, Norman, OK 73019-3103, USA}
\email{pratit.goswami-1@ou.edu}

\author{Rob Merrell}
\address{University of Oklahoma, Norman, OK 73019-3103, USA}
\email{rob.merrell@ou.edu}

\maketitle

%Cyan = Noel
%Orange = Pratit
%Green = Rob
%\todo[color=green]{Really?!}

%%%%%%%%%%%%%%%%%%%%%%%%%%%%%%%%%%%%%%%%%%%%%%%%%%%%%%%%%%%%%%%%%%%%%%%%%%%%%%%%%%%%%%%%%%%%%%%%%%%%%%%%%%%%%%%%%%%%%%%%%%%%%%%%%%%%%%%%%%%%%%%%%%%%%%
\section{Introduction}

The Dehn function is a fundamental quasi-isometry invariant of finitely presented groups. Although the geometric intuition behind the Dehn function can be found in the work of Dehn \cite{Dehn}, it was not until the early 1990s (\cite{Gromov_AI} and \cite{Gersten}) that it was defined as a quasi-isometry invariant. Since then, there has been an active research enterprise estimating bounds for Dehn functions of finitely presented groups.

Estimates for Dehn functions of finitely presented subgroups of various groups have been computed as a way of demonstrating the extent to which the large scale geometry of the subgroup differs from that of the parent group. Examples include finitely presented subgroups of the following classes of groups: automatic groups (\cite{MR1489138});  CAT(0) groups (\cite{MR3705143}, \cite{BT});  virtually special cubical groups (\cite{MR3956192}, \cite{MR1967746}); hyperbolic groups (\cite{MR1724853}, \cite{MR1934010}, \cite{MR3822289}, \cite{MR4278346}, \cite{MR4526820}, \cite{MR4773175}, \cite{MR4688705}); and direct product of free groups (\cite{dehnI}, \cite{dehnII}).

In other situations, it has been shown that the Dehn function of a finitely presented subgroup necessarily agrees with that of its parent group; examples of such situations are studied in \cite{MR2545243}, \cite{MR3651586}, and \cite{KLI}. This last work is the most relevant to our efforts, as it is the techniques of that article that we investigate.

In many of the above examples, the finitely presented subgroups appear as kernels of maps from the ambient group to $\mathbb{Z}$, giving a short exact sequence
    \[\begin{tikzcd}
    1 \arrow[r] & K \arrow[r] & G \arrow[r] & \Z \arrow[r]  & 1 \, .
    \end{tikzcd}\]
Dison showed in \cite{MR2418794} that in the case where $G$ is a right-angled Artin group, the Dehn function of $K$ is bounded above by a quartic function. Dison, Elder, Riley and Young showed in \cite{MR2545243} that Stallings' group has a quadratic Dehn function (in this case $G$ is a direct product of three copies of $F_2$ and the homomorphism takes all 6 generators of $G$ to a generator of $\mathbb{Z}$).  Carter and Forester showed in \cite{MR3651586} that the  Stallings-Bieri groups (where $G = (F_2)^n$ for $n \geq 3$) have quadratic Dehn functions. In \cite{KLI}, Kropholler and Llosa Isenrich abstracted and generalized the techniques of \cite{MR3651586} to include the case of sequences with $\mathbb{Z}^m$ as quotient. Our main result fits into the framework of this last article, and uses a condition called being $n$-split, which is an analogue of their $P$-split condition. 

\begin{repthm}{thm:main}
Let $n > 2$ be an integer. For each $1 \leq i \leq 2n$, let $G_i$ be a finitely presented group such that
    \[\begin{tikzcd}
    1 \arrow[r] & N_i \arrow[r] & G_i \arrow[r,"\varphi_i"] & \Z^m \arrow[r]  & 1
    \end{tikzcd}\]
is an $n$-split short exact sequence. Let $G = \prod_{i=1}^{2n} G_i$, and let $\Phi : G \to \Z^m$ be defined by $\Phi(g) = \sum_{i=1}^{2n} \varphi_i(\grpproj_i(g))$.

Then the kernel $\Ker(\Phi)$ in the short exact sequence
    \[\begin{tikzcd}
    1 \arrow[r] & \Ker(\Phi) \arrow[r] & G \arrow[r,"\Phi"] & \Z^m \arrow[r]  & 1
    \end{tikzcd}\]
is finitely presented and its Dehn function satisfies $\delta_G(n) \dehnleq \delta_{\Ker(\Phi)}(n) \dehnleq \overline{\delta_G}(n) \cdot \log(n)$. If, moreover, $\frac{\delta_G(n)}{n}$ is superadditive, then $\delta_G(n) \simeq \delta_{\Ker(\Phi)}(n)$.
\end{repthm}

Here $\overline{\delta_G}(n)$ denotes the superadditive closure of the Dehn function $\delta_G(n)$ of the group $G$, defined in Definition~\ref{def:superadd}.

The following table summarizes the differences and similarities between the analogous versions of Theorem~\ref{thm:main} in this paper and in \cite{KLI}. Note that although Theorem 4.2 in \cite{KLI} is only stated for a direct product of four groups, grouping some of the subproducts into pairs (the structure of which is forced by their $P$-splitting) permits the result to hold for arbitrary products of an even number of groups, and so we include this more general setup in the table.

% \begin{center}
%     \begin{tabular}{|c|c|c|c|}
%       \hline
%       $n$-split & $\Ker(\Phi)$ & Geometry of fillings & Where\\
%       \hline\hline
%       $n=1$ & $\Phi: \displaystyle\prod_{j=1}^3G_j \to \Z^m$ & alg triangle & \cite{KLI}\\
%       \hline
%       $n=2$ & $\Phi : \displaystyle\prod_{j=1}^{2n} G_j \to \Z^m$ & alg square & \cite{KLI}\\
%       \hline
%       $n \geq 3$ & $\Phi: \displaystyle\prod_{j=1}^{2n} G_j \to \Z^m$ & alg triangle& current paper\\
%       \hline
%     \end{tabular}
% \end{center}

\begin{longtblr}{
    colspec = {|c|c|c|c|},
    rowhead = 1, rows={abovesep=2pt,belowsep=2pt},
}
\hline
$n$-split     & $\Ker(\Phi)$                               & Geometry of filling  & Location \\
\hline \hline
$n = 1$               & $\Phi : \displaystyle\prod_{j=1}^3 G_j \to \Z^m$      & triangular                & \cite[Section 3]{KLI} \\
\hline
$n = 2$               & $\Phi : \displaystyle\prod_{j=1}^{2n} G_j \to \Z^m$   & square                    & \cite[Section 4]{KLI} \\
\hline
$n \geq 3$               & $\Phi : \displaystyle\prod_{j=1}^{2n} G_j \to \Z^m$   & triangular                & Section~\ref{sec:dehnfunction} \\
\hline
\end{longtblr}

We highlight several features of our construction.

\begin{itemize}
    \item The $n$-split condition is a generalization of the {\em $P$-split} condition of \cite{KLI} (which corresponds to being $2$-split in our sense). For $n \geq 3$, the $n$-split condition exploits different features of the lattice of subgroups of $\Ker(\Phi)$ than \cite{KLI}. This enables us to construct algebraic triangles which are used to build van Kampen diagrams. 
    
    These diagrams have  different fillings than the algebraic square fillings of \cite{KLI} in the cases where both techniques apply (for example, when $n \geq 4$ is even). 

    \item In our construction of the algebraic triangle, the regions of the triangle closely model the types of subgroups in the lattice of subgroups of $\Ker(\Phi)$. In particular, quadrilateral regions correspond to subgroups which have a direct product decomposition. Furthermore,  triangular and hexagonal regions correspond to subgroups which admit the following  $3$-fold ``symmetry";  they appear as kernels $\Ker(A^3 \to A)$ where $A$ is free-abelian and the homomorphism is $(a_1,a_2,a_3) \mapsto a_1+a_2+ a_3$.  

    \item Finally, the level of detail provided by the 25 region decomposition  of the  algebraic triangle in our construction, allows us to explicitly describe the $\Ker(\Phi)$ group elements associated to every vertex  (the description in 
    \cite{KLI} is more implicit than our description) and the edge paths associated to every edge of the algebraic triangle. In particular, we can distinguish between purely  {\em linear algebra operations} and other group operations inside $\Ker(\Phi)$. See proofs of Lemma~\ref{lem:triside-labdist} and Lemma~\ref{lem:triinner-labdist} for details. 
\end{itemize}
The proofs of the main results of \cite{KLI} and the current paper suggest investigating the combinatorics of the lattice of (particular families of) subgroups of a given finitely presented group group as an approach to understanding the Dehn function of the ambient group.  
In particular, it would be interesting to discover other situations where the combinatorics of a lattice of subgroups of a given group can be used to provide upper bounds for the Dehn function of the group. It is also tempting to think about lattices of subgroups with higher dimensional geometric realizations and possible applications to higher dimensional filling functions of groups. 

%%%%%%%%%%%%%%%%%%%%%%%%%%%%%%%%%%%%%%%%%%%%%%%%%%%%%%%%%%%%%%%%%%%%%%%%%%%%%%%%%%%%%%%%%%%%%%%%%%%%%%%%%%%%%%%%%%%%%%%%%%%%%%%%%%%%%%%%%%%%%%%%%%%%%%
\section{Acknowledgements}
The first two authors acknowledge travel support from Simons Foundation collaboration grant \#430097. 
The second author acknowledges support from the 2025 Summer Research Fellowship at the University of Oklahoma. The third author also extends thanks to the University of Oklahoma for support via the 2024 Summer Research Fellowship.

%%%%%%%%%%%%%%%%%%%%%%%%%%%%%%%%%%%%%%%%%%%%%%%%%%%%%%%%%%%%%%%%%%%%%%%%%%%%%%%%%%%%%%%%%%%%%%%%%%%%%%%%%%%%%%%%%%%%%%%%%%%%%%%%%%%%%%%%%%%%%%%%%%%%%%
\section{Background}
The main theorem of this paper concerns Dehn functions of certain finitely presented subgroups of direct products of groups. This section provides background on equivalence of functions, Dehn functions of groups, and the notion of superadditivity of functions.

\begin{defn}[Asymptotic equivalence]\label{def:equiv}
Let $\mathbb{N}$ denote the set of natural numbers. Given non-decreasing functions $f,g : \mathbb{N} \rightarrow \mathbb{N}$, we say that $f$ is \emph{dominated} by $g$, denoted by $f \preccurlyeq g$, if there is a constant $C > 0$ such that for all $n \in \mathbb{N}$, we have $f(n) \leq Cg(Cn+C) + Cn + C$. 

We say $f$ is \emph{asymptotically equivalent} to $g$, denoted $f \simeq g$, if both $f \preccurlyeq g$ and $g \preccurlyeq f$ (not necessarily for the same constant $C$). This defines an equivalence relation on the set of all non-decreasing functions from $\mathbb{N}$ to $\mathbb{N}$.
\end{defn}

\begin{defn}[Dehn function]
Let $G$ be a group given by a finite presentation $\mathcal{P} = \langle X \,|\, R \rangle$. For each word $w$ lying in the normal closure of $R$ in the free group $F(X)$, define
    \[
    \mathrm{Area}_{\mathcal{P}}(w) \coloneqq \min \left\{ N \;\Bigg|\; w \,=\, \prod_{i=1}^N u_i^{-1}r_iu_i, \; u_i \in F(X), \; r_i \in R^{\pm} \right\} \, .
    \]
The \emph{Dehn function} of the presentation $\mathcal{P}$, denoted as $\delta_{\mathcal{P}} : \mathbb{N} \rightarrow \mathbb{N}$, is defined by
    \[
    \delta_{\mathcal{P}}(n) \coloneqq \max \left\{ \mathrm{Area}_{\mathcal{P}}(w) \;|\; w \in F(X), \, w \in \llangle R \rrangle, |w|_X \leq n \right\} \, ,
    \]
where $|w|_X$ denotes the length of the word $w$ in the generators $X^{\pm}$.
\end{defn}

Viewed up to $\simeq$ equivalence, the Dehn function of a group is independent of the choice of the presentation (see Proposition 1.3.3 of \cite{MR1967746}); thus we typically write $\delta_G(n)$ as opposed to $\delta_{\mathcal{P}}(n)$.

Our main theorem and certain arguments in this paper use a notion called the \emph{superadditivity of functions}. We record the definition and a lemma about superadditivity of functions. For more details about superadditivity, we refer the reader to \cite[Section~$2$]{KLI}.

\begin{defn}[Superadditivity]\label{def:superadd}
A function $f : \mathbb{N} \to \mathbb{R}$ is called \emph{superadditive} (sometimes referred to in the literature as \emph{subnegative}) if $f(n+m) \geq f(m) + f(n)$. The \emph{superadditive closure} $\bar{f}$ of $f$ is the (pointwise) smallest superadditive function with $\bar{f}(n) \geq f(n)$, which may be given explicitly as
    \[
    \overline{f}(n) \coloneqq \max\left\{ \sum_{i=1}^r f(n_i) \;\Bigg|\; r \geq 1, \, \sum_{i=1}^r n_i = n , \, n_i \in \N \right\} \, .
    \]
\end{defn}

We record two lemmas which we will use while proving our main theorem, Theorem~\ref{thm:main}. The first is a folklore result courtesy of \cite{MR1102884}, for which we first require the following short definition.

\begin{defn}[Group-theoretic retraction]
Suppose that $G$ is a group and $H \leq G$. We say that a homomorphism $\sigma : G \to H$ is a \emph{(group-theoretic) retraction} if $\sigma(h)=h$ for all $h \in H$. 
\end{defn}

\begin{lem}\label{lemret}
Let $G$, $H$ be finitely presented groups such that $H \leq G$ and there exists a retraction $G \rightarrow H$. Then $\delta_H \preccurlyeq \delta_G$.
\end{lem}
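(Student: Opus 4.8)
The plan is to compare van Kampen diagrams (equivalently, areas of words) across the retraction $\sigma : G \to H$, using the standard trick that a retraction lets one "push" a filling in $G$ down to a filling in $H$ of no greater area. First I would fix a finite presentation $\mathcal{P}_H = \langle Y \mid S \rangle$ of $H$, and then build a convenient finite presentation of $G$: take generators $X = Y \sqcup X'$ (where $X'$ is a finite generating set for $G$ that we adjoin) and relators $R = S \cup R' \cup R''$, where $R'$ is a finite set of relators making this a presentation of $G$, and $R''$ consists of the relators $x' = w_{x'}$ for each $x' \in X'$, where $w_{x'}$ is a word in $Y^{\pm}$ representing $\sigma(x') \in H$. (One can always enlarge a finite presentation of $G$ to this form without changing the group, and by the quasi-isometry invariance of the Dehn function up to $\simeq$ this is harmless.) The point is that $\sigma$ is now induced by the tautological map on generators: $Y \to Y$ is the identity and $x' \mapsto w_{x'}$.

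The key step is the following: given a word $w$ in $Y^{\pm}$ with $w \in \llangle S \rrangle$ (so $w$ also lies in $\llangle R \rrangle$) and $|w|_Y = |w|_X \le n$, take an optimal van Kampen diagram $D$ for $w$ over $\mathcal{P}_G$, with $\mathrm{Area}(D) = \mathrm{Area}_{\mathcal{P}_G}(w) \le \delta_{\mathcal{P}_G}(n)$. Apply $\sigma$ to the labels: replace every edge of $D$ labeled by $x' \in X'$ by the edge path spelling $w_{x'}$, and leave edges labeled by $Y$ unchanged. Each $2$-cell of $D$ whose boundary read an element of $S$ still reads an element of $S$ (its label was a word in $Y$, fixed by $\sigma$). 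Each $2$-cell reading a relator from $R'$ now reads a word in $Y^{\pm}$ that is equal in $H$ to the image under $\sigma$ of a relator of $G$, hence is trivial in $H$; it can therefore be filled by a subdiagram over $\mathcal{P}_H$ of area at most some uniform constant $C_0$ depending only on the finite presentations (the maximum area over the finitely many relators in $R'$ of their $\sigma$-images). Each $2$-cell reading a relator from $R''$, i.e. $x'(w_{x'})^{-1}$, becomes $w_{x'}(w_{x'})^{-1}$, which bounds a trivial "spur" diagram of area $0$. The boundary word of the resulting diagram is $\sigma$ applied to $w$, which is $w$ itself since $w$ is a word in $Y$. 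Thus $\mathrm{Area}_{\mathcal{P}_H}(w) \le C_0 \cdot \mathrm{Area}_{\mathcal{P}_G}(w) \le C_0 \, \delta_{\mathcal{P}_G}(n)$.

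Taking the maximum over all such $w$ of length at most $n$ gives $\delta_{\mathcal{P}_H}(n) \le C_0 \, \delta_{\mathcal{P}_G}(n)$, and hence $\delta_H \preccurlyeq \delta_G$ after passing to $\simeq$-classes. (An alternative, purely algebraic, phrasing avoids diagrams entirely: if $w = \prod_{i=1}^{N} u_i^{-1} r_i u_i$ in $F(X)$ with $N = \mathrm{Area}_{\mathcal{P}_G}(w)$, apply the homomorphism $F(X) \to F(Y)$ extending $\sigma$ on generators to get $w = \sigma(w) = \prod_{i=1}^{N} \sigma(u_i)^{-1} \sigma(r_i) \sigma(u_i)$; each $\sigma(r_i)$ lies in $\llangle S \rrangle$ with area bounded by a uniform constant, so expanding gives an expression for $w$ over $\mathcal{P}_H$ with at most $C_0 N$ terms.)

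The main obstacle — really the only subtlety — is bookkeeping the presentation of $G$ so that $\sigma$ is literally induced by a substitution of generators, and checking that the images $\sigma(r_i)$ of the extra relators have areas bounded by a constant independent of $n$; this is immediate once $G$ is presented with finitely many relators, since there are then only finitely many words whose areas need bounding. Everything else is routine, and the result is indeed folklore (cf.\ \cite{MR1102884}).
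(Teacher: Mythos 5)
Your overall strategy---push an efficient filling in $G$ down to $H$ via the retraction, at a multiplicative cost bounded by the worst-case area of the image of a defining relator---is the right (and standard) one; note the paper itself offers no proof of this lemma and simply cites it as folklore. However, the presentation you construct is not a presentation of $G$, and this breaks the argument as written. The words $x'(w_{x'})^{-1}$ in $R''$ are not relations of $G$: in general $x'\neq\sigma(x')$ in $G$ (take $G=\Z^2=\langle a,b\rangle$, $H=\langle a\rangle$, $\sigma(b)=1$; then ``$b=w_b$'' is false in $G$). Imposing $R''$ collapses $G$: eliminating the generators $X'$ by Tietze moves using $x'=w_{x'}$ turns $\langle Y\sqcup X'\mid S\cup R'\cup R''\rangle$ into $\langle Y\mid S\cup R'[x'\mapsto w_{x'}]\rangle$, and each substituted relator is trivial in $H$ (substitution followed by evaluation in $H$ agrees with evaluation in $G$ followed by $\sigma$), hence lies in $\llangle S\rrangle$. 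So your $\mathcal{P}_G$ presents $H$, not $G$, the quantity you call $\delta_{\mathcal{P}_G}$ is (up to $\simeq$) the Dehn function of $H$, and the inequality you obtain is the vacuous $\delta_H\preccurlyeq\delta_H$ rather than $\delta_H\preccurlyeq\delta_G$.

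The repair is immediate, and your closing parenthetical essentially contains it: drop $R''$ altogether. The presentation $\mathcal{P}_G=\langle Y\sqcup X'\mid S\cup R'\rangle$ genuinely presents $G$ (each $s\in S$ is trivial in $H\leq G$), so $\delta_{\mathcal{P}_G}\simeq\delta_G$. Given $w\in F(Y)$ trivial in $H$ and an expression $w=\prod_{i=1}^{N}u_i^{-1}r_iu_i$ with $N=\mathrm{Area}_{\mathcal{P}_G}(w)$, apply the substitution homomorphism $\phi\colon F(Y\sqcup X')\to F(Y)$ fixing $Y$ and sending $x'\mapsto w_{x'}$; since $\phi$ followed by $F(Y)\to H$ agrees with $F(Y\sqcup X')\to G\to H$ on generators, each $\phi(r_i)$ is a word in $Y$ trivial in $H$, so its area over $\mathcal{P}_H$ is at most $C_0=\max_{r\in S\cup R'}\mathrm{Area}_{\mathcal{P}_H}(\phi(r))$, while $\phi(w)=w$. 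This yields $\mathrm{Area}_{\mathcal{P}_H}(w)\leq C_0\,\mathrm{Area}_{\mathcal{P}_G}(w)\leq C_0\,\delta_{\mathcal{P}_G}(|w|)$ and hence $\delta_H\preccurlyeq\delta_G$. Your diagrammatic version goes through verbatim once the $R''$-cells (which no longer exist) are removed from the case analysis.
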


The second of these lemmas also appears with proof in \cite{KLI} as Lemma 2.3.

\begin{lem}\label{maxdehn}
For $k \geq 2$, let $G_1, \ldots, G_k$ be infinite finitely presented groups and denote by $\delta_i$ the Dehn function of $G_i$. Then the Dehn function of the product $G = \prod_i G_i$ is equivalent to $\max \{ n^2 , \delta_i \;|\; 1 \leq i \leq k \}$ under the ordering $\preccurlyeq$ on such functions.
\end{lem}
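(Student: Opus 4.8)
The plan is to establish the two inequalities $\delta_G \preccurlyeq \max\{n^2,\delta_1,\dots,\delta_k\}$ and $\max\{n^2,\delta_1,\dots,\delta_k\}\preccurlyeq \delta_G$ separately, using throughout the presentation
\[
\mathcal{P} \;=\; \Bigl\langle\; \textstyle\bigsqcup_{i=1}^{k} X_i \;\Bigm|\; \textstyle\bigsqcup_{i=1}^{k} R_i \;\cup\; \bigl\{\,[x,y] \;:\; x\in X_i,\ y\in X_j,\ i\neq j\,\bigr\} \;\Bigr\rangle ,
\]
where $\langle X_i \mid R_i\rangle$ is a fixed finite presentation of $G_i$; since the Dehn function is a $\simeq$-invariant of the group, this costs nothing.

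\emph{Upper bound.} Given a null-homotopic word $w$ over $\mathcal{P}$ with $|w|\le n$, the first step is to \emph{sort} $w$ using the commutator relators: move every letter of $X_1$ to the front, then every letter of $X_2$, and so on, producing a word $w_1 w_2 \cdots w_k$ in which $w_i\in F(X_i)$ is the subword of $w$ consisting of its $X_i$-letters in their original order. Sorting a length-$n$ word into blocks by adjacent transpositions costs at most one commutator relator per inversion, hence at most $\binom{n}{2}$ cells. Applying the projection $G\to G_i$ kills every letter outside $X_i$ and shows $w_i=1$ in $G_i$; since $|w_i|\le n$, the word $w_i$ can be filled over $\langle X_i\mid R_i\rangle$ with at most $\delta_{G_i}(n)$ cells. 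Hence
\[
\mathrm{Area}_{\mathcal{P}}(w)\;\le\;\binom{n}{2}+\sum_{i=1}^{k}\delta_{G_i}(n)\;\le\;\tfrac12 n^2 + k\cdot\max_{1\le i\le k}\delta_{G_i}(n),
\]
and since $k$ is a fixed constant this gives $\delta_{\mathcal{P}}(n)\preccurlyeq \max\{n^2,\delta_1,\dots,\delta_k\}$.

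\emph{Lower bound.} For each $j$, the map $G\twoheadrightarrow G_j\hookrightarrow G$ (projection to the $j$-th factor followed by inclusion of that factor) is a retraction of $G$ onto a copy of $G_j$, so Lemma~\ref{lemret} gives $\delta_j\preccurlyeq\delta_G$. It remains to show $n^2\preccurlyeq\delta_G$. Since projection to the first two factors followed by inclusion is a retraction of $G$ onto a copy of $G_1\times G_2$, Lemma~\ref{lemret} reduces this to $n^2\preccurlyeq\delta_{G_1\times G_2}$. Because $G_1$ and $G_2$ are infinite and finitely generated, their Cayley graphs are connected of infinite diameter, so every natural number occurs as a distance from the identity; choose geodesic words $u_n\in F(X_1)$ and $v_n\in F(X_2)$ of length exactly $n$. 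The commutator $w_n = u_n v_n u_n^{-1} v_n^{-1}$ is null-homotopic with $|w_n|\le 4n$, and a van Kampen diagram argument yields $\mathrm{Area}(w_n)\succcurlyeq n^2$: in a least-area diagram for $w_n$ over the two-factor presentation above, the commutator $2$-cells assemble into corridors dual to the $X_1$- and $X_2$-edges, and each of the $n$ corridors emanating from the $X_1$-edges of the $u_n$-arc must cross each of the $n$ corridors emanating from the $X_2$-edges of the $v_n$-arc, forcing at least $c\,n^2$ commutator cells for some $c>0$. Together these two facts give $\max\{n^2,\delta_1,\dots,\delta_k\}\preccurlyeq\delta_G$.

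The sorting estimate and the retraction observations are routine. The one genuinely non-formal point — and the main obstacle — is the quadratic lower bound for a product of two infinite finitely presented groups. The corridor count sketched above needs care in handling $R_1$- and $R_2$-cells that may occur in a least-area diagram (e.g.\ by passing to a diagram in which such cells are absorbed, or by showing they cannot shorten the corridors joining the $u_n$- and $v_n$-arcs). An alternative route: if $G_1\times G_2$ were hyperbolic, then each factor $G_i$, being an undistorted retract, would be an infinite hyperbolic group and hence contain an element of infinite order, and the elements so obtained in $G_1$ and $G_2$ commute and generate a copy of $\Z^2$, which is impossible in a hyperbolic group; thus $G_1\times G_2$ is not hyperbolic and its Dehn function is at least quadratic by the subquadratic isoperimetric gap theorem (Gromov, Bowditch, Ol'shanskii, Papasoglu). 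This second route trades the diagram bookkeeping for several standard facts about hyperbolic groups.
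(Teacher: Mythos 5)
Your proposal is correct, but there is no in-paper argument to compare it against: the paper does not prove this lemma, it simply quotes it, noting that it ``appears with proof in \cite{KLI} as Lemma 2.3.'' Your upper bound (sorting a null-homotopic word into factor blocks at a cost of at most $\binom{n}{2}$ commutator cells, then filling each block $w_i$ in its own factor at cost $\delta_i(n)$) and your reduction of the lower bound to Lemma~\ref{lemret} via the coordinate retractions are the standard arguments and are fine. For the remaining inequality $n^2 \preccurlyeq \delta_{G_1 \times G_2}$, your first route is not a proof as written, as you yourself concede: in the product presentation, bands dual to $X_1$-edges run through commutator squares but terminate on $R_1$-cells, so the assertion that the $n$ corridors emanating from the $u_n$-arc must each cross the $n$ corridors emanating from the $v_n$-arc does not follow without substantial extra work. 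Your second route, however, is complete modulo standard quoted facts: a retract is undistorted, an undistorted subgroup of a hyperbolic group is quasiconvex and hence hyperbolic, an infinite hyperbolic group contains an element of infinite order, a hyperbolic group contains no $\Z^2$, and the linear/quadratic gap theorem then forces $\delta_{G_1\times G_2} \succcurlyeq n^2$ since $G_1 \times G_2$ is not hyperbolic. So the proof should be read as resting on that second route (which is also the usual way this lower bound is handled in the literature); with that reading, your argument is a correct, self-contained substitute for the citation to \cite{KLI}.
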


Finally, we record the definition of the particular structures we investigate, namely, coabelian subgroups.

\begin{defn}[Coabelian subgroups]
Let $G$ be a group and $H \leq G$. The subgroup $H$ is said to be a \emph{coabelian subgroup} of $G$ if $[G,G] \leq H$. Equivalently, $H \unlhd G$ and $G/H$ is abelian.
% Another equivalent definition is that there exists an abelian group $A$ such that $H$ and $G$ fit into the short exact sequence
%     \[\begin{tikzcd}
%         1 \arrow[r] & H \arrow[r] & G \arrow[r] & A \arrow[r] & 1 \, .
%     \end{tikzcd}\]
% If one works only up to quasi-isometry, where one may assume without loss of generality that $A$ is torsion-free, this reduces to the short exact sequence
%     \[\begin{tikzcd}
%         1 \arrow[r] & H \arrow[r] & G \arrow[r] & \Z^m \arrow[r] & 1 \, ,
%     \end{tikzcd}\]
% for some $m \geq 0$.
\end{defn}

% \begin{proof}
% See [\cite{KLI}, Lemma 2.3].
% \end{proof}

% We now observe the following lemma.

% \begin{lem}\label{super}
% Let $f:\mathbb{N} \rightarrow \mathbb{R}$ be a function that satisfies that $\frac{f(n)}{n}$ is non-decreasing in $n$. Then $f$ is superadditive.
% \end{lem}

% In Section~\ref{sec:dehnfunction}, we will be dealing with a stronger notion of superadditivity, namely, with non-decreasing functions $f:\mathbb{N} \rightarrow \mathbb{R}_{>0}$, such that $g:\mathbb{N} \rightarrow \mathbb{R}_{>0}$, defined as $g(n)=\frac{f(n)}{n}$ is superadditive.

% In Section~\ref{sec:dehnfunction}, we will be dealing with a stronger notion of superadditivity, namely, with non-decreasing functions $f:\mathbb{N} \rightarrow \mathbb{R}_{>0}$, such that the function $\frac{f(n)}{n}$ is also superadditive.

%%%%%%%%%%%%%%%%%%%%%%%%%%%%%%%%%%%%%%%%%%%%%%%%%%%%%%%%%%%%%%%%%%%%%%%%%%%%%%%%%%%%%%%%%%%%%%%%%%%%%%%%%%%%%%%%%%%%%%%%%%%%%%%%%%%%%%%%%%%%%%%%%%%%%%
\section{The Algebraic Setup}\label{sec:alg-setup}

The main theorem of this paper concerns the Dehn function of $\Ker(\Phi)$ where $\Phi : \prod_{i=1}^{2n} G_i \to \Z^m$ is a particular homomorphism. The ultimate goal is to define a collection of special subgroups of $\Ker(\Phi) \leq G = \prod_{i=1}^{2n} G_i$, called edge groups and face groups, and to use the combinatorial structure of the lattice of these  subgroups in order to provide upper bounds on the Dehn function of the kernel.

This section starts with  the definition of an $n$-split sequence of groups, and a description of the basic setup underlying the main result. Each factor group $G_i$ is endowed with a particular generating set called a {\em kernel-section} generating set, and these generating sets are promoted to {\em vector generating sets} for subgroups of $\Ker(\Phi)$. Finally, we define {\em standard arrangement} of $\mathbb{Z}^m$ in $\Ker(\Phi)$.

\begin{defn}[$n$-split sequence]\label{def:nsplit}
Let $G$ be a group and $0<n\leq m$ be positive integers. A short exact sequence
    \[\begin{tikzcd}
    1 \arrow[r] & \Ker(f) \arrow[r] & G \arrow[r,"f"] & \Z^m \arrow[r]  & 1
    \end{tikzcd}\]
is said to {\em $n$-split} if there exists a decomposition 
    \[
    \Z^m = A_1 \oplus \cdots \oplus A_n
    \]
together with projection maps $\laproj_j : \Z^m \to A_j$ for $1 \leq j \leq n$ and section homomorphisms $s^j : A_j \to G$ such that $\laproj_j \circ f \circ s^j = {\rm id}_{A_j}$ for $1 \leq j \leq n$. We  say that $f$ is $n$-split {\em with respect to} the given decomposition $A_1 \oplus \cdots \oplus A_n$ of $\Z^m$.

Note that a $1$-split short exact sequence is just a split short exact sequence in the usual sense, and that a $2$-split short exact sequence is $P$-split in the sense of \cite{KLI} for $P = \{A_1, A_2\}$. Finally, note that, since $\Z$ is free, every  short exact sequence above  is automatically $m$-split.
\end{defn}

\begin{defn}[Basic setup for Theorem~\ref{thm:main}]\label{def:setup}
The {\em basic setup} consists of the following data. 
    \begin{enumerate}
    \item Integers $ m\geq n \geq 3$, and a decomposition of $\Z^m$ into non-trivial free abelian factors 
        \[
        \Z^m = A_1 \oplus \cdots \oplus A_n ,
        \]
    together with projections $\laproj_j : \Z^m \to A_j$ for $1 \leq j \leq n$. Additionally, there is a choice $B_j$ of a basis for $A_j$ for each $1 \leq j \leq n$. 
    
    \item A family of groups $\{G_i \,|\, 1 \leq i \leq 2n\}$ and $n$-split epimorphisms
        \[
        \varphi_i : G_i \to \Z^m
        \]
    with respect to the given decomposition $\Z^m = A_1 \oplus \cdots \oplus A_n$ in item (1) above for each $1 \leq i \leq 2n$. 
    
    \item From Definition~\ref{def:nsplit}, the $n$-split conditions in item (2) mean that there exist section homomorphisms 
        \[
        s^j_i : A_j \to G_i
        \]
    satisfying $\laproj_j \circ \varphi_i \circ s_i^j = {\rm id}_{A_j}$ for each $1 \leq i \leq 2n$ and each $1 \leq j \leq n$. 
    
    \item Let $G = \prod_{i=1}^{2n}G_i$. The $\varphi_i$ sum to give an epimorphism $\Phi : G \to \Z^m$ defined by
        \[
        \Phi(g) = \sum_{i=1}^{2n} \varphi_i(\grpproj_i(g)) ,
        \]
    where $\grpproj_i : G \to G_i$ denotes the standard one-factor projection.
    \end{enumerate}
\end{defn}

\begin{rem}[Cases $n=1,2$]
The basic setup for Theorem~\ref{thm:main} includes the hypothesis that $n \geq3$. The case  $n=1$ (resp.\ $n=2$) is handled in Section~3 (resp.\ Section~4) of \cite{KLI}. 
\end{rem}

Our goal is to determine upper bounds for the isoperimetric inequality of the kernel of the homomorphism $\Phi$ in the case that this is finitely presented. To do this, we first choose special finite generating sets for the groups $G_i$ called {\em kernel-section generating sets} which partition into two subsets; one a subset of $\Ker(\varphi_i)$ and the other consisting of section lifts of the elements in a basis $B_i$ for each factor $A_i$ in the decomposition of $\Z^m$. 

\begin{defn}[Kernel-section generating sets]\label{def:ks}
Let $G$ be a finitely generated group and 
    \[
    f : G \to \Z^m = A_1 \oplus \cdots \oplus A_n
    \]
be an $n$-split epimorphism as in Definition~\ref{def:nsplit} with sections $s^j: A_j \to G$ satisfying $\laproj_j \circ f \circ s^j = {\rm id}_{A_j}$, for $1 \leq j \leq n$. 

A {\em kernel-section generating set} for $G$ with respect to the $n$-split epimorphism $f$ is a generating set for $G$ which is a disjoint union $Y \sqcup Z^1 \sqcup \cdots \sqcup Z^n$ of  finite sets $Y \subseteq \Ker(f)$ and $Z^j = s^j(B_j)$, where $B_j$ is a basis for the free abelian group $A_j$, for each $1 \leq j \leq n$.

The elements of $Y$ are called {\em kernel-generators} and the elements of $Z^1 \sqcup \cdots \sqcup Z^n$ are called {\em section-generators}. 
\end{defn}

\begin{rem}[Kernel generators]
Note that the set of kernel-generators $Y$ defined above do not necessarily generate $\Ker(f)$. Indeed, this kernel need not be finitely generated. Even in the case when $\Ker(f)$ is finitely generated, the set $Y$ may not be a generating set. The phrase {\em kernel-generators} is used simply to indicate that $Y \subseteq \Ker(f)$, and to distinguish them from the {\em section-generators} in $Z^j$. 
\end{rem}

\begin{rem}[Notation for inverses]
We will use exponential notation to denote the inverse of elements (so the inverse of the element $a$ is denoted by $a^{-1}$). We reserve the overline notation $\overline{a}$ for certain ``vectorized'' elements of $G = \prod_{j=1}^{2n} G_j$. 
\end{rem}

Our first result is that every finitely generated group $G$ as in Definition~\ref{def:ks} admits a finite kernel-section generating set. 

\begin{lem}\label{lem:ker-sec}
Let $G$ be a finitely generated group and $f : G \to \Z^m = A_1 \oplus \cdots \oplus A_n$ be an $n$-split epimorphism with sections $s_j : A_j \to G$ for $1 \leq j \leq n$. Then there exists a finite kernel-section generating set $Y \sqcup Z^1 \sqcup \cdots \sqcup Z^n$ for $G$ with respect to $f$. 
\end{lem}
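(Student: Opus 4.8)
The plan is to start from a given finite generating set $S$ of $G$ and enlarge/modify it so that it becomes a kernel-section generating set. The key observation is that the section images $Z^j = s^j(B_j)$ are forced on us (up to the choice of basis $B_j$, which we make once and for all), so the only real work is producing a finite set $Y \subseteq \Ker(f)$ such that $Y \sqcup Z^1 \sqcup \cdots \sqcup Z^n$ generates $G$.

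First I would fix, for each $1 \le j \le n$, a basis $B_j$ of the free abelian group $A_j$ and set $Z^j = s^j(B_j)$; this is a finite subset of $G$. Note that $\bigcup_j Z^j$ together generate a subgroup whose image under $f$ is all of $\Z^m$: indeed, for $a \in A_j$, the element $s^j(a)$ maps under $f$ to something projecting to $a$ under $\laproj_j$, and writing any vector of $\Z^m$ as $\sum_j a_j$ with $a_j \in A_j$ we can (modulo $\Ker(f)$) realize it using words in $\bigcup_j Z^j$. Next, given a finite generating set $S = \{g_1, \ldots, g_k\}$ of $G$, for each $g_\ell$ choose a word $w_\ell$ in the letters $\bigcup_j (Z^j)^{\pm}$ representing an element of $G$ with the same image under $f$ as $g_\ell$ — possible by the previous sentence. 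Then $y_\ell := g_\ell \, \overline{w_\ell}$ (where $\overline{w_\ell}$ is the group element represented by $w_\ell^{-1}$) lies in $\Ker(f)$, because $f(g_\ell) = f(w_\ell)$. Set $Y_0 = \{y_1, \ldots, y_k\}$, a finite subset of $\Ker(f)$.

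The point is then that $Y_0 \cup Z^1 \cup \cdots \cup Z^n$ generates $G$: each original generator $g_\ell = y_\ell \cdot \overline{w_\ell}$ is a product of an element of $Y_0$ and a word in the $Z^j$'s. Finally, to get a genuine disjoint union $Y \sqcup Z^1 \sqcup \cdots \sqcup Z^n$ as required by Definition~\ref{def:ks}, I would take $Y$ to be $Y_0$ with any elements that happen to coincide with a section-generator (or to be trivial) removed, which changes nothing about the generated subgroup; formally one can also just pass to an abstract disjoint union and map it onto $G$, since the definition only asks for a generating set partitioned into the named pieces.

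The main obstacle — really the only subtlety — is verifying that the $Z^j$'s suffice to hit every coset of $\Ker(f)$, i.e. that $f$ restricted to $\gen{\bigcup_j Z^j}$ is surjective; this is immediate from the $n$-split hypothesis once one unwinds that $\laproj_j \circ f \circ s^j = \mathrm{id}_{A_j}$ and $\Z^m = \bigoplus_j A_j$, but it is the step where the $n$-split structure is actually used. Everything else is the standard ``adjust a generating set by kernel elements'' manipulation, and finiteness is clear since $S$ and each $B_j$ are finite.
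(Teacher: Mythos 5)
Your proof is correct and takes essentially the same route as the paper: the paper's explicit correction $y_i = a_i\, s^1(\laproj_1(f(a_i)))^{-1}\cdots s^n(\laproj_n(f(a_i)))^{-1}$ is exactly one concrete choice of your word $w_\ell$, and both arguments finish by rewriting each original generator as a kernel element times a word in the $Z^j$. The only step to watch is your surjectivity claim for $f$ restricted to $\langle \bigcup_j Z^j\rangle$: just as in the paper's computation of $f(y_i)$, it relies on reading the $n$-split condition as saying that $f\circ s^j$ takes values in $A_j$ (so that $f(s^j(a))=a$), not merely that its $\laproj_j$-component is $a$.
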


\begin{proof}
For each $1 \leq j \leq n$ let $\laproj_j : \Z^m \to A_j$ denote the projection map onto the $A_j$ factor group. Since $G$ is finitely generated, we can choose a finite generating set $\{a_1, \ldots, a_p\}$. For each $1 \leq i \leq p$ define 
    \begin{equation}\label{def:ker-sec-gens}
    y_i = a_i s^1(\laproj_1(f(a_i)))^{-1} \ldots s^n(\laproj_n(f(a_i)))^{-1}
    \end{equation}
and set 
    \[
    Y = \{y_i \;|\; 1 \leq i \leq p\} \, .
    \]
Applying  $f$ to each $y_i$ gives  
    \begin{align*}
    f(y_i) &= f(a_i) + f(s^1(\laproj_1(f(a_i)))^{-1}) + \ldots + f(s^n(\laproj_n(f(a_i)))^{-1}) \\
    &= f(a_i) + \laproj_1(f(s^1(\laproj_1(f(a_i)))^{-1})) + \ldots + \laproj_n(f(s^n(\laproj_n(f(a_i)))^{-1})) \\
    &= f(a_i) - \laproj_1(f(a_i)) - \ldots - \laproj_n(f(a_i)) \\   
    & = 0 \, ,
    \end{align*}
where each $\laproj_j(f(s^1(\laproj_1(f(a_i)))^{-1})) = f(s^1((\laproj_1(f(a_i)))^{-1})$ since these elements belong to $A_j \leq \Z^m$, and where the $\laproj_j \circ f \circ s^j$ composites cancel since the $s^j$ are section maps. Therefore, $Y \subseteq \Ker(f)$. 

Let $B_j$ be a basis for the module $A_j$ and set
    \[
    Z^j \;=\;  \{s^j(z) \;|\; z \in B_j\} \, .
    \]
By definition, the elements of the sets $Z^j$ are section-generators. 
    
Finally, for each $1 \leq i \leq p$, 
    \[
    a_i = y_i s^n((\laproj_n(f(a_i)))\cdots s^1((\laproj_1(f(a_i)))
    \]
is a word in $Y \sqcup Z^1 \sqcup \cdots \sqcup Z^n$. Hence, $Y \sqcup Z^1 \sqcup \cdots \sqcup Z^n$ is a finite kernel-section generating set for $G$ with respect to $f$. 
\end{proof}

Next, we establish notation for kernel-section generating sets for the basic setup of Definition~\ref{def:setup}. 

\begin{defn}[Kernel-section generating sets for the basic setup]\label{def:setupgen}
Let $n \geq 3$ be an integer and 
    \[
    \varphi_i: G_i \to \Z^m = A_1 \oplus \cdots \oplus A_n
    \]
be an $n$-split epimorphism for each $1 \leq i \leq 2n$, as in the basic set up of Definition~\ref{def:setup}. Then for each $1 \leq i \leq 2n$, Lemma~\ref{lem:ker-sec} takes as input a finite generating set for $G_i$ and outputs a finite {\em kernel-section generating set for $G_i$} which we denote by 
    \[
    Y_i \sqcup Z^1_{i} \sqcup \cdots \sqcup Z^n_{i} \, .
    \]
\end{defn}

\begin{rem}[Word metric on $G = \prod_{i=1}^{2n}G_i$]\label{rem:l1metric}
From now on, we assume that each $G_i$ is given a finite kernel-section generating set as in Definition~\ref{def:setupgen}. The word metric on each $G_i$ is determined by the kernel-section generating set $Y_i \sqcup Z^1_i \sqcup \cdots \sqcup Z^n_i$, and the word metric on the product $G = \prod_{i=1}^{2n} G_i$ is the corresponding $\ell^1$-metric induced by the word metrics on the $G_i$.
\end{rem}

\begin{defn}[Projections]
We make use of projection maps from $G = \prod_{i=1}^{2n} G_i$ to various target groups. 
    \begin{itemize}
    \item The most basic projection map is the one-factor projection 
        \[
        \grpproj_i : \prod_{j=1}^{2n} G_j \;\to\; G_i : (g_1, \ldots, g_{2n}) \;\mapsto\; g_i 
        \]
    defined for each $1 \leq i \leq 2n$. 
        
    \item We also consider multi-factor projection maps where the factors are indexed by some index set $\alpha \subseteq \{1, \ldots , 2n\}$:
        \[
        \grpproj_{\alpha} : \prod_{j=1}^{2n} G_j \;\to\; \prod_{j \in \alpha} G_j \;\reflectbox{$\coloneqq$}\; G_{\alpha}
        \]
    for $\alpha \subseteq \{1, \ldots , 2n\}$. This map is defined by $\mathrm{pr}^{G_\alpha}_j\circ \grpproj_{\alpha} =\grpproj_j$ for all $j \in \alpha$. 
    \end{itemize}
\end{defn}

Recall from Definition~\ref{def:setup} that the epimorphisms $\varphi_i : G_i \to \Z^m$ for $1\leq i \leq 2n$ sum together to determine an epimorphism $\Phi : G = \prod_{i=1}^{2n} G_i \to \Z^m$. We define special subgroups of $\Ker(\Phi)$ via generating sets consisting of vector ($2n$-tuple) analogues of the kernel-section generating sets of the $G_i$. We define these vector generators presently. 

\begin{defn}[Vector generating sets for subgroups of $\Ker(\Phi)$]\label{def:vector_gen}
Let $0 < n \leq m$ and $G_i$ and $\Phi : G \to \Z^m$ be as in the basic setup Definition~\ref{def:setup}. For each $1 \leq i \leq 2n$, we define vector analogues of the set
    \[
    Y_i \sqcup Z_i^1 \sqcup \cdots \sqcup Z_i^n 
    \]
of kernel-section generators for the group $G_i$.  
    \begin{itemize}
    \item For each $1 \leq i \leq 2n$, define 
        \[
        \overline{Y}_i \;=\; \{\overline{y} \,|\, y \in Y_i\} \, ,
        \]
    where $\overline{y}$ is a $2n$-tuple in $G$ defined by
        \[
        \grpproj_j(\overline{y}) \;=\;
            \begin{cases}
            e & j \not= i \\
            y & j = i
            \end{cases} \, .
        \]
        
    \item For each $1 \leq i \not=k \leq 2n$ and each $1 \leq j \leq n$ define 
        \[
        \overline{Z}^j_{i,k} \; = \; \{ \overline{z} \,|\, z \in Z^j_i \} \, ,
        \]
    where (using the fact that $z=s^j_i(b)$ for some $b \in B_j$) we define the $2n$-tuple $\overline{z}$ by  
        \[
        \grpproj_\ell(\overline{z}) \;=\;
            \begin{cases}
            e & \ell \not= i,k \\
            s_i^j(b) & \ell = i \\
            s_k^j(-b) & \ell = k
            \end{cases} \, .
        \]
    \end{itemize}
        
By definition, these sets are all contained in $\Ker(\Phi)$. 
\end{defn}

\begin{rem}[Ranges of the indices of $\overline{Z}^j_{i.k}$]\label{rem:superscripts}
The superscript index $j$ refers to the the summand $A_j$ in the decomposition $A_1 \oplus \cdots \oplus A_n$ of $\Z^m$, and so ranges between $1$ and $n$.

When used in defining subgroups of $\Ker(\Phi)$, the subscript index $i$ ranges over some subset $A \subseteq \{1, \ldots , 2n\}$ and the second subscript index $k$ lies in the complement of $A$ in $\{1, \ldots, 2n\}$. 
\end{rem}

\begin{rem}[Representatives modulo $k$]
In this paper, the set of representatives of integers modulo $k$ is taken to be $\{1, 2, \dots, k-1, k\}$. That is, $k$ is used instead of $0$ for the value of $k \pmod{n}$. 
\end{rem}

In Lemma~\ref{lem:std-Zm} below, we describe a subgroup $\mathcal{L}$ of $\Ker(\Phi)$ via its generating set and show that this subgroup is isomorphic to $\Z^m$. The isomorphism type of $\mathcal{L}$ is established via the {\em $A$-determinate} property, which we define and explore below. This property is used repeatedly in the next section in establishing isomorphism types of other special subgroups of $\Ker(\Phi)$. 

\begin{defn}[$A$-determinate elements and subgroups]\label{def:determinate} 
Let $A \subseteq \{1, \dots, 2n\}$ have cardinality $n$ and let $B = \{1, \ldots, 2n\} - A$.   An element $\overline{g} \in \prod_{i=1}^{2n} G_i$ is said to be {\em $A$-determinate} if, for each $j \in B$, there exists a homomorphism
    \[
    f_j : \prod_{i \in A} G_i \;\to\; G_j
    \]
such that the $j$-th coordinate entry of $\overline{g}$ satisfies
    \[
    \grpproj_j(\overline{g}) \;=\; f_j(\grpproj_A(\overline{g})).
    \]
That is, $g_j$ is the $f_j$-homomorphic image of the $A$-entries of $\overline{g}$. 

A subgroup of $\prod_{i=1}^{2n} G_i$ is said to be {\em $A$-determinate} if each of its elements are $A$-determinate with respect to a given, fixed, collection of homomorphisms $f_j, \; j \in B$.
\end{defn}

A key property of an $A$-determinate subgroup $H$ is that the projection map from $H$ to the product of its $A$-factors is injective. This property is used in determining the isomorphism type of various subgroups of $\Ker(\Phi)$ defined in this and subsequent sections. 

\begin{lem}[$A$-determinate implies injectivity of $\grpproj_A$]\label{lem:detinj}
Let $A \subseteq \{1, \dots , 2n\}$ have cardinality $n$ and $B$ be its complement, and let $H \leq \prod_{i=1}^{2n} G_i$ be an $A$-determinate subgroup. Then the projection map
    \[
    \grpproj_A : H \;\to\; \prod_{i \in A} G_i
    \]
is injective.
\end{lem}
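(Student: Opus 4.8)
The plan is to show that the kernel of $\grpproj_A$ restricted to $H$ is trivial. Suppose $\overline{g} \in H$ with $\grpproj_A(\overline{g}) = e$, i.e.\ $\grpproj_i(\overline{g}) = e$ for every $i \in A$. Since $H$ is $A$-determinate, there is a fixed collection of homomorphisms $f_j : \prod_{i \in A} G_i \to G_j$ for $j \in B$ such that $\grpproj_j(\overline{g}) = f_j(\grpproj_A(\overline{g}))$ for all $j \in B$.

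Applying this to our element, $\grpproj_j(\overline{g}) = f_j(\grpproj_A(\overline{g})) = f_j(e) = e$ for every $j \in B$, because homomorphisms send the identity to the identity. Combined with $\grpproj_i(\overline{g}) = e$ for $i \in A$, we conclude that every coordinate of $\overline{g}$ is trivial, hence $\overline{g} = e$. Thus $\Ker(\grpproj_A|_H)$ is trivial and $\grpproj_A|_H$ is injective.

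This argument is essentially immediate from the definitions, so there is no real obstacle — the only thing to be careful about is the quantifier structure: the homomorphisms $f_j$ are fixed in advance for the whole subgroup (as spelled out in Definition~\ref{def:determinate}), so the same $f_j$ that witness $A$-determinacy of a generic element also apply to the particular element in the kernel. I would write this as a short direct proof rather than invoking any machinery.

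\begin{proof}
Let $\{f_j : \prod_{i \in A} G_i \to G_j \mid j \in B\}$ be the fixed collection of homomorphisms witnessing that $H$ is $A$-determinate. Suppose $\overline{g} \in H$ satisfies $\grpproj_A(\overline{g}) = e$; equivalently, $\grpproj_i(\overline{g}) = e$ for all $i \in A$. Then for each $j \in B$ we have
    \[
    \grpproj_j(\overline{g}) \;=\; f_j(\grpproj_A(\overline{g})) \;=\; f_j(e) \;=\; e ,
    \]
since homomorphisms preserve the identity element. Hence $\grpproj_\ell(\overline{g}) = e$ for all $1 \leq \ell \leq 2n$, so $\overline{g} = e$. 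Therefore $\grpproj_A$ restricted to $H$ has trivial kernel and is injective.
\end{proof}
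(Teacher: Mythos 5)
Your proof is correct and follows exactly the same argument as the paper: take $\overline{g} \in \Ker(\grpproj_A|_H)$, use the fixed homomorphisms $f_j$ to conclude that every $B$-coordinate is $f_j(e)=e$, and deduce $\overline{g}=\overline{e}$. Nothing further is needed.
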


\begin{proof}
Given $\overline{g} \in \Ker(\grpproj_A)$, then $g_i = e_i$ for all $i \in A$, and each $g_j = f_j(\grpproj_A(\overline{g})) = e$ for $j \in B$ by $A$-determinate hypothesis. Therefore, $\overline{g} = \overline{e} \in \prod_{i=1}^{2n} G_i$, and so $\Ker(\grpproj_A)$ is the trivial subgroup.
\end{proof}

The next result shows that the $A$-determinacy of a subgroup $H$ with respect to a collection of homomorphisms follows from the $A$-determinacy of a generating set for $H$ with respect to the same fixed collection of homomorphisms. It is used in establishing the $A$-determinacy of various subgroups of $\Ker(\Phi)$. 

\begin{lem}[$A$-determinate generators]\label{lem:detgen}
Let $G = \prod_{i=1}^{2n} G_i$ and $A, \, B \subseteq \{1, \ldots, 2n\}$ be as in Definition~\ref{def:determinate}. Suppose that $H \leq G$ is generated by a set $S$ and that there exists a collection of homomorphisms 
    \[
    f_j : \prod_{i \in A} G_i \to G_j \, ,
    \]
indexed by $j \in B$, such that 
    \[
    \grpproj_j(\overline{s}) \;=\; f_j(\grpproj_A(\overline{s}) )
    \]
for all $\overline{s} \in S$. Then $H$ is $A$-determinate. 
\end{lem}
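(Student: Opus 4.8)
The plan is to show that $A$-determinacy of a generating set propagates to the whole subgroup by showing it is preserved under the group operations, i.e.\ under products and inverses. Fix the collection of homomorphisms $f_j : \prod_{i\in A}G_i \to G_j$, $j\in B$, witnessing the hypothesis on $S$. Call an element $\overline{g}\in G$ \emph{good} if $\grpproj_j(\overline{g}) = f_j(\grpproj_A(\overline{g}))$ for every $j\in B$. By hypothesis every $\overline{s}\in S$ is good; since $\grpproj_A$ and each $f_j$ are homomorphisms, the identity $\overline{e}$ is trivially good as well. It then suffices to check that the set of good elements is closed under multiplication and inversion, since $H = \langle S\rangle$ consists precisely of finite products of elements of $S^{\pm 1}$.

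For closure under products, suppose $\overline{g}$ and $\overline{h}$ are good. For each $j\in B$, using that $\grpproj_j$ is a homomorphism, then goodness of $\overline{g}$ and $\overline{h}$, then that $f_j$ is a homomorphism, and finally that $\grpproj_A$ is a homomorphism, we get
\[
\grpproj_j(\overline{g}\,\overline{h}) = \grpproj_j(\overline{g})\,\grpproj_j(\overline{h}) = f_j(\grpproj_A(\overline{g}))\,f_j(\grpproj_A(\overline{h})) = f_j\bigl(\grpproj_A(\overline{g})\,\grpproj_A(\overline{h})\bigr) = f_j\bigl(\grpproj_A(\overline{g}\,\overline{h})\bigr),
\]
so $\overline{g}\,\overline{h}$ is good. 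For closure under inverses, suppose $\overline{g}$ is good; then for each $j\in B$,
\[
\grpproj_j(\overline{g}^{-1}) = \grpproj_j(\overline{g})^{-1} = f_j(\grpproj_A(\overline{g}))^{-1} = f_j\bigl(\grpproj_A(\overline{g})^{-1}\bigr) = f_j\bigl(\grpproj_A(\overline{g}^{-1})\bigr),
\]
so $\overline{g}^{-1}$ is good. Hence the good elements form a subgroup of $G$ containing $S$, so it contains $H = \langle S\rangle$; that is, every element of $H$ is $A$-determinate with respect to the fixed collection $\{f_j\}_{j\in B}$, which is exactly the assertion that $H$ is $A$-determinate.

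There is no real obstacle here — the statement is a routine ``property closed under the group operations extends from generators to the generated subgroup'' argument, and the only thing to be careful about is tracking that the \emph{same} fixed homomorphisms $f_j$ work throughout (which is built into the definition of $A$-determinate, and is why the computation above never needs to modify the $f_j$). One could alternatively phrase the argument by induction on the word length of an element of $H$ expressed in $S^{\pm 1}$, but the ``set of good elements is a subgroup'' formulation is cleaner and avoids choosing a specific word.
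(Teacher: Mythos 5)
Your proof is correct and is essentially the same argument as the paper's: the paper writes an arbitrary $\overline{h}\in H$ as a word $w(S^{\pm})$ and pushes the homomorphisms $\grpproj_j$, $f_j$, $\grpproj_A$ through that word, which is just your ``goodness is closed under products and inverses'' observation phrased via word evaluation. Both hinge on the same key point you flag, namely that one fixed $f_j\circ\grpproj_A$ is used uniformly for all generators.
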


\begin{proof}
Each element $\overline{h} \in H$ can be written as a word 
    \[
    \overline{h} \;=\; w(S^\pm)
    \]
in the generators $\overline{s} \in S$ and their inverses. Since the same homomorphisms $f_j$ are used for each $\overline{s} \in S$, we have, for $\overline{h} \in H$ and $j \in B$, 
    \begin{align*}
    \grpproj_j(\overline{h}) \;&=\; \grpproj_j(w(S^\pm)) \\
    &=\; w(\grpproj_j(S^\pm)) \\
    &=\; w(f_j(\grpproj_A(S^\pm))) \\
    &=\; f_j(\grpproj_A(w(S^\pm))) \\
    &=\; f_j(\grpproj_A(\overline{h})) ,
    \end{align*}
and so $H$ is $A$-determinate. The main point here is that for each fixed $j \in B$, the same homomorphism $f_j \circ \grpproj_A$  is used (uniformly) for all of the generators $\overline{s} \in S$.  
\end{proof}

The next lemma uses the previous two results on $A$-determinacy to show that the so-called {\em standard arrangement $\mathcal{L}$ of $\Z^m$} in $G$ is indeed free abelian of rank $m$. 

\begin{lem}[Standard arrangement of $\Z^m$ in $\Ker(\Phi)$]\label{lem:std-Zm}
Let $n \leq m$ be positive integers and $\Phi : G \to \Z^m$ be as in Definition~\ref{def:setup}. The subgroup $\mathcal{L}$ of $\Ker(\Phi)$ generated by 
    \[
    \bigcup_{i=1}^n \overline{Z}_{i,i+n}^i
    \]
is isomorphic to $A_1 \times \cdots \times A_n \cong \Z^m$. 
\end{lem}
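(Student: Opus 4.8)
The plan is to run the standard-arrangement through the two $A$-determinacy lemmas just proved. Take $A = \{1,2,\dots,n\}$ and $B = \{n+1,\dots,2n\}$, so that $\abs{A}=n$ and $i \mapsto i+n$ is a bijection $A \to B$; thus the index pairs $\{i,i+n\}$, $1\le i\le n$, appearing in the generating set $\bigcup_{i=1}^n \overline{Z}^i_{i,i+n}$ are exactly the ``rows'' linking an $A$-coordinate $i$ to its partner $B$-coordinate $i+n$. The strategy is: (a) show this generating set is $A$-determinate; (b) conclude via Lemma~\ref{lem:detgen} that $\mathcal{L}$ is $A$-determinate, hence via Lemma~\ref{lem:detinj} that $\grpproj_A|_{\mathcal{L}}$ is injective and $\mathcal{L}\cong \grpproj_A(\mathcal{L})$; (c) identify $\grpproj_A(\mathcal{L})$ with $\prod_{i=1}^n s^i_i(A_i)\cong\prod_{i=1}^n A_i\cong\Z^m$.

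For step (a), given $j \in B$ write $j = i+n$ with $i \in A$ and define the homomorphism
\[
    f_j \;=\; s^i_{i+n}\circ(-\mathrm{id}_{A_i})\circ\laproj_i\circ\varphi_i\circ\grpproj_i \;:\; \prod_{\ell\in A}G_\ell \;\to\; G_{i+n},
\]
which is a composite of group homomorphisms, negation being a homomorphism of the free abelian group $A_i$. A generator $\overline{z}\in\overline{Z}^k_{k,k+n}$ has $z=s^k_k(b)$ for some $b\in B_k$, with $\grpproj_k(\overline{z})=s^k_k(b)$, $\grpproj_{k+n}(\overline{z})=s^k_{k+n}(-b)$, and all other coordinates trivial. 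To check $\grpproj_j(\overline{z})=f_j(\grpproj_A(\overline{z}))$ I would split into two cases. If $k=i$, then $\grpproj_i(\grpproj_A(\overline{z}))=s^i_i(b)$, and the section identity $\laproj_i\circ\varphi_i\circ s^i_i=\mathrm{id}_{A_i}$ gives $f_{i+n}(\grpproj_A(\overline{z}))=s^i_{i+n}(-b)=\grpproj_{i+n}(\overline{z})$. If $k\neq i$, then $i\notin\{k,k+n\}$ (since $i\le n<k+n$ and $i\neq k$), so $\grpproj_i(\overline{z})=e$ and the right side is $s^i_{i+n}(0)=e$; likewise $i+n\notin\{k,k+n\}$, so $\grpproj_{i+n}(\overline{z})=e$ as well. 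Hence the generating set is $A$-determinate with respect to the $f_j$, and Lemma~\ref{lem:detgen} gives that $\mathcal{L}$ is $A$-determinate, so by Lemma~\ref{lem:detinj} the projection $\grpproj_A:\mathcal{L}\to\prod_{i=1}^n G_i$ is injective, whence $\mathcal{L}\cong\grpproj_A(\mathcal{L})$.

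For step (c), observe that for $z\in Z^i_i$ the element $\grpproj_A(\overline{z})$ is supported on the single factor $G_i$, where it takes the value $s^i_i(b)$, $b\in B_i$. Therefore $\grpproj_A(\mathcal{L})$ is the internal direct product $\prod_{i=1}^n\gen{s^i_i(B_i)}=\prod_{i=1}^n s^i_i(A_i)$ of subgroups lying in pairwise distinct factors. Each $s^i_i$ is injective, being a section of $\laproj_i\circ\varphi_i$, so $s^i_i(A_i)\cong A_i$, and thus $\mathcal{L}\cong\prod_{i=1}^n A_i = A_1\times\cdots\times A_n\cong\Z^m$. I expect no genuine obstacle here: the only real work is the coordinate bookkeeping in the $A$-determinacy check and confirming that the $f_j$ are well-defined homomorphisms (which reduces to the section identity and to negation being a homomorphism on a free abelian group); the one mild subtlety is choosing $A=\{1,\dots,n\}$ so that the index pairing $\{i,i+n\}$ aligns cleanly with the $A$/$B$ split.
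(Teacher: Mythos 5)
Your proposal is correct and follows essentially the same route as the paper: take $A=\{1,\dots,n\}$, verify $A$-determinacy of the generators, invoke Lemma~\ref{lem:detgen} and Lemma~\ref{lem:detinj} to get injectivity of $\grpproj_A|_{\mathcal{L}}$, and identify the image with $\prod_{i=1}^n s^i_i(A_i)\cong\Z^m$. The only cosmetic difference is that your determining homomorphisms $f_{i+n}$ use just the $i$-th coordinate via $\laproj_i\circ\varphi_i\circ\grpproj_i$, while the paper uses $-\laproj_j\bigl(\sum_{i=1}^n\varphi_i(g_i)\bigr)$; both are uniform homomorphisms agreeing on the generators, so the argument is the same.
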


\begin{proof}
Let $\mathcal{L}$ denote the subgroup generated by the above set. Let $\alpha = \{1, \ldots, n\} \subseteq \{1, \ldots, 2n\}$. The projection map $\grpproj_\alpha : G \to \prod_{j \in \alpha} G_j$ restricts to $H$ to give a homomorphism
    \[
    \grpproj_{\alpha}|_{\mathcal{L}} : \mathcal{L} \;\to\; \prod_{j \in \alpha} A_j \;=\; A_1 \times \cdots \times A_n .
    \]
Since the $i$ entries of the elements of $\overline{Z}^i_{i,i+n}$ are section lifts of basis elements of $A_i$, this map is a surjection. 

Note that each element $\overline{z}$ of the generating set $\bigcup_{i=1}^n \overline{Z}^i_{i,i+n}$ has the property that its last $n$ coordinates are particular homomorphic images of its first $n$ coordinates. In fact,
    \[
    z_{j+n} \;=\; s^j_{j+n}\left(-\laproj_j\left(\sum_{i=1}^n\varphi_i(z_i)\right)\right)
    \]
for $1 \leq j \leq n$. By Lemma~\ref{lem:detgen} the subgroup $\mathcal{L}$ is $\alpha$-determinate, and so $\grpproj_{\alpha}|_{\mathcal{L}}$ is injective by Lemma~\ref{lem:detinj}. Therefore, $\mathcal{L}$ is isomorphic to $A_1 \times \cdots \times A_n \cong \Z^m$. 
\end{proof}

\begin{defn}[Standard arrangement of $\Z^m$]\label{def:standard1}
Let $n$ be a positive integer and $\Phi : G \to \Z^m$ be as in Definition~\ref{def:setup}. The subgroup $\mathcal{L}$ of $\Ker(\Phi)$ generated by
    \[
    \bigcup_{i=1}^n \overline{Z}^i_{i,i+n}
    \]
is isomorphic to $\Z^m$ by Lemma~\ref{lem:std-Zm}, and is called the {\em standard arrangement} of $\Z^m$ in $\Ker(\Phi)$.
\end{defn}

%%%%%%%%%%%%%%%%%%%%%%%%%%%%%%%%%%%%%%%%%%%%%%%%%%%%%%%%%%%%%%%%%%%%%%%%%%%%%%%%%%%%%%%%%%%%%%%%%%%%%%%%%%%%%%%%%%%%%%%%%%%%%%%%%%%%%%%%%%%%%%%%%%%%%%
\section[defining subgroups]{Standard and nonstandard subgroups of \texorpdfstring{$\Ker(\Phi)$}{the kernel}}\label{sec:subgroups}

In this section, we define the notions of {\em standard} and {\em non-standard} subgroups of $\Ker(\Phi)$. We define a collection of twenty-five face subgroups and twenty-one edge subgroups, which are used in Section~\ref{sec:con-algtri} in the  definition of  the {\em algebraic triangle} -- a key combinatorial object used in establishing the isoperimetric upper bounds for $\Ker(\Phi)$ in Theorem~\ref{thm:main}. A face subgroup (resp. edge subgroup) is associated to each of the $2$-cells (resp. $1$-cells) of the algebraic triangle.

The idea of standard and non-standard subgroups of $\Ker(\Phi)$ is motivated in Subsection~\ref{sub:subgroups.0}. In Subsection~\ref{sub:subgroups.1}, we provide background definitions and preliminary results which are useful for defining and describing the face and edge subgroups of $\Ker(\Phi)$. These include the notions of {\em label sequences}, {\em subgroup patterns}, and {\em linear algebra encoding}. 

In Subsection~\ref{sub:subgroups.2}, we define the thirteen standard face subgroups of $\Ker(\Phi)$ and study their basic combinatorial and algebraic properties. In Subsection~\ref{sub:subgroups.3}, we do the same for the twelve non-standard face subgroups of $\Ker(\Phi)$, which requires us to adapt the notions of pattern (to consider {\em $B$-patterns}) and linear algebra encoding for this setting. Finally, in Subsection~\ref{sub:subgroups.4}, we define the twenty-one (standard and non-standard) edge subgroups of $\Ker(\Phi)$, and  record their algebraic and combinatorial properties.  

%%%%%%%%%%%%%%%%%%%%%%%%%%%%%%%%%%%%%%%%%%%%%%%%%%%%%%%%%%%%%%%%%%%%%%%%%%%%%%%%%%%%%%%%%%%%%%%%%%%%%%%%%%%%%%%%%%%%%%%%%%%%%%%%%%%%%%%%%%%%%%%%%%%%%%
\subsection{Motivation for the standard and non-standard constructions}\label{sub:subgroups.0}

Many of the face and edge subgroups (to be defined below) are abstractly isomorphic to products of the groups $G_j$ as $j$ ranges over some index set. One needs to be careful when trying to build products inside of $\Ker(\Phi)$. 

For example, suppose one wishes to build a copy of $G_1 \times G_2$ as a subgroup $H \leq \Ker(\Phi)$ with the property that $\grpproj_{12}(H)= G_1 \times G_2$. One needs a copy of the kernel-section generators for $G_1$ in coordinate 1 and a copy of the kernel-section generators for $G_2$ in coordinate 2. This is achieved by using appropriate vector forms of the kernel-section generators as in Definition~\ref{def:vector_gen}. 

However, one has to be careful with the choices of vector forms of the section generators. For example, suppose a particular section generator for $G_1$ comes from the lift of an element $b \in A_i \leq \Z^m$. The vector form of this element consists of a lift $s^i_1(b)$ in coordinate $1$, and  a lift $s^i_k(-b)$ for some coordinate $k \not= 1$. Suppose further that a given section generator for $G_2$ comes from a lift of some element $b' \in A_j$ where $j \not= i$. The vector form of this element consists of $s^j_2(b')$ in coordinate 2, and $s^j_\ell(-b')$ in some coordinate $\ell \not= 2$. If $\ell = k$, then the two lifts $s^i_k(-b)$ and $s^j_k(-b')$ may not commute in the group $G_k$, and so the two vector forms of the section generators would not commmute, and so the subgroup $H$ may not be isomorphic to $G_1 \times G_2$. 

In order to  avoid this situation, we set aside $n+2$ coordinate slots. Slots 1 and 2 are for the subgroups $G_1$ and $G_2$, and the remaining $n$ slots are used to park the various $s^i_k(-b)$ terms in such a way that lifts of elements from $A_i$ always commute with lifts of elements from $A_j$ whenever $i \not= j$. We call this the problem of handling the {\em linear algebra}. 

One strategy is to lift generators $b \in A_i$ to either $s^i_i(-b)$ or to $s^i_{i+n}(-b)$, with the subscript index taken modulo $2n$ as usual. This  is referred to as a  {\em standard linear algebra} choice. 

For certain subgroups that we build, the standard  way of handling the linear algebra lifts is not possible (because a pair of coordinates $j$ and $j+n$ may be used for other purposes), and we will use {\em non-standard linear algebra} choices. The non-standard linear choices that we employ are motivated by the definition of the four corner subgroups in the algebraic square construction of \cite{KLI}.

%%%%%%%%%%%%%%%%%%%%%%%%%%%%%%%%%%%%%%%%%%%%%%%%%%%%%%%%%%%%%%%%%%%%%%%%%%%%%%%%%%%%%%%%%%%%%%%%%%%%%%%%%%%%%%%%%%%%%%%%%%%%%%%%%%%%%%%%%%%%%%%%%%%%%%
\subsection{Preliminary definitions and results}\label{sub:subgroups.1}

The various standard and non-standard subgroups of $\Ker(\Phi)$ are identified by {\em label sequences}. We first define six subsets of $\{1, \ldots, 2n\}$ called {\em $\alpha$-label sequences} and describe how to build new label sequences from these. 

\begin{defn}[$\alpha$-label sequences]\label{def:lab_seq}
Associated to each integer $n \geq 3$, define six basic {\em label sequences}, denoted by $\alpha_1, \ldots, \alpha_6$, which partition the set $\{1, \dots, 2n\}$ approximately into sixths. The label sequences are defined in two groups of three as follows. 

For $1 \leq i \leq 3$, define 
    \[
    \alpha_i \coloneqq \left\{ \left\lceil \frac{1}{3}(i-1)n \right\rceil + 1, \dots, \left\lceil \frac{1}{3}in \right\rceil \right\} \, .
    \]
Explicitly, the values of these $\alpha_i$ are summarized in the following table, depending on the value of $n \pmod{3}$; here, $k = \lfloor n/3 \rfloor$.

    \vspace{0.1in}
    \begin{center}
        \begin{tabular}{|l||c|c|c|}
        \hline
        \mbox{}             & $i = 1$               & $i = 2$                   & $i = 3$ \\
        \hline \hline
        $n = 0 \pmod{3}$    & $\{1, \ldots, k\}$    & $\{k+1, \ldots, 2k\}$     & $\{2k+1, \ldots, 3k\}$ \\
        \hline
        $n = 1 \pmod{3}$    & $\{1, \ldots, k+1\}$  & $\{k+2, \ldots, 2k+1\}$   & $\{2k+2, \ldots, 3k+1\}$ \\
        \hline
        $n = 2 \pmod{3}$    & $\{1, \ldots, k+1\}$  &  $\{k+2, \ldots, 2k+2\}$  & $\{2k+3, \ldots, 3k+2\}$ \\
        \hline
        \end{tabular}
    \end{center}
    \vspace{0.1in}
Finally,  for $4 \leq i \leq 6$, define 
    \[
    \alpha_i = \{ j+n \,|\, j \in \alpha_{i-3} \} .
    \]

Let $|\alpha|$ denote the length (cardinality) of the label sequence $\alpha$. Note that $|\alpha_i| = |\alpha_{i+3}|$ for each $1 \leq i \leq 3$ and $|\alpha_i| + |\alpha_{(i+1) \!\pmod{6}}| + |\alpha_{(i+2) \!\pmod{6}}| = n$ for all $1 \leq i \leq 6$.

The term `sequence' refers to the fact that we think of these $\alpha_i$ as being linearly ordered sets, with ordering inherited from the usual ordering on $\{1, \ldots, 2n\}$. 
\end{defn}

Next we describe how to define new label sequences, built from the $\alpha_i$ sequences above. These new sequences inherit a linear ordering as segments of the cyclic ordering on $\{1, \ldots, 2n\}$. 

\begin{defn}[New $\alpha$-sequences from old]
We introduce the following notations for unions and complements of $\alpha$-sequences. 
    \begin{itemize}
    \item Unions of $\alpha$-sequences are denoted by extending the subscripts. So, $\alpha_{ij} = \alpha_i \cup \alpha_j$, $\alpha_{ijk} = \alpha_i \cup \alpha_j \cup \alpha_k$, and so on. 

    We think of these new sequences as inheriting  linear orderings as segments in  the standard cyclic ordering on $\{1, \ldots, 2n\}$. So for example, the elements of $\alpha_6$ all precede those of $\alpha_1$ in the sequence $\alpha_{61}$.  
    
    \item Complements in $\{1, \ldots , 2n\}$ are denoted via a $c$-superscript, e.g., $\alpha_{61}^c = \alpha_{2345}$. 
    \end{itemize}
Finally, the interval notation $\alpha [ i,j ]$ is used to denote the subsequence of $\alpha$ beginning at the $i$th entry of $\alpha$ and ending at the $j$th entry (both inclusive).
\end{defn}

For each of the face and edge subgroups of $\Ker(\Phi)$ we determine three features:
    \begin{itemize}
    \item the {\em isomorphism type} of the subgroup,
    \item the {\em pattern} of the subgroup, and
    \item the {\em linear algebra encoding} of the subgroup.
    \end{itemize}
The {\em pattern} of a subgroup (see Definition~\ref{defn:pattern} below) of $\Ker(\Phi)$ is a $5$- or $6$-tuple of homomorphic images of the subgroup which is obtained by projecting the subgroup onto various factors. The {\em linear algebra encoding} (see Definition~\ref{def:lin_algebra} below) is a particular matrix that records the various choices of section generators $\overline{Z}^j_{i,k}$ used in defining the subgroup. 

\begin{defn}[Projection images]\label{defn:proj-special}
The following two types of groups occur as images of the special subgroups of $\Ker(\Phi)$ under the projection homomorphisms $\grpproj_{\alpha_i}$ for $1 \leq i \leq 6$.

    \begin{itemize}
    \item The first type is a product of the factor groups $G_i$. For $1 \leq i \leq 6$, define 
        \[
        G_{\alpha_i} \;=\; \prod_{j\in\alpha_i} G_j \, . 
        \]
    \item The second type is a product of copies of  factors $A_i$ in $\Z^m$. For each $1 \leq i \leq 3$, define 
        \[
        A^{\alpha_i}_i \;=\; \prod_{j \in \alpha_i} s^j_j(A_j)
        \]
    and 
        \[
        A^{\alpha_i}_{i+3} \;=\; \prod_{j \in \alpha_i} s^j_{j+n}(A_j) \, . 
        \]
    \end{itemize}
\end{defn}

\begin{defn}[Subgroup patterns]\label{defn:pattern}
Let $G = \prod_{j=1}^{2n} G_j$ and let $H \leq G$. For each $1 \leq i \leq 6$ there are projection homomorphisms 
    \[
    \grpproj_{\alpha_i} : G \to \prod_{j \in \alpha_i} G_j \, . 
    \]
The {\em pattern} of $H$ is defined to be the $6$-tuple
    \[
    (\grpproj_{\alpha_1}(H), \; \grpproj_{\alpha_2}(H), \; \grpproj_{\alpha_3}(H), \; \grpproj_{\alpha_4}(H), \; \grpproj_{\alpha_5}(H), \;  \grpproj_{\alpha_6}(H)) \, .
    \]
For certain (non-standard) subgroups $H$, we combine two of the $\alpha_i$ sequences together and obtain a $5$-tuple instead of a $6$-tuple pattern. For example, the projection 
    \[
    \grpproj_{\alpha_{12}} : G \;\to\; \prod_{j \in \alpha_{12}} G_j
    \]
is used in the $5$-tuple pattern 
    \[
    (\grpproj_{\alpha_{12}}(H), \; \grpproj_{\alpha_3}(H), \; \grpproj_{\alpha_4}(H), \; \grpproj_{\alpha_5}(H), \; \grpproj_{\alpha_6}(H)) \, .
    \]
There are two other similarly defined $5$-tuple patterns that we use: 
    \[
    (\grpproj_{\alpha_1}(H), \; \grpproj_{\alpha_2}(H), \; \grpproj_{\alpha_{34}}(H), \; \grpproj_{\alpha_5}(H), \; \grpproj_{\alpha_6}(H)) 
    \]
and
    \[
    (\grpproj_{\alpha_1}(H), \; \grpproj_{\alpha_2}(H), \; \grpproj_{\alpha_3}(H), \; \grpproj_{\alpha_4}(H), \; \grpproj_{\alpha_{56}}(H)) \, .
    \]
\end{defn}

\begin{defn}[Pattern entries]\label{defn:pattern_entry}
In the case of $6$-tuple patterns, the possible entries are described in Definition~\ref{defn:proj-special}; namely, the groups $G_{\alpha_i}$ for $1 \leq i \leq 6$ and the groups $A^{\alpha_i}_i$ and $A^{\alpha_i}_{i+3}$ for $1 \leq i \leq 3$.

In the case of $5$-tuple patterns, there are three additional entry formats; we denote these as $A^{\alpha_3}_1 E$, $A^{\alpha_2}_3 E$, and $A^{\alpha_1}_5 E$. 

    \begin{itemize}
    \item The first group is the $\grpproj_{\alpha_{12}}$-image of various special subgroups of $\Ker(\Phi)$ and has the form 
        \[
        A^{\alpha_3}_1 E \;=\; \prod_{j \in \alpha_3} s^j_{j-|\alpha_{12}|}(A_j) \times \prod_{j=|\alpha_3|+1}^{|\alpha_{12}|} \{e\} \, .
        \]
    That is, lifts of $A_j$ for $j \in \alpha_3$ to the first $|\alpha_3|$ coordinates of $G$ and then trivial subgroups in the remaining coordinates up through coordinate $|\alpha_{12}|$. It is abstractly isomorphic to $\bigoplus_{j \in \alpha_3} A_j$. Note that a second notation of the lift maps $s^j_{j-|\alpha_{12}|}$ is $s^j_{j+|\alpha_3|-n}$. 

    \item The second group is the $\grpproj_{\alpha_{34}}$-image of various special subgroups of $\Ker(\Phi)$, and has the form
        \[
        A^{\alpha_2}_3 E \; =\; \prod_{j \in \alpha_2}s^j_{j+|\alpha_2|}(A_j)\times \prod_{j = |\alpha_{12}|+|\alpha_2| + 1}^{|\alpha_{1234}|}\{e\} \, . 
        \]
    This consists of lifts of $A_j$ for $j \in \alpha_2$ to  coordinates $|\alpha_{12}| + 1$ through $|\alpha_{12}| + |\alpha_2|$ of $G$ and trivial groups in all other coordinates up through coordinate $|\alpha_{1234}|$. It is abstractly isomorphic to $\bigoplus_{j \in \alpha_2} A_j$. 

    \item The third and final group is the $\grpproj_{\alpha_{56}}$-image of various special subgroups of $\Ker(\Phi)$, and is of the form 
        \[
        A^{\alpha_1}_5 E \; =\; \prod_{j \in \alpha_1}s^j_{j+|\alpha_{1234}|}(A_j)\times \prod_{j=|\alpha_{1234}|+|\alpha_1|+1}^{2n}\{e\} \, .
        \]
    This consists of lifts of $A_j$ for $j \in \alpha_1$ to the coordinates $|\alpha_{1234}| + 1$ through $|\alpha_{1234}| + |\alpha_1|$ of $G$ and trivial groups in all other coordinates up through coordinate $2n$. It is abstractly isomorphic to $\bigoplus_{j \in \alpha_1} A_j$.  
    \end{itemize}
\end{defn}

\begin{defn}[Linear algebra encoding]\label{def:lin_algebra}
{\em Linear algebra encoding} is a way of recording the various section lifts of the three factors $\bigoplus_{j \in \alpha_i} A_j$ of $\Z^m$ (for $1 \leq i \leq 3$) that occur in face and edge  subgroups of $\Ker(\Phi)$ and which appear in the corresponding patterns. 

The $\bigoplus_{j \in \alpha_i} A_j$ factor can appear as $A^{\alpha_i}_i$, $A^{\alpha_i}_{i+3}$, and $A^{\alpha_i}_j E$ (with $j \not\in \{i, i+3\}$) in these patterns; these are encoded by column vectors $\begin{pmatrix} i \\ i \end{pmatrix}$, $\begin{pmatrix} i \\ i + 3 \end{pmatrix}$, and $\begin{pmatrix} \dot{i} \\ j \end{pmatrix}$ respectively. The dot in the last encoding denotes a non-standard lift of $\bigoplus_{j \in \alpha_i} A_j$. If a particular $A^{\alpha_i}$ occurs more than once in the pattern, the corresponding values are separated by a slash ($/$) in the second row of the matrix.

Finally,  if each $A^{\alpha_i}$ occurs at least once in a given pattern, then the encoding is given as a $(2 \times 3)$-matrix, with first row being $(1 \; 2 \; 3)$ with a dot over entries involving non-standard lifts. In other cases, the linear algebra encoding is given by a $(2\times 1)$-column vector. 
\end{defn}

The following definitions and subsequent lemma are useful in establishing subgroup patterns of special subgroups of $\Ker(\Phi)$ which are defined via $\overline{Y}$ and $\overline{Z}$ generating sets.

\begin{defn}[Cyclically convex subset of $\{1, \ldots, 2n\}$]
A subset $A \subseteq \{1, \dots, 2n\}$ is called {\em cyclically convex} if there exists an interval (i.e., a convex subset of $\R$) $[a,b]$ with $a,b \in \Z$ such that $A = ([a,b] \cap \Z) \pmod{2n}$.

For example, two cyclically convex subsets of $\{1, \dots, 6\}$ would be $\{3, 4, 5\}$ and $\{1, 2, 6\}$. The latter example can be seen as $([6, 8] \cap \Z) \pmod{6}$. The subset $\{2, 4\}$ is not cyclically convex in $\{1, \dots, 6\}$.
\end{defn}

We now introduce a notion of $B$-patterns in order to determine the {\em linear algebra encoding} structure (that is, the lifts of the  $A_{\alpha_i}$ subgroups of $\Z^m$, see Definition~\ref{def:lin_algebra}) of various subgroups $H \leq \Ker(\Phi)$. 

\begin{defn}[$B$-patterns]\label{def:Bpatterns}
Let $A \subseteq \{1, \dots, 2n\}$ be cyclically convex and $B$ be the complement of $A$ in $\{1, \dots, 2n\}$. Let $G$ and $H \leq G = \prod_{i=1}^{2n} G_i$ be an $A$-determinate subgroup. The {\em $B$-pattern} of $H$ is defined to be
    \[
    (\grpproj_{\alpha_1 \cap B}(H), \; \grpproj_{\alpha_2 \cap B}(H), \; \grpproj_{\alpha_3 \cap B}(H), \; \grpproj_{\alpha_4 \cap B}(H), \; \grpproj_{\alpha_5 \cap B}(H), \;  \grpproj_{\alpha_6 \cap B}(H)) \, . 
    \]
\end{defn}

This lemma explains how to compute $B$-patterns. 

\begin{lem}[$B$-patterns]\label{lem:Bpatterns}
Let $A$ be a cyclically convex subset of $\{1, \ldots, 2n\}$ of length $|A| \leq n$ and $B$ be its complement. Let $H \leq G$ have generating set
    \[
    \bigcup_{i \in A} \left( \overline{Y}_i \;\cup\; \bigcup_{j=1}^n \bigcup_{j'} \overline{Z}^j_{i,j'}  \right)
    \]
where $j' \in \{j, j+n\} \cap B$. Then for any $k \in B$, $\grpproj_k(H) = A_j$, where $j$ is the unique element of $\{k, k \pm n\} \cap \{1, \ldots, n\}$. 
\end{lem}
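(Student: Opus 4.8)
The plan is to compute $\grpproj_k(H)$ directly from the displayed generating set $S = \bigcup_{i\in A}\bigl(\overline{Y}_i \cup \bigcup_{j=1}^n\bigcup_{j'}\overline{Z}^j_{i,j'}\bigr)$ (with $j'\in\{j,j+n\}\cap B$). Since $\grpproj_k\colon G\to G_k$ is a homomorphism we have $\grpproj_k(H)=\grpproj_k\bigl(\langle S\rangle\bigr)=\bigl\langle\,\grpproj_k(S)\,\bigr\rangle$, so the whole argument reduces to evaluating $\grpproj_k$ on each generator in $S$ and identifying the subgroup of $G_k$ that the images generate.

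First I would isolate the generators with nontrivial $k$th coordinate. By Definition~\ref{def:vector_gen}, every $\overline{y}\in\overline{Y}_i$ with $i\in A$ has $\grpproj_k(\overline{y})=e$, since $k\in B$ forces $k\neq i$; and every $\overline{z}\in\overline{Z}^j_{i,j'}$ has $\grpproj_k(\overline{z})=e$ unless $k\in\{i,j'\}$, so again (as $k\neq i$) only the generators with $j'=k$ contribute. Because $j'\in\{j,j+n\}$ in $S$, the condition $j'=k$ forces $j$ to be the unique element $j_0$ of $\{k,k\pm n\}\cap\{1,\dots,n\}$ --- this is the ``$j$'' appearing in the statement. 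Conversely, for that $j_0$ one has $k\in\{j_0,j_0+n\}\cap B$ (using $k\in B$), so the sets $\overline{Z}^{j_0}_{i,k}$ genuinely occur in $S$, for each $i\in A$; and $A\neq\emptyset$, so at least one such set is present.

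Next I would read off the values. If $\overline{z}\in\overline{Z}^{j_0}_{i,k}$ comes from $z=s^{j_0}_i(b)$ with $b\in B_{j_0}$, then $\grpproj_k(\overline{z})=s^{j_0}_k(-b)$ by Definition~\ref{def:vector_gen}, and this is the same set of elements for every $i\in A$. Combining with the first step, $\bigl\langle\,\grpproj_k(S)\,\bigr\rangle = \bigl\langle\, s^{j_0}_k(-b) \;\bigm|\; b\in B_{j_0}\,\bigr\rangle$. Since $B_{j_0}$ is a basis of the free abelian group $A_{j_0}$, the set $-B_{j_0}$ also generates $A_{j_0}$, and as $s^{j_0}_k$ is a homomorphism the right-hand side equals $s^{j_0}_k(A_{j_0})$. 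Hence $\grpproj_k(H)=s^{j_0}_k(A_{j_0})$; since $\laproj_{j_0}\circ\varphi_k\circ s^{j_0}_k=\mathrm{id}_{A_{j_0}}$ the section $s^{j_0}_k$ is injective, so $s^{j_0}_k(A_{j_0})$ is a copy of $A_{j_0}$ sitting inside $G_k$, which is exactly the assertion $\grpproj_k(H)=A_{j_0}$.

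I do not expect a genuine difficulty; the argument is bookkeeping with the definition of the vector generators. The two points requiring care are: (i) the ``parking'' indices $j'$ always lie in $B$, hence never coincide with an index of $A$, which is what makes the $\overline{Y}_i$-generators and every section generator with $j\neq j_0$ invisible in coordinate $k$; and (ii) the generators $\overline{Z}^{j_0}_{i,k}$ that do survive are actually present in the given generating set (checked via $k\in\{j_0,j_0+n\}\cap B$). It is worth noting that neither the cyclic convexity of $A$ nor the bound $|A|\leq n$ is used for this statement; those hypotheses enter elsewhere, e.g.\ to ensure $H$ is $A$-determinate so that its $B$-pattern in the sense of Definition~\ref{def:Bpatterns} is well defined.
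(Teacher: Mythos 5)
Your proposal is correct and follows essentially the same route as the paper: both compute $\grpproj_k$ of the generating set, discard the $\overline{Y}_i$ and all section generators with $j'\neq k$, and identify the surviving images $s^{j_0}_k(\pm b)$, $b\in B_{j_0}$, as generating the copy of $A_{j_0}$ in coordinate $k$. Your extra checks (that the sets $\overline{Z}^{j_0}_{i,k}$ actually occur in $S$, and the identification via injectivity of the section) are minor refinements of the same argument.
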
 

\begin{proof}
Note that 
    \[
    \bigcup_{j \in \{1, \ldots, n\}} \{j, j+n\} \;=\; \{1, \ldots, 2n\}
    \]
and so, as $j$ ranges over $\{1, \ldots, n\}$, then $j'$ ranges over all of $\{1, \ldots, 2n\} \cap B = B$. This means, given $k \in B$, there exists $j \in \{1, \ldots, n\}$ such that $j' = k$. 

Furthermore, the value $j$ in the previous paragraph is uniquely determined by $k$, since it must equal $k$ or $k+n \!\!\pmod{2n}$, and the difference is $|k -(k+n)| = n$, which is strictly greater than the difference between elements of $\{1, \ldots, n\}$. That is, $j= k'$ is the unique element of the set $\{k, k \pm n\} \cap \{1, \ldots, n\}$. 

For $i \in A$, $\grpproj_k(\overline{Y}_i) = \{e\}$ by definition of the elements of $\overline{Y}_i$, and since $k \not\in A$. Therefore
    \begin{align*}
    \grpproj_k(H) \;&=\; \grpproj_k \left( \left\langle \bigcup_{i \in A} \left( \overline{Y}_i \;\cup\; \bigcup_{j=1}^n \bigcup_{j'} \overline{Z}^j_{i,j'} \right) \right\rangle \right) \\
    &=\; \left\langle \grpproj_k \left( \bigcup_{i \in A} \left( \overline{Y}_i \;\cup\; \bigcup_{j=1}^n \bigcup_{j'} \overline{Z}^j_{i,j'} \right) \right) \right\rangle \\
    &=\; \left\langle \grpproj_k\left( \bigcup_{i \in A} \bigcup_{j=1}^n \bigcup_{j'} \overline{Z}^j_{i,j'} \right) \right\rangle
    {\hbox{ (since $\grpproj_k(\overline{Y}_i) = \{e\}$ for all $i \in A$)}} \\
    &=\; \left\langle \bigcup_{i \in A} \bigcup_{j=1}^n \bigcup_{j'} \grpproj_k \left( \overline{Z}^j_{i,j'} \right) \right\rangle \\
    &=\; \left\langle \bigcup_{i \in A} \grpproj_k\left(\overline{Z}^{k'}_k\right) \right\rangle {\hbox{ (where $k'$ is the unique element of $\{k, k \pm n\} \cap \{1, \ldots, n\})$}} \\
    &=\; \left\langle \grpproj_k\left(\overline{Z}^{k'}_k\right) \right\rangle \\
    &=\; A_{k'} \\
    &=\; A_j {\hbox{ (where $j = k'$ is uniquely defined above)}} .
    \end{align*}
Note that the notation $\overline{Z}^i_k$ appearing in the third and fourth last line above is used to denote the collection of elements of $G$ obtained by lifting a basis for $A_i$ into coordinate $k$ via the section map $s^i_k$ and setting the remaining $2n-1$ coordinates to be the identity. 

With this notation, the 5th equality above holds because $\grpproj_k(\overline{Z}^j_{i,j'}) = \grpproj_k(\overline{Z}^j_{j'})$, since $i \in A$ and $k \not\in A$ (and so $\grpproj_k(\overline{Z}^j_i) = \{e\}$), and also because $\grpproj_k(\overline{Z}^j_{j'}) = \{e\}$ unless $j' = k$, in which case $j= k'$ is uniquely determined by $k$ via the formula above. Therefore,
    \[
    \bigcup_{j=1}^n \bigcup_{j'} \grpproj_k \left( \overline{Z}^j_{i,j'} \right) \;=\; \bigcup_{j=1}^n \bigcup_{j'} \grpproj_k \left( \overline{Z}^{k'}_k \right) \;=\; \grpproj_k \left( \overline{Z}^{k'}_k \right) . \qedhere
    \]
\end{proof}

%%%%%%%%%%%%%%%%%%%%%%%%%%%%%%%%%%%%%%%%%%%%%%%%%%%%%%%%%%%%%%%%%%%%%%%%%%%%%%%%%%%%%%%%%%%%%%%%%%%%%%%%%%%%%%%%%%%%%%%%%%%%%%%%%%%%%%%%%%%%%%%%%%%%%%
\subsection{The standard face subgroups of \texorpdfstring{$\Ker(\Phi)$}{the kernel}}\label{sub:subgroups.2}

We define thirteen standard face subgroups of $\Ker(\Phi)$. The standard arrangement $\mathcal{L}$ of $\Z^m$ given in Definition~\ref{def:standard1} is one of them; the remaining twelve standard subgroups are defined using the sets $\overline{Y}_i$ and $\overline{Z}^j_{i,k}$ that are compatible with $\mathcal{L}$. Here are the details.

%\todo{NAME AND SHORTEN REMARK 4.10.}
\begin{rem}[Working with  indices modulo $2n$ and $n$; the $j^\ast$ notation]
Some formulae in this and following sections include lower indices outside of the range $\{1, \dots, 2n\}$; for compactness, such lower indices are always understood to be taken modulo $2n$. 

Furthermore, some expressions  require inputs between $1$ and $n$. For these expressions, we introduce the notation that $j^*$ is defined to be $j \!\pmod{n}$, where, as before, we  adopt the convention of using $n$ for the equivalence class of $0\!\pmod{n}$.
\end{rem}
%There are some formulae in this and following sections which as written would include lower indices outside of the range $\{1, \dots, 2n\}$; such lower indices will universally be considered to be taken modulo $2n$. There are also a significant number of expressions that only make sense for inputs between $1$ and $n$, but are often most easily expressed with an arbitrary input which is then taken modulo $n$. Thus we will introduce the notation that $j^*$ is defined to be $j \pmod{n}$.

\begin{defn}[Thirteen standard face subgroups of $\Ker(\Phi)$]\label{defn:stdsub}
The thirteen {\em standard face subgroups of $\Ker(\Phi)$} consist of the standard arrangement $\mathcal{L}$ of $\Z^m$, together with twelve subgroups which are indexed by label sequences $\alpha_\sigma$, where $\sigma \subseteq \{1, \dots, 6\}$ may be a triple or a pair.

Let $\sigma$ be a cyclically convex subset of $\{1, \dots, 6\}$ with $|\sigma| \in \{2,3\}$. The remaining twelve standard face subgroups of $\Ker(\Phi)$ are defined in terms of $\overline{Y}$ and $\overline{Z}$ generators as follows, depending on whether $|\sigma| = 2$ or $|\sigma| = 3$.
    
If $|\sigma| = 2$, then let $t$ be the unique element of $\{1, 2, 3\} \cap (\sigma \pmod{3})^c$, and define
    \[
    A^t G^\sigma \;\coloneqq\; \left\langle \; \bigcup_{i \in \alpha_\sigma} \left( \overline{Y}_i \;\cup\; \bigcup_{j \in \alpha_\sigma^c} \overline{Z}^{j^*}_{i,j} \right) \right\rangle \, .
    \]
    
On the other hand, if $|\sigma| = 3$, then define
    \[
    G^\sigma \;=\; \left\langle \; \bigcup_{i \in \alpha_\sigma} \left( \overline{Y}_i \;\cup\; \bigcup_{j \in \alpha_\sigma^c} \overline{Z}^{j^*}_{i,j} \right) \right\rangle \, .
    \]
\end{defn}

\begin{rem}
As is shown in Proposition~\ref{thm:table_std}, this naming scheme evokes the isomorphism type of the standard face subgroups; specifically, when $|\sigma| = 2$, then
    \[
    A^t G^\sigma \cong A_{\alpha_t} \times \prod_{i \in \sigma} G_i \, ,
    \]
and when $|\sigma| = 3$, then
    \[
    G^\sigma \cong \prod_{i \in \sigma} G_i \, .
    \]
    
Note that within this scheme, one could also write $\mathcal{L} = A^{123}$ as the case where $\sigma$ is empty. We will continue to refer to this subgroup as $\mathcal{L}$ due to its function as ``linear algebra'', but realizing $\mathcal{L}$ as $A^{123}$ makes the following results intuitively clear. 
\end{rem}

\begin{exmp}
Choosing $\sigma = \{1, 2, 3\}$ or $\sigma = \{1, 2\}$, respectively, would give
    \begin{align*}
    G^{123} &= \left\langle \; \bigcup_{i \in \alpha_{123}} \left( \overline{Y}_i \;\cup\; \bigcup_{j \in \alpha_{456}} \overline{Z}^{j^*}_{i,j} \right) \right\rangle \, = \left\langle \; \bigcup_{i \in \alpha_{123}} \left( \overline{Y}_i \;\cup\; \bigcup_{j \in \alpha_{123}} \overline{Z}^{j}_{i,j+n} \right) \right\rangle \, \text{ and } \\
    A^3 G^{12} &= \left\langle \; \bigcup_{i \in \alpha_{12}} \left( \overline{Y}_i \;\cup\; \bigcup_{j \in \alpha_{3456}} \overline{Z}^{j^*}_{i,j} \right) \right\rangle \, = \left\langle \; \bigcup_{i \in \alpha_{12}} \left( \overline{Y}_i \;\cup\; \bigcup_{j \in \alpha_3} \overline{Z}^{j}_{i,j} \;\cup\; \bigcup_{j \in \alpha_{123}} \overline{Z}^j_{i,j+n} \right) \right\rangle \, .
    \end{align*}
\end{exmp}

\begin{lem}\label{lem:detstd}
The standard face subgroups of $G$ are all $A$-determinate for the sets $A$ specified by the following table and homomorphisms $f_j$ defined afterwards.
    \begin{longtblr}{
        colspec = {|c|c|c|c|},
        rowhead = 1,
    }
    \hline
    Subgroup    & Index set $A$                     & Subgroup      & Index set $A$ \\
    \hline \hline
    $G^{123}$   & $\alpha_{123}$                    & $G^{234}$     & $\alpha_{234}$ \\
    \hline
    $G^{126}$   & $\alpha_{126}$                    & $G^{345}$     & $\alpha_{345}$ \\
    \hline
    $G^{156}$   & $\alpha_{156}$                    & $G^{456}$     & $\alpha_{456}$ \\
    \hline
    $A^3G^{12}$    & $\alpha_{123}$ or $\alpha_{126}$  & $A^2G^{34}$      & $\alpha_{234}$ or $\alpha_{345}$ \\
    \hline
    $A^1G^{23}$    & $\alpha_{123}$ or $\alpha_{234}$  & $A^1G^{56}$      & $\alpha_{156}$ or $\alpha_{456}$ \\
    \hline
    $A^2G^{16}$    & $\alpha_{126}$ or $\alpha_{156}$  & $A^3G^{45}$      & $\alpha_{345}$ or $\alpha_{456}$ \\
    \hline
    \end{longtblr}
and
    \begin{equation}\label{eqn:Adet-homs}
    f_j(\overline{g}) \;=\; s^{j^*}_j \left( \laproj_{j^*} \left( -\displaystyle\sum_{i \in A} \varphi_i(g_i) \right) \right) \, .
    \end{equation}
for $j \in B$.
\end{lem}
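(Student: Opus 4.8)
The plan is to apply Lemma~\ref{lem:detgen}: it suffices to check that every generator of each standard face subgroup is $A$-determinate with respect to the homomorphisms $f_j$ in \eqref{eqn:Adet-homs}, where $A$ is the index set listed in the table and $B = \{1,\dots,2n\} - A$. First I would verify that each $f_j$ as defined is indeed a homomorphism $\prod_{i\in A} G_i \to G_j$: it is a composite of the coordinate homomorphisms $\overline g \mapsto g_i$, the $\varphi_i$, summation and negation in $\Z^m$, the projection $\laproj_{j^*}$, and the section homomorphism $s^{j^*}_j$, all of which are group homomorphisms. Then the core of the proof is the computation that for each generator $\overline g$ (an element of some $\overline Y_i$ or $\overline Z^{j^*}_{i,j}$ with $i \in \alpha_\sigma$) and each $j \in B$, one has $\grpproj_j(\overline g) = f_j(\grpproj_A(\overline g))$.

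The key steps, carried out generator by generator: for $\overline y \in \overline Y_i$ with $i \in \alpha_\sigma \subseteq A$, the only nonzero coordinate is the $i$-th, which lies in $\Ker(\varphi_i)$; so $\sum_{i'\in A}\varphi_{i'}(g_{i'}) = \varphi_i(y) = 0$, hence $f_j(\grpproj_A(\overline y)) = s^{j^*}_j(0) = e = \grpproj_j(\overline y)$ for every $j \in B$ (note $j \neq i$ since $i \in A$). For $\overline z \in \overline Z^{j^*}_{i,j}$ with $z = s^{j^*}_i(b)$, $b \in B_{j^*}$, the nonzero coordinates are coordinate $i \in A$ (holding $s^{j^*}_i(b)$) and coordinate $j$ (holding $s^{j^*}_j(-b)$). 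Since $j \in \alpha_\sigma^c$, and I must check $j \in B$: this is exactly the point where the table's choices of $A$ matter — for $|\sigma|=3$, $A = \alpha_\sigma$ so $B = \alpha_\sigma^c$ and indeed $j \in B$; for $|\sigma|=2$, $A$ is one of two listed options $\alpha_{\sigma \cup \{r\}}$ for an adjacent $r$, and one checks $\alpha_\sigma^c \supseteq B$ forces $j \in B$ only when $j \notin \alpha_r$, while the remaining lifts (those with $j \in \alpha_r$, i.e. the $\overline Z^{j}_{i,j}$ terms in the $|\sigma|=2$ formula) have their second support coordinate inside $A$ and hence contribute to $\grpproj_A(\overline g)$ rather than needing to be reconstructed. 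Modulo these index bookkeeping checks, for $j \in B$ with $\overline z$ supported at $j$ we compute $\sum_{i'\in A}\varphi_{i'}(z_{i'}) = \varphi_i(s^{j^*}_i(b))$, apply $\laproj_{j^*}$ (killing the components in $A_\ell$ for $\ell \neq j^*$ and using $\laproj_{j^*}\circ\varphi_i\circ s^{j^*}_i = \mathrm{id}$) to get $b$, negate to get $-b$, and apply $s^{j^*}_j$ to land on $s^{j^*}_j(-b) = \grpproj_j(\overline z)$; for any other $j' \in B$ not equal to $j$, both sides are $e$. This matches exactly, and Lemma~\ref{lem:detgen} finishes.

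The main obstacle I anticipate is not the algebra — each individual verification is a one-line unwinding of the section/projection identities from Definition~\ref{def:setup} — but rather the careful combinatorial bookkeeping: confirming that for every standard face subgroup and every listed choice of $A$, the generating set genuinely decomposes into (i) $\overline Y_i$ and $\overline Z$-generators whose non-$A$ support coordinate $j$ lies in $B$ and is handled by $f_j$, and (ii) $\overline Z$-generators whose ``extra'' coordinate also lies in $A$ (these are $A$-determinate trivially, since $f_j$ is only required for $j \in B$, and these generators have $\grpproj_j = e$ for all $j\in B$). One must also double-check that the superscript $j^*$ appearing in $f_j$ agrees with the superscript of the relevant $\overline Z$-generator: since the generator $\overline Z^{j^*}_{i,j}$ lifts a basis element of $A_{j^*}$ into coordinates $i$ and $j$, and $f_j$ reads off an element of $A_{j^*}$ via $\laproj_{j^*}$ and re-lifts it via $s^{j^*}_j$, the superscripts are consistent by construction. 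Once these index-set compatibility facts are laid out (ideally in a short case analysis organized by $|\sigma| = 2$ versus $|\sigma| = 3$, using cyclic convexity of $\sigma$ to restrict the possible adjacent $r$), the determinacy follows uniformly from Lemma~\ref{lem:detgen}, and the same fixed family $\{f_j\}_{j\in B}$ works for all generators simultaneously, which is precisely the hypothesis that lemma requires.
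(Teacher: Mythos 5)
Your proposal is correct and follows essentially the same route as the paper: reduce to generators via Lemma~\ref{lem:detgen} and verify Equation~\eqref{eqn:Adet-homs} in two cases, the $\overline{Y}$-generators (where the sum vanishes) and the $\overline{Z}^{j^*}_{i,j}$-generators with $j \in B$ (where the sum collapses to $b$ and $f_j$ returns $s^{j^*}_j(-b)$), which is exactly the paper's two-case argument. The one point to tighten is your case (ii) of $\overline{Z}$-generators supported entirely inside $A$ (relevant for the $A^tG^{\sigma}$ groups): calling them ``trivially'' determinate because $\grpproj_j(\overline{z}) = e$ for $j \in B$ only checks the left side of the determinacy equation, and you should also record that $f_j(\grpproj_A(\overline{z})) = e$ because the two nonzero coordinates of $\overline{z}$ contribute $b$ and $-b$, which cancel before $s^{j^*}_j$ is applied.
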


\begin{proof}
It suffices to verify that each element of the standard generating sets for these subgroups satisfy Equation~\eqref{eqn:Adet-homs}. There are two cases.

(\emph{Case I: } $\overline{g} \in \overline{Y}_i$ for some $i \in A$). In this case, $\varphi_i(\overline{y}) = 0$ for all $1 \leq i \leq 2n$. In particular, $\sum_{i \in A} \varphi_i(g_i) = 0$, and so the right side of \eqref{eqn:Adet-homs} gives the element $e \in G_j$, which agrees with $\grpproj_j(\overline{y})$ because $j \not\in A$ and $\overline{y}$ entries are $e$ in the coordinates of $B$. (Indeed, they are $e$ for all but one entry in $A$, but we don't need this much.) 

(\emph{Case II: } $\overline{g} \in \overline{Z}_{j,i}^j$ for some $i \in A$, $j \in B$). In this case, $\overline{z} \in \overline{Z}^j_{i,j}$ has two coordinate entries which are not $e$; there is a copy of $s^{j^*}_i(b)$ in the $i$-factor and a copy of $s^{j^*}_j(-b)$ in the $j$-factor for some basis element $b \in B_{j^*}$ of the group $A_{j^*}$. Thus,
    \[
    \sum_{i \in A} \varphi_i(g_i) \;=\; 0 + \cdots + 0 + b
    \]
is a sum of $(n-1)$ zeros and a copy of $b$. Therefore, the first line of the right side of \eqref{eqn:Adet-nonstd} gives $s^{j^*}_j(-b)$, which agrees with $\grpproj_j(\overline{z})$.
\end{proof}

Here is the main result describing algebraic and combinatorial properties of  the standard face subgroups. 

\begin{prop}[Table of the standard face subgroups of $\Ker(\Phi)$]\label{thm:table_std}
The following table summarizes the isomorphism type, subgroup pattern, and linear algebra encoding for each of the thirteen standard face subgroups of $\Ker(\Phi)$.
    \begin{center}
    \begin{longtblr}{
        colspec = {|c|c|c|c|},
        rowhead = 1, rows={abovesep=5pt,belowsep=5pt},
    }
    \hline
    {Subgroup \\ Label} & {Isomorphism \\ Type} & {Subgroup \\ Pattern} &  {Linear \\ Algebra} \\
    \hline
    \hline
    $G^{123}$ & $G_{\alpha_1} \times G_{\alpha_2} \times G_{\alpha_3}$ & $(G_{\alpha_1}, G_{\alpha_2}, G_{\alpha_3}, A^{\alpha_1}_4, A^{\alpha_2}_5, A^{\alpha_3}_6)$ & $\begin{pmatrix} 1 & 2 & 3 \\ 4 & 5 & 6 \end{pmatrix}$ \\
    \hline
    $G^{126}$ & $G_{\alpha_6} \times G_{\alpha_1} \times G_{\alpha_2}$ & $(G_{\alpha_1}, G_{\alpha_2}, A^{\alpha_3}_{3}, A^{\alpha_1}_{4}, A^{\alpha_2}_{5}, G_{\alpha_6})$ & $\begin{pmatrix} 1 & 2 & 3 \\ 4 & 5 & 3 \end{pmatrix}$ \\
    \hline
    $G^{156}$ & $G_{\alpha_1} \times G_{\alpha_5} \times G_{\alpha_6}$ & $(G_{\alpha_1}, A^{\alpha_2}_2, A^{\alpha_3}_{3}, A^{\alpha_1}_4, G_{\alpha_5}, G_{\alpha_6})$ & $\begin{pmatrix} 1 & 2 & 3 \\ 4 & 2 & 3 \end{pmatrix}$ \\
    \hline
    $G^{234}$ & $G_{\alpha_2} \times G_{\alpha_3} \times G_{\alpha_4}$ & $(A^{\alpha_1}_{1}, G_{\alpha_2}, G_{\alpha_3}, G_{\alpha_4}, A^{\alpha_2}_{5}, A^{\alpha_3}_{6})$ & $\begin{pmatrix} 1 & 2 & 3 \\ 1 & 5 & 6 \end{pmatrix}$ \\
    \hline
    $G^{345}$ & $G_{\alpha_3} \times G_{\alpha_4} \times G_{\alpha_5}$ & $(A^{\alpha_1}_1, A^{\alpha_2}_2, G_{\alpha_3}, G_{\alpha_4}, G_{\alpha_5}, A^{\alpha_3}_6)$ & $\begin{pmatrix} 1 & 2 & 3 \\ 1 & 2 & 6 \end{pmatrix}$ \\
    \hline
    $G^{456}$ & $G_{\alpha_4} \times G_{\alpha_5} \times G_{\alpha_6}$ & $(A^{\alpha_1}_1, A^{\alpha_2}_2, A^{\alpha_3}_{3}, G_{\alpha_4},  G_{\alpha_5}, G_{\alpha_6})$ & $\begin{pmatrix} 1 & 2 & 3 \\ 1 & 2 & 3 \end{pmatrix}$ \\
    \hline
    $A^3G^{12}$ & $G_{\alpha_1} \times G_{\alpha_2} \times A_{\alpha_3}$ & $(G_{\alpha_1}, G_{\alpha_2}, A^{\alpha_3}_3, A^{\alpha_1}_4, A^{\alpha_2}_5, A^{\alpha_3}_6)$ & $\begin{pmatrix} 1 & 2 & 3 \\ 1 & 2 & 3/6 \end{pmatrix}$ \\
    \hline
    $A^2G^{16}$ & $A_{\alpha_2} \times G_{\alpha_1} \times G_{\alpha_6}$ & $(G_{\alpha_1}, A^{\alpha_2}_2, A^{\alpha_3}_3, A^{\alpha_1}_4, A^{\alpha_2}_5, G_{\alpha_6})$ & $\begin{pmatrix} 1 & 2 & 3 \\ 4 & 2/5 & 3 \end{pmatrix}$ \\
    \hline
    $A^1G^{23}$ & $A_{\alpha_1} \times G_{\alpha_2} \times G_{\alpha_3}$ & $(A^{\alpha_1}_1, G_{\alpha_2}, G_{\alpha_3}, A^{\alpha_1}_4, A^{\alpha_2}_5, A^{\alpha_3}_6)$ & $\begin{pmatrix} 1 & 2 & 3 \\ 1/4 & 5 & 6 \end{pmatrix}$ \\
    \hline
    $A^2G^{34}$ & $A_{\alpha_2} \times G_{\alpha_3} \times G_{\alpha_4}$ & $(A^{\alpha_1}_1, A^{\alpha_2}_2, G_{\alpha_3}, G_{\alpha_4}, A^{\alpha_2}_5, A^{\alpha_3}_6)$ & $\begin{pmatrix} 1 & 2 & 3 \\ 1 & 2/5 & 6 \end{pmatrix}$ \\
    \hline
    $A^3G^{45}$ & $A_{\alpha_3} \times G_{\alpha_4} \times G_{\alpha_5}$ & $(A^{\alpha_1}_1, A^{\alpha_2}_2, A^{\alpha_3}_3, G_{\alpha_4}, G_{\alpha_5}, A^{\alpha_3}_6)$ & $\begin{pmatrix} 1 & 2 & 3 \\ 1 & 2 & 3/6 \end{pmatrix}$ \\
    \hline
    $A^1G^{56}$ & $G_{\alpha_5} \times G_{\alpha_6} \times A_{\alpha_1}$ & $(A^{\alpha_1}_1, A^{\alpha_2}_2, A^{\alpha_3}_{3}, A^{\alpha_1}_4, G_{\alpha_5}, G_{\alpha_6})$ & $\begin{pmatrix} 1 & 2 & 3 \\ 1/4 & 2 & 3 \end{pmatrix}$ \\
    \hline
    $\mathcal{L}$ & $\Z^m$ & $(A^{\alpha_1}_1, A^{\alpha_2}_2, A^{\alpha_3}_{3}, A^{\alpha_1}_4, A^{\alpha_2}_5, A^{\alpha_3}_6)$ & $\begin{pmatrix} 1 & 2 & 3 \\ 1/4 & 2/5 & 3/6 \end{pmatrix}$ \\
    \hline
    \end{longtblr}
    \end{center}
\end{prop}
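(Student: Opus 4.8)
The plan is to verify the three claimed features—isomorphism type, subgroup pattern, and linear algebra encoding—for each standard face subgroup, organizing the argument so that almost all of the work is a single uniform computation rather than thirteen separate ones. The key observation is that all thirteen subgroups are given by $\overline{Y}$- and $\overline{Z}$-generating sets of exactly the shape treated in Lemma~\ref{lem:Bpatterns}, and that Lemma~\ref{lem:detstd} already establishes $A$-determinacy of each of them with respect to the homomorphisms in Equation~\eqref{eqn:Adet-homs}. So the skeleton of the argument is: (1) use $A$-determinacy plus Lemma~\ref{lem:detinj} to pin down the isomorphism type; (2) read off the pattern entries indexed by $\alpha_i \subseteq A$ directly from the definition of the generators, and the entries indexed by $\alpha_i \subseteq B$ from Lemma~\ref{lem:Bpatterns}; (3) translate the pattern into the linear algebra encoding matrix by bookkeeping which lift map $s^{j^*}_j$ appears in which coordinate.

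For step (1), fix one of the subgroups, say $A^t G^\sigma$ with $|\sigma|=2$ (the $|\sigma|=3$ case $G^\sigma$ is the same argument with $t$ absent), and let $A$ be the index set from the table (for concreteness take the first listed choice). Since the generating set is $\bigcup_{i\in\alpha_\sigma}\bigl(\overline{Y}_i\cup\bigcup_{j\in\alpha_\sigma^c}\overline{Z}^{j^*}_{i,j}\bigr)$ and $\alpha_\sigma \subseteq A$, Lemma~\ref{lem:detinj} says $\grpproj_A$ is injective on the subgroup, so it is isomorphic to its image $\grpproj_A(H) \leq \prod_{i\in A} G_i$. One then identifies that image: the coordinates $i\in\alpha_\sigma$ carry all of $G_i$ (because $\overline{Y}_i$ together with the $s^{j^*}_i$-lifts of bases of every $A_{j^*}$ generate $G_i$—this is exactly the kernel-section generating property of Definition~\ref{def:ks}), while the coordinates $i \in A\setminus\alpha_\sigma$, which for $A^tG^\sigma$ are precisely the $\alpha_t$-coordinates among the chosen $A$, carry only the $s^{j^*}_j$-lifts of $A_{j^*}$, i.e. a copy of $A_{\alpha_t}$. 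Checking that the $\alpha_\sigma$-block and the $\alpha_t$-block are an honest direct product inside $\prod_{i\in A}G_i$—equivalently that the various lifts commute as they should—is where the "standard linear algebra" design (parking lifts of $A_j$ only in slots $j$ or $j+n$) is used; this reproves the isomorphism $A^tG^\sigma\cong A_{\alpha_t}\times\prod_{i\in\sigma}G_i$ asserted in the remark after Definition~\ref{defn:stdsub}, and similarly $G^\sigma\cong\prod_{i\in\sigma}G_i$ and $\mathcal{L}\cong\Z^m$ (the last being Lemma~\ref{lem:std-Zm}, so nothing new).

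For step (2), the pattern entry $\grpproj_{\alpha_i}(H)$ splits into two regimes. If $\alpha_i\subseteq A$: when $\alpha_i\subseteq\alpha_\sigma$ the entry is $G_{\alpha_i}$ (all of it appears, by the kernel-section property again), and when $\alpha_i\subseteq A\setminus\alpha_\sigma$ the entry is the corresponding $A^{\alpha_i}_{i'}$, where the subscript $i'$ records which coordinate block the lift lives in—read off from the $\overline{Z}^{j^*}_{i,j}$ appearing. If $\alpha_i\subseteq B$, apply Lemma~\ref{lem:Bpatterns}: one checks the generating set of $H$ has exactly the form required by that lemma (with the cyclically convex set there being our $A$), so $\grpproj_k(H)=A_{k'}$ for each $k\in\alpha_i$, giving the entry $A^{\alpha_i}_i$ or $A^{\alpha_i}_{i+3}$ according to whether the $k$'s in question are the "lower" or "upper" copies—this is the only place one must be careful, since for the $|\sigma|=2$ subgroups $\alpha_t^c$ meets both $\{1,\dots,n\}$ and $\{n+1,\dots,2n\}$ and one must confirm that the lift map $s^{j^*}_j$ dictated by $\overline{Z}^{j^*}_{i,j}$ puts the image into the slot claimed by the table. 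Finally, step (3) is pure bookkeeping: for each column $i\in\{1,2,3\}$ of the encoding matrix, list all places $A^{\alpha_i}_{\bullet}$ (or $G_{\alpha_i}$) occurs in the 6-tuple pattern and record the subscripts separated by a slash, exactly as prescribed in Definition~\ref{def:lin_algebra}.

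\textbf{Main obstacle.} The only genuinely delicate point—everything else being a uniform application of Lemmas~\ref{lem:detinj}, \ref{lem:detgen}, and \ref{lem:Bpatterns}—is the case analysis verifying that for each subgroup the prescribed index set $A$ really is cyclically convex and that the section generators $\overline{Z}^{j^*}_{i,j}$ appearing in that subgroup's definition land their lifts in the coordinate blocks ($j\in\{j^*,j^*+n\}$) claimed by the table; i.e. matching up the indexing conventions ($j^*$, "mod $2n$", the ordering of $\alpha_\sigma^c$) with the actual entries of the pattern and encoding. This is best handled by treating the three "triple" subgroups $G^{123},G^{126},G^{156}$ (and their $+3$-shifts $G^{456},G^{234},G^{345}$, obtained by adding $n$ to all indices) and the six "pair" subgroups together, exploiting the evident $\integers/6$-symmetry that cycles $\alpha_i\mapsto\alpha_{i+1}$, so that only two or three cases need to be written in full and the rest follow by symmetry. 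I expect the write-up to consist of this symmetry reduction, one fully worked representative of each orbit, and a remark that $\mathcal{L}$ is the already-proved Lemma~\ref{lem:std-Zm}.
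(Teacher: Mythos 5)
Your proposal is correct and follows essentially the same route as the paper's proof: establish $A$-determinacy of the generators (Equation~\eqref{eqn:Adet-homs}), apply Lemma~\ref{lem:detgen} and Lemma~\ref{lem:detinj} to get injectivity of $\grpproj_A$, identify the image via the kernel-section generating sets to pin down the isomorphism type and the $A$-side pattern entries, invoke Lemma~\ref{lem:Bpatterns} for the $B$-side entries, read off the encoding matrix from the pattern, and handle $\mathcal{L}$ by Lemma~\ref{lem:std-Zm} — with only representative cases ($G^{123}$, $A^3G^{12}$) written out and the rest obtained by shifting index sets, just as you suggest via symmetry.
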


\begin{proof}
Note that the {\em linear algebra encoding} simply records the positions of the $A^{\alpha_i}$ subgroups in the given subgroup pattern, so the entries in the third column of the table follow immediately from the information in the second column and the notation of  Definition~\ref{def:lin_algebra}.

({\em Row 13}): We start with the last row of the table since some of work is already done in Section~\ref{sec:alg-setup}. We have already established the isomorphism type of $\mathcal{L}$ in Lemma~\ref{lem:std-Zm}. The isomorphism in that proof is given by a restriction of the projection map $\grpproj_{\alpha_{123}}$. Instead, one could have taken $A = \alpha_{456}$ and $B = \alpha_{123}$ and obtained an isomorphism between $\mathcal{L}$ and $\Z^m$ via restriction of the projection $\grpproj_{\alpha_{456}}$. By composing the two projections above with projections onto the product of $\alpha_i$ factors for $1 \leq i \leq 6$, we establish the given subgroup pattern. 

({\em Rows 1 through 6}): Next, consider the subgroup $H = G^{123}$ of the first row of the table. It is generated by 
    \[
    \bigcup_{i\in \alpha_{123}} \left( \overline{Y}_i \;\cup\; \bigcup_{j=1}^n \overline{Z}^j_{i,j'} \right)
    \]
where $j'$ is the unique element of $\{j,j+n\} \cap \alpha_{123}^c$. Let $A = \alpha_{123}$ and $B = \alpha_{123}^c$. These generators are $A$-determinate with respect to the collection of homomorphisms
    \[
    f_{j'} \;=\; s^j_{j'} \left( -\laproj_j \left( \sum_{i \in \alpha_{123}} \varphi_i \right) \right)
    \]
for $1 \leq j \leq n$ (so that $j' = j+n$ belongs to the index set $\alpha_{123}^c = B$). By Lemma~\ref{lem:detgen}, the subgroup $H = G^{123}$ is $A$-determinate. By Lemma~\ref{lem:detinj}, the projection map $\grpproj_{\alpha_{123}}|_H$ is injective. For each $1 \leq i \leq n$, the generating set $\overline{Y}_i \cup \bigcup_{j=1}^n \overline{Z}^j_{i,j'}$ gives a copy of the kernel-section generating set 
    \[
    Y_i \cup Z^1_i \cup \cdots \cup Z^n_i
    \]
of $G_i$ in the $i$th factor. Therefore, $\grpproj_{\alpha_{123}}|_H$ is also surjective and so is an isomorphism of $H$ with $G_1 \times \cdots \times G_n = G_{\alpha_1} \times G_{\alpha_2} \times G_{\alpha_3}$. 

From the paragraph above, we conclude that the first three entries of the pattern of $H$ are $G_{\alpha_1}$, $G_{\alpha_2}$, and $G_{\alpha_3}$. Finally, Lemma~\ref{lem:Bpatterns} (with $A = \alpha_{123}$ and $B = \alpha_{123}^c$) establishes the $A^{\alpha_1}_4, A^{\alpha_2}_5, A^{\alpha_3}_6$ portions of the pattern, and so the first row is proven. 

The proofs for the other triple superscript groups work the same way. In the general case of the group $G^{pqr}$, one works with $A = \alpha_{pqr}$, $B = \alpha_{pqr}^c$ and $j'$ being the unique element of $\{j, j+n\} \cap \alpha_{pqr}^c$. 

({\em Rows 7 through 12}): Now, consider the subgroup $H = A^3G^{12}$ on line 7 of the table. It is generated by 
    \[
    \bigcup_{i \in \alpha_{12}} \left( \overline{Y}_i \;\cup\; \bigcup_{j=1}^n \bigcup_{j'} \overline{Z}^j_{i,j'} \right) 
    \]
where  $j' \in \{j, j+n\} \cap \alpha_{12}^c$. Note that this set is either a singleton (in the case $j \in \alpha_{12}$) or all of $\{j, j+n\}$ (in the case $j \in \alpha_3$). 

Choosing $A = \alpha_{123}$ (though one could also work with $A = \alpha_{126}$) and $B$ its complement, we see that all of these generators are uniformly $A$-determinate with respect to the collection of homomorphisms
    \[
    f_{j'} \;=\; s^j_{j'} \left( -\laproj_j \left( \sum_{i \in \alpha_{123}} \varphi_i \right) \right) ,
    \]
where $j'$ ranges over $B$ as $j$ ranges over $\{1, \ldots, n\}$. By Lemma~\ref{lem:detgen}, $H$ is $A$-determinate, and so by Lemma~\ref{lem:detinj}, the projection map $\grpproj_{\alpha_{123}}|_H$ is injective. 

For each $i \in \alpha_{12}$, the $\grpproj_{\alpha_{123}}|_H$-projection image of $\overline{Y}_i \cup \bigcup_{j=1}^n \overline{Z}^j_{i,j'}$ is the kernel-section generating set $Y_i \cup Z^1_i \cup \cdots \cup Z^n_i$ for $G_i$. Furthermore, the $\grpproj_{\alpha_{123}}|_H$-projection image of the generating sets $\overline{Z}_{i,j'}^j$ for $j'= j \in \alpha_3$ gives a generating set for $A_j$ in the $j$th coordinate. This gives a copy of $A_{\alpha_3}$ in the $\alpha_3$-coordinates. Therefore, $\grpproj_{\alpha_{123}}|_H$ gives a surjection to $G_{\alpha_1} \times G_{\alpha_2} \times A_{\alpha_3}$. This establishes the isomorphism type of $A^3G^{12}$. 

Projecting the isomorphic group $G_{\alpha_1} \times G_{\alpha_2} \times A_{\alpha_3}$ onto the $\alpha_i$-factors for $i=1,2,3$ establishes half of the pattern for $H$. The $B$-patterns Lemma~\ref{lem:Bpatterns} completes the pattern with $A^{\alpha_1}_4$, $A^{\alpha_2}_5$, and $A^{\alpha_3}_6$, establishing line 7 of the table. 

The other double superscript groups are handled similarly. One can use $A = \alpha_{234}$ (or $A = \alpha_{123}$) for the group $A^1G^{23}$; $A = \alpha_{345}$ (or $A = \alpha_{234}$) for the group $A^2G^{34}$; $A = \alpha_{456}$ (or $A = \alpha_{345}$) for the group $A^3G^{45}$; $A = \alpha_{561}$ (or $A = \alpha_{456}$) for the group $A^1G^{56}$; and $A = \alpha_{612}$ (or $A = \alpha_{561}$) for the group $A^2G^{61}$.  
\end{proof}

%%%%%%%%%%%%%%%%%%%%%%%%%%%%%%%%%%%%%%%%%%%%%%%%%%%%%%%%%%%%%%%%%%%%%%%%%%%%%%%%%%%%%%%%%%%%%%%%%%%%%%%%%%%%%%%%%%%%%%%%%%%%%%%%%%%%%%%%%%%%%%%%%%%%%%
\subsection{The non-standard face subgroups of \texorpdfstring{$\Ker(\Phi)$}{the kernel}}\label{sub:subgroups.3}

Next, we define the twelve non-standard face subgroups of $\Ker(\Phi)$.  These definitions are inspired by the definitions of the subgroups associated to the corners of the algebraic square in \cite[Section~4]{KLI}. In contrast to the organization of the definition of standard face subgroups in Definition~\ref{defn:stdsub}, we group these definitions according to their common index sets $A$.

\begin{defn}[Twelve non-standard face subgroups of $\Ker(\Phi)$]\label{defn:nonstdsub}
The $N$-subscript is used to denote non-standard subgroups of $\Ker(\Phi)$.  

There are twelve non-standard face subgroups of $\Ker(\Phi)$: three of these are the groups $G^{14}_N$, $G^{25}_N$, and $G^{36}_N$; another six are the groups $A^iG^i_N$ and $A^iG^{i+3}_N$ for each $1 \leq i \leq 3$; and the final three are the {\em nonstandard linear algebra} subgroups $A^1A^1_N$, $A^2A^2_N$, and $A^3A^3_N$. We define these subgroups based on their generating sets as follows, grouped by the value of their indices $\!\!\pmod{3}$.

{\em Group 1}, where the indices are all equivalent to $1 \pmod{3}$:
    \begin{align*}
    G^{14}_N \;&=\; \left\langle \; \bigcup_{i\in\alpha_{14}} \left( \overline{Y}_i \;\cup\; \bigcup_{j\in\alpha_{23}} \overline{Z}^j_{i,j}  \,\cup\, \bigcup_{j\in\alpha_1} \overline{Z}^j_{i,j+|\alpha_{1234}|} \right) \; \right\rangle \, , \\
    A^1G^1_N \;&=\; \left\langle \; \bigcup_{i\in\alpha_1} \left( \overline{Y}_i \;\cup\; \bigcup_{j\in\alpha_{234}} \overline{Z}^{j^*}_{i,j}  \,\cup\, \bigcup_{j\in\alpha_1} \overline{Z}^j_{i,j+|\alpha_{1234}|} \right) \; \right\rangle \, , \\
    A^1G^4_N \;&=\; \left\langle \; \bigcup_{i\in\alpha_4} \left( \overline{Y}_i \;\cup\; \bigcup_{j\in\alpha_{234}} \overline{Z}^{j^*}_{i,j}  \,\cup\, \bigcup_{j\in\alpha_1} \overline{Z}^j_{i,j+|\alpha_{1234}|} \right) \; \right\rangle \, , \text{ and } \\
    A^{11}_N \;&=\; \left\langle \bigcup_{j\in\alpha_1} \left( \overline{Z}^j_{j,j+n} \;\cup\; \overline{Z}^j_{j,j+|\alpha_{1234}|} \right) \right\rangle \, .
    \end{align*}

{\em Group 2}, where the indices are all equivalent to $2 \pmod{3}$:
    \begin{align*}
    G^{25}_N \;&=\; \left\langle \; \bigcup_{i\in\alpha_{25}} \left( \overline{Y}_i \;\cup\; \bigcup_{j\in\alpha_{16}} \overline{Z}^{j^*}_{i,j}  \;\cup\; \bigcup_{j\in\alpha_2} \overline{Z}^j_{i,j+|\alpha_2|} \right) \; \right\rangle \, , \\
    A^2G^2_N \;&=\; \left\langle \; \bigcup_{i\in\alpha_2} \left( \overline{Y}_i \;\cup\; \bigcup_{j\in\alpha_{156}} \overline{Z}^{j^*}_{i,j} \;\cup\; \bigcup_{j\in\alpha_2} \overline{Z}^j_{i,j+|\alpha_2|} \right) \; \right\rangle \, , \\
    A^2G^5_N \;&=\; \left\langle \; \bigcup_{i\in\alpha_5} \left( \overline{Y}_i \;\cup\; \bigcup_{j\in\alpha_{156}} \overline{Z}^{j^*}_{i,j} \;\cup\; \bigcup_{j\in\alpha_2} \overline{Z}^j_{i,j+|\alpha_2|} \right) \; \right\rangle \, , \text{ and } \\
    A^{22}_N \;&=\; \left\langle \bigcup_{j \in \alpha_2} \left( \overline{Z}^j_{j,j+n} \;\cup\; \overline{Z}^j_{j,j+|\alpha_2|} \right) \right\rangle \, .
    \end{align*}

{\em Group 3}, where the indices are all equivalent to $3 \pmod{3}$:
    \begin{align*}
    G^{36}_N \;&=\; \left\langle \; \bigcup_{i \in \alpha_{36}} \left( \overline{Y}_i \;\cup\; \bigcup_{j\in\alpha_{45}} \overline{Z}^{j^*}_{i,j} \;\cup\; \bigcup_{j \in \alpha_3} \overline{Z}^j_{i,j-|\alpha_{12}|} \right) \; \right\rangle \, , \\
    A^3G^3_N \;&=\; \left\langle \bigcup_{i \in \alpha_3} \left( \overline{Y}_i \;\cup\; \bigcup_{j\in\alpha_{456}} \overline{Z}^{j^*}_{i,j}  \;\cup\; \bigcup_{j \in \alpha_3} \overline{Z}^j_{i,j-|\alpha_{12}|} \right) \right\rangle \, , \\
    A^3G^6_N \;&=\; \left\langle \bigcup_{i\in \alpha_6} \left( \overline{Y}_i \;\cup\; \bigcup_{j\in\alpha_{456}} \overline{Z}^{j^*}_{i,j}  \;\cup\; \bigcup_{j \in \alpha_3} \overline{Z}^j_{i,j-|\alpha_{12}|} \right) \right\rangle \, , \text{ and } \\
    A^{33}_N \;&=\; \left\langle \bigcup_{j \in \alpha_3} \left( \overline{Z}^j_{j,j+n} \;\cup\; \overline{Z}^j_{j,j-|\alpha_{12}|} \right) \right\rangle \, .
    \end{align*}
\end{defn}

Next, we establish the $A$-determinacy of the following non-standard subgroups. This is done in separate cases for each of Groups 1, 2, and 3, with each case having a common index set $A$. 

\begin{lem}\label{lem:detnstd}
\begin{subequations}\label{eqn:Adet-nonstd}
The subgroups $G^{14}_N$, $A^1G^1_N$, $A^1G^4_N$, and $A^{11}_N$ are $A$-determinate for
    \[
    A \;=\; \alpha_{14} \,\cup\, \alpha_{56} [ |\alpha_1|+1,|\alpha_{56}| ]
    \]
and homomorphisms $f_j$ ($1 \leq j \leq 2n$) given by
    \begin{equation}
    f_j(\overline{g}) \;=\; 
        \begin{cases}
        s^{j-|\alpha_2|}_j \left( \laproj_{j-|\alpha_2|} \left( -\sum_{i \in A} \varphi_i(g_i) \right)\right) & j \in \alpha_{34} [ 1, |\alpha_2| ] \\
        s^{j^*}_j \left( \laproj_{j^*} \left( -\sum_{i \in A} \varphi_i(g_i) \right)\right) & j \in \alpha_{16}
        \end{cases} \, . \label{eqn:Adet-nonstd-one}
    \end{equation}

Similarly, $G^{25}_N$, $A^2G^2_N$, $A^2G^5_N$, and $A^{22}_N$ are $A$-determinate for  
    \[
    A \;=\; \alpha_{25} \,\cup\, \alpha_{34} [ |\alpha_2|+1,|\alpha_{34}| ]
    \]
and
    \begin{equation}
    f_j(\overline{g}) \;=\;
        \begin{cases}
        s^{j-n-|\alpha_4|}_j \left( \laproj_{j-n-|\alpha_4|} \left( -\sum_{i \in A} \varphi_i(g_i) \right)\right) & j \in \alpha_{56} [ 1, |\alpha_1| ] \\
        s^j_j \left( \laproj_j \left( -\sum_{i \in A} \varphi_i(g_i) \right)\right) & j \in \alpha_{23}
        \end{cases} \, . \label{eqn:Adet-nonstd-two}
    \end{equation}

Finally, $G^{36}_N$, $A^3G^3_N$, $A^3G^6_N$, and $A^{33}_N$ are $A$-determinate for
    \[
    A \;=\; \alpha_{36} \,\cup\, \alpha_{12} [ |\alpha_{12}|, |\alpha_3|+1 ]
    \]
and
    \begin{equation}
    f_j(\overline{g}) \;=\;
        \begin{cases}
        s^{j+|\alpha_{12}|}_j \left( \laproj_{j+|\alpha_{12}|} \left( -\sum_{i \in A} \varphi_i(g_i) \right)\right) & j \in \alpha_{12} [ 1, |\alpha_3| ] \\  
        s^{j-n}_j \left( \laproj_{j-n} \left( -\sum_{i \in A} \varphi_i(g_i) \right)\right) & j \in \alpha_{45}
        \end{cases} \, . \label{eqn:Adet-nonstd-three}
    \end{equation}
\end{subequations}
\end{lem}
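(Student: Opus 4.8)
The plan is to reduce, in each of the three cases, to a check on the generators, exactly as in the proof of Lemma~\ref{lem:detstd} for the standard face subgroups. By Lemma~\ref{lem:detgen} it is enough to show that for every generator $\overline{s}$ in the displayed generating set of each of the listed subgroups and every $j \in B = \{1, \ldots, 2n\} - A$, one has $\grpproj_j(\overline{s}) = f_j(\grpproj_A(\overline{s}))$, with $f_j$ given by \eqref{eqn:Adet-nonstd-one}, \eqref{eqn:Adet-nonstd-two}, \eqref{eqn:Adet-nonstd-three} respectively. Since the three groups of subgroups are defined by the same recipe applied to cyclically-related blocks of indices, I would carry this out in detail only for Group~1 (the subgroups $G^{14}_N, A^1G^1_N, A^1G^4_N, A^{11}_N$) and then indicate the index substitutions producing Groups~2 and~3.

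For Group~1 I would first make $B = \{1, \ldots, 2n\} - A$ explicit --- for $A = \alpha_{14} \cup \alpha_{56}[|\alpha_1|+1, |\alpha_{56}|]$ this is $B = \alpha_{23} \cup \alpha_{56}[1, |\alpha_1|]$ --- and observe that the two branches of \eqref{eqn:Adet-nonstd-one} are designed to cover $B$: one branch handles the coordinates reached by the standard-type lifts $\overline{Z}^{j^*}_{i,j}$ in the generating sets, the other handles the coordinates reached by the non-standard lifts $\overline{Z}^j_{i,\,j+|\alpha_{1234}|}$. The verification then splits by generator type. (i) For a kernel generator $\overline{y} \in \overline{Y}_i$ with $i \in A$, every composite $\varphi_{i'} \circ \grpproj_{i'}$ annihilates $\overline{y}$, so $\sum_{i' \in A}\varphi_{i'}(\grpproj_{i'}(\overline{y})) = 0$ and both sides of the determinacy equation equal the identity of $G_j$ (the left side because $\overline{y}$ is supported on $\{i\} \subseteq A$ while $j \in B$). (ii) For a standard lift $\overline{z} \in \overline{Z}^{j^*}_{i,j}$ with $i \in A$, the element $\overline{z}$ is supported on $\{i, j\}$ with $\grpproj_i(\overline{z}) = s^{j^*}_i(b)$ and $\grpproj_j(\overline{z}) = s^{j^*}_j(-b)$; only coordinate $i$ contributes to $\sum_{i'\in A}\varphi_{i'}(\grpproj_{i'}(\overline{z}))$, its $\laproj_{j^*}$ equals $b$, and the corresponding branch of $f_j$ returns $s^{j^*}_j(-b) = \grpproj_j(\overline{z})$, while for every other coordinate $j'$ of $B$ both sides are trivial, since $b$ lies in $A_{j^*}$ and the branch of $f_{j'}$ reads off the component along a different summand. (iii) For a non-standard lift $\overline{z} \in \overline{Z}^j_{i,\,j+|\alpha_{1234}|}$ with $i \in A$ and $j \in \alpha_1$, the computation is the same as in (ii); the only extra point is to confirm that the coordinate $j + |\alpha_{1234}| \pmod{2n}$ carrying the negative lift lies in $B$, belongs to the block handled by the non-standard branch of \eqref{eqn:Adet-nonstd-one}, and has its superscript reproduced correctly by that branch. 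The generators of $A^{11}_N$ form a subfamily of those in (ii) and (iii) (with $i = j$), so they need no separate argument.

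The main obstacle is the index bookkeeping, which involves two interacting moduli. One must verify, in each of the three $n \bmod 3$ regimes (these fix the sizes $|\alpha_i|$), that the coordinate shift defining the non-standard lifts in each case ($j \mapsto j + |\alpha_{1234}|$ for Group~1, $j \mapsto j + |\alpha_2|$ for Group~2, $j \mapsto j - |\alpha_{12}|$ for Group~3) lands precisely inside the block of $B$ that the corresponding branch of $f_j$ is meant to handle, and that the superscript recorded there --- which summand $A_\ell$ of $\Z^m$ is being lifted --- is recovered by the stated index formulas ($j - |\alpha_2|$, $j^*$, and so on) after reduction modulo $2n$ in the subscript and modulo $n$ in the superscript. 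This is the same kind of calculation already carried out for the standard subgroups in Lemma~\ref{lem:detstd} and for the $B$-patterns in Lemma~\ref{lem:Bpatterns}; once the partition of $\{1, \ldots, 2n\}$ into the six blocks $\alpha_1, \ldots, \alpha_6$ is laid out explicitly, each individual check reduces to routine arithmetic.
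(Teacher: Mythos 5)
Your proposal is correct and follows essentially the same route as the paper: reduce to the generators via Lemma~\ref{lem:detgen}, then check the determinacy equation separately for the kernel generators (where the sum $\sum_{i\in A}\varphi_i(g_i)$ vanishes) and for the section generators (where exactly one coordinate contributes a basis element $b$, and the appropriate branch of $f_j$ returns the negative lift $s^k_j(-b)$). The paper treats the standard and non-standard lifts together in a single case and leaves the mod-$2n$/mod-$n$ index bookkeeping implicit, exactly as you defer it, so there is no substantive difference.
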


\begin{proof}
For the duration of this proof, we will refer to any of Equations~\eqref{eqn:Adet-nonstd-one}, ~\eqref{eqn:Adet-nonstd-two}, or ~\eqref{eqn:Adet-nonstd-three} as Equation~\eqref{eqn:Adet-nonstd}, depending on which equation would make sense in context. With that in mind, there are two cases to consider.
    
(\emph{Case I: } $\overline{g} \in \overline{Y}_i$ for some $i \in A$). In this case, $\varphi_i(\overline{g}) = 0$ for all $1 \leq i \leq 2n$. In particular, $\sum_{i \in A} \varphi_i(g_i) = 0$, and so the right side of~\eqref{eqn:Adet-nonstd} gives the element $e \in G_j$, which agrees with $f_j(\overline{g})$ because $j \not\in A$ and $\overline{Y}$ entries are $e$ on $B$.
        
(\emph{Case II: } $\overline{g} \in \overline{Z}_{j,i}^{j^*}$ for some $i \in A$, $j \in B$). Note that $\overline{z} \in \overline{Z}^{j^*}_{i,j}$ has two coordinate entries which are not $e$; there is (for some $1 \leq k \leq n$, determined by which of the three cases of the lemma we are in) a copy of $s^k_i(b)$ in the $i$-factor and a copy of $s^k_j(-b)$ in the $j$-factor, for some basis element $b \in B_j$ of the group $A_j$. Thus,
    \[
    \sum_{i \in A} \varphi_i(g_i) \; = \; 0 + \cdots + 0 + b
    \]
is a sum of $(n-1)$ zeros and a copy of $b$. Therefore, the first line of the right side of~\eqref{eqn:Adet-nonstd} gives $s^k_j(-b)$, which agrees with $\grpproj_j(\overline{g})$.
\end{proof}

\begin{lem}[$B$-pattern for nonstandard face subgroups]\label{lem:BpatternsNS} 
The following table summarizes the $B$-patterns for each of the nonstandard face subgroups of $\Ker(\Phi)$ for $B = B_1 \sqcup B_2$ as indicated.
    \begin{center}
    \begin{longtblr}{
        colspec = {|c|c|c|c|c|},
        rowhead = 1, rows={abovesep=5pt,belowsep=5pt},
    }
    \hline
    Subgroup $H$ & $B_1$ & $\grpproj_k(H)$, $k \in B_1$ & $B_2$ & $\grpproj_k(H)$, $k \in B_2$ \\
    \hline \hline
    $G^{14}_N$, $A^1G^1_N$, $A^1G^4_N$ & $\alpha_{23}$ & $A_k$ & $\alpha_{56} [ 1,|\alpha_1| ]$ & $A_{k-n-|\alpha_1|}$ \\
    \hline
    $A^{11}_N$ & $\alpha_{23}$ & $\{e\}$ & $\alpha_{56} [ 1,|\alpha_1| ]$ & $A_{k-n-|\alpha_1|}$ \\
    \hline 
    $G^{25}_N$, $A^2G^2_N$, $A^2G^5_N$ & $\alpha_{16}$ & $A_{k^*}$ & $\alpha_{34} [ 1,|\alpha_2| ]$ & $A_{k+|\alpha_2|}$ \\
    \hline
    $A^{22}_N$ & $\alpha_{16}$ & $\{e\}$ & $\alpha_{34} [ 1,|\alpha_2| ]$ & $A_{k+|\alpha_2|}$ \\
    \hline
    $G^{36}_N$, $A^3G^3_N$, $A^3G^6_N$ & $\alpha_{45}$ & $A_{k-n}$ & $\alpha_{12} [ 1,|\alpha_3| ]$ & $A_{k+n-|\alpha_3|}$ \\
    \hline
    $A^{33}_N$ & $\alpha_{45}$ & $\{e\}$ & $\alpha_{12} [ 1,|\alpha_3| ]$ & $A_{k+n-|\alpha_3|}$ \\
    \hline
    \end{longtblr}
    \end{center}
\end{lem}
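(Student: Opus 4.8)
The plan is to mimic the proof of Lemma~\ref{lem:Bpatterns}, but to account for the \emph{non-standard} lift coordinates that appear in the generating sets of Definition~\ref{defn:nonstdsub} (their landing coordinates are of the form $j+|\alpha_{1234}|$, $j+|\alpha_2|$, or $j-|\alpha_{12}|$, which are not of the form $j$ or $j+n$, so Lemma~\ref{lem:Bpatterns} does not apply verbatim). First, by Lemma~\ref{lem:detnstd} each subgroup $H$ in the table is $A$-determinate for the cyclically convex $A$ recorded there, so its $B$-pattern (Definition~\ref{def:Bpatterns}) is defined; a quick comparison of those $A$'s with the table shows that $B=B_1\sqcup B_2$ as claimed in the statement. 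Since $\grpproj_k$ is a homomorphism and $H=\langle S\rangle$ for the generating set $S$ listed in Definition~\ref{defn:nonstdsub}, we have $\grpproj_k(H)=\langle\grpproj_k(S)\rangle$ for each $k\in B$, so it suffices to compute $\grpproj_k(\overline{s})$ for each generator $\overline{s}\in S$.

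Next I would discard the generators that project trivially. Each $\overline{y}\in\overline{Y}_i$ is supported on the single coordinate $i\in A$ (Definition~\ref{def:vector_gen}), so $\grpproj_k(\overline{y})=e$ whenever $k\notin A$. Each section generator $\overline{z}$ occurring in $S$ lies in some $\overline{Z}^{\ell}_{i,j'}$ and is supported on the two coordinates $\{i,j'\}$ with $i\in A$; since $k\notin A$, $\grpproj_k(\overline{z})=e$ unless $j'=k$, in which case $\grpproj_k(\overline{z})$ is a lift $s^{\ell}_k(\pm b)$ of a basis element of $A_\ell$. Consequently, for each $k\in B$ the group $\grpproj_k(H)$ is generated by the images of the section generators whose ``landing coordinate'' $j'$ equals $k$, so everything reduces to identifying, for each $k\in B$, the unique family of section generators landing on $k$ and reading off its superscript $\ell$ (then $\grpproj_k(H)=s^\ell_k(A_\ell)\cong A_\ell$, using that $s^\ell_k$ is a section, hence injective on $A_\ell$).

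The remaining work is the index bookkeeping, carried out once for each of Groups~$1$, $2$, $3$ of Definition~\ref{defn:nonstdsub}. The generators of each $G^{\bullet}_N$ or $A^iG^\bullet_N$ split into a ``standard-lift'' family $\bigcup_j\overline{Z}^{j^*}_{i,j}$ (landing coordinate $j$ itself) and a ``non-standard'' family $\bigcup_j\overline{Z}^{j}_{i,\,j\pm\mathrm{shift}}$. Among the standard-lift generators, those landing inside $A$ are irrelevant to the $B$-pattern; the rest land precisely on $B_1$ ($=\alpha_{23}$, $\alpha_{16}$, or $\alpha_{45}$ in the three Groups), and for $k\in B_1$ the superscript is $k^*$, whence $\grpproj_k(H)\cong A_{k^*}$ (specializing to $A_k$, $A_{k^*}$, $A_{k-n}$ respectively). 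For the non-standard family one checks, using $|\alpha_i|=|\alpha_{i+3}|$ and $|\alpha_{12}|+|\alpha_3|=n$ (hence $|\alpha_{1234}|=n+|\alpha_1|$), that as $j$ runs over $\alpha_1$ (resp.\ $\alpha_2$, $\alpha_3$) the landing coordinate $j+|\alpha_{1234}|$ (resp.\ $j+|\alpha_2|$, $j-|\alpha_{12}|$) runs bijectively over $\alpha_{56}[1,|\alpha_1|]$ (resp.\ $\alpha_{34}[1,|\alpha_2|]$, $\alpha_{12}[1,|\alpha_3|]$), i.e.\ over $B_2$; re-expressing the superscript $j$ in terms of $k$ then yields the table entry. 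Finally, the rows for $A^{11}_N$, $A^{22}_N$, $A^{33}_N$ are the degenerate case: there are no $\overline{Y}$ generators, and the ``diagonal'' generators $\overline{Z}^j_{j,j+n}$ are supported on $\{j,j+n\}\subseteq A$, so they contribute $e$ to every coordinate of $B_1$ (yielding the $\{e\}$ entries), while the non-standard generators $\overline{Z}^j_{j,\,j\pm\mathrm{shift}}$ behave exactly as above on $B_2$.

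The only real obstacle is the combinatorial bookkeeping of the last paragraph: for each of the three Groups one must verify that the landing-coordinate map of the non-standard family is a bijection onto the displayed interval and that the superscript, rewritten as a function of $k$, matches. There is no new group theory beyond the argument of Lemma~\ref{lem:Bpatterns}; one simply has to be careful with the reductions modulo $2n$ (and $n$) and with the cardinality identities among the $\alpha_i$.
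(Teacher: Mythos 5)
Your proposal is correct and follows essentially the same route as the paper's proof: restrict $\grpproj_k$ to the generating set, discard the $\overline{Y}$-generators and any section generators supported entirely in $A$, and for each $k\in B$ identify the unique $\overline{Z}$-family whose landing coordinate is $k$, reading off its superscript via the cardinality identities among the $\alpha_i$ (the paper just carries this out explicitly for $G^{36}_N$ and $A^{33}_N$ and cites symmetry for the rest). One caveat: when you actually re-express the superscript in terms of $k$ for the Group~2 interval $\alpha_{34}[1,|\alpha_2|]$, the landing coordinate $k=j+|\alpha_2|$ gives $\grpproj_k(H)\cong A_{k-|\alpha_2|}$ — which is exactly what the paper's own proof records — so do not assert that your computation "yields the table entry" verbatim there.
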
 

\begin{proof}
From Lemma~\ref{lem:detnstd}, we know the index set $A$ of $G^{36}_N$ is
    \[
    A \;=\; \alpha_{36} \,\cup\, \alpha_{12} [ |\alpha_{12}|,|\alpha_3|+1 ] \, .
    \]
Furthermore, for $i \in A$, $\grpproj_k(\overline{Y}_i) = \{e\}$ by the definition of the elements of $\overline{Y}_i$ (and since $k \not\in A$).

For $k \in \alpha_{45}$, we have 
    \begin{align*}
    \grpproj_k(G^{36}_N) \;&=\; \grpproj_k \left( \left\langle \; \bigcup_{i \in \alpha_{36}} \left( \overline{Y}_i \;\cup\; \bigcup_{j\in\alpha_{12}} \overline{Z}^j_{i,j+n}  \,\cup\, \bigcup_{j \in \alpha_3} \overline{Z}^j_{i,j+ |\alpha_3|-n} \right) \; \right\rangle \right) \\
    &=\; \left\langle \grpproj_k \left( \bigcup_{i \in \alpha_{36}} \left( \overline{Y}_i \;\cup\; \bigcup_{j\in\alpha_{12}} \overline{Z}^j_{i,j+n} \,\cup\, \bigcup_{j \in \alpha_3} \overline{Z}^j_{i,j+|\alpha_3|-n} \right) \right) \right\rangle \\
    &=\; \left\langle \grpproj_k\left( \bigcup_{i \in \alpha_{36}} \bigcup_{j\in\alpha_{12}} \overline{Z}^j_{i,j+n} \,\cup\, \bigcup_{j \in \alpha_3} \overline{Z}^j_{i,j+|\alpha_3|-n} \right) \right\rangle {\hbox{ (as $\grpproj_k(\overline{Y}_i) = \{e\}$ for all $i \in \alpha_{36}$)}} \\
    &=\; \left\langle \bigcup_{i \in \alpha_{36}} \bigcup_{j \in \alpha_{12}} \grpproj_k \left( \overline{Z}^j_{i,j+n} \right) \right\rangle {\hbox{ (since $\alpha_{45} \cap \{j+|\alpha_3|-n\} = \emptyset$ for all $j \in \alpha_3$)}} \\
    &=\; \left\langle \bigcup_{i \in \alpha_{36}} \grpproj_k \left( \overline{Z}^{k-n}_k \right) \right\rangle \\
    &=\; \left\langle \grpproj_k \left( \overline{Z}^{k-n}_k \right) \right\rangle \\
    &=\; A_{k-n} \, ,
    \end{align*}
where $\overline{Z}^i_k$ is as defined in the proof of Lemma~\ref{lem:Bpatterns}. 

Finally, for $k \in \alpha_{12} [ 1,|\alpha_3| ]$ we have
    \begin{align*}
    \grpproj_k(G^{36}_N) \;&=\; \grpproj_k \left( \left\langle \; \bigcup_{i \in \alpha_{36}} \left( \overline{Y}_i \;\cup\; \bigcup_{j\in\alpha_{12}} \overline{Z}^j_{i,j+n} \,\cup\, \bigcup_{j \in \alpha_3} \overline{Z}^j_{i,j+|\alpha_3|-n} \right) \; \right\rangle \right) \\
    &=\; \left\langle \grpproj_k \left( \bigcup_{i \in \alpha_{36}} \left( \overline{Y}_i \;\cup\; \bigcup_{j\in\alpha_{12}} \overline{Z}^j_{i,j+n} \,\cup\, \bigcup_{j \in \alpha_3} \overline{Z}^j_{i,j+|\alpha_3|-n} \right) \right) \right\rangle \\
    &=\; \left\langle \grpproj_k \left( \bigcup_{i \in \alpha_{36}} \bigcup_{j\in\alpha_{12}} \overline{Z}^j_{i,j+n} \,\cup\, \bigcup_{j \in \alpha_3} \overline{Z}^j_{i,j+|\alpha_3|-n} \right) \right\rangle {\hbox{ (since $\grpproj_k(\overline{Y}_i) = \{e\}$ for $i \in \alpha_{36}$)}} \\
    &=\; \left\langle \bigcup_{i \in \alpha_{36}} \bigcup_{j \in \alpha_{3}} \grpproj_k \left( \overline{Z}^j_{i,j+|\alpha_3|-n} \right) \right\rangle {\hbox{ (since $\alpha_{12} [ 1,|\alpha_3| ] \;\cap\; \{j+n\} = \emptyset$ for $j \in \alpha_{3}$)}} \\
    &=\; \left\langle \bigcup_{i \in \alpha_{36}} \grpproj_k \left( \overline{Z}^{k+n-|\alpha_3|}_k \right) \right\rangle \\
    &=\; \left\langle \grpproj_k \left( \overline{Z}^{k+n-|\alpha_3|}_k \right) \right\rangle \\
    &=\; A_{k+n-|\alpha_3|} \, .
    \end{align*}

Since the index sets do not change for $A^3G^3_N$ and $A^3G^6_N$, then for any $k \in \alpha_{45}$,
    \[
    \grpproj_k(A^3G^{3}_N) = \grpproj_k(A^3G^6_N) = A_{k-n} \, ,
    \]
and for any $k \in \alpha_{12} [ 1,|\alpha_3| ]$,
    \[
    \grpproj_k(A^3G^{3}_N) = \grpproj_k(A^3G^6_N) = A_{k+n-|\alpha_3|} \, .
    \]

By varying the index sets, similar arguments show that:
    \begin{itemize}[nolistsep]
    \item for any $k \in \alpha_{23}$,
        \[
        \grpproj_k(G^{14}_N) = \grpproj_k(A^1G^{1}_N) = \grpproj_k(A^1G^4_N) = A_{k} \, ,
        \]
    \item for any $k \in \alpha_{56} [ 1,|\alpha_1| ]$,
        \[
        \grpproj_k(G^{14}_N) = \grpproj_k(A^1G^1_N) = \grpproj_k(A^1G^4_N) = A_{k-n-|\alpha_1|} \, ,
        \]
    \item for any $k \in \alpha_{16}$,
        \[
        \grpproj_k(G^{25}_N) = \grpproj_k(A^2G^{2}_N) = \grpproj_k(A^2G^5_N) = A_{k^*} \, ,
        \]
    \item and for any $k \in \alpha_{34} [ 1,|\alpha_2| ]$,
        \[
        \grpproj_k(G^{25}_N) = \grpproj_k(A^2G^{2}_N) = \grpproj_k(A^2G^5_N) = A_{k-|\alpha_2|} \, .
        \]
    \end{itemize}

It now only remains to show the $B$-patterns for $A^{11}_N$, $A^{22}_N$, and $A^{33}_N$. From Lemma~\ref{lem:detnstd}, we know the index set $A$ of $A^{33}_N$ is 
    \[
    A \;=\; \alpha_{36} \,\cup\, \alpha_{12} [ |\alpha_{12},|\alpha_3|+1 ] \, ,
    \]
with complement
    \[
    B \;=\; \alpha_{45} \,\cup\, \alpha_{12} [ 1,|\alpha_3| ] \, .
    \]
        
For $k \in \alpha_{45}$, $\grpproj_k(A^{33}_N) = \{e\}$ since there are no $\overline{Z}$ sets in the $\alpha_{45}$ coordinates, as seen in the definition of $A^{33}_N$ in Definition~\ref{defn:nonstdsub}.

Let $k \in \alpha_{12} [ 1,|\alpha_3| ]$. Then
    \begin{align*}
    \grpproj_k(A^{33}_N) \;&=\; \grpproj_k \left( \left\langle \bigcup_{j\in\alpha_3} \left( \overline{Z}^j_{j,j-|\alpha_{12}|} \;\cup\; \overline{Z}^j_{j,j+n} \right) \right\rangle \right) \\
    &=\; \grpproj_k \left( \left\langle \bigcup_{j\in\alpha_3} \left( \overline{Z}^j_{j,j-|\alpha_{12}|} \right) \right\rangle \right) \\
    &=\; \left\langle \grpproj_k\left(\overline{Z}^{k+|\alpha_{12}|}_k\right) \right\rangle \\
    &=\; A_{k+|\alpha_{12}|} \, .
    \end{align*}
Similarly to before, varying the index sets and using similar arguments will show that for any $k \in \alpha_{23}$, $\grpproj_k(A^{11}_N) = \{e\}$ and $k \in \alpha_{56} [ 1,|\alpha_1| ]$ yields $\grpproj_k(A^{11}_N) = A_{k-n-|\alpha_1|}$. Finally, for any $k \in \alpha_{16}$, $\grpproj_k(A^{22}_N) = \{e\}$ and $k \in \alpha_{34} [ 1,|\alpha_2| ]$ gives $\grpproj_k(A^{22}_N) = A_{k+|\alpha_2|}$.
\end{proof}

The following proposition is an analogue of Proposition~\ref{thm:table_std}. It describes the algebraic and combinatorial properties of the non-standard face subgroups of $\Ker(\Phi)$. 

\begin{prop}[Table of non-standard face subgroups of $\Ker(\Phi)$]\label{thm:table_nonstd}
The following table summarizes the isomorphism type, subgroup pattern, and linear algebra encoding for each of the twelve non-standard face subgroups of $\Ker(\Phi)$.

    \begin{center}
    \begin{longtblr}{
        colspec = {|c|c|c|c|},
        rowhead = 1, rows={abovesep=5pt,belowsep=5pt},
    }
    \hline
    {Subgroup \\ Label} & {Isomorphism \\ Type} & {Subgroup \\ Pattern} & {Linear \\ Algebra} \\
    \hline
    \hline
    $G^{14}_N$ & $G_{\alpha_1} \times G_{\alpha_4}$ & $(G_{\alpha_1}, A^{\alpha_2}_2, A^{\alpha_3}_{3}, G_{\alpha_4}, A^{\alpha_1}_5 E)$ & $\begin{pmatrix} \dot{1} & 2 & 3 \\ 5 & 2 & 3 \end{pmatrix}$ \\
    \hline
    $G^{25}_N$ & $G_{\alpha_2} \times G_{\alpha_5}$ & $(A^{\alpha_1}_{1}, G_{\alpha_2}, A^{\alpha_2}_3 E, G_{\alpha_5}, A^{\alpha_3}_{6})$ & $\begin{pmatrix} 1 & \dot{2} & 3 \\ 1 & 3 & 6 \end{pmatrix}$ \\
    \hline
    $G^{36}_N$ & $G_{\alpha_3} \times G_{\alpha_6}$ & $(A^{\alpha_3}_1 E, G_{\alpha_3}, A^{\alpha_1}_4, A^{\alpha_2}_5, G_{\alpha_6})$ & $\begin{pmatrix} 1 & 2 & \dot{3} \\ 4 & 5 & 1 \end{pmatrix}$ \\
    \hline
    $A^1G^{1}_N$ & $A_{\alpha_1} \times G_{\alpha_1}$ & $(G_{\alpha_1}, A^{\alpha_2}_2, A^{\alpha_3}_{3}, A^{\alpha_1}_4, A^{\alpha_1}_5 E)$ & $\begin{pmatrix} \dot{1} & 2 & 3 \\ 4/5 & 2 & 3 \end{pmatrix}$ \\
    \hline
    $A^1G^{4}_N$ & $A_{\alpha_1} \times G_{\alpha_4}$ & $(A^{\alpha_1}_1, A^{\alpha_2}_2, A^{\alpha_3}_{3}, G_{\alpha_4}, A^{\alpha_1}_5 E)$ & $\begin{pmatrix} \dot{1} & 2 & 3 \\ 1/5 & 2 & 3 \end{pmatrix}$ \\
    \hline
    $A^2G^{2}_N$ & $A_{\alpha_2} \times G_{\alpha_2}$ & $(A^{\alpha_1}_{1}, G_{\alpha_2}, A^{\alpha_2}_3 E, A^{\alpha_2}_{5}, A^{\alpha_3}_{6})$ & $\begin{pmatrix} 1 & \dot{2} & 3 \\ 1 & 3/5 & 6 \end{pmatrix}$ \\
    \hline  
    $A^2G^{5}_N$ & $A_{\alpha_2} \times G_{\alpha_5}$ & $(A^{\alpha_1}_1, A^{\alpha_2}_2, A^{\alpha_2}_3 E, G_{\alpha_5}, A^{\alpha_3}_6)$ & $\begin{pmatrix} 1 & \dot{2} & 3 \\ 1 & 2/3 & 6 \end{pmatrix}$ \\
    \hline
    $A^3G^{3}_N$ & $A_{\alpha_3} \times G_{\alpha_3}$ & $(A^{\alpha_3}_1 E, G_{\alpha_3}, A^{\alpha_1}_4, A^{\alpha_2}_5, A^{\alpha_3}_6)$ & $\begin{pmatrix} 1 & 2 & \dot{3} \\ 4 & 5 & 1/6 \end{pmatrix}$ \\
    \hline
    $A^3G^{6}_N$ & $A_{\alpha_3} \times G_{\alpha_6}$ & $(A^{\alpha_3}_1 E, A^{\alpha_3}_{3}, A^{\alpha_1}_{4}, A^{\alpha_2}_{5}, G_{\alpha_6})$ & $\begin{pmatrix} 1 & 2 & \dot{3} \\ 4 & 5 & 1/3 \end{pmatrix}$ \\
    \hline
    $A^{11}_N$ & $(A_{\alpha_1})^2$ & $(A^{\alpha_1}_1, E, E, A^{\alpha_1}_4, A^{\alpha_1}_5 E)$ & $\begin{pmatrix} \dot{1} \\ 1/4/5 \end{pmatrix}$ \\
    \hline
    $A^{22}_N$ & $(A_{\alpha_2})^2$ & $(E, A^{\alpha_2}_2, A^{\alpha_2}_3 E, A^{\alpha_2}_5, E)$ & $\begin{pmatrix} \dot{2} \\ 2/3/5 \end{pmatrix}$ \\
    \hline
    $A^{33}_N$ & $(A_{\alpha_3})^2$ & $(A^{\alpha_3}_1 E, A^{\alpha_3}_3, E, E, A^{\alpha_3}_6)$ & $\begin{pmatrix} \dot{3} \\ 1/3/6 \end{pmatrix}$ \\
    \hline
    \end{longtblr}
    \end{center}
\end{prop}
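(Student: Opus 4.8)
The plan is to mirror the proof of Proposition~\ref{thm:table_std}, with the non-standard toolkit (Lemma~\ref{lem:detnstd} and Lemma~\ref{lem:BpatternsNS}) replacing its standard counterparts. As in that proof, the \emph{linear algebra} column carries no new information: once the subgroup pattern column is known, the matrix in the fourth column is read off mechanically from Definition~\ref{def:lin_algebra} by recording, block by block, which $A^{\alpha_i}$-type entry (if any) occurs and whether its lift is standard or not, the non-standard (dotted) case being exactly the $E$-format entries $A^{\alpha_3}_1E$, $A^{\alpha_2}_3E$, $A^{\alpha_1}_5E$ of Definition~\ref{defn:pattern_entry}. So the substance is to pin down, for each of the twelve subgroups, its isomorphism type and its $5$- or $6$-tuple pattern. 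I would work through the three residue classes of Definition~\ref{defn:nonstdsub} in turn; since these differ only by the cyclic relabelling $\alpha_p\mapsto\alpha_{p+1}$ of the basic label sequences (and the attendant shifts of all index sets), it suffices to carry out one class in full and to observe that the others go through verbatim after relabelling.

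Within a class, I would first treat the subgroups $G^{i,i+3}_N$, $A^iG^i_N$, $A^iG^{i+3}_N$. For each such $H$, Lemma~\ref{lem:detnstd} furnishes an index set $A$ with respect to which $H$ is $A$-determinate, so Lemma~\ref{lem:detinj} makes $\grpproj_A|_H$ injective; hence $H\cong\grpproj_A(H)$, and the isomorphism type is found by computing this image $\alpha$-block by $\alpha$-block over the blocks meeting $A$. Where the generators of $H$ restrict, in the coordinates of a block $\alpha_i$, to (a copy of) a full kernel-section generating set $Y_i\cup Z^1_i\cup\cdots\cup Z^n_i$ of $G_i$ as produced by Lemma~\ref{lem:ker-sec}, that block contributes the full factor $G_{\alpha_i}$; where only section lifts $\overline{Z}^{j^*}_{i,j}$ deposited into the block's own coordinates appear, it contributes the corresponding $A^{\alpha_i}_\bullet$ (using, exactly as for the standard table, that the $\varphi$'s kill the $\overline{Y}$-generators and that each $\overline{Z}$-generator contributes one section lift of one basis vector); and in the remaining coordinates of $A$ the projection is trivial. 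This yields the isomorphism type at once (a product of some $G_{\alpha_i}$'s, together with one $A_{\alpha_i}$ factor in the $A^\bullet G^\bullet$ cases), and simultaneously the ``$G_{\alpha_i}$'' and standard ``$A^{\alpha_i}_k$'' entries of the pattern, by post-composing the isomorphism with further projections. The remaining pattern entries lie over coordinates of $B$, and Lemma~\ref{lem:BpatternsNS} supplies each $\grpproj_k(H)$ for $k$ in the two pieces $B_1$, $B_2$ of $B$; assembling these over an $\alpha$-block — or over the combined block $\alpha_{12}$, $\alpha_{34}$, or $\alpha_{56}$ in the $5$-tuple patterns of Definition~\ref{defn:pattern} — and comparing with Definitions~\ref{defn:proj-special} and \ref{defn:pattern_entry} identifies the entry as an ordinary $A^{\alpha_i}_k$ or as one of the $E$-padded formats, the padding arising precisely because the non-standard parking coordinate ($j+|\alpha_{1234}|$, $j+|\alpha_2|$, or $j-|\alpha_{12}|$) sweeps out only an initial sub-segment of its target block.

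For the three ``nonstandard linear algebra'' subgroups $A^{ii}_N$ there are no $\overline{Y}$-generators, hence no $G_{\alpha_i}$ factor; the generating set consists of the two families $\overline{Z}^j_{j,j+n}$ and $\overline{Z}^j_{j,j+|\alpha_{1234}|}$ (and the Group~2, 3 analogues). Again Lemma~\ref{lem:detnstd} and Lemma~\ref{lem:detinj} give injectivity of $\grpproj_A$ on $A^{ii}_N$; since $A^{ii}_N$ has trivial image in the ``extra'' coordinates of $A$ (those not of the form $j$ or $j+n$ with $j\in\alpha_i$), this already forces $\grpproj_{\alpha_i\cup\alpha_{i+3}}$ to be injective on $A^{ii}_N$, and a direct inspection of the two families shows its image is all of $A^{\alpha_i}_i\times A^{\alpha_i}_{i+3}\cong(A_{\alpha_i})^2$ (the family $\overline{Z}^j_{j,j+n}$ contributes the pairs $(s^j_j(b),s^j_{j+n}(-b))$, the family $\overline{Z}^j_{j,j+|\alpha_{1234}|}$ contributes $(s^j_j(b),e)$, and differencing these generates the whole product). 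That settles the isomorphism type; the pattern is then read off coordinate-block by coordinate-block — the two standard slots $A^{\alpha_i}_i$, $A^{\alpha_i}_{i+3}$ directly, the single $E$-padded slot via Lemma~\ref{lem:BpatternsNS}, and all other blocks trivial.

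The one place requiring genuine care, rather than routine invocation of the lemmas, is the arithmetic of the indices modulo $2n$: one must check that the non-standard parking coordinates $j+|\alpha_{1234}|$, $j+|\alpha_2|$, $j-|\alpha_{12}|$ really do land inside the claimed sub-segments $\alpha_{56}[1,|\alpha_1|]$, $\alpha_{34}[1,|\alpha_2|]$, $\alpha_{12}[1,|\alpha_3|]$ (this uses $|\alpha_p|=|\alpha_{p+3}|$ and $|\alpha_1|+|\alpha_2|+|\alpha_3|=n$), that the index sets $A$ produced by Lemma~\ref{lem:detnstd} have cardinality exactly $n$ so that Lemma~\ref{lem:detinj} applies, and that the resulting projections match the $E$-padded pattern entries of Definition~\ref{defn:pattern_entry} on the nose, including the correct range of trailing trivial coordinates. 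The uniformity of the homomorphisms $f_j$ needed to pass from a generating set to the whole subgroup — the delicate point flagged in Lemma~\ref{lem:detgen} — is already built into the statement of Lemma~\ref{lem:detnstd}, so no extra argument is needed there.
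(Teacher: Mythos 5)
Your proposal is correct and follows essentially the same route as the paper: invoke Lemma~\ref{lem:detnstd} together with Lemmas~\ref{lem:detgen} and~\ref{lem:detinj} to get injectivity of $\grpproj_A$, identify the image block by block (full kernel-section generating sets giving $G_{\alpha_i}$ factors, section lifts giving $A$-type factors) to obtain the isomorphism type and the $A$-side of the pattern, complete the pattern on the $B$-coordinates via Lemma~\ref{lem:BpatternsNS}, read the linear algebra matrix off the pattern, and handle one residue class explicitly with the others following by varying the index sets. The only cosmetic difference is your explicit differencing argument for surjectivity in the $A^{ii}_N$ cases and the extra attention to the index arithmetic, both of which are consistent with (and slightly more detailed than) the paper's treatment.
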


\begin{proof}
{\em (Rows 1 through 3.)} The group $G^{36}_N$ is generated by
    \[
    \left\langle \; \bigcup_{i \in \alpha_{36}} \left( \overline{Y}_i \;\cup\; \bigcup_{j\in\alpha_{12}} \overline{Z}^j_{i,j+n} \,\cup\, \bigcup_{j \in \alpha_3} \overline{Z}^j_{i,j+|\alpha_3|-n} \right) \; \right\rangle \, .
    \]
Taking $A \;=\; \alpha_{36} \,\cup\, \alpha_{12} [ |\alpha_3|+1,|\alpha_{12}| ]$ and $B \;=\; \alpha_{45} \,\cup\, \alpha_{12} [ 1,|\alpha_3| ]$, we see that the generators of $G^{36}_N$ are $A$-determinate with respect to the collection of homomorphisms $f_j$ defined on the right side of the equation below:
    \begin{equation}
    f_j(\overline{g}) \;=\;
        \begin{cases}
        s^{j+|\alpha_{12}|}_j \left( \laproj_{j+|\alpha_{12}|} \left( -\sum_{i \in A} \varphi_i(g_i) \right)\right) & j \in \alpha_{12} [ 1,|\alpha_3| ] \\
        s^{j-n}_j \left( \laproj_{j-n} \left( -\sum_{i \in A} \varphi_i(g_i) \right)\right) & j \in \alpha_{45}
        \end{cases} \, ,
    \end{equation}
as proven in Lemma~\ref{lem:detnstd}. By Lemma~\ref{lem:detgen}, the subgroup $G^{36}_N$ is $A$-determinate. By Lemma~\ref{lem:detinj}, the projection map $\grpproj_{A}|_{G^{36}_N}$ is injective. For each $i \in \alpha_{36}$, the generating set
    \[
    \overline{Y}_i \;\cup\; \bigcup_{j\in\alpha_{12}} \overline{Z}^j_{i,j+n} \,\cup\, \bigcup_{j \in \alpha_3} \overline{Z}^j_{i,j+|\alpha_3|-n}
    \]
gives a copy of the kernel-section generating set of $G_i$ in the $i$th factor. Therefore, $\grpproj_{A}|_{G^{36}_N}$ is surjective, and so is an isomorphism of $G^{36}_N$ with
    \[
    E_{\alpha_{12} [ |\alpha_3|+1,|\alpha_{12}| ] } \times G_{\alpha_3} \times G_{\alpha_6} \cong G_{\alpha_3} \times G_{\alpha_6} \, .
    \]
Finally, from the isomorphism above and Lemma~\ref{lem:detnstd}, we get the required projection pattern for $G^{36}_N$ as in the first row of the table.

Varying the index sets, we get the isomorphism type and subgroup patterns for the other two nonstandard double superscript subgroups, $G^{25}_N$ and $G^{14}_N$.

{\em (Rows 4 through 9.)} The group $A^3G^{3}_N$ is generated by
    \[
    \left\langle \; \bigcup_{i \in \alpha_{3}} \left( \overline{Y}_i \;\cup\; \bigcup_{j\in\alpha_{12}} \overline{Z}^j_{i,j+n} \,\cup\, \bigcup_{j \in \alpha_3} \overline{Z}^j_{i,j+|\alpha_3|-n} \right) \; \right\rangle \, .
    \]
Taking $A \;=\; \alpha_{36} \,\cup\, \alpha_{12} [ |\alpha_3|+1,|\alpha_{12}| ]$ and $B \;=\; \alpha_{45} \,\cup\, \alpha_{12} [ 1,|\alpha_3| ]$, we recall that the generators of $A^3G^{3}_N$ are $A$-determinate with respect to the collection of homomorphisms $f_j$ below,
    \begin{equation}
    f_j(\overline{g}) \;=\;
        \begin{cases}
        s^{j+|\alpha_{12}|}_j \left( \laproj_{j+|\alpha_{12}|} \left( -\sum_{i \in A} \varphi_i(g_i) \right)\right) & j \in \alpha_{12} [ 1,|\alpha_3| ] \\
        s^{j-n}_j \left( \laproj_{j-n} \left( -\sum_{i \in A} \varphi_i(g_i) \right)\right) & j \in \alpha_{45}
        \end{cases} \, ,
    \end{equation}
as proven in Lemma~\ref{lem:detnstd}. By Lemma~\ref{lem:detgen}, the subgroup $A^3G^{3}_N$ is $A$-determinate. By Lemma~\ref{lem:detinj}, the projection map $\grpproj_{A}|_{A^3G^{3}_N}$ is injective. For each $i \in \alpha_{3}$, the generating set
    \[
    \overline{Y}_i \;\cup\; \bigcup_{j\in\alpha_{12}} \overline{Z}^j_{i,j+n} \,\cup\, \bigcup_{j \in \alpha_3} \overline{Z}^j_{i,j+|\alpha_3|-n}
    \]
gives a copy of the kernel-section generating set of $G_i$ in the $i$th factor. For each $i \in \alpha_{6}$, the generating set
    \[
    \bigcup_{j \in \alpha_3} \overline{Z}^j_{i,j+|\alpha_3|-n}
    \]
gives a copy of $A_{i-n}$ in the ($j+|\alpha_3|-n$)-th factor. Therefore, $\grpproj_{A}|_{A^3G^3_N}$ is also surjective, and so is an isomorphism of $A^3G^{3}_N$ with
    \[
    E_{\alpha_{12} [ |\alpha_3|+1,|\alpha_{12}| ]} \times G_{\alpha_3} \times A_{\alpha_3} \cong G_{\alpha_3} \times A_{\alpha_3} \, .
    \]
Finally, from the isomorphism above and Lemma~\ref{lem:BpatternsNS}, we get the required projection pattern for $A^3G^{3}_N$ as in the fourth row of the table.

Varying the index sets we get the isomorphism type and subgroup patterns for the other five nonstandard double superscript subgroups, namely $A^1G^{1}_N$, $A^2G^{2}_N$, $A^1G^{4}_N$, $A^2G^{5}_N$ and $A^3G^{6}_N$.

{\em (Rows 10 through 12.)} The group $A^{33}_N$ is generated by
    \[
    \left\langle \bigcup_{j \in \alpha_3} \left(\overline{Z}^j_{j,j-|\alpha_{12}|} \;\cup\; \overline{Z}^j_{j,j+n} \right) \right\rangle \, .
    \]
Taking $A \;=\; \alpha_{36} \,\cup\, \alpha_{12} [ |\alpha_3|+1,|\alpha_{12}| ]$ and $B \;=\; \alpha_{45} \,\cup\, \alpha_{12} [ 1,|\alpha_3| ]$, we know that the generators of $A^{33}_N$ are $A$-determinate with respect to the collection of homomorphisms $f_j$ given by
    \begin{equation}
    f_j(\overline{g}) \;=\;
        \begin{cases}
        s^{j+|\alpha_{12}|}_j \left( \laproj_{j+|\alpha_{12}|} \left( -\sum_{i \in A} \varphi_i(g_i) \right)\right) & j \in \alpha_{12} [ 1,|\alpha_3| ] \\
        s^{j-n}_j \left( \laproj_{j-n} \left( -\sum_{i \in A} \varphi_i(g_i) \right)\right) & j \in \alpha_{45}
    \end{cases} \, ,
    \end{equation}
as proven in Lemma~\ref{lem:detnstd}. Then by Lemma~\ref{lem:detgen}, the subgroup $A^{33}_N$ is $A$-determinate. By Lemma~\ref{lem:detinj}, the projection map $\grpproj_{A}|_{A^{33}_N}$ is injective. For each $i \in \alpha_{3}$, the generating set
    \[
    \left\langle \bigcup_{j \in \alpha_3} \left(\overline{Z}^j_{j,j-|\alpha_{12}|} \;\cup\; \overline{Z}^j_{j,j+n} \right) \right\rangle
    \]
gives a copy of $A_i$ in the $i$th factor and for each $i \in \alpha_{6}$, the same generating set gives a copy of $A_{i-n}$ in the ($i-n$)-th factor. Therefore, $\grpproj_{A}|_{A^{33}_N}$ is also surjective, and so is an isomorphism of $A^{33}_N$ with
    \[
    E_{\alpha_{12} [ |\alpha_3|+1,|\alpha_{12}| ]} \times A_{\alpha_3} \times A_{\alpha_3} \cong A_{\alpha_3} \times A_{\alpha_3} \, .
    \]
Finally, from the isomorphism above and Lemma~\ref{lem:BpatternsNS}, we get the required projection pattern for $A^{33}_N$ as in the tenth row of the table.

Varying the index sets gives the isomorphism type and subgroup patterns for $A^{11}_N$ and $A^{22}_N$.
\end{proof}

%%%%%%%%%%%%%%%%%%%%%%%%%%%%%%%%%%%%%%%%%%%%%%%%%%%%%%%%%%%%%%%%%%%%%%%%%%%%%%%%%%%%%%%%%%%%%%%%%%%%%%%%%%%%%%%%%%%%%%%%%%%%%%%%%%%%%%%%%%%%%%%%%%%%%%
\subsection{The standard and non-standard edge subgroups of \texorpdfstring{$\Ker(\Phi)$}{the kernel}}\label{sub:subgroups.4}

In the previous subsections we defined and investigated the twenty-five standard and non-standard face subgroups of $\Ker(\Phi)$. In this subsection we do the same for the twenty-one standard and non-standard edge subgroups. 

\begin{defn}[The standard and non-standard edge subgroups of $\Ker(\Phi)$]{\label{def:edgegroups}} There are four families of edge subgroups of $\Ker(\Phi)$.

The first family of edge groups consists of non-standard groups which are isomorphic to a given $G_{\alpha_j}$ for $j \in \{1, \ldots , 6\}$. We shall see in Section~\ref{sec:con-algtri} that the particular type of non-standard linear algebra in a given subgroup is inherited from that of the non-standard face group whose face contains the given edge. Meanwhile, here are the definitions. 
    \begin{align*}
    G^1_N \;&=\; \left\langle \; \bigcup_{i\in\alpha_1} \left( \overline{Y}_i \;\cup\; \bigcup_{j\in\alpha_{23}} \overline{Z}^{j^*}_{i,j}  \,\cup\, \bigcup_{j\in\alpha_1} \overline{Z}^j_{i,j+|\alpha_{1234}|} \right) \; \right\rangle \, , \\
    G^2_N \;&=\; \left\langle \; \bigcup_{i\in\alpha_2} \left( \overline{Y}_i \;\cup\; \bigcup_{j\in\alpha_{16}} \overline{Z}^{j^*}_{i,j} \;\cup\; \bigcup_{j\in\alpha_2} \overline{Z}^j_{i,j+|\alpha_2|} \right) \; \right\rangle \, , \\
    G^3_N \;&=\; \left\langle \bigcup_{i \in \alpha_3} \left( \overline{Y}_i \;\cup\; \bigcup_{j\in\alpha_{45}} \overline{Z}^{j^*}_{i,j}  \;\cup\; \bigcup_{j \in \alpha_3} \overline{Z}^j_{i,j-|\alpha_{12}|} \right) \right\rangle \, , \\
    G^4_N \;&=\; \left\langle \; \bigcup_{i\in\alpha_4} \left( \overline{Y}_i \;\cup\; \bigcup_{j\in\alpha_{23}} \overline{Z}^{j^*}_{i,j}  \,\cup\, \bigcup_{j\in\alpha_1} \overline{Z}^j_{i,j+|\alpha_{1234}|} \right) \; \right\rangle \, , \\
    G^5_N \;&=\; \left\langle \; \bigcup_{i\in\alpha_5} \left( \overline{Y}_i \;\cup\; \bigcup_{j\in\alpha_{16}} \overline{Z}^{j^*}_{i,j} \;\cup\; \bigcup_{j\in\alpha_2} \overline{Z}^j_{i,j+|\alpha_2|} \right) \; \right\rangle \, , \; \mathrm{and} \\
    G^6_N \;&=\; \left\langle \bigcup_{i\in \alpha_6} \left( \overline{Y}_i \;\cup\; \bigcup_{j\in\alpha_{45}} \overline{Z}^{j^*}_{i,j}  \;\cup\; \bigcup_{j \in \alpha_3} \overline{Z}^j_{i,j-|\alpha_{12}|} \right) \right\rangle \, .
    \end{align*}

The second family of edge groups consists of standard subgroups which are isomorphic to $G_{\alpha_i}$ for $i \in \{1, \ldots , 6\}$. Specifically,
    \begin{align*}
    G^1 \;&=\; \left\langle \bigcup_{i \in \alpha_1}\left(\overline{Y}_i \, \cup \, \bigcup_{j \in \alpha_{234}}\overline{Z}^{j^\ast}_{i,j} \right) \right\rangle \, , \\
    G^2 \;&=\; \left\langle \bigcup_{i \in \alpha_2}\left(\overline{Y}_i \, \cup \, \bigcup_{j \in \alpha_{156}}\overline{Z}^{j^\ast}_{i,j} \right) \right\rangle \, , \\
    G^3 \;&=\; \left\langle \bigcup_{i \in \alpha_3}\left(\overline{Y}_i \, \cup \, \bigcup_{j \in \alpha_{456}}\overline{Z}^{j^\ast}_{i,j} \right) \right\rangle \, , \\
    G^4 \;&=\; \left\langle \bigcup_{i \in \alpha_4}\left(\overline{Y}_i \, \cup \, \bigcup_{j \in \alpha_{123}}\overline{Z}^{j^\ast}_{i,j} \right) \right\rangle \, , \\
    G^5 \;&=\; \left\langle \bigcup_{i \in \alpha_5}\left(\overline{Y}_i \, \cup \, \bigcup_{j \in \alpha_{126}}\overline{Z}^{j^\ast}_{i,j} \right) \right\rangle \, , \; \mathrm{and} \\
    G^6 \;&=\; \left\langle \bigcup_{i \in \alpha_6}\left(\overline{Y}_i \, \cup \, \bigcup_{j \in \alpha_{345}}\overline{Z}^{j^\ast}_{i,j} \right) \right\rangle \, .
    \end{align*}

The third family of edge subgroups are copies of $A_{\alpha_i}$ for $i \in \{1,2,3\}$. They are non-standard subgroups whose elements convert standard linear algebra into a given non-standard from or vise versa. In the notation for these subgroups, the $\alpha_i$ is indicated by a superscript $i$ and the standard to non-standard conversion are included in parentheses. These are:
    \begin{align*}
    A^1_N(1,5) \;&=\; \left\langle \bigcup_{j \in \alpha_1} \overline{Z}^j_{j,j+|\alpha_{1234}|} \right\rangle \, , \\
    A^1_N(4,5) \;&=\; \left\langle \bigcup_{j \in \alpha_4} \overline{Z}^{j^*}_{j,j+|\alpha_4|} \right\rangle \, , \\
    A^2_N(2,3) \;&=\; \left\langle \bigcup_{j \in \alpha_2} \overline{Z}^j_{j,j+|\alpha_2|} \right\rangle \, , \\
    A^2_N(3,5) \;&=\; \left\langle \bigcup_{j \in \alpha_5} \overline{Z}^{j^*}_{j,j-|\alpha_{34}|} \right\rangle \;=\; \left\langle \bigcup_{j \in \alpha_5} \overline{Z}^{j^*}_{j,j+|\alpha_{1256}|} \right\rangle \, , \\
    A^3_N(1,3) \;&=\; \left\langle \bigcup_{j \in \alpha_3} \overline{Z}^j_{j,j-|\alpha_{12}|} \right\rangle \;=\; \left\langle \bigcup_{j \in \alpha_3} \overline{Z}^j_{j,j+|\alpha_{3456}|} \right\rangle \, , \; \mathrm{and} \\
    A^3_N(1,6) \;&=\; \left\langle  \bigcup_{j \in \alpha_6}\overline{Z}^{j^*}_{j,j-|\alpha_{12345}|} \right\rangle \;=\; \left\langle \bigcup_{j \in \alpha_6}\overline{Z}^{j^*}_{j,j+|\alpha_6|} \right\rangle \, . 
    \end{align*}

Finally, the fourth family of edge subgroups are also copies of $A_{\alpha_i}$ for $i \in \{1, 2, 3\}$; however, these are standard subgroups (in contrast to the subgroups in the third class) whose elements convert standard linear algebra between its two different forms. The notation is similar to that for the third class. For each $i \in \{1, 2, 3\}$, we have
    \[
    A^i(i,i+3) \;=\; \left\langle \bigcup_{j \in \alpha_i} \overline{Z}^j_{j,j+n} \right\rangle \, .
    \]
\end{defn}

We record the isomorphism type, subgroup pattern and linear algebra encoding of the edge subgroups defined in Definition~\ref{def:edgegroups} in the following proposition. 

\begin{prop}[Table of the edge subgroups of $\Ker(\Phi)$]\label{lem:edge-subgroups}
The following table summarizes the isomorphism type, subgroup pattern, and linear algebra encoding for each of the twenty-one standard and non-standard edge subgroups of $\Ker(\Phi)$
    \begin{center}
        \begin{longtblr}{
            colspec = {|c|c|c|c|},
            rowhead = 1, rows={abovesep=5pt,belowsep=5pt},
        }
        \hline
        {Subgroup \\ Label} & {Isomorphism \\ Type} & {Subgroup \\ Pattern} & {Linear \\ Algebra} \\
        \hline
        \hline
        $G^1$ & $G_{\alpha_1}$ & $(G_{\alpha_1}, A^{\alpha_2}_{2}, A^{\alpha_3}_3, A^{\alpha_1}_4, E, E )$ & $\begin{pmatrix} 1 & 2 & 3 \\ 4 & 2 & 3 \end{pmatrix}$ \\
        \hline
        $G^2$ & $G_{\alpha_2}$ & $(A^{\alpha_1}_{1}, G_{\alpha_2}, E, E, A^{\alpha_2}_5, A^{\alpha_3}_{6})$ & $\begin{pmatrix} 1 & 2 & 3 \\ 1 & 5 & 6 \end{pmatrix}$ \\
        \hline
        $G^3$ & $G_{\alpha_3}$ & $(E, E, G_{\alpha_3}, A^{\alpha_1}_4, A^{\alpha_2}_{5}, A^{\alpha_3}_6)$ & $\begin{pmatrix} 1 & 2 & 3 \\ 4 & 5 & 6 \end{pmatrix}$ \\
        \hline
        $G^4$ & $G_{\alpha_4}$ & $(A^{\alpha_1}_1, A^{\alpha_2}_2, A^{\alpha_3}_3, G_{\alpha_4}, E, E)$ & $\begin{pmatrix} 1 & 2 & 3 \\ 1 & 2 & 3 \end{pmatrix}$ \\
        \hline
        $G^5$ & $G_{\alpha_5}$ & $(A^{\alpha_1}_{1}, A^{\alpha_2}_2, E, E,  G_{\alpha_5}, A^{\alpha_3}_{6})$  & $\begin{pmatrix} 1 & 2 & 3 \\ 1 & 2 & 6 \end{pmatrix}$ \\
        \hline
        $G^6$ & $G_{\alpha_6}$ & $(E, E, A^{\alpha_3}_{3}, A^{\alpha_1}_4, A^{\alpha_2}_5, G_{\alpha_6})$ & $\begin{pmatrix} 1 & 2 & 3 \\ 4 & 5 & 3 \end{pmatrix}$ \\
        \hline  
        $G^{1}_N$ & $G_{\alpha_1}$ & $(G_{\alpha_1}, A^{\alpha_2}_2, A^{\alpha_3}_{3}, E, A^{\alpha_1}_5 E)$ & $\begin{pmatrix} \dot{1} & 2 & 3 \\ 5 & 2 & 3 \end{pmatrix}$ \\
        \hline
        $G^{2}_N$ & $G_{\alpha_2}$ & $(A^{\alpha_1}_1, G_{\alpha_2}, A^{\alpha_2}_3 E, E, A^{\alpha_3}_{6})$ & $\begin{pmatrix} 1 & \dot{2} & 3 \\ 1 & 3 & 6 \end{pmatrix}$ \\
        \hline
        $G^{3}_N$ & $G_{\alpha_3}$ & $(A^{\alpha_3}_1 E, G_{\alpha_3}, A^{\alpha_1}_4, A^{\alpha_2}_5, E)$ & $\begin{pmatrix} 1 & 2 & \dot{3} \\ 4 & 5 & 1 \end{pmatrix}$ \\
        \hline
        $G^4_N$ & $G_{\alpha_4}$ & $(E, A^{\alpha_2}_2, A^{\alpha_3}_{3}, G_{\alpha_4}, A^{\alpha_1}_5 E)$ & $\begin{pmatrix} \dot{1} & 2 & 3 \\ 5 & 2 & 3 \end{pmatrix}$ \\
        \hline
        $G^5_N$ & $G_{\alpha_5}$ & $(A^{\alpha_1}_1, E, A^{\alpha_2}_3 E, G_{\alpha_5}, A^{\alpha_3}_6)$ & $\begin{pmatrix} 1 & \dot{2} & 3 \\ 1 & 3 & 6 \end{pmatrix}$ \\
        \hline
        $G^6_N$ & $G_{\alpha_6}$ & $(A^{\alpha_3}_1 E, E, A^{\alpha_1}_{4}, A^{\alpha_2}_{5}, G_{\alpha_6})$ & $\begin{pmatrix} 1 & 2 & \dot{3} \\ 4 & 5 & 1 \end{pmatrix}$ \\
        \hline
        $A^1_N(1,5)$ & $A_{\alpha_1}$ & $(A^{\alpha_1}_1, E, E, E, A^{\alpha_1}_5 E)$ & $\begin{pmatrix} \dot{1} \\ 1/5 \end{pmatrix}$ \\
        \hline
        $A^1_N(4,5)$ & $A_{\alpha_1}$ & $(E, E, E, A^{\alpha_1}_4, A^{\alpha_1}_5 E)$ & $\begin{pmatrix} \dot{1} \\ 4/5 \end{pmatrix}$ \\
        \hline
        $A^1(1,4)$ & $A_{\alpha_1}$ & $(A^{\alpha_1}_1, E, E, A^{\alpha_1}_4, E, E)$ & $\begin{pmatrix} 1 \\ 1/4 \end{pmatrix}$ \\
        \hline
        $A^2_N(2,3)$ & $A_{\alpha_2}$ & $(E, A^{\alpha_2}_2, A^{\alpha_2}_3 E, E, E)$ & $\begin{pmatrix} \dot{2} \\ 2/3 \end{pmatrix}$ \\
        \hline
        $A^2_N(3,5)$ & $A_{\alpha_2}$ & $(E, E, A^{\alpha_2}_3, E, A^{\alpha_2}_5 E)$ & $\begin{pmatrix} \dot{2} \\ 3/5 \end{pmatrix}$ \\
        \hline
        $A^2(2,5)$ & $A_{\alpha_2}$ & $(E, A^{\alpha_2}_2, E, E, A^{\alpha_2}_5, E)$ & $\begin{pmatrix} 2 \\ 2/5 \end{pmatrix}$ \\
        \hline
        $A^3_N(1,3)$ & $A_{\alpha_3}$ & $(A^{\alpha_3}_1 E, A^{\alpha_3}_3, E, E, E)$ & $\begin{pmatrix} \dot{3} \\ 1/3 \end{pmatrix}$ \\
        \hline
        $A^3_N(1,6)$ & $A_{\alpha_3}$ & $(A^{\alpha_3}_1 E, E, E, E, A^{\alpha_3}_6)$ & $\begin{pmatrix} \dot{3} \\ 1/6 \end{pmatrix}$ \\
        \hline
        $A^3(3,6)$ & $A_{\alpha_3}$ & $(E, E, A^{\alpha_3}_3, E, E, A^{\alpha_3}_6)$ & $\begin{pmatrix} 3 \\ 3/6 \end{pmatrix}$ \\
        \hline
        \end{longtblr}
    \end{center}
\end{prop}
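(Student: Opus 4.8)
The plan is to run, subgroup by subgroup, exactly the three-step template used in the proofs of Proposition~\ref{thm:table_std} and Proposition~\ref{thm:table_nonstd}: establish $A$-determinacy for a suitable index set $A$ of cardinality $n$, apply Lemma~\ref{lem:detinj} to get injectivity of $\grpproj_A$ on the subgroup, and then read off the isomorphism type, the subgroup pattern, and (by Definition~\ref{def:lin_algebra}, essentially for free) the linear algebra encoding. The observation that makes this efficient is that each of the twenty-one edge subgroups is, on the nose, the sub-collection of the generating set of one of the twenty-five face subgroups (or of $\mathcal{L}$) obtained by restricting the outer index from a pair or triple of $\alpha$-labels down to a single $\alpha_i$. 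Explicitly: the standard edge group $G^i$ is the $\alpha_i$-indexed part of the standard face group $G^{pqr}$ whose superscript triple is cyclically complementary to $\{i\}$ (so $G^1\subseteq G^{156}$, $G^2\subseteq G^{234}$, $G^3\subseteq G^{123}$, $G^4\subseteq G^{456}$, $G^5\subseteq G^{345}$, $G^6\subseteq G^{126}$); the non-standard edge group $G^i_N$ is the $\alpha_i$-indexed part of one of the non-standard face groups $G^{14}_N$, $G^{25}_N$, $G^{36}_N$; each conversion group $A^i_N(a,b)$ is a single-block sub-collection of the generators of one of $A^1G^1_N,\dots,A^3G^6_N$ or of $A^{11}_N, A^{22}_N, A^{33}_N$; and $A^i(i,i+3)$ is the $\alpha_i$-indexed part of the generating set of $\mathcal{L}$.

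Given these inclusions the first step is immediate. Because $A$-determinacy with respect to a fixed family of homomorphisms $f_j$ is inherited by any subgroup generated by a subset of an $A$-determinate generating set (Lemma~\ref{lem:detgen}), each edge subgroup is $A$-determinate for the very index set $A$ and the very homomorphisms $f_j$ already recorded for its containing face subgroup in Lemma~\ref{lem:detstd} or Lemma~\ref{lem:detnstd} (and for $\mathcal{L}$ in the last family). Hence $\grpproj_A$ restricted to the edge subgroup is injective by Lemma~\ref{lem:detinj}. Identifying the image is then pure bookkeeping: among the coordinates in $A$, the single block $\alpha_i$ over which the outer index now ranges receives, for each index in that block, a copy of the full kernel--section generating set of the corresponding factor group (hence the whole factor $G_{\alpha_i}$ for the first two families, or the whole free abelian factor $A_{\alpha_i}$ for the last two, where the surviving generators are all of $\overline{Z}$-type), while the remaining coordinates of $A$ receive only the identity, since the generators we discarded were precisely the ones that populated them. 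This determines the isomorphism type in the second column of the table.

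For the third and fourth columns, post-compose the isomorphism $\grpproj_A|_H$ with the projections $\grpproj_{\alpha_k}$ for $1\le k\le 6$ (or with the amalgamated $\grpproj_{\alpha_{12}}$, $\grpproj_{\alpha_{34}}$, $\grpproj_{\alpha_{56}}$ where the table uses a $5$-tuple). On the block coordinates lying inside $A$ one simply reads off $G_{\alpha_i}$ in the distinguished block and $E$ elsewhere. On the coordinates in the complement $B$ one is carrying out exactly the computation of Lemma~\ref{lem:Bpatterns} and Lemma~\ref{lem:BpatternsNS}, except with the union over $i\in A$ replaced by the union over the single nonempty block; the proofs of those lemmas go through unchanged under this restriction, since all that is used is that the block is nonempty. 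The conclusion is that the edge subgroup inherits the pattern of its containing face subgroup verbatim, with the entries that recorded the ``extra'' $G_{\alpha}$-factors turned into $E$ and the corresponding duplicate $A^\alpha$-slots dropped. Since the linear algebra encoding is by Definition~\ref{def:lin_algebra} nothing but a transcription of the positions of the $A^{\alpha_i}$-entries in the pattern, the fourth column now follows mechanically; it is the containing face group's encoding with the removed columns or slashed entries deleted.

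I expect the difficulty to be organizational rather than conceptual, concentrated in two small points. First, for the standard edge groups $G^i$ the target isomorphism type $G_{\alpha_i}$ may involve fewer than $n$ coordinates; the right $A$ to use is still the cardinality-$n$ cyclically convex set attached to the containing face group in Lemma~\ref{lem:detstd} (for instance $A=\alpha_{156}$ for $G^1$), and one must note that $G^i$ projects trivially onto the two extra $\alpha$-blocks inside that $A$. Second, for the conversion groups $A^i_N(a,b)$ one must track which coordinate slot each surviving $\overline{Z}$-generator sends its ``$-b$'' entry into -- this is the standard/non-standard distinction flagged by the dots and $/$-separators in the encoding -- but this data is already fixed by the formulas in Definition~\ref{def:edgegroups}, including the two equivalent presentations given there for $A^2_N(3,5)$, $A^3_N(1,3)$ and $A^3_N(1,6)$. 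Beyond this the proof is a routine pass through the twenty-one rows; it suffices to write out one representative case from each of the four families -- say $G^1$, $G^1_N$, $A^1_N(1,5)$, and $A^1(1,4)$ -- and to observe that the others follow by permuting the index sets.
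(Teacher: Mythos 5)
Your proposal is correct and follows essentially the route the paper intends: the paper's own proof of this proposition simply states that it parallels Propositions~\ref{thm:table_std} and~\ref{thm:table_nonstd}, i.e.\ the same $A$-determinacy argument via Lemmas~\ref{lem:detgen} and~\ref{lem:detinj} followed by the $B$-pattern bookkeeping, which is exactly what you carry out. Your organizing observation that each edge group's generators form a subset of a face group's generators (so determinacy is inherited) is the same mechanism the paper records in Lemma~\ref{lem:edgeface}, so there is nothing further to add.
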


\begin{proof}
    The proof parallels those of Propositions~\ref{thm:table_std} and ~\ref{thm:table_nonstd}, and is left to the reader.
\end{proof}

In Section~\ref{sec:con-algtri}, the face and edge subgroups above will be assigned to various faces and edges of a decorated 2-complex called the {\em algebraic triangle}. See Figure~\ref{fig:tri-2cell-label} and Figure~\ref{fig:tri-1cell-label} below for details. The next lemma shows that each edge group is a subgroup of the adjacent 2-cell groups.

\begin{lem}[Edge groups are subgroups of adjacent face groups]\label{lem:edgeface}
Each edge group in Definition~\ref{def:edgegroups} is a subgroup of the corresponding adjacent face groups of Definition~\ref{defn:stdsub} and Definition~\ref{defn:nonstdsub}. 
\end{lem}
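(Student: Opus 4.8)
The plan is to check the containment one pair at a time --- one edge group together with one of its adjacent face groups --- for the adjacency of faces and edges of the algebraic triangle that is to be fixed in Section~\ref{sec:con-algtri} (see Figures~\ref{fig:tri-2cell-label} and \ref{fig:tri-1cell-label}). Every subgroup involved is given in Definitions~\ref{defn:stdsub}, \ref{defn:nonstdsub} and \ref{def:edgegroups} by an explicit generating set of $\overline{Y}_i$'s and $\overline{Z}^{j^*}_{i,k}$'s, so it suffices to show that each generator of the edge group lies in the ambient face group.

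First I would dispose of the bulk of the cases by direct comparison of generating sets. Writing an edge group $H$ and an adjacent face group $K$ side by side, the set of factor indices $i$ and section-lift coordinates $k$ over which the generators of $H$ range is a subset of the corresponding index data for $K$, and, by the standing modular conventions, the superscript $j^*$ attached to a given coordinate is forced to be the same in $H$ as in $K$; hence the generating set of $H$ is literally a subset of that of $K$ and there is nothing more to prove. The content of this step is the bookkeeping assertion that the inclusion of index data really does hold for every adjacent pair, and I would organize it by comparing the linear-algebra-encoding columns recorded in Propositions~\ref{thm:table_std}, \ref{thm:table_nonstd} and \ref{lem:edge-subgroups}: $H$ is adjacent to $K$ only when the column of $K$'s encoding that records a given $A^{\alpha_i}$ already contains the coordinate (or coordinates) appearing in the corresponding column for $H$.

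The cases not settled by a literal inclusion are the $A^i$- and $A^i_N$-type edge groups that abut a double-superscript face group whose encoding has a slash, i.e.\ one that records the relevant $A^{\alpha_i}$ in two coordinates $j$ and $k$, neither of which need be a factor index of the edge group. For such a pair, a generator $\overline{z} = \overline{Z}^{j^*}_{j,k}$ of the edge group is not itself a generator of $K$, but it equals the two-term product $\bigl(\overline{Z}^{j^*}_{i,j}\bigr)^{-1}\overline{Z}^{j^*}_{i,k}$ for any factor index $i$ in the index set of $K$ --- both factors on the right are generators of $K$ and the $i$-coordinate cancels --- so $\overline{z} \in K$. The conceptually cleaner way to see the same thing is that, by Propositions~\ref{thm:table_std} and \ref{thm:table_nonstd} together with Lemma~\ref{lem:detinj}, each face group $K$ is precisely the set of elements that are $A_K$-determinate with respect to the homomorphisms $f_j$ of Lemmas~\ref{lem:detstd} and \ref{lem:detnstd} and whose $\grpproj_{A_K}$-image lands in the product listed for $K$; then $H \leq K$ follows from Lemma~\ref{lem:detgen} applied to the generators of $H$ --- whose factor indices lie in $A_K$, so the two-case computation of Lemmas~\ref{lem:detstd} and \ref{lem:detnstd} applies verbatim --- together with the containment $\grpproj_{A_K}(H) \subseteq \grpproj_{A_K}(K)$, which is read off immediately from the generators.

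The hard part is not any single deduction but the exhaustiveness of the check over all twenty-one edge groups and their adjacent faces; the genuinely error-prone point is verifying, in the non-standard cases, that the ``parking coordinate'' ($j+|\alpha_{1234}|$, $j+|\alpha_2|$, or $j-|\alpha_{12}|$, according to the residue class mod $3$) used to define a given edge group coincides on the nose with the one used in the adjacent non-standard face group, since a single mismatch in those modular shifts would break the inclusion. I would therefore run the verification grouped by the three residue classes mod $3$, in parallel with Definition~\ref{defn:nonstdsub} and Lemma~\ref{lem:detnstd}.
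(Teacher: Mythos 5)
Your proposal is correct and follows essentially the same route as the paper: its proof likewise verifies, generator by generator, that each edge-group generator either appears in, or is a short word in, the generating set of each adjacent face group, carrying out only the representative inclusions $G^3 \leq A^3G^3_N$ and $G^3 \leq G^{123}$ and leaving the remaining pairs to the reader. Your two-term identity $\overline{Z}^{j^*}_{j,k} = \bigl(\overline{Z}^{j^*}_{i,j}\bigr)^{-1}\overline{Z}^{j^*}_{i,k}$ is exactly the ``is a word in'' device the paper invokes implicitly for the $A$-type edge groups abutting the slash-encoded face groups, so your write-up simply makes explicit a step the paper leaves to the reader.
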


\begin{proof}
To see this, one needs to simply verify that each generator for a given edge group appears (or is a word in) the generating set of the corresponding adjacent face groups. By way of example, we show that $G^3 \leq A^3G^3_N$ and $G^3 \leq G^{123}$ explicitly below, and leave the other verifications to the reader. 

The generating set for $G^3$ is
    \[
    \bigcup_{i \in \alpha_3}\left( \overline{Y}_i \,\cup\, \bigcup_{j \in \alpha_{456}}\overline{Z}^{j^\ast}_{i,j} \right) \, ,
    \]
which is a subset of the generating set 
    \[
    \bigcup_{i \in \alpha_3}\left( \overline{Y}_i \,\cup\, \bigcup_{j \in \alpha_{456}}\overline{Z}^{j^\ast}_{i,j} \,\cup\, \bigcup_{j \in \alpha_3} \overline{Z}^j_{i, j-|\alpha_{12}|} \right)
    \]
for $A^3G^3_N$ of Definition~\ref{defn:nonstdsub}. This establishes the inclusion $G^3 \leq A^3G^3_N$.

Similarly, the generating set for $G^3$ is also seen to be a subset of the generating set 
    \[
    \bigcup_{i \in \alpha_{123}}\left( \overline{Y}_i \,\cup\, \bigcup_{j \in \alpha_{456}}\overline{Z}^{j^\ast}_{i,j} \right)
    \]
for the group $G^{123}$ of Definition~\ref{defn:stdsub}, establishing the reverse inclusion. 
\end{proof}

\begin{rem}[Non-standard and standard edge groups contained in the $AG_N$ face groups]
In the proof of Lemma~\ref{lem:edgeface} above, it was shown that the non-standard group $A^3G^3_N$ contains the standard edge group $G^3$. The reader can verify that $A^3G^3_N$ also contains the non-standard edge group $G^3_N$ as a subgroup. The other edge group of $A^3G^3_N$ is $A^3_N(1,6)$ which converts between non-standard $\begin{pmatrix} \dot{3} \\ 1 \end{pmatrix}$ and the standard $\begin{pmatrix} 3 \\ 6 \end{pmatrix}$ linear algebra. This dual standard/non-standard nature of $A^3G^3_N$ (and of the other $AG_N$ subgroups) enables one to interpolate between the non-standard corner cells of the algebraic triangle and the standard cells in the central region -- see Figure~\ref{fig:tri-2cell-label} for details. 
\end{rem}

% \begin{rem}[Subgroup lattice structure]
% We note without proof that the non-standard single-index subgroups may have instead been obtained by intersecting the non-standard double-index subgroups with standard subgroups as follows.
%     \begin{align*}
%     G^1_N \;&=\; G^{14}_N \cap G^{456} \, , \\
%     G^2_N \;&=\; G^{25}_N \cap G^{234} \, , \\
%     G^3_N \;&=\; G^{36}_N \cap G^{123} \, , \\
%     G^4_N \;&=\; G^{14}_N \cap G^{156} \, , \\
%     G^5_N \;&=\; G^{25}_N \cap G^{345} \, , \\
%     G^6_N \;&=\; G^{36}_N \cap G^{126} \, , \\
%     A^1_N \;&=\; G^{456} \cap G^{14}_N \cap G^{156} \, , \\
%     A^2_N \;&=\; G^{234} \cap G^{25}_N \cap G^{345} \, , \text{ and} \\
%     A^3_N \;&=\; G^{126} \cap G^{36}_N \cap G^{123} \, .
%     \end{align*}
% Furthermore, one may have defined the following standard subgroups via intersections of the subgroups of Definition~\ref{defn:stdsub}.
%     \begin{align*}
%     \mathcal{L} \;&=\; G^{12} \cap G^{34} \cap G^{56} \, , \\
%     G^{23} \;&=\; G^{123} \cap G^{234} \, , \\
%     G^{45} \;&=\; G^{345} \cap G^{456} \, , \text{ and} \\
%     G^{16} \;&=\; G^{156} \cap G^{126} \, .
%     \end{align*}
% \end{rem}

%%%%%%%%%%%%%%%%%%%%%%%%%%%%%%%%%%%%%%%%%%%%%%%%%%%%%%%%%%%%%%%%%%%%%%%%%%%%%%%%%%%%%%%%%%%%%%%%%%%%%%%%%%%%%%%%%%%%%%%%%%%%%%%%%%%%%%%%%%%%%%%%%%%%%%
\section{Construction of the algebraic triangle}\label{sec:con-algtri}

In this section, we define and investigate properties of the {\em algebraic triangle}. The algebraic triangle is a decorated 2-complex consisting of a triangle which is subdivided into twenty-five subregions (as shown in Figure~\ref{fig:tri-2cell-label}), together with an assignment of particular subgroups of $\Ker(\Phi)$ (those defined in Section~\ref{sec:subgroups}) to the 1-cells and 2-cells of the 2-complex. 
The algebraic triangle provides a template for building certain special van Kampen diagrams in $\Ker(\Phi)$, which in turn tesselate (via a triangular Farey diagram construction) a van Kampen diagram for a general word in the generators of $\Ker(\Phi)$ which represents the identity.

\begin{rem}[Dropping bar notation for elements of $G = \prod_{j=1}^{2n}G_j$] 
To aid in readability of expressions, we drop the bar notation for elements of $G = \prod_{j=1}^{2n}G_j$ from this point on. So, for example,  an element $a \in \Ker(\Phi)$ is  a $2n$-tuple
$$
a \; =\; (a_1, \ldots , a_{2n}). 
$$

We retain the bar notation for sets of vectorized kernel- and section-generators as before; namely, $\overline{Y}_i$ and $\overline{Z}^j_{i,k}$.
\end{rem}

\begin{defn}[Vector kernel-section generators of $\Ker(\Phi)$]\label{def:kersecgens}
Define $\overline{X}$ to be the union of the generating sets for each of the twenty-five face subgroups of $\Ker(\Phi)$. In Corollary~\ref{cor:kerfingen}, we show that $\overline{X}$ generates $\Ker(\Phi)$.
\end{defn}

\begin{defn}[Algebraic triangle]{\label{def:abstract-alg-tri}}
Given integers $m \geq n \geq 3$, $2n$ groups $G_i$, and homomorphisms $\varphi_i: G_i \to \Z^{m}$,  satisfing the conditions of the basic set up in  Definition~\ref{def:setup}, the {\em algebraic triangle} corresponding to this data consists of the following:
    \begin{itemize}
    \item a $2$-dimensional disk, given a combinatorial $2$-complex structure with thirty-six $0$-cells, sixty $1$-cells, and twenty-five $2$-cells, as shown in Figure~\ref{fig:tri-2cell-label} below, and   
    \item a labeling of each $1$-cell (resp.\ $2$-cell) by a specific subgroup of $\Ker(\Phi)$ from Section~\ref{sec:subgroups}, as shown in Figure~\ref{fig:tri-1cell-label} (resp.\ Figure~\ref{fig:tri-2cell-label}) below.
    \end{itemize}
\end{defn}

The algebraic triangle is a decorated 2-complex which is used to build van Kampen diagrams for special types of loops in $\Ker(\Phi)$ as follows. 

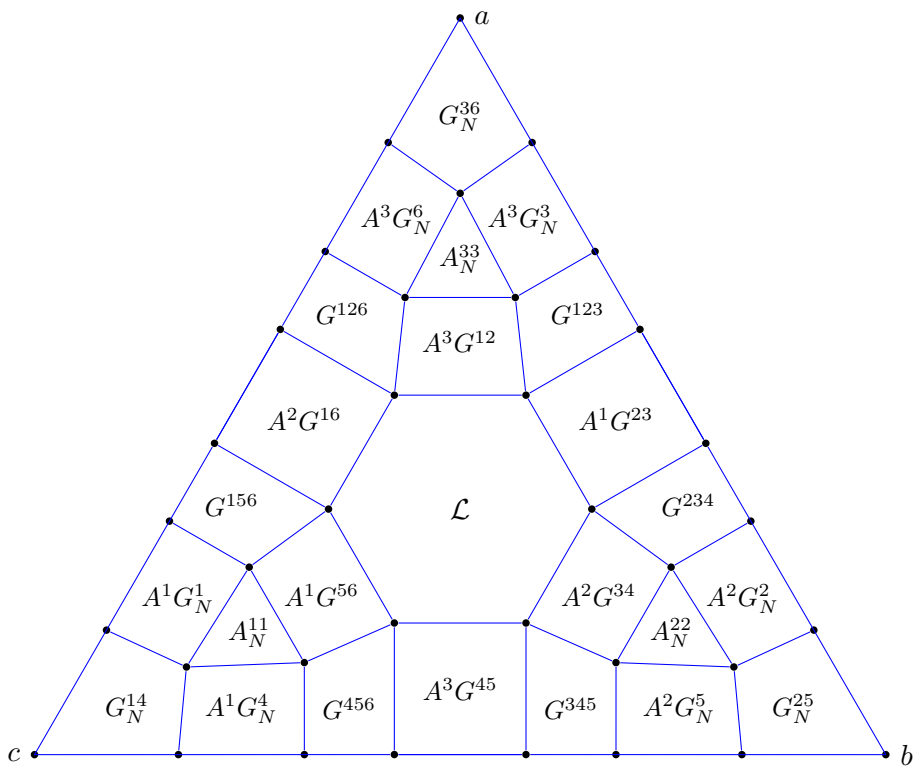
\begin{figure}[h]
    \begin{tikzpicture}[scale = 1.75]
    \foreach \i in {0,...,5}{
    \node[auto=center,style={circle,fill=black},inner sep=1pt] (hexagon-\i) at ($({\i*60}:1)$){};
    }
        
    \node[auto=center,style={circle,fill=black},inner sep=1pt] (A1) at ($(hexagon-1)+(30:1)$){};
    \node[auto=center,style={circle,fill=black},inner sep=1pt] (A2) at ($(hexagon-2)+(150:1)$){};
    \node[auto=center,style={circle,fill=black},inner sep=1pt] (B1) at ($(hexagon-3)+(150:1)$){};
    \node[auto=center,style={circle,fill=black},inner sep=1pt] (B2) at ($(hexagon-4)+(270:1)$){};
    \node[auto=center,style={circle,fill=black},inner sep=1pt] (C1) at ($(hexagon-5)+(270:1)$){};
    \node[auto=center,style={circle,fill=black},inner sep=1pt] (C2) at ($(hexagon-0)+(30:1)$){};
    
    \draw [name path=B1--A2,color=blue] (B1) -- (A2);
    \draw [name path=C2--A1,color=blue] (C2) -- (A1);
    \coordinate (pointA) at (intersection of B1--A2 and C2--A1);
    \node[auto=center,style={circle,fill=black},inner sep=1pt,label=right:$a$] at (pointA) {};
    
    \draw [name path=A2--B1,color=blue] (A2) -- (B1);
    \draw [name path=C1--B2,color=blue] (C1) -- (B2);
    \coordinate (pointB) at (intersection of A2--B1 and C1--B2);
    \node[auto=center,style={circle,fill=black},inner sep=1pt,label=left:$c$] at (pointB) {};
    
    \draw [name path=B2--C1,color=blue] (B2) -- (C1);
    \draw [name path=A1--C2,color=blue] (A1) -- (C2);
    \coordinate (pointC) at (intersection of B2--C1 and A1--C2);
    \node[auto=center,style={circle,fill=black},inner sep=1pt,label=right:$b$] at (pointC) {};
    
    \draw [color=blue] (hexagon-0) -- (hexagon-1);
    \draw [color=blue] (hexagon-2) -- (hexagon-3);
    \draw [color=blue] (hexagon-4) -- (hexagon-5);
    
    \node[auto=center,style={circle,fill=black},inner sep=1pt] (AB1) at ($(pointA)!{0.4}!(A2)$){};
    \node[auto=center,style={circle,fill=black},inner sep=1pt] (AB2) at ($(pointA)!{0.75}!(A2)$){};
    \node[auto=center,style={circle,fill=black},inner sep=1pt] (AC1) at ($(pointA)!{0.4}!(A1)$){};
    \node[auto=center,style={circle,fill=black},inner sep=1pt] (AC2) at ($(pointA)!{0.75}!(A1)$){};
    \node[auto=center,style={circle,fill=black},inner sep=1pt] (AIL) at ($(AB2)+(-30:0.7)$){};
    \node[auto=center,style={circle,fill=black},inner sep=1pt] (AIR) at ($(AC2)+(210:0.7)$){};
    \node[auto=center,style={circle,fill=black},inner sep=1pt] (AIU) at ($(pointA)+(270:1.333)$){};
    
    \draw[color=blue] (pointA) -- (A2) -- (hexagon-2) -- (hexagon-1) -- (A1) -- (pointA);
    \draw[color=blue] (AB2) -- (AIL) -- (AIR) -- (AC2);
    \draw[color=blue] (AB1) -- (AIU) -- (AC1);
    \draw[color=blue] (hexagon-2) -- (AIL) -- (AIU) -- (AIR) -- (hexagon-1);
    
    \node[auto=center,style={circle,fill=black},inner sep=1pt] (BA1) at ($(pointB)!{0.4}!(B1)$){};
    \node[auto=center,style={circle,fill=black},inner sep=1pt] (BA2) at ($(pointB)!{0.75}!(B1)$){};
    \node[auto=center,style={circle,fill=black},inner sep=1pt] (BC1) at ($(pointB)!{0.4}!(B2)$){};
    \node[auto=center,style={circle,fill=black},inner sep=1pt] (BC2) at ($(pointB)!{0.75}!(B2)$){};
    \node[auto=center,style={circle,fill=black},inner sep=1pt] (BIR) at ($(BA2)+(-30:0.7)$){};
    \node[auto=center,style={circle,fill=black},inner sep=1pt] (BIL) at ($(BC2)+(90:0.7)$){};
    \node[auto=center,style={circle,fill=black},inner sep=1pt] (BIU) at ($(pointB)+(30:1.333)$){};
    
    \draw[color=blue] (pointB) -- (B2) -- (hexagon-4) -- (hexagon-3) -- (B1) -- (pointB);
    \draw[color=blue] (BA2) -- (BIR) -- (BIL) -- (BC2);
    \draw[color=blue] (BA1) -- (BIU) -- (BC1);
    \draw[color=blue] (hexagon-4) -- (BIL) -- (BIU) -- (BIR) -- (hexagon-3);
    
    \node[auto=center,style={circle,fill=black},inner sep=1pt] (CB1) at ($(pointC)!{0.4}!(C1)$){};
    \node[auto=center,style={circle,fill=black},inner sep=1pt] (CB2) at ($(pointC)!{0.75}!(C1)$){};
    \node[auto=center,style={circle,fill=black},inner sep=1pt] (CA1) at ($(pointC)!{0.4}!(C2)$){};
    \node[auto=center,style={circle,fill=black},inner sep=1pt] (CA2) at ($(pointC)!{0.75}!(C2)$){};
    \node[auto=center,style={circle,fill=black},inner sep=1pt] (CIR) at ($(CB2)+(90:0.7)$){};
    \node[auto=center,style={circle,fill=black},inner sep=1pt] (CIL) at ($(CA2)+(210:0.7)$){};
    \node[auto=center,style={circle,fill=black},inner sep=1pt] (CIU) at ($(pointC)+(150:1.333)$){};
    
    \draw[color=blue] (pointC) -- (C2) -- (hexagon-0) -- (hexagon-5) -- (C1) -- (pointC);
    \draw[color=blue] (CB2) -- (CIR) -- (CIL) -- (CA2);
    \draw[color=blue] (CB1) -- (CIU) -- (CA1);
    \draw[color=blue] (hexagon-0) -- (CIL) -- (CIU) -- (CIR) -- (hexagon-5);

    \node at (0,0) {$\mathcal{L}$};
    \node at ($(pointA)+(0,-0.75)$) {\small $G^{36}_N$};
    \node at ($(pointB)+(0.7,0.35)$) {\small $G^{14}_N$};
    \node at ($(pointC)+(-0.7,0.35)$) {\small $G^{25}_N$};
    \node at ($(hexagon-1)!{0.55}!(AIL)$) {\small $A^3G^{12}$};
    \node at ($(hexagon-4)!{0.5}!(BIR)$) {\small $A^1G^{56}$};
    \node at ($(hexagon-5)!{0.5}!(CIL)$) {\small $A^2G^{34}$};
    \node at ($(hexagon-0)!{0.5}!(A1)$) {\small $A^1G^{23}$};
    \node at ($(hexagon-3)!{0.5}!(A2)$) {\small $A^2G^{16}$};
    \node at ($(hexagon-4)!{0.5}!(C1)$) {\small $A^3G^{45}$};
    \node at ($(A1)!{0.5}!(AIR)$) {\small $G^{123}$};
    \node at ($(A2)!{0.5}!(AIL)$) {\small $G^{126}$};
    \node at ($(B1)!{0.5}!(BIR)$) {\small $G^{156}$};
    \node at ($(B2)!{0.5}!(BIL)$) {\small $G^{456}$};
    \node at ($(C1)!{0.5}!(CIR)$) {\small $G^{345}$};
    \node at ($(C2)!{0.5}!(CIL)$) {\small $G^{234}$};
    \node at ($(AC1)!{0.5}!(AIR)$) {\small $A^3G^3_N$};
    \node at ($(AB1)!{0.5}!(AIL)$) {\small $A^3G^6_N$};
    \node at ($(BA1)!{0.5}!(BIR)$) {\small $A^1G^1_N$};
    \node at ($(BC1)!{0.5}!(BIL)$) {\small $A^1G^4_N$};
    \node at ($(CB1)!{0.5}!(CIR)$) {\small $A^2G^5_N$};
    \node at ($(CA1)!{0.5}!(CIL)$) {\small $A^2G^2_N$};
    \node at ($(AIU)+(0,-0.5)$) {\small $A^{33}_N$};
    \node at ($(BIR)+(0,-0.5)$) {\small $A^{11}_N$};
    \node at ($(CIL)+(0,-0.5)$) {\small $A^{22}_N$};
    \end{tikzpicture}
\caption{The algebraic triangle $\Delta$ --- cell structure and labeling of the 2-cells by subgroups of $\Ker(\Phi)$.}
\label{fig:tri-2cell-label}
\end{figure}
\begin{figure}[h]
\centering
    \begin{tikzpicture}[scale = 1.75]
    \foreach \i in {0,...,5}{
    \node[auto=center,style={circle,fill=black},inner sep=1pt] (hexagon-\i) at ($({\i*60}:1)$){};
    }
        
    \node[auto=center,style={circle,fill=black},inner sep=1pt] (A1) at ($(hexagon-1)+(30:1)$){};
    \node[auto=center,style={circle,fill=black},inner sep=1pt] (A2) at ($(hexagon-2)+(150:1)$){};
    \node[auto=center,style={circle,fill=black},inner sep=1pt] (B1) at ($(hexagon-3)+(150:1)$){};
    \node[auto=center,style={circle,fill=black},inner sep=1pt] (B2) at ($(hexagon-4)+(270:1)$){};
    \node[auto=center,style={circle,fill=black},inner sep=1pt] (C1) at ($(hexagon-5)+(270:1)$){};
    \node[auto=center,style={circle,fill=black},inner sep=1pt] (C2) at ($(hexagon-0)+(30:1)$){};
    
    \draw [name path=B1--A2,color=blue] (B1) -- (A2);
    \draw [name path=C2--A1,color=blue] (C2) -- (A1);
    \coordinate (pointA) at (intersection of B1--A2 and C2--A1);
    \node[auto=center,style={circle,fill=black},inner sep=1pt,label=right:$a$] at (pointA) {};
    
    \draw [name path=A2--B1,color=blue] (A2) -- (B1);
    \draw [name path=C1--B2,color=blue] (C1) -- (B2);
    \coordinate (pointB) at (intersection of A2--B1 and C1--B2);
    \node[auto=center,style={circle,fill=black},inner sep=1pt,label=left:$c$] at (pointB) {};
    
    \draw [name path=B2--C1,color=blue] (B2) -- (C1);
    \draw [name path=A1--C2,color=blue] (A1) -- (C2);
    \coordinate (pointC) at (intersection of B2--C1 and A1--C2);
    \node[auto=center,style={circle,fill=black},inner sep=1pt,label=right:$b$] at (pointC) {};
    
    \draw [color=blue] (hexagon-0) -- (hexagon-1);
    \draw [color=blue] (hexagon-2) -- (hexagon-3);
    \draw [color=blue] (hexagon-4) -- (hexagon-5);
    
    \node[auto=center,style={circle,fill=black},inner sep=1pt] (AB1) at ($(pointA)!{0.4}!(A2)$){};
    \node[auto=center,style={circle,fill=black},inner sep=1pt] (AB2) at ($(pointA)!{0.75}!(A2)$){};
    \node[auto=center,style={circle,fill=black},inner sep=1pt] (AC1) at ($(pointA)!{0.4}!(A1)$){};
    \node[auto=center,style={circle,fill=black},inner sep=1pt] (AC2) at ($(pointA)!{0.75}!(A1)$){};
    \node[auto=center,style={circle,fill=black},inner sep=1pt] (AIL) at ($(AB2)+(-30:0.7)$){};
    \node[auto=center,style={circle,fill=black},inner sep=1pt] (AIR) at ($(AC2)+(210:0.7)$){};
    \node[auto=center,style={circle,fill=black},inner sep=1pt] (AIU) at ($(pointA)+(270:1.333)$){};
    
    \draw[color=blue] (pointA) -- (A2) -- (hexagon-2) -- (hexagon-1) -- (A1) -- (pointA);
     \draw[color=blue] (AB2) -- (AIL) -- (AIR) -- (AC2);
    \draw[color=blue] (AB1) -- (AIU) -- (AC1);
    \draw[color=blue] (hexagon-2) -- (AIL) -- (AIU) -- (AIR) -- (hexagon-1);
    
    \node[auto=center,style={circle,fill=black},inner sep=1pt] (BA1) at ($(pointB)!{0.4}!(B1)$){};
    \node[auto=center,style={circle,fill=black},inner sep=1pt] (BA2) at ($(pointB)!{0.75}!(B1)$){};
    \node[auto=center,style={circle,fill=black},inner sep=1pt] (BC1) at ($(pointB)!{0.4}!(B2)$){};
    \node[auto=center,style={circle,fill=black},inner sep=1pt] (BC2) at ($(pointB)!{0.75}!(B2)$){};
    \node[auto=center,style={circle,fill=black},inner sep=1pt] (BIR) at ($(BA2)+(-30:0.7)$){};
    \node[auto=center,style={circle,fill=black},inner sep=1pt] (BIL) at ($(BC2)+(90:0.7)$){};
    \node[auto=center,style={circle,fill=black},inner sep=1pt] (BIU) at ($(pointB)+(30:1.333)$){};
    
    \draw[color=blue] (pointB) -- (B2) -- (hexagon-4) -- (hexagon-3) -- (B1) -- (pointB);
    \draw[color=blue] (BA2) -- (BIR) -- (BIL) -- (BC2);
    \draw[color=blue] (BA1) -- (BIU) -- (BC1);
    \draw[color=blue] (hexagon-4) -- (BIL) -- (BIU) -- (BIR) -- (hexagon-3);
    
    \node[auto=center,style={circle,fill=black},inner sep=1pt] (CB1) at ($(pointC)!{0.4}!(C1)$){};
    \node[auto=center,style={circle,fill=black},inner sep=1pt] (CB2) at ($(pointC)!{0.75}!(C1)$){};
    \node[auto=center,style={circle,fill=black},inner sep=1pt] (CA1) at ($(pointC)!{0.4}!(C2)$){};
    \node[auto=center,style={circle,fill=black},inner sep=1pt] (CA2) at ($(pointC)!{0.75}!(C2)$){};
    \node[auto=center,style={circle,fill=black},inner sep=1pt] (CIR) at ($(CB2)+(90:0.7)$){};
    \node[auto=center,style={circle,fill=black},inner sep=1pt] (CIL) at ($(CA2)+(210:0.7)$){};
    \node[auto=center,style={circle,fill=black},inner sep=1pt] (CIU) at ($(pointC)+(150:1.333)$){};
    
    \draw[color=blue] (pointC) -- (C2) -- (hexagon-0) -- (hexagon-5) -- (C1) -- (pointC);
    \draw[color=blue] (CB2) -- (CIR) -- (CIL) -- (CA2);
    \draw[color=blue] (CB1) -- (CIU) -- (CA1);
    \draw[color=blue] (hexagon-0) -- (CIL) -- (CIU) -- (CIR) -- (hexagon-5);

    \node (i) at ($(pointA)+(4.25,0)$) {\tiny $\circled{1} = A^3_N(1,6)$};
    \node (ii) at ($(i)+(0,-0.5)$) {\tiny $\circled{2} = A^3_N(1,3)$};
    \node (iii) at ($(ii)+(0,-0.5)$) {\tiny $\circled{3} = A^3(3,6)$};

    \node (iv) at ($(iii)+(0,-0.5)$) {\tiny $\circled{4} = A^1_N(4,5)$};
    \node (v) at ($(iv)+(0,-0.5)$) {\tiny $\circled{5} = A^1_N(1,5)$};
    \node (vi) at ($(v)+(0,-0.5)$) {\tiny $\circled{6} = A^1(1,4)$};

    \node (vii) at ($(vi)+(0,-0.5)$) {\tiny $\circled{7} = A^2_N(2,3)$};
    \node (viii) at ($(vii)+(0,-0.5)$) {\tiny $\circled{8} = A^2_N(3,5)$};
    \node (ix) at ($(viii)+(0,-0.5)$) {\tiny $\circled{9} = A^2(2,5)$};
    
    \node at ($(pointA)!{0.5}!(AC1)+(0.2,0.07)$) {\tiny $G^6_N$};
    \node at ($(AC1)!{0.5}!(AC2)+(0.15,0.07)$) {\tiny $\circled{1}$};
    \node at ($(AC2)!{0.5}!(A1)+(0.17,0.07)$) {\tiny $G^{12}$};
    \node at ($(A1)!{0.5}!(C2)+(0.15,0.07)$) {\tiny $\circled{6}$};
    \node at ($(C2)!{0.5}!(CA2)+(0.17,0.07)$) {\tiny $G^{34}$};
    \node at ($(CA2)!{0.5}!(CA1)+(0.15,0.07)$) {\tiny $\circled{8}$};
    \node at ($(CA1)!{0.5}!(pointC)+(0.19,0.07)$) {\tiny $G^5_N$};
    \node at ($(pointA)!{0.5}!(AB1)+(-0.2,0.1)$) {\tiny $G^3_N$};
    \node at ($(AB1)!{0.5}!(AB2)+(-0.15,0.07)$) {\tiny $\circled{2}$};
    \node at ($(AB2)!{0.5}!(A2)+(-0.17,0.07)$) {\tiny $G^{12}$};
    \node at ($(A2)!{0.5}!(B1)+(-0.15,0.07)$) {\tiny $\circled{9}$};
    \node at ($(B1)!{0.5}!(BA2)+(-0.17,0.07)$) {\tiny $G^{56}$};
    \node at ($(BA2)!{0.5}!(BA1)+(-0.15,0.07)$) {\tiny $\circled{4}$};
    \node at ($(BA1)!{0.5}!(pointB)+(-0.21,0.07)$) {\tiny $G^4_N$};
    \node at ($(pointB)!{0.5}!(BC1)+(0,-0.17)$) {\tiny $G^1_N$};
    \node at ($(BC1)!{0.5}!(BC2)+(0,-0.17)$) {\tiny $\circled{5}$};
    \node at ($(BC2)!{0.5}!(B2)+(0,-0.17)$) {\tiny $G^{56}$};
    \node at ($(B2)!{0.5}!(C1)+(0,-0.16)$) {\tiny $\circled{3}$};
    \node at ($(C1)!{0.5}!(CB2)+(0,-0.17)$) {\tiny $G^{34}$};
    \node at ($(CB2)!{0.5}!(CB1)+(0,-0.17)$) {\tiny $\circled{7}$};
    \node at ($(CB1)!{0.5}!(pointC)+(0,-0.17)$) {\tiny $G^2_N$};

    \node at ($(AC1)!{0.5}!(AIU)+(0,0.24)$) {\tiny $G^3_N$};
    \node at ($(AB1)!{0.5}!(AIU)+(-0.03,0.2)$) {\tiny $G^6_N$};
    \node at ($(AIU)!{0.5}!(AIR)+(0.13,0.11)$) {\tiny $\circled{1}$};
    \node at ($(AIU)!{0.5}!(AIL)+(-0.13,0.11)$) {\tiny $\circled{2}$};
    \node at ($(AIL)!{0.5}!(AIR)+(0,0.14)$) {\tiny $\circled{3}$};
    \node at ($(AIR)!{0.5}!(AC2)+(0.05,0.15)$) {\tiny $G^3$};
    \node at ($(AIL)!{0.5}!(AB2)+(-0.05,0.18)$) {\tiny $G^6$};
    \node at ($(A2)!{0.5}!(hexagon-2)+(-0.16,0.25)$) {\tiny $G^{16}$};
    \node at ($(A1)!{0.5}!(hexagon-1)+(0.21,0.22)$) {\tiny $G^{23}$};
    \node at ($(hexagon-1)!{0.5}!(AIR)+(0.17,0.05)$) {\tiny $G^{12}$};
    \node at ($(hexagon-2)!{0.5}!(AIL)+(-0.15,0.05)$) {\tiny $G^{12}$};
    \node at ($(hexagon-1)!{0.5}!(hexagon-2)+(0,0.15)$) {\tiny $\circled{3}$};

    \node at ($(BA1)!{0.5}!(BIU)+(-0.18,-0.05)$) {\tiny $G^1_N$};
    \node at ($(BC1)!{0.5}!(BIU)+(-0.24,-0.18)$) {\tiny $G^4_N$};
    \node at ($(BIR)!{0.5}!(BA2)+(-0.21,-0.02)$) {\tiny $G^1$};
    \node at ($(hexagon-3)!{0.5}!(B1)+(-0.35,0.04)$) {\tiny $G^{16}$};
    \node at ($(BIU)!{0.5}!(BIR)+(-0.15,0.07)$) {\tiny $\circled{4}$};
    \node at ($(hexagon-3)!{0.5}!(BIR)+(-0.07,0.05)$) {\tiny $G^{56}$};
    \node at ($(BIL)!{0.5}!(BIU)+(0,0.13)$) {\tiny $\circled{5}$};
    \node at ($(BC2)!{0.5}!(BIL)+(-0.18,-0.18)$) {\tiny $G^4$};
    \node at ($(BIL)!{0.5}!(BIR)+(0.07,0.18)$) {\tiny $\circled{6}$};
    \node at ($(hexagon-3)!{0.5}!(hexagon-4)+(0.09,0.18)$) {\tiny $\circled{6}$};
    \node at ($(hexagon-4)!{0.5}!(BIL)+(-0.07,0.07)$) {\tiny $G^{56}$};
    \node at ($(hexagon-4)!{0.5}!(B2)+(-0.15,-0.05)$) {\tiny $G^{45}$};

    \node at ($(CA1)!{0.5}!(CIU)+(0.18,-0.08)$) {\tiny $G^2_N$};
    \node at ($(CB1)!{0.5}!(CIU)+(0.24,-0.18)$) {\tiny $G^5_N$};
    \node at ($(CIL)!{0.5}!(CA2)+(0.21,-0.02)$) {\tiny $G^2$};
    \node at ($(hexagon-5)!{0.5}!(C1)+(0.18,-0.05)$) {\tiny $G^{45}$};
    \node at ($(CIU)!{0.5}!(CIL)+(0.15,0.07)$) {\tiny $\circled{8}$};
    \node at ($(hexagon-5)!{0.5}!(CIR)+(0.13,0.09)$) {\tiny $G^{34}$};
    \node at ($(CIR)!{0.5}!(CIU)+(0,0.13)$) {\tiny $\circled{7}$};
    \node at ($(CB2)!{0.5}!(CIR)+(0.18,-0.18)$) {\tiny $G^5$};
    \node at ($(CIL)!{0.5}!(CIR)+(-0.07,0.19)$) {\tiny $\circled{9}$};
    \node at ($(hexagon-5)!{0.5}!(hexagon-0)+(-0.09,0.18)$) {\tiny $\circled{9}$};
    \node at ($(hexagon-0)!{0.5}!(CIL)+(0.15,0.09)$) {\tiny $G^{34}$};
    \node at ($(hexagon-0)!{0.5}!(C2)+(0.2,0)$) {\tiny $G^{23}$};

    \node at ($(hexagon-2)!{0.5}!(hexagon-3)+(-0.15,0.07)$) {\tiny $\circled{9}$};
    \node at ($(hexagon-0)!{0.5}!(hexagon-1)+(0.15,0.07)$) {\tiny $\circled{6}$};
    \node at ($(hexagon-4)!{0.5}!(hexagon-5)+(0,0.15)$) {\tiny $\circled{3}$};
    \end{tikzpicture}
\caption{The algebraic triangle $\Delta$ --- labeling of the $1$-cells by subgroups of $\Ker(\Phi)$.}
\label{fig:tri-1cell-label}
\end{figure}

\begin{defn}[Actualization of the algebraic triangle]\label{def:actual-alg-tri}
Let $(a,b,c)$ be an ordered triple of elements of $\Ker(\Phi)$. The algebraic triangle $\Delta$ forms a template for a particular van Kampen diagram, denoted $\Delta(a,b,c)$ and called the \emph{actualization of $\Delta$ with respect to the triple $(a,b,c)$}, in the following way.

\begin{enumerate}
    \item Let $a$, $b$, and $c$  label the three vertices of the triangle as shown in Figure~\ref{fig:tri-2cell-label}. So $a$ at the top, $b$ on the bottom right, and $c$ on the bottom left. 
    \item Use the sequence of 7 edge segments from $a$ to $b$ to construct an edge-path, $\partial\Delta(a,b)$, of vector kernel-section generators of these 7 sedge subgroups of $\Ker(\Phi)$ between these two elements. The detailed procedure for building such an edge-path is described in Lemma~\ref{lem:triside-labdist}. Note that since $a$ and $b$ are arbitrary, this proves that union of the vector kernel-section generating sets for these 7 edge groups forms a generating set for $\Ker(\Phi)$; this is the content of Corollary~\ref{cor:kerfingen}. We can view $\partial\Delta(a,b)$ as a particular edge path from $a$ to $b$ in the Cayley graph of $\Ker(\Phi)$. 

    In a similar fashion, use the 7 edge segments from $b$ to $c$ to build an edge path, $\partial\Delta(b,c)$, from $b$ to $c$, and use the remaining 7 edge segments from $c$ to $a$ to build an edge path $\partial\Delta(c,a)$ from $c$ to $a$. 
    \item The union $\partial\Delta(a,b) \cup \partial\Delta(b,c)\cup \partial\Delta(c,a)$ gives a loop in the Cayley graph of $\Ker(\Phi)$. Lemma~\ref{lem:triinner-labdist} describes how to subdivide the loop   $\partial\Delta(a,b) \cup \partial\Delta(b,c)\cup \partial\Delta(c,a)$  into 25  loops, indexed by the 2-cells of $\Delta$, each of which is contained in the Cayley graph of the corresponding  face group. Since the face groups are finitely presented, each of the 25 sub-loops can be filled by a van Kamepn diagram over its face subgroup. 
    \item The union of these 25 van Kampen diagrams gives a van Kampen diagram for the loop 
    $\partial\Delta(a,b) \cup \partial\Delta(b,c)\cup \partial\Delta(c,a)$. We call this van Kampen diagram the {\em actualization  of the algebraic triangle $\Delta$} and denote it by 
   $\Delta(a,b,c)$. 
 \end{enumerate}  
   Note that $\Delta(a,b,c)$ is unique up to choices of the edge paths constructed in  Lemma~\ref{lem:triside-labdist} and Lemma~\ref{lem:triinner-labdist}, and to the choices of van Kampen fillings chosen in the 25 face groups. 
\end{defn}

\begin{figure}[h]
\centering
    \begin{tikzpicture}[scale = 1.75]
    \foreach \i in {0,...,5}{
    \node[auto=center,style={circle,fill=black},inner sep=1pt] (hexagon-\i) at ($({\i*60}:1)$){};
    }
        
    \node[auto=center,style={circle,fill=black},inner sep=1pt] (A1) at ($(hexagon-1)+(30:1)$){};
    \node[auto=center,style={circle,fill=black},inner sep=1pt] (A2) at ($(hexagon-2)+(150:1)$){};
    \node[auto=center,style={circle,fill=black},inner sep=1pt] (B1) at ($(hexagon-3)+(150:1)$){};
    \node[auto=center,style={circle,fill=black},inner sep=1pt] (B2) at ($(hexagon-4)+(270:1)$){};
    \node[auto=center,style={circle,fill=black},inner sep=1pt] (C1) at ($(hexagon-5)+(270:1)$){};
    \node[auto=center,style={circle,fill=black},inner sep=1pt] (C2) at ($(hexagon-0)+(30:1)$){};
    
    \draw [name path=B1--A2,color=blue] (B1) -- (A2);
    \draw [name path=C2--A1,color=blue] (C2) -- (A1);
    \coordinate (pointA) at (intersection of B1--A2 and C2--A1);
    \node[auto=center,style={circle,fill=black},inner sep=1pt] at (pointA) {};
    
    \draw [name path=A2--B1,color=blue] (A2) -- (B1);
    \draw [name path=C1--B2,color=blue] (C1) -- (B2);
    \coordinate (pointB) at (intersection of A2--B1 and C1--B2);
    \node[auto=center,style={circle,fill=black},inner sep=1pt] at (pointB) {};
    
    \draw [name path=B2--C1,color=blue] (B2) -- (C1);
    \draw [name path=A1--C2,color=blue] (A1) -- (C2);
    \coordinate (pointC) at (intersection of B2--C1 and A1--C2);
    \node[auto=center,style={circle,fill=black},inner sep=1pt] at (pointC) {};
    
    \draw [color=blue] (hexagon-0) -- (hexagon-1);
    \draw [color=blue] (hexagon-2) -- (hexagon-3);
    \draw [color=blue] (hexagon-4) -- (hexagon-5);
    
    \node[auto=center,style={circle,fill=black},inner sep=1pt] (AB1) at ($(pointA)!{0.4}!(A2)$){};
    \node[auto=center,style={circle,fill=black},inner sep=1pt] (AB2) at ($(pointA)!{0.75}!(A2)$){};
    \node[auto=center,style={circle,fill=black},inner sep=1pt] (AC1) at ($(pointA)!{0.4}!(A1)$){};
    \node[auto=center,style={circle,fill=black},inner sep=1pt] (AC2) at ($(pointA)!{0.75}!(A1)$){};
    \node[auto=center,style={circle,fill=black},inner sep=1pt] (AIL) at ($(AB2)+(-30:0.7)$){};
    \node[auto=center,style={circle,fill=black},inner sep=1pt] (AIR) at ($(AC2)+(210:0.7)$){};
    \node[auto=center,style={circle,fill=black},inner sep=1pt] (AIU) at ($(pointA)+(270:1.333)$){};
    
    \draw[color=blue] (pointA) -- (A2) -- (hexagon-2) -- (hexagon-1) -- (A1) -- (pointA);
    \draw[color=blue] (AB2) -- (AIL) -- (AIR) -- (AC2);
    \draw[color=blue] (AB1) -- (AIU) -- (AC1);
    \draw[color=blue] (hexagon-2) -- (AIL) -- (AIU) -- (AIR) -- (hexagon-1);
    
    \node[auto=center,style={circle,fill=black},inner sep=1pt] (BA1) at ($(pointB)!{0.4}!(B1)$){};
    \node[auto=center,style={circle,fill=black},inner sep=1pt] (BA2) at ($(pointB)!{0.75}!(B1)$){};
    \node[auto=center,style={circle,fill=black},inner sep=1pt] (BC1) at ($(pointB)!{0.4}!(B2)$){};
    \node[auto=center,style={circle,fill=black},inner sep=1pt] (BC2) at ($(pointB)!{0.75}!(B2)$){};
    \node[auto=center,style={circle,fill=black},inner sep=1pt] (BIR) at ($(BA2)+(-30:0.7)$){};
    \node[auto=center,style={circle,fill=black},inner sep=1pt] (BIL) at ($(BC2)+(90:0.7)$){};
    \node[auto=center,style={circle,fill=black},inner sep=1pt] (BIU) at ($(pointB)+(30:1.333)$){};
    
    \draw[color=blue] (pointB) -- (B2) -- (hexagon-4) -- (hexagon-3) -- (B1) -- (pointB);
    \draw[color=blue] (BA2) -- (BIR) -- (BIL) -- (BC2);
    \draw[color=blue] (BA1) -- (BIU) -- (BC1);
    \draw[color=blue] (hexagon-4) -- (BIL) -- (BIU) -- (BIR) -- (hexagon-3);
    
    \node[auto=center,style={circle,fill=black},inner sep=1pt] (CB1) at ($(pointC)!{0.4}!(C1)$){};
    \node[auto=center,style={circle,fill=black},inner sep=1pt] (CB2) at ($(pointC)!{0.75}!(C1)$){};
    \node[auto=center,style={circle,fill=black},inner sep=1pt] (CA1) at ($(pointC)!{0.4}!(C2)$){};
    \node[auto=center,style={circle,fill=black},inner sep=1pt] (CA2) at ($(pointC)!{0.75}!(C2)$){};
    \node[auto=center,style={circle,fill=black},inner sep=1pt] (CIR) at ($(CB2)+(90:0.7)$){};
    \node[auto=center,style={circle,fill=black},inner sep=1pt] (CIL) at ($(CA2)+(210:0.7)$){};
    \node[auto=center,style={circle,fill=black},inner sep=1pt] (CIU) at ($(pointC)+(150:1.333)$){};
    
    \draw[color=blue] (pointC) -- (C2) -- (hexagon-0) -- (hexagon-5) -- (C1) -- (pointC);
    \draw[color=blue] (CB2) -- (CIR) -- (CIL) -- (CA2);
    \draw[color=blue] (CB1) -- (CIU) -- (CA1);
    \draw[color=blue] (hexagon-0) -- (CIL) -- (CIU) -- (CIR) -- (hexagon-5);

    \node at (0,0) {$\mathcal{L}$};
    \node at ($(pointA)+(0,-0.75)$) {$G^{36}_N$};
   % \node at ($(pointC)+(-0.7,0.35)$) {$G^{25}_N$};
    \node at ($(hexagon-1)!{0.55}!(AIL)$) {$A^3G^{12}$};
   % \node at ($(hexagon-5)!{0.5}!(CIL)$) {$A^2G^{34}$};
    \node at ($(hexagon-0)!{0.5}!(A1)$) {$A^1G^{23}$};
   % \node at ($(hexagon-3)!{0.5}!(A2)$) {$A^2G^{16}$};
    \node at ($(A1)!{0.5}!(AIR)$) {$G^{123}$};
    \node at ($(A2)!{0.5}!(AIL)$) {$G^{126}$};
   % \node at ($(C2)!{0.5}!(CIL)$) {$G^{234}$};
    \node at ($(AC1)!{0.5}!(AIR)$) {$A^3G^3_N$};
    \node at ($(AB1)!{0.5}!(AIL)$) {$A^3G^6_N$};
   % \node at ($(CB1)!{0.5}!(CIR)$) {$A^2G^5_N$};
    %\node at ($(CA1)!{0.5}!(CIL)$) {$A^2G^2_N$};
    \node at ($(AIU)+(0,-0.5)$) {$A^{33}_N$};
   % \node at ($(CIL)+(0,-0.5)$) {$A^{22}_N$};

    \node at ($(pointA)+(0.15,0)$) {\tiny \textcolor{red}{$P_1$}};
    \node at ($(AC1)+(0.15,0)$) {\tiny \textcolor{red}{$P_2$}};
    \node at ($(AC2)+(0.15,0)$) {\tiny \textcolor{red}{$P_3$}};
    \node at ($(A1)+(0.15,0)$) {\tiny \textcolor{red}{$P_4$}};
    \node at ($(C2)+(0.15,0)$) {\tiny \textcolor{red}{$P_5$}};
    \node at ($(CA2)+(0.15,0)$) {\tiny \textcolor{red}{$P_6$}};
    \node at ($(CA1)+(0.15,0)$) {\tiny \textcolor{red}{$P_7$}};
    \node at ($(pointC)+(0.15,0)$) {\tiny \textcolor{red}{$P_8$}};
  %  \node at ($(CB1)+(0.1,0.1)$) {\tiny \textcolor{red}{$P_9$}};
   % \node at ($(CIU)+(0.05,0.17)$) {\tiny \textcolor{red}{$P_{10}$}};
    %\node at ($(CIL)+(0,0.15)$) {\tiny \textcolor{red}{$P_{11}$}};
    \node at ($(hexagon-0)+(-0.2,0)$) {\tiny \textcolor{red}{$P_{17}$}};
    \node at ($(hexagon-1)+(0.2,-0.05)$) {\tiny \textcolor{red}{$P_{16}$}};
    \node at ($(AIR)+(0.05,0.2)$) {\tiny \textcolor{red}{$P_{13}$}};
    \node at ($(AIU)+(0,0.2)$) {\tiny \textcolor{red}{$P_{10}$}};
    \node at ($(AB1)+(-0.2,0)$) {\tiny \textcolor{red}{$P_{9}$}};
    \node at ($(AB2)+(-0.2,0)$) {\tiny \textcolor{red}{$P_{11}$}};
    \node at ($(AIL)+(-0.07,0.18)$) {\tiny \textcolor{red}{$P_{12}$}};
    \node at ($(hexagon-2)+(-0.2,0)$) {\tiny \textcolor{red}{$P_{15}$}};
    \node at ($(A2)+(-0.2,0)$) {\tiny \textcolor{red}{$P_{14}$}};

    \node at ($(hexagon-5)+(-0.1,0.1)$) {\tiny \textcolor{red}{$P_{18}$}};
    \node at ($(hexagon-4)+(0.1,0.1)$) {\tiny \textcolor{red}{$P_{19}$}};
    \node at ($(hexagon-3)+(0.2,0)$) {\tiny \textcolor{red}{$P_{20}$}};
    \end{tikzpicture}
\caption{A labeling of  vertices of the algebraic triangle corresponding to the right side and to one third of the $2$-cells. This labeling is used in the proofs of Lemma~\ref{lem:triside-labdist} and of Lemma~\ref{lem:triinner-labdist}.}
\label{fig:triangle-vertices}
\end{figure}
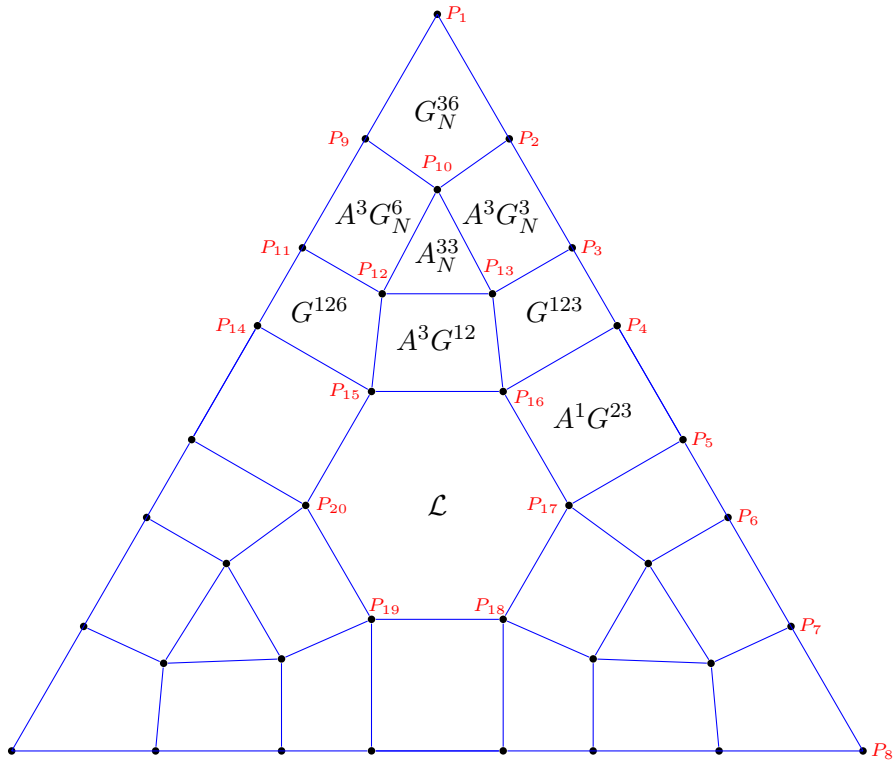

% \begin{figure}[h]
%     \centering
%     \includegraphics[width = 0.55 \textwidth]{Global.jpeg}
% \end{figure}

In order to streamline the process of specifying the  vertices $P_1, \ldots, P_{20}$ in Figure~\ref{fig:triangle-vertices}, we introduce some notation here.

\begin{defn}[Padded elements]\label{defn:padded}
Given an element $g \in G$ and a subset $A \subseteq \{1, \dots, 2n\}$, we define the {\em padded element} $g_A$ to be the unique element of $G$ such that
    \[
    \grpproj_i(g_A) = \begin{cases}
        \grpproj_i(g) & i \in A, \\
        e & \text{ else}
    \end{cases}.
    \]
\end{defn}

\begin{exmp}
As a concrete example, if $g = (g_1, g_2, \dots, g_{2n})$, then $g_{\{1,2\}} = (g_1, g_2, e, \dots, e)$. In the lemmas that follow, $A$ is some label sequence, e.g., $A= \alpha_6$ or $A = \alpha_{12}$. 

Note that there is a `calculus' of identities involving padded elements and label sequences. For example, if $a, b \in G$, then 
    \[
    a(a_{\alpha_6}^{-1}b_{\alpha_6}) \; =\;  (aa_{\alpha_6}^{-1})b_{\alpha_6} \; =\; a_{\alpha_{12345}}b_{\alpha_6} \, .
    \]
\end{exmp}

\begin{defn}[Streamlined encoding for group elements]\label{defn:la_elements} Suppose that  $A \subseteq \{1, \ldots, 2n\}$ of length $n$ is as in Definition~\ref{def:determinate} (for the  standard and non-standard face and edge groups) and  has  complement $B$. For any  $g \in G$ the element $g_A$ (defined in part (1) above) can be converted into an element of $\Ker(\Phi)$ by multiplying by the $B$-section lift of $-\Phi(g_A)$. 
%{\em This is the element that is obtained by first writing each $g_j$ (for $j \in A$) as a word in the $G_j$ kernel-section generators, taking a product of ver $j \in A$, and then and then using vectorized kernel-section generators in the resulting word. }
This element is expressed as a product of $j$-section lifts for $j \in B$. The streamlined notation is to write this product as 
$[-\Phi(g_A)]$ times the appropriate $(2 \times 3)$ linear algebra matrix. 
\end{defn}

\begin{exmp}[Linear algebra encoding for elements in $G^6_N$]\label{exmp:G6N}
    For example, in the edge group $G^6_N$ the linear algebra encoding matrix is $\begin{pmatrix}
    1 & 2 & \dot{3}\\ 4& 5& 1
\end{pmatrix}$
and so the subset $B$ is as follows
$$
B \; =\; \alpha_{4} \, \cup \, \alpha_5 \, \cup \, \alpha_{12}[1, |\alpha_3|]. 
$$ 
Given $g \in G$, the $B$-section lift of $-\Phi(g_A)$ is 
$$
\prod_{j \in \alpha_{12}}s^j_{j+n}(-\Phi(g_A))\prod_{j \in \alpha_3}s^j_{j -|\alpha_{12}|}(-\Phi(g_A))\; =\; 
[-\Phi(g_A)]\begin{pmatrix}
    1 & 2 & \dot{3}\\ 4& 5& 1
\end{pmatrix} . 
$$
The explicit product of section lifts is on the left side, and the streamlined encoding is on the right side. 
\end{exmp}

The next example shows how the streamlined encoding will be used in the proof of  Lemma~\ref{lem:triside-labdist} and Lemma~\ref{lem:triinner-labdist}  below.

\begin{exmp}[Application of streamlined encoding in $G^6_N$]\label{exmp:G6N2}  Suppose $a, b \in \Ker(\Phi)$ and we wish to convert the $\alpha_6$-coordinates of $a$ into those of $b$ using multiplication by an element of $G^6_N$. Here is how one achieves this. 

This change is achieved by multiplying the last $|\alpha_6|$ coordinates of $a$ by the element $a_{\alpha_6}^{-1}b_{\alpha_6}$. However, the element $a_{\alpha_6}^{-1}b_{\alpha_6}$ is likely not to belong to $\Ker(\Phi)$, and so we have to be careful in order to keep the factor in $G^6_N \leq \Ker(\Phi)$. 
Write the element $a_{\alpha_6}^{-1}b_{\alpha_6}$ as a product of words in the kernel-section generators of $G_j$ (where $j \in \alpha_6$), and now replace these kernel-section generators by vectorized versions from the group $G^6_N$. 

The new element is 
$$
a(a_{\alpha_6}^{-1}b_{\alpha_6})[-\Phi(a_{\alpha_6}^{-1}b_{\alpha_6})]\begin{pmatrix}
    1 & 2 & \dot{3}\\ 4& 5& 1
\end{pmatrix} 
$$
which lies in $\Ker(\Phi)$. Note that  the multiplying factor
$$
(a_{\alpha_6}^{-1}b_{\alpha_6})[-\Phi(a_{\alpha_6}^{-1}b_{\alpha_6})]\begin{pmatrix}
    1 & 2 & \dot{3}\\ 4& 5& 1
\end{pmatrix} 
$$
is an element of $G^6_N$, and that the linear algebra term 
$$ 
[-\Phi(a_{\alpha_6}^{-1}b_{\alpha_6})]\begin{pmatrix}
    1 & 2 & \dot{3}\\ 4& 5& 1
\end{pmatrix} 
$$
is the streamlined form of the product expression in Example~\ref{exmp:G6N} for $g_A = a_{\alpha_6}^{-1}b_{\alpha_6}$. 
    
\end{exmp}

We now state the lemma behind the first part  of the construction of the actualization $\Delta(a,b,c)$.  

%\todo{Fix the statement}
\begin{lem}[Filling 0-spheres with 7-segment edge paths]\label{lem:triside-labdist}
Given an ordered triple $(a,b,c)$ of elements of $\Ker(\Phi)$, and an assignment of $a$, $b$, $c$ to vertices of the algebraic triangle as in Definition~\ref{def:actual-alg-tri}, there exist edge paths $\partial{\Delta}(a,b)$ from $a$ to $b$, $\partial{\Delta}(b,c)$ from $b$ to $c$, and $\partial{\Delta}(c,a)$ from $c$ to $a$. Each of these edge paths is composed of 7 segments (indexed by the 7 boundary edges of the algebraic triangle from the first coordinate to the second), each of which  is a geodesic path in the Cayley graph of the corresponding edge group.
% \begin{itemize}
%     \item The path $\partial\Delta(a,b)$ is composed of 7 segments, indexed by the 7 boundary edges of the algebraic triangle from $a$ to $b$, each of which is a geodesic path in the Cayley graph of the corresponding edge group.
%     \item The path $\partial\Delta(b,c)$ is composed of 7 segments, indexed by the 7 boundary edges of the algebraic triangle from $b$ to $c$, each of which is a geodesic path in the Cayley graph of the corresponding edge group.
%     \item The path $\partial\Delta(c,a)$ is composed of 7 segments, indexed by the 7 boundary edges of the algebraic triangle from $c$ to $a$, each of which is a geodesic path in the Cayley graph of the corresponding edge group.
% \end{itemize}
%Moreover, the lengths of these geodesic segments can be bounded by the $\ell^1$-distance between the coordinates in the adjacent face groups $G^\alpha$.
\end{lem}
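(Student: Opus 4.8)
The key point is that the seven boundary edges on (say) the side from $a$ to $b$ are labelled, in order, by seven edge groups whose linear algebra encodings ``interpolate'' between the trivial-type configuration at $a$ and the trivial-type configuration at $b$, passing through the standard arrangement $\mathcal{L}$ in the central region. So I would proceed side-by-side with Figure~\ref{fig:tri-1cell-label}: first fix an enumeration $E_1, \dots, E_7$ of the seven edge groups along the $a$-to-$b$ path (reading off Figure~\ref{fig:tri-1cell-label}), and a corresponding sequence of vertices $P = Q_0, Q_1, \dots, Q_7 = $ (the image of $b$), where $Q_t$ is the endpoint of the $t$th segment. Following the notation of Figure~\ref{fig:triangle-vertices} (the $P_i$), I would give each $Q_t$ an explicit formula as an element of $\Ker(\Phi)$, built from $a$, $b$ and the padded elements $a_\alpha$, $b_\alpha$ of Definition~\ref{defn:padded}, together with the appropriate linear-algebra correction terms encoded by the matrices via the streamlined notation of Definition~\ref{defn:la_elements} (as in Examples~\ref{exmp:G6N} and~\ref{exmp:G6N2}). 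The inductive claim to verify is: for each $1 \leq t \leq 7$, the element $Q_{t-1}^{-1}Q_t$ lies in the edge group $E_t$. Once that is checked, since $E_t$ is finitely generated (its isomorphism type is recorded in Proposition~\ref{lem:edge-subgroups}, and all these groups are finitely presented because the $G_i$ are and $A_{\alpha_i}$ is free abelian), choosing a geodesic word for $Q_{t-1}^{-1}Q_t$ in the generators of $E_t$ gives the $t$th segment. Concatenating the seven segments produces $\partial\Delta(a,b)$, and the analogous construction (cyclically permuting the roles of the coordinate blocks, i.e. replacing the triple $(a,b,c)$-adapted choices by their cyclic shifts) yields $\partial\Delta(b,c)$ and $\partial\Delta(c,a)$.

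\textbf{The core computation.} The heart of the argument is the verification that each consecutive difference $Q_{t-1}^{-1}Q_t$ lands in $E_t$. This is where the $A$-determinacy machinery of Section~\ref{sec:subgroups} does the work: each edge group $E_t$ was defined by an explicit $\overline{Y}$-and-$\overline{Z}$ generating set and shown (in Lemmas~\ref{lem:detstd}, \ref{lem:detnstd} and the pattern tables) to be $A$-determinate for a specific index set $A$ with an explicit family of correction homomorphisms $f_j$. So it suffices to check two things for $Q_{t-1}^{-1}Q_t$: (i) its coordinates in $A$ are a word in the kernel-section generators of the relevant $G_i$ (those $i$ for which the pattern entry is $G_{\alpha_i}$) or lie in the relevant $s^j_j(A_j)$ (for the pattern entries of type $A^{\alpha_i}_\bullet$), and (ii) its $B$-coordinates are exactly the $f_j$-images prescribed by the encoding matrix. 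Point (i) is immediate from the way I will have defined $Q_t$ — each step changes exactly one block of coordinates (either an $\alpha_i$-block of ``honest'' $G$-coordinates, or a linear-algebra block), using a word in the correct generators; point (ii) is a bookkeeping check that the linear-algebra correction term attached to $Q_t$ differs from that of $Q_{t-1}$ by precisely the $B$-section lift of $-\Phi$ of the block that was changed, which is exactly the streamlined-encoding identity illustrated in Example~\ref{exmp:G6N2}. Since $\Phi(a) = \Phi(b) = 0$, the correction terms at the two ends are trivial, so the sum of all the changes telescopes correctly.

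\textbf{Main obstacle and how to handle it.} The real difficulty is not conceptual but organizational: there are seven segments, the edge groups along a side are a mixture of standard and non-standard types (with their $\dot{i}$ non-standard linear-algebra conventions), and one must choose the $Q_t$ so that at the transition from a non-standard corner region to the standard central region the linear-algebra correction term is converted from its non-standard form to the standard $\begin{pmatrix} i \\ i \end{pmatrix}$, $\begin{pmatrix} i \\ i+3 \end{pmatrix}$ form. This is precisely the role of the ``conversion'' edge groups $A^i_N(\cdot,\cdot)$, $A^i(i,i+3)$ (the third and fourth families of Definition~\ref{def:edgegroups}): inserting a segment labelled by one of these groups changes \emph{only} the linear-algebra coordinates, replacing one valid $B$-section lift of the same $\Z^m$-element by another. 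So the plan is to write the $Q_t$ formulas block-by-block reading Figure~\ref{fig:tri-1cell-label} from $a$ inward and then outward to $b$, and at each of the (at most two) conversion edges simply rewrite the correction factor. I would present this for one representative side in full detail (the $a$-to-$b$ side, using the vertex labels $P_1, \dots, P_8$ from Figure~\ref{fig:triangle-vertices} for the outer edge together with the corresponding interior vertices), verify the seven membership claims, and then note that the other two sides follow by the evident symmetry of the construction, recording the geodesic-segment choices to make the output well-defined up to those choices — which is all the lemma asserts.
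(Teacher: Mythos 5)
Your proposal is correct and follows essentially the same route as the paper's proof: explicit vertex formulas built from padded elements with streamlined linear-algebra correction factors, one geodesic segment per edge group read off Figure~\ref{fig:tri-1cell-label}, and the identity $\Phi(a^{-1}b)=0$ to make the corrections close up. The only cosmetic differences are that the paper works from both endpoints toward the middle edge and verifies $P_4'=P_4$ there (rather than a one-directional telescoping pass), and it certifies membership of each difference in the edge group by writing it directly as a word in the vectorized kernel-section generators instead of invoking the $A$-determinacy criterion.
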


%In particular, the geodesic segments in the path $\partial{\Delta}(a,b)$ can be specified by their endpoints, denoted $P_1 = a$ through $P_8 = b$, and the lengths of these segments follow. By changing $(a,b)$ to $(b,c)$ or $(c,a)$, one can obtain the geodesic segments making up $\partial{\Delta}(b,c)$ and $\partial{\Delta}(c,a)$, respectively, as well as their lengths.

% \todo{include below stuff in the proof}
% to the top vertex of the algebraic triangle and $b$ to the bottom right vertex, there is an edge path $\partial\Delta(a,b)$ composed of 7 segments, each of which is a geodesic segment in the Cayley  graph of an edge group 
    
%Additionally, the lengths of these segments, denoted $\ell_{i,j}$ for the length of the path $\overline{P_iP_{i+1}}$, are summarized in the following table.

\begin{proof}
We give a detailed construction of the 7-segment path $\partial\Delta(a,b)$ here. The constructions of the paths $\partial\Delta(b,c)$ and $\partial\Delta(c,a)$  differ primarily in notation, and details are left to the reader. 

Intuitively, one needs to convert the $a$-entries (in 6 groups indexed by the $\alpha_i$) into $b$-entries. We work from each end of the right side of the algebraic triangle (in Figure~\ref{fig:tri-1cell-label}) towards the middle as follows. We use the labeling of vertices shown in Figure~\ref{fig:triangle-vertices}. 
\begin{itemize}
    \item The edge adjacent to $a$ is used to convert the $\alpha_6$-entries of $a$ into those of $b$. This is possible in the non-standard edge subgroup $G^6_N$. 
    \item The second edge is responsible for converting a non-standard linear algebra arrangement $\begin{pmatrix}
    \dot{3}\\1
\end{pmatrix}$ into the standard  $\begin{pmatrix}
    3\\6
\end{pmatrix}$ arrangement.  This is achieved in the non-standard edge group $A^3_N(1,3)$. 
\item The third edge is used to  convert the $\alpha_{12}$-entries of $a$ (appropriately modified by the effects of the the previous two edges) into those of $b$. This is achieved in the standard edge group $G^{12}$. 
\item Similarly, working from the $b$-vertex, one uses $G^5_N$ to change the $\alpha_5$-entries of $b$ into those of $a$. 
\item Next, the non-standard edge group $A^2_N(2,3)$ is used to change non-standard linear algebra $\begin{pmatrix}
    \dot{2}\\3
\end{pmatrix}$ into standard linear algebra $\begin{pmatrix}
    2\\5
\end{pmatrix}$.
\item Thirdly, one changes the $\alpha_{34}$-entries of (the modified version of) $b$ into those of $a$ using the standard edge group $G^{34}$. 
\item Finally, the middle edge group $A^1(1,4)$ effects a standard linear algebra conversion $\begin{pmatrix}
    1\\1/4
\end{pmatrix}$, and one checks that the resulting group element agrees with the element obtained in the third bullet item above.  
\end{itemize}

Here are the details. Label the end points of these segments by 
    \[
    a \;=\; P_1 \, , \, P_2 \, , \, \ldots \, , \, P_7 \, , \, P_8 \;=\; b 
    \]
as indicated in Figure~{\ref{fig:triangle-vertices}}. 

{\em The segment $\overline{P_1P_2}$.} The purpose of this segment is to convert the last $|\alpha_6|$ entries of $a$ into those of $b$. This is achieved using a word in the generators of  $G^6_{N}$ of length $d_{\alpha_6}(a,b)$. Following Example~\ref{exmp:G6N2}, the endpoints of this segment are $P_1 = a$ and 
    \[
    P_2 \;=\; a_{\alpha_{12345}}b_{\alpha_6}[-\Phi(a_{\alpha_6}^{-1}b_{\alpha_6})]{\scalebox{0.75}{$\begin{pmatrix} 1 & 2 & \dot{3} \\ 4 & 5 & 1 \end{pmatrix}$}} \, .
    \]
This segment has length $\ell_{1,2} = d_{\alpha_6}(a,b)$.

% The purpose of this segment is to convert the last $|\alpha_6|$ entries of $a$ into those of $b$. This is achieved writing the element $a_{\alpha_6}^{-1}b_{\alpha_b}$ as a word in the kernel-section generators of $G^6_{N}$ of length $d_{\alpha_6}(a,b)$. However, to remain within $\Ker(\Phi)$, we must also multiply by
%     \[
%     \left[ -\Phi(a_{\alpha_6}^{-1}b_{\alpha_6}) \right] \scalebox{0.75}{$\begin{pmatrix} 1 & 2 & \dot{3} \\ 4 & 5 & 1 \end{pmatrix}$} \, .
%     \]
% Since $a_{\alpha_6}^{-1}b_{\alpha_6}$ uses $d_{\alpha_6}(a,b)$ generators, this last word uses at most $Cd_{\alpha_6}(a,b)$ generators to write, where
%     \[
%     C = \max \left| \Phi(g) \right|_{\mathcal{L}} \, ,
%     \]
% where the maximum is taken over all kernel-section generators $g$ of $G^6_N$. Then the segment $\overline{P_1P_2}$ has length at most $(C+1)d_{\alpha_6}(a,b)$.

%Following Example~\ref{exmp:G6N2}, the endpoints of this segment are $P_1 = a$ and 
    %\[
    %P_2 \;=\; a_{\alpha_{12345}}b_{\alpha_6}[-\Phi(a_{\alpha_6}^{-1}b_{\alpha_6})]{\scalebox{0.75}{$\begin{pmatrix} 1 & 2 & \dot{3} \\ 4 & 5 & 1 \end{pmatrix}$}} \, .
    %\]
%This segment has length $\ell_{1,2} = d_{\alpha_6}(a,b)$.

{\em The segment $\overline{P_2P_3}$.} The purpose of this segment is to change the nonstandard linear algebra \scalebox{0.75}{$\begin{pmatrix} \dot{3} \\  1 \end{pmatrix}$} of the element $P_2$ into standard linear algebra \scalebox{0.75}{$\begin{pmatrix} 3 \\  6 \end{pmatrix}$} . This is achieved by using at most $d_{\alpha_6}(a,b)$ generators of $A^3_N(1,6)$. This upper bound is seen by noticing that the  non-standard \scalebox{0.75}{$\begin{pmatrix} \dot{3} \\  1 \end{pmatrix}$} elements arose as a result of the $\overline{P_1P_2}$  segment, and so their number is bounded by $\ell_{1,2} = d_{\alpha_6}(a,b)$.   The new endpoint is
    \[
    P_3 \;=\; a_{\alpha_{12345}}b_{\alpha_6}\left[-\Phi\left(a_{\alpha_6}^{-1}b_{\alpha_6}\right)\right]{\scalebox{0.75}{$\begin{pmatrix} 1 & 2 & 3 \\ 4 & 5 & 6 \end{pmatrix}$}} \, . 
    \]
This segment has length $\ell_{2,3} \leq \ell_{1,2} \leq d_{\alpha_6}(a,b)$.

{\em The segment $\overline{P_3P_4}$.} The purpose of this segment is to change  the first $|\alpha_{12}|$ entries of the element $P_3$ into those of $b$. Since the $\alpha_{12}$-entries of $P_3$ are identical with those of $a$, this is achieved using a word in the generators of $G^{12}$ of length $d_{\alpha_{12}}(a,b)$. The new endpoint is
    \[
    P_4 \;=\; a_{\alpha_{345}}b_{\alpha_{126}}\left[-\smashoperator{\sum_{i \in \{1,2,6\}}}\Phi\left(a_{\alpha_i}^{-1}b_{\alpha_i}\right)\right]{\scalebox{0.75}{$\begin{pmatrix} 1 & 2 & 3 \\ 4 & 5 & 6 \end{pmatrix}$}} \, .
    \]
This segment has length $\ell_{3,4} = d_{\alpha_{12}}(a,b)$.

{\em The segment $\overline{P_8P_7}$.} The purpose of this  segment is to change  the $\alpha_5$-entries  of $b$ into thiose of $a$. This is achieved using a word in the generators of $G^{5}_{N}$ of length $d_{\alpha_{5}}(a,b)$. The endpoints of this segment are $P_8 = b$ and 
    \[
    P_7 \;=\; b_{\alpha_{12346}}a_{\alpha_5}\left[-\Phi\left(b_{\alpha_5}^{-1}a_{\alpha_5}\right)\right]{\scalebox{0.75}{$\begin{pmatrix} 1 & \dot{2} & 3 \\ 1 & 3 & 6 \end{pmatrix}$}} \, .
    \]
This segment has length $\ell_{7,8} = d_{\alpha_5}(a,b)$.

{\em The segment $\overline{P_7P_6}$.} The purpose of this segment is to change the nonstandard linear algebra \scalebox{0.75}{$\begin{pmatrix} \dot{2} \\ 3 \end{pmatrix}$} of the element $P_7$ into standard linear algebra \scalebox{0.75}{$\begin{pmatrix}  2 \\ 5 \end{pmatrix}$}. An argument similar to the one used  for segment $\overline{P_2P_3}$ shows that is is achieved by using at most $\ell_{7,8} \leq d_{\alpha_5}(a,b)$ generators of $A^2_N(3,5)$. The new endpoint is
    \[
    P_6 \;=\; b_{\alpha_{12346}}a_{\alpha_5}\left[-\Phi\left(b_{\alpha_5}^{-1}a_{\alpha_5}\right)\right]{\scalebox{0.75}{$\begin{pmatrix} 1 & 2 & 3 \\ 1 & 5 & 6 \end{pmatrix}$}} \, .
    \]
This segment has length $\ell_{6,7} \leq \ell_{7,8} \leq d_{\alpha_5}(a,b)$.

{\em The segment $\overline{P_6P_5}$.} The purpose of this segment is to change  the $\alpha_{34}$-entries of  the element $P_6$ into those of  $a$. Since these entries of $P_6$ agree with those of $b$, this is achieved by using a word in the generators of $G^{34}$ of length $d_{\alpha_{34}}(a,b)$. The new endpoint is
    \[
    P_5 \;=\; b_{\alpha_{126}}a_{\alpha_{345}}\left[-\smashoperator{\sum_{i \in \{3,4,5\}}}\Phi\left(b_{\alpha_i}^{-1}a_{\alpha_i}\right)\right]{\scalebox{0.75}{$\begin{pmatrix} 1 & 2 & 3 \\ 1 & 5 & 6 \end{pmatrix}$}} \, .
    \]
This segment has length $\ell_{5,6} = d_{\alpha_{34}}(a,b)$.

{\em The segment $\overline{P_5P_4^{'}}$.} The purpose of this segment is to  change the standard linear algebra \scalebox{0.75}{$\begin{pmatrix} 1 \\ 1 \end{pmatrix}$} of the element $P_5$ into standard linear algebra \scalebox{0.75}{$\begin{pmatrix} 1 \\ 4  \end{pmatrix}$} of the element $P_4$. This is achieved by using at most $d_{\alpha_{345}}(a,b)$ generators of $A^1(1,4)$. This  bound comes from estimating the number of \scalebox{0.75}{$\begin{pmatrix} 1 \\ 1 \end{pmatrix}$} generators that can arise along the path $P_8 - P_7 -P_6 -P_5$ above.  The new endpoint is
    \[
    P_4^{'} = b_{\alpha_{126}}a_{\alpha_{345}}\left[-\smashoperator{\sum_{i \in \{3,4,5\}}}\Phi\left(b_{\alpha_i}^{-1}a_{\alpha_i}\right)\right]{\scalebox{0.75}{$\begin{pmatrix} 1 & 2 & 3 \\ 4 & 5 & 6 \end{pmatrix}$}} \, .
    \]
This segment has length $\ell_{4,5} \leq d_{\alpha_{345}}(a,b)$.

{\em Showing $P_4^{'} = P_4$.} We finish our argument by showing that $P_4 = P_4^{'}$. First, the padded elements commute 
$$
b_{\alpha_{126}}a_{\alpha_{345}} \; =\; a_{\alpha_{345}}b_{\alpha_{126}}
$$
because they are non-trivial on complementary ranges, 
and so we only have to check the linear algebra terms. 
For the linear algebra, notice that 
$$
\sum_{j \in \{3,4,5\}}\Phi(b_{\alpha_j}^{-1}a_{\alpha_j}) \; =\; -\sum_{j \in \{3,4,5\}}\Phi(a_{\alpha_j}^{-1}b_{\alpha_j}) \; =\; \sum_{i \in \{1,2,6\} }\Phi(a_{\alpha_i}^{-1}b_{\alpha_i})
$$
and so $P'_4 = P_4$. 
The first equality in the displayed equation above is obtained by taking inverses. The second equality follows from the identity 
$$
\sum_{i \in \{1, \ldots , 6\}}\Phi(a_{\alpha_i}^{-1}b_{\alpha_i}) \; =\; 0
$$
which is a long hand version of $\Phi(a^{-1}b) = 0$. This holds since $a, b \in \Ker(\Phi)$ imply $a^{-1}b \in \Ker(\Phi)$. 

One also observes that $\ell_{4,5} \leq d_{\alpha_{126}}(a,b)$. We will work with the bound $d_{\alpha_{345}}(a,b)$ as proved before.
\end{proof}

Since the $a,b \in \Ker(\Phi)$ in the argument above are arbitrary, this shows that the subset of $\overline{X}$ in Definition~\ref{def:kersecgens} consisting of the union of the generating sets of the 7 edge groups corresponding to the edges  on the right side of the triangle generates $\Ker(\Phi)$. A similar argument shows that the subsets corresponding to the left side or to the base of the triangle also generate $\Ker(\Phi)$. This proves  the following. 

\begin{cor}[Finite generation of $\Ker(\Phi)$]\label{cor:kerfingen}
The set $\overline{X}$ of Definition~\ref{def:kersecgens} generates $\Ker(\Phi)$.
\end{cor}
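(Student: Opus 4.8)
The plan is to deduce Corollary~\ref{cor:kerfingen} directly from Lemma~\ref{lem:triside-labdist} together with Definition~\ref{def:kersecgens}. Recall that $\overline{X}$ is defined to be the union of the generating sets of the twenty-five face subgroups of $\Ker(\Phi)$, and by Lemma~\ref{lem:edgeface} every edge subgroup of Definition~\ref{def:edgegroups} is contained in one of those face subgroups; hence every edge group's generators are words in $\overline{X}$. So it suffices to show that the union of the generating sets of the $7$ edge groups appearing along (say) the right side of the algebraic triangle already generates $\Ker(\Phi)$.

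First I would fix an arbitrary element $w \in \Ker(\Phi)$ and apply Lemma~\ref{lem:triside-labdist} to the ordered triple $(a,b,c) = (e, w, e)$, where $e$ is the identity of $\Ker(\Phi)$. The lemma produces an edge path $\partial\Delta(a,b) = \partial\Delta(e,w)$ from $e$ to $w$ in the Cayley graph of $\Ker(\Phi)$, consisting of $7$ concatenated segments, where the $i$-th segment is a geodesic path in the Cayley graph of the $i$-th edge group. Reading off the edge labels of this path expresses $w$ as a product of elements, each lying in one of these $7$ edge groups; more precisely, each segment spells a word in the vector kernel-section generators of its edge group. Since each of those edge groups embeds (by Lemma~\ref{lem:edgeface}) into a face group whose generating set is part of $\overline{X}$, this writes $w$ as a word in $\overline{X}^{\pm}$. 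As $w$ was arbitrary, $\overline{X}$ generates $\Ker(\Phi)$.

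The only point requiring a little care is that $e \in \Ker(\Phi)$, so that the triple $(e,w,e)$ is a legitimate input to Lemma~\ref{lem:triside-labdist}; this is immediate. One should also note that the explicit endpoint formulas in the proof of Lemma~\ref{lem:triside-labdist} specialize cleanly: with $a = e$ the padded elements $a_{\alpha_i}$ are trivial and the linear-algebra correction terms telescope, so the final vertex $P_8$ is indeed $w$, confirming that the $7$-segment path genuinely connects $e$ to $w$. Because $\overline{X}$ is a finite union of finite generating sets, it is finite, giving finite generation as claimed.

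I do not anticipate a real obstacle here: the corollary is essentially a repackaging of Lemma~\ref{lem:triside-labdist} (whose proof does the substantive work of tracking the seven conversions and the linear algebra) combined with the containment in Lemma~\ref{lem:edgeface}. The one bookkeeping item is to remember that the seven side-edge groups are $G^6_N, A^3_N(1,6), G^{12}, A^1(1,4), G^{34}, A^2_N(3,5), G^5_N$ (reading the right side of Figure~\ref{fig:tri-1cell-label}), and that each of these is contained in a face group from Definition~\ref{defn:stdsub} or Definition~\ref{defn:nonstdsub}; the symmetry among the three sides of the triangle then shows the analogous statement for the left side and the base, but any one side already suffices for the conclusion.
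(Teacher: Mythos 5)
Your proposal is correct and follows essentially the same route as the paper: the paper likewise deduces the corollary by noting that Lemma~\ref{lem:triside-labdist}, applied to arbitrary $a,b \in \Ker(\Phi)$, writes $a^{-1}b$ as a word in the generators of the seven right-side edge groups, which are among the face-group generators comprising $\overline{X}$. Your specialization to the triple $(e,w,e)$ is an immaterial variant of taking $a,b$ arbitrary, and the appeal to Lemma~\ref{lem:edgeface} for the containment of edge-group generators in $\overline{X}$ matches the paper's implicit use of that fact.
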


\begin{rem}\label{record1}[Record of  segment length bounds for Lemma~\ref{lem:triside-labdist}] We record the upper bounds for the  segment lengths obtained in the proof of Lemma~\ref{lem:triside-labdist} for the reader's convenience. 

\iffalse
    \begin{align*}
    P_1 &= a \, , \\
    P_2 &= a_{\alpha_{12345}}b_{\alpha_6}[-\Phi(a_{\alpha_6}^{-1}b_{\alpha_6})]{\scalebox{0.75}{$\begin{pmatrix} 1 & 2 & \dot{3} \\ 4 & 5 & 1 \end{pmatrix}$}} \, , \\
    P_3 &= a_{\alpha_{12345}}b_{\alpha_6}\left[-\Phi\left(a_{\alpha_6}^{-1}b_{\alpha_6}\right)\right]{\scalebox{0.75}{$\begin{pmatrix} 1 & 2 & \dot{3} \\ 4 & 5 & 6 \end{pmatrix}$}} \, , \\
    P_4 &= a_{\alpha_{345}}b_{\alpha_{126}}\left[-\smashoperator{\sum_{i \in \{1,2,6\}}}\Phi\left(a_{\alpha_i}^{-1}b_{\alpha_i}\right)\right]{\scalebox{0.75}{$\begin{pmatrix} 1 & 2 & 3 \\ 4 & 5 & 6 \end{pmatrix}$}} \, = \, a_{\alpha_{345}}b_{\alpha_{126}}\left[-\smashoperator{\sum_{i \in \{3,4,5\}}}\Phi\left(b_{\alpha_i}^{-1}a_{\alpha_i}\right)\right]{\scalebox{0.75}{$\begin{pmatrix} 1 & 2 & 3 \\ 4 & 5 & 6 \end{pmatrix}$}}  \, , \\
    P_5 &= b_{\alpha_{126}}a_{\alpha_{345}}\left[-\smashoperator{\sum_{i \in \{3,4,5\}}}\Phi\left(b_{\alpha_i}^{-1}a_{\alpha_i}\right)\right]{\scalebox{0.75}{$\begin{pmatrix} 1 & 2 & 3 \\ 1 & 5 & 6 \end{pmatrix}$}} \, , \\
    P_6 &= b_{\alpha_{12346}}a_{\alpha_5}\left[-\Phi\left(b_{\alpha_5}^{-1}a_{\alpha_5}\right)\right]{\scalebox{0.75}{$\begin{pmatrix} 1 & \dot{2} & 3 \\ 1 & 5 & 6 \end{pmatrix}$}} \, , \\
    P_7 &= b_{\alpha_{12346}}a_{\alpha_5}\left[-\Phi\left(b_{\alpha_5}^{-1}a_{\alpha_5}\right)\right]{\scalebox{0.75}{$\begin{pmatrix} 1 & \dot{2} & 3 \\ 1 & 3 & 6 \end{pmatrix}$}} \, , \text{ and} \\
    P_8 &= b \, .
    \end{align*}
\fi
    
    \begin{center}
    \begin{longtblr}{
        colspec = {|c|c||c|c|},
        rowhead = 1, rows={abovesep=2pt,belowsep=2pt},
    }
    \hline
    {Segment Length} & {Upper Bound} & {Segment Length} & {Upper Bound} \\
    \hline
    \hline
    $\ell_{1,2}$ & $d_{\alpha_6} (a,b)$ & $\ell_{2,3}$ & $d_{\alpha_6} (a,b)$ \\
    \hline
    $\ell_{3,4}$ & $d_{\alpha_{12}} (a,b)$ & $\ell_{4,5}$ & $d_{\alpha_{345}} (a,b)$ \\
    \hline
    $\ell_{5,6}$ & $d_{\alpha_{34}} (a,b)$ & $\ell_{6,7}$  & $d_{\alpha_{5}} (a,b)$ \\
    \hline
    $\ell_{7,8}$  & $d_{\alpha_{5}} (a,b)$ & \hphantom{xyz} &  \\
    \hline
    \end{longtblr}
    \end{center}
    
\end{rem}

\begin{rem}[1-fillings of 0-spheres and 2-fillings of loops] The preceding lemma is a one-dimensional result. It describes how to build specific $1$-fillings of the $0$-spheres $\{a,b\}$, $\{b,c\}$, and $\{c,a\}$, and provides an upper bound estimate for the lengths of these fillings.  

The next lemma is two-dimensional. It describes how to build a $2$-filling (van Kampen diagram filling) of the $1$-cycle (loop) $\partial\Delta(a,b,c)$, defined below. This filling is obtained by first decomposing the loop into a product of 25 loops and establishing upper bounds on the lengths of each of these loops. These loops are filled in the Cayley complexes of the 25 face groups and upper bounds on their filling area are provided by the isomorphism types of these face groups.  
\end{rem}

\begin{lem}[Twenty-five loop decomposition and van Kampen filling of $\partial\Delta(a,b,c)$]\label{lem:triinner-labdist}
Given an ordered triple $(a,b,c)$ of elements of $ \Ker(\Phi)$ and edge paths $\partial{\Delta}(a,b)$, $\partial{\Delta}(b,c)$, and $\partial{\Delta}(c,a)$ constructed as in Lemma~\ref{lem:triside-labdist}, the edge group labels in Figure~\ref{fig:tri-1cell-label} give a procedure for constructing a subdivision of the loop
    \[
    \partial{\Delta}(a,b,c) \coloneqq \partial{\Delta}(a,b) \cup \partial{\Delta}(b,c) \cup \partial{\Delta}(c,a)
    \]
into twenty-five loops indexed by the $2$-cells of the algebraic triangle $\Delta$, each of which is an edge path loop made up of geodesic segments in the Cayley graph of the corresponding face group. 

Furthermore, the lengths of these geodesic segments can be bounded by $4D$, where $D \coloneqq d_G(a,b) + d_G(b,c) + d_G(c,a)$.
\end{lem}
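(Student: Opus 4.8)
The plan is to run the same ``work inward from both ends'' strategy used in Lemma~\ref{lem:triside-labdist} on the boundary of the algebraic triangle, but now carry it out inside the $2$-complex $\Delta$, splitting each interior edge of $\Delta$ in turn so as to cut the big loop $\partial\Delta(a,b,c)$ into the twenty-five smaller loops bounding the $2$-cells. Concretely, I will first invoke Lemma~\ref{lem:triside-labdist} to obtain the three boundary paths $\partial\Delta(a,b)$, $\partial\Delta(b,c)$, $\partial\Delta(c,a)$, each a concatenation of $7$ geodesic segments in edge groups, so that $\partial\Delta(a,b,c)$ is already a loop traversing the $21$ boundary $1$-cells of $\Delta$ (with a length bound recorded in Remark~\ref{record1}). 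The vertices of this loop are the $P_i$ of Figure~\ref{fig:triangle-vertices} (and their cyclic analogues on the other two sides). Then, for each interior $1$-cell $e$ of $\Delta$, labelled by a specific edge subgroup $H_e$ in Figure~\ref{fig:tri-1cell-label}, I will produce a geodesic path in the Cayley graph of $H_e$ joining the two endpoints of $e$ (which are already among the vertices determined by the boundary construction or by previously-treated interior edges). The key point, exactly as in the proof of Lemma~\ref{lem:triside-labdist}, is that the two endpoints of $e$ differ (as elements of $\Ker(\Phi)$) by an element whose non-trivial coordinates and linear-algebra terms all lie within the coordinate block handled by $H_e$, so such a geodesic exists; and by Lemma~\ref{lem:edgeface} this geodesic lies in the Cayley graphs of both adjacent face groups, so the subdivision is consistent.

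The order of operations is: process the three corner hexagonal regions ($G^{36}_N$, $G^{14}_N$, $G^{25}_N$) and their incident interior edges first; then the ``linear-algebra'' regions ($A^{11}_N$, $A^{22}_N$, $A^{33}_N$), which sit between the corners and the central band; then the six $AG_N$ regions ($A^3G^3_N$, $A^3G^6_N$, etc.); then the six triple-superscript regions ($G^{123}$, $G^{126}$, $\dots$); then the six double-superscript regions ($A^3G^{12}$, $A^1G^{23}$, $\dots$); and finally the central region $\mathcal{L}$. At each stage the two endpoints of the new interior edge are group elements already written down (in the padded-element/streamlined-matrix notation of Definitions~\ref{defn:padded} and~\ref{defn:la_elements}), and the length of the new geodesic segment is at most the $d$-distance between those endpoints in the appropriate coordinate block. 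The verification that closing off each region yields a genuine loop in the relevant face group is the ``$P_4 = P_4'$''-type check done once in Lemma~\ref{lem:triside-labdist}: one computes the two expressions for the shared endpoint reached along the two boundary arcs of the region and checks they agree, using $\Phi(a^{-1}b)=0$ etc.\ to reconcile the linear-algebra terms; since the pattern is identical to that lemma I will indicate the computation for one representative region and leave the rest to the reader.

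For the length bound, I will bound every geodesic segment (boundary or interior) by $4D$ where $D = d_G(a,b)+d_G(b,c)+d_G(c,a)$. Each boundary segment already has a bound of the form $d_\alpha(x,y)$ for some label sequence $\alpha$ and some $x,y\in\{a,b,c\}$ (Remark~\ref{record1}), hence is $\le d_G(x,y) \le D$. For an interior edge, the two endpoints are obtained from $a,b,c$ by a bounded number of the ``conversion'' steps, each of which changes the element by a word whose length is already bounded by one of the boundary segment lengths; tracing through Figure~\ref{fig:triangle-vertices} the worst case accumulates at most four such contributions (e.g.\ the segment landing at a central vertex of a corner hexagon can pick up the $\alpha_5$-length, the $\alpha_6$-length, and the two linear-algebra corrections they spawn), giving the uniform bound $4D$. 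The coefficient $4$ is not sharp and I will simply verify it suffices for the worst of the $25$ regions rather than optimize.

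The main obstacle is bookkeeping: one must track, for all $20$-plus labelled vertices of Figure~\ref{fig:triangle-vertices} and their cyclic images under the $\mathbb{Z}/3$ symmetry of $\Delta$, the precise group element (padded coordinates plus linear-algebra matrix), and check that along each of the $25$ region boundaries the element computed going one way around matches the element computed going the other way. This is the same kind of verification carried out explicitly for one side in Lemma~\ref{lem:triside-labdist}; the new content here is that it must be done for the interior edges and that the closure of each region is consistent. I expect no conceptual difficulty beyond that already met in Lemma~\ref{lem:triside-labdist} and Lemma~\ref{lem:edgeface}: the linear-algebra matrices attached to the regions in Figure~\ref{fig:tri-2cell-label} and to the edges in Figure~\ref{fig:tri-1cell-label} were designed precisely so that adjacent regions share compatible edge labels, so the reconciliation is forced once one follows the figures.
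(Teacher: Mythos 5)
Your proposal follows essentially the same route as the paper's proof: define the interior vertices explicitly in padded-element/linear-algebra notation, join consecutive vertices by geodesics in the labelled edge groups (which lie in both adjacent face groups by Lemma~\ref{lem:edgeface}), verify that each region's boundary word closes up in its face group via a $\Phi(a^{-1}b)=0$-type reconciliation of the linear-algebra terms, and bound each segment by a sum of a few $d_{\alpha_i}$-distances, hence by $4D$. The only differences are cosmetic (your processing order by region type versus the paper's top-to-bottom sweep of one third of $\Delta$ followed by the $3$-fold symmetry, including its explicit $P_{17}'=P_{17}$ check at the central hexagon), so the plan is sound.
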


\begin{proof}
We  describe how to fill in one-third of the diagram in Figure~\ref{fig:triangle-vertices} explicitly, and appeal to the $3$-fold rotational symmetry of the setup in order to fill in the remaining two-thirds. In particular, we consider the region corresponding to the top seven $2$-cells in Figure~\ref{fig:triangle-vertices}, the $2$-cell labeled by $A^1G^{23}$, and the central hexagonal $2$-cell.

Vertices $P_1, \ldots, P_8$ are already specified in the proof of Lemma~\ref{lem:triside-labdist}, as well as  the geodesic edge-paths connecting them. We continue  labeling  vertices and building edge-paths below. Our discussion is indexed by the $2$-cells, working from top to bottom.  

%we specify the vertices $P_9, \ldots , P_{17}$.  
%Furthermore, as in Lemma~\ref{lem:triside-labdist}, these geodesic segments may be specified by their endpoints; due to the rotational symmetry of the triangle (realized algebraically by replacing $(a,b)$ by $(b,c)$ or $(c,a)$, as before), we need only specify twenty vertices to completely determine every loop. These points are labeled $P_1$ through $P_{20}$; the points $P_1$ through $P_8$ are the same as in Lemma~\ref{lem:triside-labdist}, so the additional points $P_9$ through $P_{20}$ are listed below in Remark~\ref{rem:record-vertlength}, along with the specific bounds on the lengths of the geodesic segments.

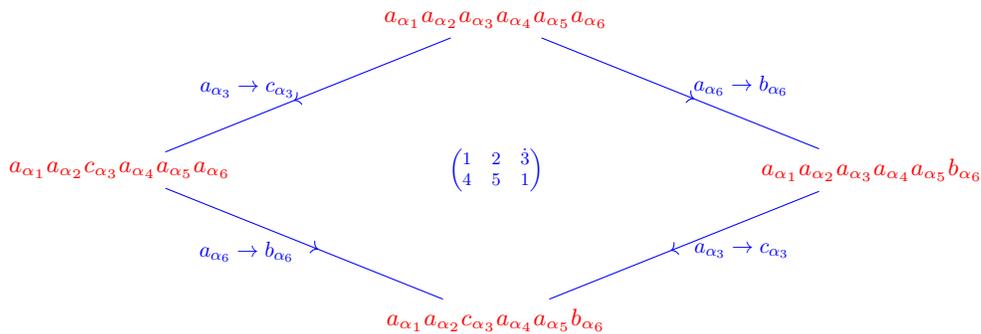
\begin{figure}[h]
\centering
    \begin{tikzpicture}
    \node (P1) at (0,4) {\footnotesize\textcolor{red}{$a_{\alpha_1}a_{\alpha_2}a_{\alpha_3}a_{\alpha_4}a_{\alpha_5}a_{\alpha_6}$}};
    \node (P2) at (5,2) {\footnotesize\textcolor{red}{$a_{\alpha_1}a_{\alpha_2}a_{\alpha_3}a_{\alpha_4}a_{\alpha_5}b_{\alpha_6}$}};
    \node (P9) at (-5,2) {\footnotesize\textcolor{red}{$a_{\alpha_1}a_{\alpha_2}c_{\alpha_3}a_{\alpha_4}a_{\alpha_5}a_{\alpha_6}$}};
    \node (P10) at (0,0) {\footnotesize\textcolor{red}{$a_{\alpha_1}a_{\alpha_2}c_{\alpha_3}a_{\alpha_4}a_{\alpha_5}b_{\alpha_6}$}};
    \node (LAlabel) at (0,2) {\footnotesize\textcolor{blue}{$\scalebox{0.75}{$\begin{pmatrix} 1 & 2 & \dot{3} \\ 4 & 5 & 1 \end{pmatrix}$}$}};
    
    \begin{scope}[color=blue,decoration={
    markings,
    mark=at position 0.55 with {\arrow{>}}}
    ]
        % \draw[postaction={decorate}] (P1) -- (P2) node[midway, above right=-1ex and 0.25ex] {$\substack{\scalebox{0.75}{$a_{\alpha_6} \to b_{\alpha_6}$} \\ \scalebox{0.5}{$\begin{pmatrix} 3 \\ 6 \end{pmatrix} \to \begin{pmatrix} \dot{3} \\ 1 \end{pmatrix}$}}$};
        % \draw[postaction={decorate}] (P1) -- (P9) node[midway, above left=-1ex and 0.25ex] {$\substack{\scalebox{0.75}{$a_{\alpha_3} \to c_{\alpha_3}$} \\ \scalebox{0.5}{$\begin{pmatrix} 3 \\ 6 \end{pmatrix} \to \begin{pmatrix} \dot{3} \\ 1 \end{pmatrix}$}}$};
        \draw[postaction={decorate}] (P1) -- (P2) node[midway, above right=-1ex and 0.25ex] {\scalebox{0.75}{$a_{\alpha_6} \to b_{\alpha_6}$}};
        \draw[postaction={decorate}] (P1) -- (P9) node[midway, above left=-1ex and 0.25ex] {\scalebox{0.75}{$a_{\alpha_3} \to c_{\alpha_3}$}};
        \draw[postaction={decorate}] (P2) -- (P10) node[midway, below right=-1ex and 0ex] {\scalebox{0.75}{$a_{\alpha_3} \to c_{\alpha_3}$}};
        \draw[postaction={decorate}] (P9) -- (P10) node[midway, below left=-1ex and 0ex] {\scalebox{0.75}{$a_{\alpha_6} \to b_{\alpha_6}$}};
    \end{scope}
\end{tikzpicture}
\caption{Vertex labels, coordinate changes, and linear algebra framework for the $G^{36}_N$ square of $\Delta(a,b,c)$.}
\label{fig:vertex-labels-g36n}
\end{figure}

\begin{rem}[Notation in figures\ref{fig:vertex-labels-g36n} through~\ref{fig:vertex-labels-a1g23-L}]
The vertex labels are not complete. They are simply products of padded elements. Their purpose is to help the reader keep track of which of the six $\alpha_i$-segments have entries from $a$ or $b$ or $c$. The precise group elements corresponding to each vertex is given in the details of the proof below. 

The edge labels consist of two types; either a change of entries in some $\alpha_i$-segment, or a change in linear algebra. The former are denoted by, for example, $a_{\alpha_i} \to b_{\alpha_i}$. The reader should keep in mind that such changes are effected by elements of $\Ker(\Phi)$ and there is a  linear algebra choice in the lifts of section generators. This linear algebra choice is indicated by an accompanying $(2\times 3)$-matrix; sometimes one such linear algebra choice works for all edges of the diagram, in which case the associated $(2 \times 3)$-matrix is typeset in the center of the region. 

The other purpose of an edge is to effect a change in linear algebra from non-standard to standard or vice versa, or between two standard choices. These are represented by an arrow between two $(2 \times 1)$ column vectors. 

\end{rem}

({\em $G^{36}_N$ square $2$-cell.}) The square $2$-cell indexed by the subgroup $G^{36}_N$ has vertices $P_1$, $P_2$, $P_9$, and $P_{10}$ as shown in Figure~\ref{fig:vertex-labels-g36n}. Vertices $P_1$ and $P_2$ were computed explicitly in the proof of Lemma~\ref{lem:triside-labdist}; vertex $P_9$ also follows from that lemma, but is not explicitly computed there. We do so here. 
    \begin{align*}
    P_1 &= a \;=\; a_{\alpha_{123456}} \, , \\
    P_2 &= a_{\alpha_{12345}}b_{\alpha_6} \left[-\Phi(a_{\alpha_6}^{-1}b_{\alpha_6})\right]{\scalebox{0.75}{$\begin{pmatrix} 1 & 2 & \dot{3} \\ 4 & 5 & 1 \end{pmatrix}$}} \\   
    &= P_1 \cdot a_{\alpha_6}^{-1}b_{\alpha_6} \left[-\Phi(a_{\alpha_6}^{-1}b_{\alpha_6})\right]{\scalebox{0.75}{$\begin{pmatrix} 1 & 2 & \dot{3} \\ 4 & 5 & 1 \end{pmatrix}$}} \, , \\
    P_9 &= a_{\alpha_{12345}}c_{\alpha_3} \left[-\Phi(a_{\alpha_3}^{-1}c_{\alpha_3})\right]{\scalebox{0.75}{$\begin{pmatrix} 1 & 2 & \dot{3} \\ 4 & 5 & 1 \end{pmatrix}$}} \\   
    &= P_1 \cdot a_{\alpha_3}^{-1}c_{\alpha_3} \left[-\Phi(a_{\alpha_3}^{-1}c_{\alpha_3})\right]{\scalebox{0.75}{$\begin{pmatrix} 1 & 2 & \dot{3} \\ 4 & 5 & 1 \end{pmatrix}$}} \, , \\
    P_{10} &= a_{\alpha_{1245}}b_{\alpha_6}c_{\alpha_3} \left[-\Phi\left(a_{\alpha_3}^{-1}c_{\alpha_3}\right)-\Phi\left(a_{\alpha_6}^{-1}b_{\alpha_6}\right)\right]{\scalebox{0.75}{$\begin{pmatrix} 1 & 2 & \dot{3} \\ 4 & 5 & 1 \end{pmatrix}$}} \\
    &= P_2 \cdot a_{\alpha_3}^{-1}c_{\alpha_3} \left[-\Phi(a_{\alpha_3}^{-1}c_{\alpha_3})\right]{\scalebox{0.75}{$\begin{pmatrix} 1 & 2 & \dot{3} \\ 4 & 5 & 1 \end{pmatrix}$}} \\
    &= P_9 \cdot a_{\alpha_6}^{-1}b_{\alpha_6} \left[-\Phi(a_{\alpha_6}^{-1}b_{\alpha_6})\right]{\scalebox{0.75}{$\begin{pmatrix} 1 & 2 & \dot{3} \\ 4 & 5 & 1 \end{pmatrix}$}} \, .
    \end{align*}
The fact that the two paths $P_1 \to P_2 \to P_{10}$ and $P_1 \to P_9 \to P_{10}$ form a closed loop corresponds to the commutation relation 
    \begin{align*}
    &(a_{\alpha_6}^{-1}b_{\alpha_6}) \left[-\Phi(a_{\alpha_6}^{-1}b_{\alpha_6})\right] {\scalebox{0.75}{$\begin{pmatrix} 1 & 2 & \dot{3} \\ 4 & 5 & 1 \end{pmatrix}$}}(a_{\alpha_3}^{-1}c_{\alpha_3}) \left[-\Phi(a_{\alpha_3}^{-1}c_{\alpha_3})\right]{\scalebox{0.75}{$\begin{pmatrix} 1 & 2 & \dot{3} \\ 4 & 5 & 1 \end{pmatrix}$}} \\
    &= (a_{\alpha_3}^{-1}c_{\alpha_3}) \left[-\Phi(a_{\alpha_3}^{-1}c_{\alpha_3})\right] {\scalebox{0.75}{$\begin{pmatrix} 1 & 2 & \dot{3} \\ 4 & 5 & 1 \end{pmatrix}$}}(a_{\alpha_6}^{-1}b_{\alpha_6}) \left[-\Phi(a_{\alpha_6}^{-1}b_{\alpha_6})\right] {\scalebox{0.75}{$\begin{pmatrix} 1 & 2 & \dot{3} \\ 4 & 5 & 1 \end{pmatrix}$}} \, ,
    \end{align*}
which holds in $G_N^{36} \cong G_{\alpha_3} \times G_{\alpha_6}$. Concretely, the $(a_{\alpha_3}^{-1}c_{\alpha_3})$ term in the first row commutes to the left past the linear algebra and the $(a_{\alpha_6}^{-1}b_{\alpha_6})$ terms because both of those terms have identity elements in their $\alpha_3$-coordinates and the padded element $(a_{\alpha_3}^{-1}c_{\alpha_3})$ has the identity element in every coordinate outside of the $\alpha_3$-coordinates. Next, the two linear algebra terms commute, since they are lifts of commuting elements from $\Z^m$. Finally, the $(a_{\alpha_6}^{-1}c_{\alpha_6})$ term commutes to the right past the $(a_{\alpha_3}^{-1}c_{\alpha_3})$-linear algebra term for the same reason. We do not spell out the relations this explicitly for the other squares below. 

There is a van Kampen diagram for this commutation identity in the Cayley $2$-complex of $G^{36}_N$. We take the $a$-translate of this van Kampen diagram corresponding to the coset $aG^{23}_N \subseteq \Ker(\Phi)$; this is one of twenty-five pieces which make up the van Kampen diagram for $\partial\Delta(a,b,c)$ in the Cayley $2$-complex of $\Ker(\Phi)$. We shall drop explicit reference to these coset translates in our discussion of the remaining 2-cells below.

({\em Segment bounds in $G^{36}_N$ square $2$-cell.}) In Lemma~\ref{lem:triside-labdist}, one gets $\ell_{1,2} \leq d_{\alpha_6}(a,b)$ (see Remark~\ref{lem:triside-labdist}). By similar reasoning, we get $\ell_{9,10} \leq d_{\alpha_6}(a,b)$. Again, using generators form $G^3_N$, one goes from $P_1$ to $P_9$ and from $P_2$ to $P_{10}$. This provides the following the upper bounds: $\ell_{1,9} \leq d_{\alpha_3}(a,c)$ and $\ell_{2,10} \leq d_{\alpha_3}(a,c)$.

\begin{figure}[h]
\centering
    \begin{tikzpicture}[scale=0.95]
    \node (P10) at (0,3) {\footnotesize\textcolor{red}{$a_{\alpha_1}a_{\alpha_2}c_{\alpha_3}a_{\alpha_4}a_{\alpha_5}b_{\alpha_6}$}};
    \node (P12) at (-3,0) {\footnotesize\textcolor{red}{$a_{\alpha_1}a_{\alpha_2}c_{\alpha_3}a_{\alpha_4}a_{\alpha_5}b_{\alpha_6}$}};
    \node (P13) at (3,0) {\footnotesize\textcolor{red}{$a_{\alpha_1}a_{\alpha_2}c_{\alpha_3}a_{\alpha_4}a_{\alpha_5}b_{\alpha_6}$}};
    
    \node (P2) at ($(P10)+(30:3)$) {\footnotesize\textcolor{red}{$a_{\alpha_1}a_{\alpha_2}a_{\alpha_3}a_{\alpha_4}a_{\alpha_5}b_{\alpha_6}$}};
    \node (P3) at ($(P13)+(30:3)$) {\footnotesize\textcolor{red}{$a_{\alpha_1}a_{\alpha_2}a_{\alpha_3}a_{\alpha_4}a_{\alpha_5}b_{\alpha_6}$}};
    \node (P9) at ($(P10)+(150:3)$) {\footnotesize\textcolor{red}{$a_{\alpha_1}a_{\alpha_2}c_{\alpha_3}a_{\alpha_4}a_{\alpha_5}a_{\alpha_6}$}};
    \node (P11) at ($(P12)+(150:3)$) {\footnotesize\textcolor{red}{$a_{\alpha_1}a_{\alpha_2}c_{\alpha_3}a_{\alpha_4}a_{\alpha_5}a_{\alpha_6}$}};

    \begin{scope}[color=blue,decoration={
    markings,
    mark=at position 0.55 with {\arrow{>}}}
    ]
        \draw[postaction={decorate}] (P2) -- (P3) node[midway, above right=-1ex and 0ex] {\scalebox{0.5}{$\begin{pmatrix} \dot{3} \\ 1 \end{pmatrix} \to \begin{pmatrix} 3 \\ 6 \end{pmatrix}$}};
        \draw[postaction={decorate}] (P2) -- (P10) node[midway, below right=-1ex and 0ex] {$\substack{$\scalebox{0.75}{$a_{\alpha_3} \to c_{\alpha_3}$}$ \\ $\scalebox{0.5}{$\begin{pmatrix} 1 & 2 & \dot{3} \\ 4 & 5 & 1 \end{pmatrix}$}$}$};
            %{\scalebox{0.75}{$a_{\alpha_3} \to c_{\alpha_3}$}};
        \draw[postaction={decorate}] (P9) -- (P10) node[midway, below left=-1ex and 0ex] {$\substack{$\scalebox{0.75}{$a_{\alpha_6} \to b_{\alpha_6}$}$ \\ $\scalebox{0.5}{$\begin{pmatrix} 1 & 2 & \dot{3} \\ 4 & 5 & 1 \end{pmatrix}$}$}$};
            %{\scalebox{0.75}{$a_{\alpha_6} \to b_{\alpha_6}$}};
        \draw[postaction={decorate}] (P3) -- (P13) node[midway, below right=-1ex and 0ex] {$\substack{$\scalebox{0.75}{$a_{\alpha_3} \to c_{\alpha_3}$}$ \\ $\scalebox{0.5}{$\begin{pmatrix} 1 & 2 & 3 \\ 4 & 5 & 3 \end{pmatrix}$}$}$};
            %{\scalebox{0.75}{$a_{\alpha_3} \to c_{\alpha_3}$}};
        \draw[postaction={decorate}] (P10) -- (P13) node[midway, above right=0.5ex and -2.5ex] {\scalebox{0.5}{$\begin{pmatrix} \dot{3} \\ 1 \end{pmatrix} \to \begin{pmatrix} 3 \\ 6 \end{pmatrix}$}};
        \draw[postaction={decorate}] (P9) -- (P11) node[midway, above left=-1ex and 0ex] {\scalebox{0.5}{$\begin{pmatrix} \dot{3} \\ 1 \end{pmatrix} \to \begin{pmatrix} 3 \\ 3 \end{pmatrix}$}};
        \draw[postaction={decorate}] (P11) -- (P12) node[midway, below left=-1ex and 0ex] {$\substack{$\scalebox{0.75}{$a_{\alpha_6} \to b_{\alpha_6}$}$ \\ $\scalebox{0.5}{$\begin{pmatrix} 1 & 2 & 3 \\ 4 & 5 & 6 \end{pmatrix}$}$}$};
            %{\scalebox{0.75}{$a_{\alpha_6} \to b_{\alpha_6}$}};
        \draw[postaction={decorate}] (P10) -- (P12) node[midway, above left=0.5ex and -2.5ex] {\scalebox{0.5}{$\begin{pmatrix} \dot{3} \\ 1 \end{pmatrix} \to \begin{pmatrix} 3 \\ 3 \end{pmatrix}$}};
        \draw[postaction={decorate}] (P12) -- (P13) node[midway, below] {\scalebox{0.5}{$\begin{pmatrix} 3 \\ 3 \end{pmatrix} \to \begin{pmatrix} 3 \\ 6 \end{pmatrix}$}};
    \end{scope}
    \end{tikzpicture}
\caption{Vertex labels, coordinate changes, and linear algebra framework for the $A^3G^3_N$ and $A^3G^6_N$ squares and the $A^{33}_N$ triangle of $\Delta(a,b,c)$.}
\label{fig:vertex-labels-a3g3n-a3g6n-a33n}
\end{figure}
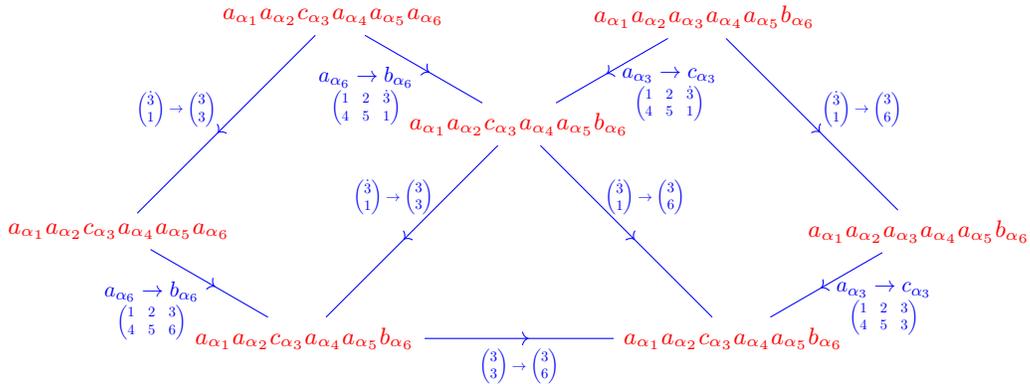

({\em $A^3G^3_N$ and $A^3G^6_N$ square $2$-cells.}) As seen in Figure~\ref{fig:vertex-labels-a3g3n-a3g6n-a33n} above, the square $2$-cell indexed by the subgroup $A^3G^3_N$ has the following four vertices: $P_2$ and $P_{10}$, which are described above, together with  
    \begin{align*}
    P_3 &= a_{\alpha_{12345}}b_{\alpha_6} \left[-\Phi\left(a_{\alpha_6}^{-1}b_{\alpha_6}\right)\right] {\scalebox{0.75}{$\begin{pmatrix} 1 & 2 & 3 \\ 4 & 5 & 6 \end{pmatrix}$}} \\
    &= P_2 \cdot \left[ \left[\Phi\left(a_{\alpha_6}^{-1}b_{\alpha_6}\right)\right]{\scalebox{0.75}{$\begin{pmatrix} \dot{3} \\ 1 \end{pmatrix}$}} +  \left[-\Phi\left(a_{\alpha_6}^{-1}b_{\alpha_6}\right)\right] {\scalebox{0.75}{$\begin{pmatrix} 3 \\ 6 \end{pmatrix}$}} \right] \, \text{ and } \\
    P_{13} &= a_{\alpha_{1245}}b_{\alpha_6}c_{\alpha_3} \left[-\Phi\left(a_{\alpha_3}^{-1}c_{\alpha_3}\right)-\Phi\left(a_{\alpha_6}^{-1}b_{\alpha_6}\right)\right] {\scalebox{0.75}{$\begin{pmatrix} 1 & 2 & 3 \\ 4 & 5 & 6 \end{pmatrix}$}} \\
    &= P_3 \cdot a_{\alpha_3}^{-1}c_{\alpha_3} \left[-\Phi(a_{\alpha_3}^{-1}c_{\alpha_3})\right] {\scalebox{0.75}{$\begin{pmatrix} 1 & 2 & 3 \\ 4 & 5 & 6 \end{pmatrix}$}} \\
    &= P_{10} \cdot \left[ \left[\Phi\left(a_{\alpha_3}^{-1}c_{\alpha_3}\right)\right]{\scalebox{0.75}{$\begin{pmatrix} \dot{3} \\ 1 \end{pmatrix}$}} + \left[-\Phi\left(a_{\alpha_3}^{-1}c_{\alpha_3}\right)\right] {\scalebox{0.75}{$\begin{pmatrix} 3 \\ 6 \end{pmatrix}$}} + \left[\Phi\left(a_{\alpha_6}^{-1}b_{\alpha_6}\right)\right] {\scalebox{0.75}{$\begin{pmatrix} \dot{3} \\ 1 \end{pmatrix}$}} + \left[-\Phi\left(a_{\alpha_6}^{-1}b_{\alpha_6}\right)\right] {\scalebox{0.75}{$\begin{pmatrix} 3 \\ 6 \end{pmatrix}$}}\right] \, .
    \end{align*}
The fact that the two paths $P_2 \to P_{10} \to P_{13}$ and $P_2 \to P_3 \to P_{13}$ are coterminal corresponds to multiplication by 
$a_{\alpha_3}^{-1}c_{\alpha_3}$ together with the linear algebra relation 
    \begin{align*}
    \left[-\Phi(a_{\alpha_3}^{-1}c_{\alpha_3})\right] {\scalebox{0.75}{$\begin{pmatrix} 1 & 2 & \dot{3} \\ 4 & 5 & 1 \end{pmatrix}$}} &+ \left[\Phi\left(a_{\alpha_3}^{-1}c_{\alpha_3}\right)\right] {\scalebox{0.75}{$\begin{pmatrix} \dot{3} \\ 1 \end{pmatrix}$}} \\
    &+ \left[-\Phi\left(a_{\alpha_3}^{-1}c_{\alpha_3}\right)\right] {\scalebox{0.75}{$\begin{pmatrix} 3 \\ 6 \end{pmatrix}$}} + \left[\Phi\left(a_{\alpha_6}^{-1}b_{\alpha_6}\right)\right] {\scalebox{0.75}{$\begin{pmatrix} \dot{3} \\ 1 \end{pmatrix}$}} + \left[-\Phi\left(a_{\alpha_6}^{-1}b_{\alpha_6}\right)\right] {\scalebox{0.75}{$\begin{pmatrix} 3 \\ 6 \end{pmatrix}$}} \\
    &= \left[\Phi\left(a_{\alpha_6}^{-1}b_{\alpha_6}\right)\right] {\scalebox{0.75}{$\begin{pmatrix} \dot{3} \\ 1 \end{pmatrix}$}} +  \left[-\Phi\left(a_{\alpha_6}^{-1}b_{\alpha_6}\right)\right] {\scalebox{0.75}{$\begin{pmatrix} 3 \\ 6 \end{pmatrix}$}} + \left[-\Phi(a_{\alpha_3}^{-1}c_{\alpha_3})\right] {\scalebox{0.75}{$\begin{pmatrix} 1 & 2 & 3 \\ 4 & 5 & 6 \end{pmatrix}$}} \, ,
    \end{align*}
which holds in $A^3G^3_N \cong A_{\alpha_3} \times G_{\alpha_3}$.

Similarly, the square $2$-cell indexed by the subgroup $A^3G^6_N$ has the following four vertices: $P_9$ and $P_{10}$ as described above, and 
    \begin{align*}
    P_{11} &= a_{\alpha_{12456}}c_{\alpha_3} \left[-\Phi\left(a_{\alpha_3}^{-1}c_{\alpha_3}\right)\right] {\scalebox{0.75}{$\begin{pmatrix} 1 & 2 & 3 \\ 4 & 5 & 3 \end{pmatrix}$}} \, \text{ and } \\
    P_{12} &= a_{\alpha_{1245}}b_{\alpha_6}c_{\alpha_3}\left[-\Phi\left(a_{\alpha_3}^{-1}c_{\alpha_3}\right)-\Phi\left(a_{\alpha_6}^{-1}b_{\alpha_6}\right)\right]{\scalebox{0.75}{$\begin{pmatrix} 1 & 2 & 3 \\ 4 & 5 & 3 \end{pmatrix}$}} \\
    &= P_{10} \cdot \left[ \left[\Phi\left(a_{\alpha_3}^{-1}c_{\alpha_3}\right) + \Phi\left(a_{\alpha_6}^{-1}b_{\alpha_6}\right)\right]{\scalebox{0.75}{$\begin{pmatrix}  \dot{3} \\ 1 \end{pmatrix}$}} - \left[\Phi\left(a_{\alpha_3}^{-1}c_{\alpha_3}\right) + \Phi\left(a_{\alpha_6}^{-1}b_{\alpha_6}\right)\right]{\scalebox{0.75}{$\begin{pmatrix}  3 \\ 3 \end{pmatrix}$}} \right] \, .
    \end{align*}  
This $2$-cell is a mirror image of the $A^3G^3_N$ $2$-cell above (specifically, one replaces $a_{\alpha_6}$ by $b_{\alpha_6}$ using non-standard linear algebra $\begin{pmatrix} 1&2&\dot{3}\\4&5&1
\end{pmatrix}$ and also using standard  linear algebra $\begin{pmatrix} 1&2&3\\4&5&3
\end{pmatrix}$), and the boundary relation holds in $A^3G^6_N$ in an analogous fashion. 

({\em Segment bounds in $A^3G^3_N$ and $A^3G^6_N$ square $2$-cells.}) One gets $\ell_{2,3} \leq d_{\alpha_6}(a,b)$ from Lemma~\ref{lem:triside-labdist}. The purpose of the $P_{10,13}$ segment is to change the nonstandard linear algebra \scalebox{0.75}{$\begin{pmatrix} \dot{3} \\  1 \end{pmatrix}$} of the element $P_{10}$ into standard linear algebra \scalebox{0.75}{$\begin{pmatrix} 3 \\  6 \end{pmatrix}$} of $P_{13}$. This is achieved by using at most $d_{\alpha_3} (a,c) + d_{\alpha_6} (a,b)$ generators of $A^3_N(1,6)$. Hence, $\ell_{10,13} \leq d_{\alpha_3} (a,c) + d_{\alpha_6} (a,b)$. Finally, using generators from $G^3$, one converts the $\alpha_3$ coordinates from $a$ to $c$, which establishes $\ell_{3,13} \leq d_{\alpha_3}(a,c)$. This completes all edges bounding the $A^3G^3_N$ square. As described above, since $A^3G^6_N$ is a mirror image of $A^3G^3_N$ cell, one obtains the upper bound (recorded in Remark~\ref{rem:record-vertlength}) on the segments by similar reasoning. 

({\em $A^{33}_N$ triangular $2$-cell.}) The triangular $2$-cell indexed by $A^{33}_N$ has the following vertices: $P_{10}$, $P_{12}$, and $P_{13}$, all of which are described above.

The fact that the two paths $P_{10} \to P_{12} \to P_{13}$ and $P_{10} \to P_{13}$ are coterminal corresponds to the purely linear algebra relation
    \begin{align*}
    &\left[\Phi\left(a_{\alpha_3}^{-1}c_{\alpha_3}\right) + \Phi\left(a_{\alpha_6}^{-1}b_{\alpha_6}\right)\right] {\scalebox{0.75}{$\begin{pmatrix}  \dot{3} \\ 1 \end{pmatrix}$}} - \left[\Phi\left(a_{\alpha_3}^{-1}c_{\alpha_3}\right) + \Phi\left(a_{\alpha_6}^{-1}b_{\alpha_6}\right)\right] {\scalebox{0.75}{$\begin{pmatrix} 3 \\ 3 \end{pmatrix}$}} \\
    &+\left[\Phi\left(a_{\alpha_3}^{-1}c_{\alpha_3}\right) + \Phi\left(a_{\alpha_6}^{-1}b_{\alpha_6}\right)\right] {\scalebox{0.75}{$\begin{pmatrix} 3 \\ 3 \end{pmatrix}$}} - \left[\Phi\left(a_{\alpha_3}^{-1}c_{\alpha_3}\right) + \Phi\left(a_{\alpha_6}^{-1}b_{\alpha_6}\right)\right] {\scalebox{0.75}{$\begin{pmatrix} 3 \\ 6 \end{pmatrix}$}} \\
    &= \left[\Phi\left(a_{\alpha_3}^{-1}c_{\alpha_3}\right) + \Phi\left(a_{\alpha_6}^{-1}b_{\alpha_6}\right)\right] {\scalebox{0.75}{$\begin{pmatrix}  \dot{3} \\ 1 \end{pmatrix}$}} - \left[\Phi\left(a_{\alpha_3}^{-1}c_{\alpha_3}\right) + \Phi\left(a_{\alpha_6}^{-1}b_{\alpha_6}\right)\right] {\scalebox{0.75}{$\begin{pmatrix} 3 \\ 6 \end{pmatrix}$}} \, ,
    \end{align*}
which holds in the group $A^{33}_N \cong A_{\alpha_3} \times A_{\alpha_3}$.

({\em Segment bounds in $A^{33}_N$ triangular $2$-cell.}) We have established the upper bounds for the segments $P_{10,13}$ and $P_{10,12}$. The purpose of the $P_{12,13}$ segment is to change the standard linear algebra \scalebox{0.75}{$\begin{pmatrix} 3 \\  3 \end{pmatrix}$} of the element $P_{12}$ into standard linear algebra \scalebox{0.75}{$\begin{pmatrix} 3 \\  6 \end{pmatrix}$} of $P_{13}$ which is achieved by using at most $d_{\alpha_3} (a,c) + d_{\alpha_6} (a,b)$ generators of $A^3(3,6)$.

\begin{figure}[h]
\centering
    \begin{tikzpicture}
    \node (P3) at (10,7) {\tiny \textcolor{red}{$a_{\alpha_1}a_{\alpha_2}a_{\alpha_3}a_{\alpha_4}a_{\alpha_5}b_{\alpha_6}$}};
    \node (P4) at (10,4) {\tiny \textcolor{red}{$b_{\alpha_1}b_{\alpha_2}a_{\alpha_3}a_{\alpha_4}a_{\alpha_5}b_{\alpha_6}$}};
    \node (P11) at (0,7) {\tiny \textcolor{red}{$a_{\alpha_1}a_{\alpha_2}c_{\alpha_3}a_{\alpha_4}a_{\alpha_5}a_{\alpha_6}$}};
    \node (P12) at (2.5,5) {\tiny \textcolor{red}{$a_{\alpha_1}a_{\alpha_2}c_{\alpha_3}a_{\alpha_4}a_{\alpha_5}b_{\alpha_6}$}};
    \node (P13) at (7.5,5) {\tiny \textcolor{red}{$a_{\alpha_1}a_{\alpha_2}c_{\alpha_3}a_{\alpha_4}a_{\alpha_5}b_{\alpha_6}$}};
    \node (P14) at (0,4) {\tiny \textcolor{red}{$c_{\alpha_1}c_{\alpha_2}c_{\alpha_3}a_{\alpha_4}a_{\alpha_5}a_{\alpha_6}$}};
    \node (P15) at (2.5,2) {\tiny \textcolor{red}{$b_{\alpha_1}c_{\alpha_2}c_{\alpha_3}a_{\alpha_4}a_{\alpha_5}b_{\alpha_6}$}};
    \node (P16) at (7.5,2) {\tiny \textcolor{red}{$b_{\alpha_1}c_{\alpha_2}c_{\alpha_3}a_{\alpha_4}a_{\alpha_5}b_{\alpha_6}$}};
    \node (LAlabel1) at (1.25,4.5) {\footnotesize\textcolor{blue}{$\scalebox{0.7}{$\begin{pmatrix} 1 & 2 & 3 \\ 4 & 5 & 3 \end{pmatrix}$}$}};
    \node (LAlabel2) at (8.75,4.5) {\footnotesize\textcolor{blue}{$\scalebox{0.7}{$\begin{pmatrix} 1 & 2 & 3 \\ 4 & 5 & 6 \end{pmatrix}$}$}};
    
    \begin{scope}[color=blue,decoration={
    markings,
    mark=at position 0.55 with {\arrow{>}}}
    ]
        \draw[postaction={decorate}] (P3) -- (P4) node[midway, right] {$\substack{\scalebox{0.75}{$a_{\alpha_1} \to b_{\alpha_1}$} \\ \scalebox{0.75}{$a_{\alpha_2} \to b_{\alpha_2}$}}$};
        \draw[postaction={decorate}] (P3) -- (P13) node[midway, above left=-1ex and 0ex] {\scalebox{0.75}{$a_{\alpha_3} \to c_{\alpha_3}$}};
        \draw[postaction={decorate}] (P11) -- (P12) node[midway, above right=-1ex and 0ex] {\scalebox{0.75}{$a_{\alpha_6} \to b_{\alpha_6}$}};
        \draw[postaction={decorate}] (P12) -- (P13) node[midway, below] {\scalebox{0.5}{$\begin{pmatrix} 3 \\ 3 \end{pmatrix} \to \begin{pmatrix} 3 \\ 6 \end{pmatrix}$}};
        \draw[postaction={decorate}] (P11) -- (P14) node[midway, left] {$\substack{\scalebox{0.75}{$a_{\alpha_1} \to c_{\alpha_1}$} \\ \scalebox{0.75}{$a_{\alpha_2} \to c_{\alpha_2}$}}$};
        \draw[postaction={decorate}] (P14) -- (P15) node[midway, below left=-1ex and 0ex] {$\substack{\scalebox{0.75}{$c_{\alpha_1} \to b_{\alpha_1}$} \\ \scalebox{0.75}{$a_{\alpha_6} \to b_{\alpha_6}$}}$};
        \draw[postaction={decorate}] (P12) -- (P15) node[midway, right] {$\substack{\scalebox{0.75}{$a_{\alpha_1} \to b_{\alpha_1}$} \\ \scalebox{0.75}{$a_{\alpha_2} \to c_{\alpha_2}$}}$};
        \draw[postaction={decorate}] (P15) -- (P16) node[midway, below] {\scalebox{0.5}{$\begin{pmatrix} 3 \\ 3 \end{pmatrix} \to \begin{pmatrix} 3 \\ 6 \end{pmatrix}$}};
        \draw[postaction={decorate}] (P13) -- (P16) node[midway, left] {$\substack{\scalebox{0.75}{$a_{\alpha_1} \to b_{\alpha_1}$} \\ \scalebox{0.75}{$a_{\alpha_2} \to c_{\alpha_2}$}}$};
        \draw[postaction={decorate}] (P4) -- (P16) node[midway, below right=-1ex and 0ex] {$\substack{\scalebox{0.75}{$b_{\alpha_2} \to c_{\alpha_2}$} \\ \scalebox{0.75}{$a_{\alpha_3} \to c_{\alpha_3}$}}$};
    \end{scope}
    \end{tikzpicture}
\caption{Vertex labels, coordinate changes, and linear algebra framework for the $G^{123}$, $G^{126}$, and $A^3G^{12}$ squares of $\Delta(a,b,c)$.}
\label{fig:vertex-labels-g126-g123-a3g12}
\end{figure}

({\em $G^{123}$ and $G^{126}$ square $2$-cells.}) The square $2$-cell indexed by the subgroup $G^{123}$ has the following four vertices: $P_3$ and $P_{13}$ described above, plus
    \begin{align*}
    P_4 &= a_{\alpha_{345}}b_{\alpha_{126}}\left[-\smashoperator{\sum_{i \in \{1,2,6\}}}\Phi\left(a_{\alpha_i}^{-1}b_{\alpha_i}\right)\right]{\scalebox{0.75}{$\begin{pmatrix} 1 & 2 & 3 \\ 4 & 5 & 6 \end{pmatrix}$}} \\
    &= a_{\alpha_{345}}b_{\alpha_{126}}\left[-\smashoperator{\sum_{i \in \{3,4,5\}}}\Phi\left(b_{\alpha_i}^{-1}a_{\alpha_i}\right)\right]{\scalebox{0.75}{$\begin{pmatrix} 1 & 2 & 3 \\ 4 & 5 & 6 \end{pmatrix}$}}  \, \text{ and } \\
    P_{16} &= a_{\alpha_{45}}b_{\alpha_{16}}c_{\alpha_{23}}\left[-\smashoperator{\sum_{i \in \{2,3\}}}\Phi\left(b_{\alpha_i}^{-1}c_{\alpha_i}\right)-\smashoperator{\sum_{i \in \{4,5\}}} \Phi\left(b_{\alpha_i}^{-1}a_{\alpha_i}\right)\right]{\scalebox{0.75}{$\begin{pmatrix} 1 & 2 & 3 \\ 4 & 5 & 6 \end{pmatrix}$}} \, .
    \end{align*}
The fact that the two paths $P_3 \to P_4 \to P_{16}$ and $P_3 \to P_{13} \to P_{16}$ are coterminal corresponds to the relation 
    \begin{align*}
    &(a_{\alpha_{12}}^{-1}b_{\alpha_{12}}) \left[-\smashoperator{\sum_{i \in \{1,2 \}}}\Phi\left(a_{\alpha_i}^{-1}b_{\alpha_i}\right)\right]{\scalebox{0.75}{$\begin{pmatrix} 1 & 2 & 3 \\ 4 & 5 & 6 \end{pmatrix}$}} \\
    &\cdot
    (b_{\alpha_{2}}^{-1}c_{\alpha_{2}}) \left[-\Phi\left(b_{\alpha_2}^{-1}c_{\alpha_2}\right)\right]{\scalebox{0.75}{$\begin{pmatrix} 1 & 2 & 3 \\ 4 & 5 & 6 \end{pmatrix}$}}
    \cdot
    (a_{\alpha_{3}}^{-1}c_{\alpha_{3}}) \left[-\Phi\left(a_{\alpha_3}^{-1}c_{\alpha_3}\right)\right]{\scalebox{0.75}{$\begin{pmatrix} 1 & 2 & 3 \\ 4 & 5 & 6 \end{pmatrix}$}} \\
    &= (a_{\alpha_{3}}^{-1}c_{\alpha_{3}}) \left[-\Phi\left(a_{\alpha_3}^{-1}c_{\alpha_3}\right)\right]{\scalebox{0.75}{$\begin{pmatrix} 1 & 2 & 3 \\ 4 & 5 & 6 \end{pmatrix}$}}
    \cdot \\
    &(a_{\alpha_{1}}^{-1}b_{\alpha_{1}}) \left[-\Phi\left(a_{\alpha_1}^{-1}b_{\alpha_1}\right)\right]{\scalebox{0.75}{$\begin{pmatrix} 1 & 2 & 3 \\ 4 & 5 & 6 \end{pmatrix}$}}
    \cdot
    (a_{\alpha_{2}}^{-1}c_{\alpha_{2}}) \left[-\Phi\left(a_{\alpha_2}^{-1}c_{\alpha_2}\right)\right]{\scalebox{0.75}{$\begin{pmatrix} 1 & 2 & 3 \\ 4 & 5 & 6 \end{pmatrix}$}} \, ,
    \end{align*}
which holds in the group $G^{123} \cong G_{\alpha_1} \times G_{\alpha_2} \times G_{\alpha_3}$.

Next, the square $2$-cell indexed by the subgroup $G^{126}$ has the following four vertices: $P_{11}$ and $P_{12}$ above, and
    \begin{align*}
    P_{14} &= a_{\alpha_{456}}c_{\alpha_{123}} \left[-\smashoperator{\sum_{i \in \{1,2,3\}}} \Phi\left(a_{\alpha_i}^{-1}c_{\alpha_i}\right)\right] {\scalebox{0.75}{$\begin{pmatrix} 1 & 2 & 3 \\ 4 & 5 & 3 \end{pmatrix}$}} \, \text{ and } \\
    P_{15} &= a_{\alpha_{45}}b_{\alpha_{16}}c_{\alpha_{23}}\left[-\smashoperator{\sum_{i \in \{2,3\}}}\Phi\left(a_{\alpha_i}^{-1}c_{\alpha_i}\right)-\smashoperator{\sum_{i \in \{1,6\}}} \Phi\left(a_{\alpha_i}^{-1}b_{\alpha_i}\right)\right] {\scalebox{0.75}{$\begin{pmatrix} 1 & 2 & 3 \\ 4 & 5 & 3 \end{pmatrix}$}} \, .
    \end{align*}
This $2$-cell is a mirror image of the $G^{123}$ $2$-cell above (the underlying combinatorics involves interchanging the roles of $b$ and $c$ and also the roles of 3 and 6), and the boundary relation holds in $G^{126}$ in an analogous fashion.

({\em Segment bounds in $G^{123}$ and $G^{126}$ square $2$-cells.}) For the $G^{123}$ square 2-cell, we have already established the upper bounds for $P_{3,4}$ and $P_{3,13}$. The $P_{4,16}$ segment converts the $\alpha_3$ and $\alpha_2$ coordinates from $a$ to $c$ and from $b$ to $c$ respectively using generators from $G^{23}$. Thus, $\ell_{4,16} \leq d_{\alpha_3}(a,c) + d_{\alpha_2}(b,c)$. Finally, the $P_{13,16}$ segment converts the $\alpha_1$ and $\alpha_2$ coordinates from $a$ to $b$ and from $a$ to $c$ respectively using generators from $G^{12}$. Thus, $\ell_{13,16} \leq d_{\alpha_1}(a,b) + d_{\alpha_2}(a,c)$. Being a mirror image, parallel arguments hold true for the $G^{126}$ square $2$-cell.

({\em $A^3G^{12}$ square $2$-cell.}) The square $2$-cell indexed by the subgroup $A^3G^{12}$ has the following four vertices: $P_{12}$, $P_{13}$, $P_{16}$ and $P_{15}$, all described previously. The fact that the two paths $P_{12} \to P_{13} \to P_{16}$ and $P_{12} \to P_{15} \to P_{16}$ are coterminal corresponds to the relation
    \begin{align*}
    &\left[\left(\Phi\left(a_{\alpha_3}^{-1}c_{\alpha_3}\right) + \Phi\left(a_{\alpha_6}^{-1}b_{\alpha_6}\right)\right){\scalebox{0.75}{$\begin{pmatrix}  3 \\ 3 \end{pmatrix}$}} - \Phi\left(a_{\alpha_3}^{-1}c_{\alpha_3}\right) + \left(\Phi\left(a_{\alpha_6}^{-1}b_{\alpha_6}\right)\right) {\scalebox{0.75}{$\begin{pmatrix}  3 \\ 6 \end{pmatrix}$}} \right] \\
    &\cdot (a_{\alpha_{1}}^{-1}b_{\alpha_{1}}) \left[-\Phi\left(a_{\alpha_1}^{-1}b_{\alpha_1}\right)\right]{\scalebox{0.75}{$\begin{pmatrix} 1 & 2 & 3 \\ 4 & 5 & 6 \end{pmatrix}$}} \cdot (a_{\alpha_{2}}^{-1}c_{\alpha_{2}}) \left[-\Phi\left(a_{\alpha_2}^{-1}c_{\alpha_2}\right)\right]{\scalebox{0.75}{$\begin{pmatrix} 1 & 2 & 3 \\ 4 & 5 & 6 \end{pmatrix}$}} \\
    &=(a_{\alpha_{1}}^{-1}b_{\alpha_{1}}) \left[-\Phi\left(a_{\alpha_1}^{-1}b_{\alpha_1}\right)\right]{\scalebox{0.75}{$\begin{pmatrix} 1 & 2 & 3 \\ 4 & 5 & 3 \end{pmatrix}$}} \cdot (a_{\alpha_{2}}^{-1}c_{\alpha_{2}}) \left[-\Phi\left(a_{\alpha_2}^{-1}c_{\alpha_2}\right)\right]{\scalebox{0.75}{$\begin{pmatrix} 1 & 2 & 3 \\ 4 & 5 & 3 \end{pmatrix}$}} \\
    &\cdot\left( \left[ \;\, \smashoperator{\sum_{i \in \{2,3\}}}\Phi\left(a_{\alpha_i}^{-1}c_{\alpha_i}\right) + \smashoperator{\sum_{i \in \{1,6\}}} \Phi\left(a_{\alpha_i}^{-1}b_{\alpha_i}\right)\right]{\scalebox{0.75}{$\begin{pmatrix} 3 \\ 3 \end{pmatrix}$}} - \left[ \, \, \, \smashoperator{\sum_{i \in \{2,3\}}}\Phi\left(a_{\alpha_i}^{-1}c_{\alpha_i}\right) + \smashoperator{\sum_{i \in \{1,6\}}} \Phi\left(a_{\alpha_i}^{-1}b_{\alpha_i}\right)\right]{\scalebox{0.75}{$\begin{pmatrix} 3 \\ 6 \end{pmatrix}$}} \right) \, ,
    \end{align*}
which holds in the group $A^3G^{12} \cong G_{\alpha_1} \times G_{\alpha_2} \times A_{\alpha_3}$. 

({\em Three segment bounds in $A^3G^{12}$ square $2$-cell.}) Upper bounds for $P_{12,15}$, $P_{12,13}$ and $P_{13,16}$ are already established before. We postpone the computation of the upper bound for $P_{15,16}$ after the discussion of $\mathcal{L}$ hexagonal $2$-cell.

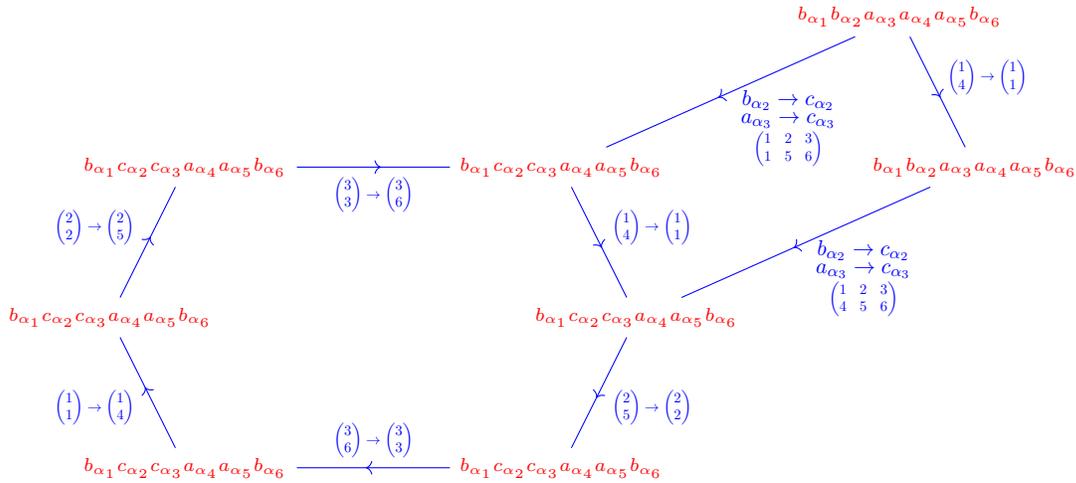
\begin{figure}[h]
\centering
    \begin{tikzpicture}
    \node (P4) at (12,4) {\tiny \textcolor{red}{$b_{\alpha_1}b_{\alpha_2}a_{\alpha_3}a_{\alpha_4}a_{\alpha_5}b_{\alpha_6}$}};
    \node (P5) at (13,2) {\tiny \textcolor{red}{$b_{\alpha_1}b_{\alpha_2}a_{\alpha_3}a_{\alpha_4}a_{\alpha_5}b_{\alpha_6}$}};
    \node (P15) at (2.5,2) {\tiny \textcolor{red}{$b_{\alpha_1}c_{\alpha_2}c_{\alpha_3}a_{\alpha_4}a_{\alpha_5}b_{\alpha_6}$}};
    \node (P16) at (7.5,2) {\tiny \textcolor{red}{$b_{\alpha_1}c_{\alpha_2}c_{\alpha_3}a_{\alpha_4}a_{\alpha_5}b_{\alpha_6}$}};
    \node (P17) at (8.5,0) {\tiny \textcolor{red}{$b_{\alpha_1}c_{\alpha_2}c_{\alpha_3}a_{\alpha_4}a_{\alpha_5}b_{\alpha_6}$}};
    \node (P18) at (1.5,0) {\tiny \textcolor{red}{$b_{\alpha_1}c_{\alpha_2}c_{\alpha_3}a_{\alpha_4}a_{\alpha_5}b_{\alpha_6}$}};
    \node (P19) at (2.5,-2) {\tiny \textcolor{red}{$b_{\alpha_1}c_{\alpha_2}c_{\alpha_3}a_{\alpha_4}a_{\alpha_5}b_{\alpha_6}$}};
    \node (P20) at (7.5,-2) {\tiny \textcolor{red}{$b_{\alpha_1}c_{\alpha_2}c_{\alpha_3}a_{\alpha_4}a_{\alpha_5}b_{\alpha_6}$}};

    \begin{scope}[color=blue,decoration={
    markings,
    mark=at position 0.55 with {\arrow{>}}}
    ]
        \draw[postaction={decorate}] (P4) -- (P5) node[midway, above right=-1ex and 0ex] {\scalebox{0.5}{$\begin{pmatrix} 1 \\ 4 \end{pmatrix} \to \begin{pmatrix} 1 \\ 1 \end{pmatrix}$}};
        \draw[postaction={decorate}] (P5) -- (P17) node[midway, below right=-1ex and 0ex] {$\substack{\scalebox{0.75}{$b_{\alpha_2} \to c_{\alpha_2}$} \\ \scalebox{0.75}{$a_{\alpha_3} \to c_{\alpha_3}$} \\ \scalebox{0.5}{$\begin{pmatrix} 1 & 2 & 3 \\ 4 & 5 & 6 \end{pmatrix}$}}$};
        \draw[postaction={decorate}] (P15) -- (P16) node[midway, below] {\scalebox{0.5}{$\begin{pmatrix} 3 \\ 3 \end{pmatrix} \to \begin{pmatrix} 3 \\ 6 \end{pmatrix}$}};
        \draw[postaction={decorate}] (P4) -- (P16) node[midway, below right=-1ex and 0ex] {$\substack{\scalebox{0.75}{$b_{\alpha_2} \to c_{\alpha_2}$} \\ \scalebox{0.75}{$a_{\alpha_3} \to c_{\alpha_3}$} \\ \scalebox{0.5}{$\begin{pmatrix} 1 & 2 & 3 \\ 1 & 5 & 6 \end{pmatrix}$}}$};
        \draw[postaction={decorate}] (P16) -- (P17) node[midway, above right=-1ex and 0.25ex] {\scalebox{0.5}{$\begin{pmatrix} 1 \\ 4 \end{pmatrix} \to \begin{pmatrix} 1 \\ 1 \end{pmatrix}$}};
        \draw[postaction={decorate}] (P17) -- (P20) node[midway, below right=-1ex and 0.25ex] {\scalebox{0.5}{$\begin{pmatrix} 2 \\ 5 \end{pmatrix} \to \begin{pmatrix} 2 \\ 2 \end{pmatrix}$}};
        \draw[postaction={decorate}] (P20) -- (P19) node[midway, above] {\scalebox{0.5}{$\begin{pmatrix} 3 \\ 6 \end{pmatrix} \to \begin{pmatrix} 3 \\ 3 \end{pmatrix}$}};
        \draw[postaction={decorate}] (P19) -- (P18) node[midway, below left=-1ex and 0.25ex] {\scalebox{0.5}{$\begin{pmatrix} 1 \\ 1 \end{pmatrix} \to \begin{pmatrix} 1 \\ 4 \end{pmatrix}$}};
        \draw[postaction={decorate}] (P18) -- (P15) node[midway, above left=-1ex and 0.25ex] {\scalebox{0.5}{$\begin{pmatrix} 2 \\ 2 \end{pmatrix} \to \begin{pmatrix} 2 \\ 5 \end{pmatrix}$}};
    \end{scope}
    \end{tikzpicture}
\caption{Vertex labels, coordinate changes, and linear algebra framework for the $A^1G^{23}$ square and $\mathcal{L}$ hexagon of $\Delta(a,b,c)$.}
\label{fig:vertex-labels-a1g23-L}
\end{figure}

({\em $A^1G^{23}$ square $2$-cell.}) The square $2$-cell indexed by the subgroup $A^1G^{23}$ has the following four vertices: $P_4$ and $P_{16}$, described above, and
    \begin{align*}
    P_5 &= a_{\alpha_{345}}b_{\alpha_{126}}\left[-\smashoperator{\sum_{i \in \{3,4,5\}}}\Phi\left(b_{\alpha_i}^{-1}a_{\alpha_i}\right)\right]{\scalebox{0.75}{$\begin{pmatrix} 1 & 2 & 3 \\ 1 & 5 & 6 \end{pmatrix}$}} \, \text{ and } \\
    P_{17} &= a_{\alpha_{45}}b_{\alpha_{16}}c_{\alpha_{23}}\left[-\smashoperator{\sum_{i \in \{2,3\}}}\Phi\left(b_{\alpha_i}^{-1}c_{\alpha_i}\right)-\smashoperator{\sum_{i \in \{4,5\}}} \Phi\left(b_{\alpha_i}^{-1}a_{\alpha_i}\right)\right]{\scalebox{0.75}{$\begin{pmatrix} 1 & 2 & 3 \\ 1 & 5 & 6 \end{pmatrix}$}} \, . 
    \end{align*}
The fact that the two paths $P_{4} \to P_{5} \to P_{17}$ and $P_{4} \to P_{16} \to P_{17}$ are coterminal corresponds to the relation 
    \begin{align*}
    &\left[ \, \, \, \, \smashoperator{\sum_{i \in \{3,4,5\}}}\Phi\left(b_{\alpha_i}^{-1}a_{\alpha_i}\right)\right]{\scalebox{0.75}{$\begin{pmatrix} 1 \\ 4 \end{pmatrix}$}} - \left[ \, \, \, \, \smashoperator{\sum_{i \in \{3,4,5\}}}\Phi\left(b_{\alpha_i}^{-1}a_{\alpha_i}\right)\right]{\scalebox{0.75}{$\begin{pmatrix} 1 \\ 1 \end{pmatrix}$}} \\
    &\cdot (a_{\alpha_{3}}^{-1}c_{\alpha_{3}}) \left[-\Phi\left(a_{\alpha_3}^{-1}c_{\alpha_3}\right)\right]{\scalebox{0.75}{$\begin{pmatrix} 1 & 2 & 3 \\ 1 & 5 & 6 \end{pmatrix}$}} \cdot (b_{\alpha_{2}}^{-1}c_{\alpha_{2}}) \left[-\Phi\left(b_{\alpha_2}^{-1}c_{\alpha_2}\right)\right]{\scalebox{0.75}{$\begin{pmatrix} 1 & 2 & 3 \\ 1 & 5 & 6 \end{pmatrix}$}} \\
    &=(b_{\alpha_{2}}^{-1}c_{\alpha_{2}}) \left[-\Phi\left(b_{\alpha_2}^{-1}c_{\alpha_2}\right)\right]{\scalebox{0.75}{$\begin{pmatrix} 1 & 2 & 3 \\ 4 & 5 & 6 \end{pmatrix}$}} \cdot (a_{\alpha_{3}}^{-1}c_{\alpha_{3}}) \left[-\Phi\left(a_{\alpha_3}^{-1}c_{\alpha_3}\right)\right]{\scalebox{0.75}{$\begin{pmatrix} 1 & 2 & 3 \\ 4 & 5 & 6 \end{pmatrix}$}} \\
    &\cdot \left( \left[ \, \, \, \smashoperator{\sum_{i \in \{2,3\}}}\Phi\left(b_{\alpha_i}^{-1}c_{\alpha_i}\right) + \smashoperator{\sum_{i \in \{4,5\}}} \Phi\left(b_{\alpha_i}^{-1}a_{\alpha_i}\right)\right]{\scalebox{0.75}{$\begin{pmatrix} 1 \\ 4 \end{pmatrix}$}} - \left[ \, \, \, \smashoperator{\sum_{i \in \{2,3\}}}\Phi\left(b_{\alpha_i}^{-1}c_{\alpha_i}\right) + \smashoperator{\sum_{i \in \{4,5\}}} \Phi\left(b_{\alpha_i}^{-1}a_{\alpha_i}\right)\right]{\scalebox{0.75}{$\begin{pmatrix} 1 \\ 1 \end{pmatrix}$}} \right) \, ,
    \end{align*}
which holds in the group $A^1G^{23} \cong G_{\alpha_2} \times G_{\alpha_3} \times A_{\alpha_1}$.

({\em Three segment bound in $A^1G^{23}$ square $2$-cell.}) In the prior computations, we have proved the upper bounds for the segments $P_{4,5}$ and $P_{4,16}$. The $P_{5,17}$ segment converts the $\alpha_2$ and $\alpha_3$ coordinates from $b$ to $c$ and from $a$ to $c$ respectively using generators from $G^{23}$. Thus, $\ell_{5,17} \leq d_{\alpha_2}(b,c) + d_{\alpha_3}(a,c)$. The upper bound for the $P_{16,17}$ segment is established after the discussion of $\mathcal{L}$ hexagonal $2$-cell. 

({\em $\mathcal{L}$ hexagonal $2$-cell.}) Finally, we discuss the relation corresponding to the central hexagonal $2$-cell. The six vertices of the central hexagonal region area all equal to some $\Ker(\Phi)$ version of the element $b_{\alpha_1}c_{\alpha_2}c_{\alpha_3}a_{\alpha_4}a_{\alpha_5}b_{\alpha_6}$, with six different choices of standard linear algebra from \scalebox{0.75}{$\begin{pmatrix} 1 & 2 & 3 \\ 1/4 & 2/5 & 3/6 \end{pmatrix}$}.

Starting at the top left corner, one has
    \[
    P_{15} \;=\; a_{\alpha_{45}}b_{\alpha_{16}}c_{\alpha_{23}}
    \left[-\Phi(a_{\alpha_1}^{-1}b_{\alpha_1}) -\Phi(a_{\alpha_2}^{-1}c_{\alpha_2}) -\Phi(a_{\alpha_3}^{-1}c_{\alpha_3}) 
    -\Phi(a_{\alpha_6}^{-1}b_{\alpha_6})\right] \scalebox{0.75}{$\begin{pmatrix} 1 & 2 & 3 \\ 4 & 5 & 3 \end{pmatrix}$} \, ,
    \]
with linear algebra encoding \scalebox{0.75}{$\begin{pmatrix} 1 & 2 & 3 \\ 4 & 5 & 3 \end{pmatrix}$}. For efficiency, we write this as 
    \[
    P_{15} \;=\; a_{\alpha_{45}}b_{\alpha_{16}}c_{\alpha_{23}}[L]\scalebox{0.75}{$\begin{pmatrix}1 & 2 & 3\\4 & 5& 3\end{pmatrix}$} \, .
    \]
Now, starting at the vertex $P_{15}$, we work clockwise around the central hexagon, changing the linear algebra encoding as follows: 
    \begin{enumerate}
    \item Change the $A_{\alpha_3}$ linear algebra from coordinates $\alpha_3$ to $\alpha_6$. This is achieved by right-multiplying by the element 
    $[L]\scalebox{0.75}{$\begin{pmatrix} 3 \\ 6 \end{pmatrix}$} \cdot \left( [L]\scalebox{0.75}{$\begin{pmatrix} 3 \\ 3 \end{pmatrix}$} \right)^{-1}$, resulting in the vertex $P_{16}$.
    
    \item Change the $A_{\alpha_1}$ linear algebra from coordinates $\alpha_4$ to $\alpha_1$; namely, multiply by $[L]\scalebox{0.75}{$\begin{pmatrix} 1 \\ 1 \end{pmatrix}$} \cdot \left( [L]\scalebox{0.75}{$\begin{pmatrix} 1 \\ 4 \end{pmatrix}$} \right)^{-1}$. The result is the vertex $P_{17}$. 
    
    \item Change the $A_{\alpha_2}$ linear algebra from coordinates $\alpha_5$ to $\alpha_2$ by multiplying by $[L]\scalebox{0.75}{$\begin{pmatrix} 2 \\ 2 \end{pmatrix}$} \cdot \left( [L]\scalebox{0.75}{$\begin{pmatrix} 2 \\ 5 \end{pmatrix}$} \right)^{-1}$ to give the vertex $P_{18}$.
    
    \item Change the $A_{\alpha_3}$ linear algebra from coordinates $\alpha_6$ back to $\alpha_3$; this requires multiplying by the inverse of the element of item (1); that is, we multiply by $[L]\scalebox{0.75}{$\begin{pmatrix} 3 \\ 3 \end{pmatrix}$} \cdot \left( [L]\scalebox{0.75}{$\begin{pmatrix} 3 \\ 6 \end{pmatrix}$} \right)^{-1}$. The resulting vertex is $P_{19}$.
    
    \item Change the $A_{\alpha_1}$ linear algebra from coordinates $\alpha_1$ back to $\alpha_4$; namely, multiply by $[L]\scalebox{0.75}{$\begin{pmatrix} 1 \\ 4 \end{pmatrix}$} \cdot \left( [L]\scalebox{0.75}{$\begin{pmatrix} 1 \\ 1 \end{pmatrix}$} \right)^{-1}$ to get the vertex $P_{20}$.
    
    \item Finally, change the $A_{\alpha_2}$ linear algebra from coordinates $\alpha_2$ back to $\alpha_5$ via multiplication by $[L]\scalebox{0.75}{$\begin{pmatrix} 2 \\ 5 \end{pmatrix}$} \cdot \left( [L]\scalebox{0.75}{$\begin{pmatrix} 2 \\ 2 \end{pmatrix}$} \right)^{-1}$. The result is the vertex $P_{15}$.
    \end{enumerate}
The sum of these six terms is $0$ in the abelian group $\mathcal{L}$. This is the relation corresponding to the boundary of the central hexagon.  

We need to discuss the role of the 3-fold rotational symmetry in the description of the 6 central vertices above. The top two vertices are obtained from the vertex $a$ by either traveling along the right edge or along the left edge of the algebraic triangle before turning towards the central hexagon. This results in the vertices: 
$$
P_{15} \; =\; a_{\alpha_{45}}c_{\alpha_{23}} b_{\alpha_{61}}\left[ -\smashoperator{\sum_{i \in \{1,6\}}}\Phi(a_{\alpha_{i}}^{-1}b_{\alpha_{i}}) -\smashoperator{\sum_{i \in \{2,3\}}}\Phi(a_{\alpha_{i}}^{-1}c_{\alpha_{i}}) \right] \begin{pmatrix}
    1&2&3\\4&5&3
\end{pmatrix}, 
$$
and (by step (1) above) 
$$
P_{16} \; =\; a_{\alpha_{45}}b_{\alpha_{61}}c_{\alpha_{23}} \left[ - \smashoperator{\sum_{i \in \{1,6\}}}\Phi(a_{\alpha_{i}}^{-1}b_{\alpha_{i}}) - \smashoperator{\sum_{i \in \{2,3\}}}\Phi(a_{\alpha_{i}}^{-1}c_{\alpha_{i}}) \right] \begin{pmatrix}
    1&2&3\\4&5&6
\end{pmatrix}. 
$$
Changing the standard linear algebra $\begin{pmatrix}
    1\\1/4
\end{pmatrix}$ gives (via step (2) above) the vertex 
$$
P_{17} \; =\; a_{\alpha_{45}}b_{\alpha_{61}}c_{\alpha_{23}}\left[ - \smashoperator{\sum_{i \in \{1,6\}}}\Phi(a_{\alpha_{i}}^{-1}b_{\alpha_{i}}) - \smashoperator{\sum_{i \in \{2,3\}}}\Phi(a_{\alpha_{i}}^{-1}c_{\alpha_{i}}) \right] \begin{pmatrix}
    1&2&3\\1&5&6
\end{pmatrix}. 
$$
However, using the 3-fold symmetry, we can also get to $P_{17}$ by starting from vertex $b$, moving up along the right side ($ba$-side) of the triangle and then in towards $P_{17}$. This gives the element (which we denote with prime notation) 
$$
P'_{17} \; =\; b_{\alpha_{61}}a_{\alpha_{45}}c_{\alpha_{23}} \left[ - \smashoperator{\sum_{i \in \{4,5\}}}\Phi(b_{\alpha_{i}}^{-1}a_{\alpha_{i}}) - \smashoperator{\sum_{i \in \{2,3\}}}\Phi(b_{\alpha_{i}}^{-1}c_{\alpha_{i}}) \right] \begin{pmatrix}
    1&2&3\\1&5&6
\end{pmatrix} . 
$$
To see that $P'_{17}= P_{17}$, we argue as we did to establish $P'_4 = P_4$ in the proof of Lemma~\ref{lem:triside-labdist}. First, the padded elements $a_{\alpha_{45}}$, $b_{\alpha_{61}}$, and $c_{\alpha_{23}}$ all commute since their non-trivial components have non-overlapping index sets. So we only need to establish equality of the linear algebra portions. 
The linear algebra term from $P'_{17}$ involves
$$
- \smashoperator{\sum_{i \in \{4,5\}}} \Phi(b_{\alpha_{i}}^{-1}a_{\alpha_{i}}) - \smashoperator{\sum_{i \in \{2,3\}}}\Phi(b_{\alpha_{i}}^{-1}c_{\alpha_{i}}) \; =\; -\smashoperator{\sum_{i \in \{4,5\}}}\Phi(b_{\alpha_{i}}^{-1}a_{\alpha_{i}}) - \smashoperator{\sum_{i \in \{2,3\}}}\Phi(b_{\alpha_{i}}^{-1}a_{\alpha_{i}}) - 
\smashoperator{\sum_{i \in \{2,3\}}}\Phi(a_{\alpha_{i}}^{-1}c_{\alpha_{i}}). 
$$
The last term $- \smashoperator{\sum_{i \in \{2,3\}}}\Phi(a_{\alpha_{i}}^{-1}c_{\alpha_{i}})$ appears in the linear algebra portion of $P_{17}$. So we need to show that 
$$
- \smashoperator{\sum_{i \in \{4,5\}}}\Phi(b_{\alpha_{i}}^{-1}a_{\alpha_{i}}) - \smashoperator{\sum_{i \in \{2,3\}}}\Phi(b_{\alpha_{i}}^{-1}a_{\alpha_{i}}) \; =\; -\smashoperator{\sum_{i \in \{1,6\}}}\Phi(a_{\alpha_{i}}^{-1}b_{\alpha_{i}}). 
$$
As in the proof of Lemma~\ref{lem:triside-labdist}, the right side is equal to $\smashoperator{\sum_{i \in \{1,6\}}}\Phi(b_{\alpha_{i}}^{-1}a_{\alpha_{i}})$, and subtracting this term from both sides of the equation above yields the expression 
$-\Phi(b^{-1}a) = 0$, which holds since $a$, $b$,  and so $b^{-1}a$,  all lie in $\Ker(\Phi)$. 

In a similar manner, it can be shown that one obtains the same elements on the central hexagon by either starting from $P_{15}$ and using the 6 steps above, or  by traveling from $b$ along the bottom side and then up to the central hexagon, or starting from $c$ and traveling along either the bottom side or the left side and then in to the central hexagon. 

({\em Segment bounds for $P_{15,16}$ and $P_{16,17}$.}) The purpose of the $P_{15,16}$ segment is to change the standard linear algebra \scalebox{0.75}{$\begin{pmatrix} 3 \\  3 \end{pmatrix}$} of the element $P_{15}$ into standard linear algebra \scalebox{0.75}{$\begin{pmatrix} 3 \\  6 \end{pmatrix}$} of $P_{16}$ using generators from the group $A^3(3,6)$. One needs at most $d_{\alpha_{23}}(a,c) + d_{\alpha_{16}} (a,b)$ for this, thus providing an upper bound for $\ell_{15,16}$. Finally, the $P_{16,17}$ segment is responsible for changing the standard linear algebra \scalebox{0.75}{$\begin{pmatrix} 1 \\  4 \end{pmatrix}$} of the element $P_{16}$ into standard linear algebra \scalebox{0.75}{$\begin{pmatrix} 1 \\  1 \end{pmatrix}$} of $P_{17}$ using generators from the group $A^1(1,4)$. One needs at most $d_{\alpha_{23}}(a,c) + d_{\alpha_{16}} (a,b)$ for this, thus providing an upper bound for $\ell_{16,17}$.

Recorded below in Remark~\ref{rem:record-vertlength} is the exhaustive list of upper bounds for all edges in $\Delta(a,b,c)$. By adding these upper bounds for each side of a given $2$-cell in $\Delta$, one finds that the desired upper bound for the boundary length of the related region in $\Delta(a,b,c)$ is no more than $4D$, where $D \coloneqq d_G(a,b) + d_G(b,c) + d_G(c,a)$ and $d_G$ is the $\ell^1$-metric on $\prod_{i=1}^{2n} G_i$.
\end{proof}

\begin{rem}[Record of  segment length upper bounds for Lemma~\ref{lem:triinner-labdist}]\label{rem:record-vertlength}
Similar to Remark~\ref{record1}, we record the bounds on the various  segment lengths.

    \begin{center}
    \begin{longtblr}{
        colspec = {|c|c||c|c|},
        rowhead = 1, rows={abovesep=2pt,belowsep=2pt},
    }
    \hline
    {Segment Length} & {Upper Bound} & {Segment Length} & {Upper Bound} \\
    \hline
    \hline
    $\ell_{1,2}$ & $d_{\alpha_6} (a,b)$ & $\ell_{10,12}$ & $d_{\alpha_3} (a,c) + d_{\alpha_6} (a,b)$ \\
    \hline
    $\ell_{1,9}$  & $d_{\alpha_{3}} (a,c)$ & $\ell_{10,13}$ & $d_{\alpha_3} (a,c) + d_{\alpha_6} (a,b)$ \\
    \hline
    $\ell_{2,3}$ & $d_{\alpha_6} (a,b)$ & $\ell_{11,12}$ & $d_{\alpha_{12}} (a,c)$ \\
    \hline
    $\ell_{2,10}$  & $d_{\alpha_{3}} (a,c)$ & $\ell_{11,14}$ & $d_{\alpha_{12}} (a,c)$ \\
    \hline
    $\ell_{3,4}$ & $d_{\alpha_{12}} (a,b)$ & $\ell_{13,16}$ & $d_{\alpha_1} (a,b) + d_{\alpha_2} (a,c)$ \\
    \hline
    $\ell_{3,13}$ & $d_{\alpha_3} (a,c)$ & $\ell_{12,15}$ & $d_{\alpha_1} (a,b) + d_{\alpha_2} (a,c)$ \\
    \hline
    $\ell_{4,5}$ & $d_{\alpha_{126}} (a,b)$ & $\ell_{12,13}$ & $d_{\alpha_3} (a,c) + d_{\alpha_6} (a,b)$ \\
    \hline
    $\ell_{4,16}$ & $d_{\alpha_3} (a,c) + d_{\alpha_2} (b,c)$ & $\ell_{14,15}$ & $d_{\alpha_1} (b,c) + d_{\alpha_6} (a,b)$ \\
    \hline
    $\ell_{5,17}$ & $d_{\alpha_2} (b,c) + d_{\alpha_3} (a,c)$ & $\ell_{15,16}$ & $d_{\alpha_{23}} (a,c) + d_{\alpha_{16}} (a,b)$ \\
    \hline
    $\ell_{9,10}$ & $d_{\alpha_6} (a,b)$ & $\ell_{16,17}$ & $d_{\alpha_{23}} (a,c) + d_{\alpha_{16}} (a,b)$ \\
    \hline
    $\ell_{9,11}$ & $d_{\alpha_3} (a,c)$ & \mbox{} & \mbox{} \\
    \hline
    \end{longtblr}
    \end{center}

\end{rem}

We conclude this section with the following lemma that provides an upper bound for the area of $\Delta(a,b,c)$ which will be used in the proof of Theorem~\ref{thm:main}.

\begin{lem}[Area of actualization of the algebraic triangle]\label{lem:area}
Given an ordered triple $(a,b,c)$ of elements of $ \Ker(\Phi)$, the area of actualization of the algebraic triangle $\Delta(a,b,c)$  is bounded above by $25\delta_G(24D)$.
\end{lem}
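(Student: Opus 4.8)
\textbf{Proof plan for Lemma~\ref{lem:area}.}

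The plan is to combine the structural decomposition of $\Delta(a,b,c)$ from Lemma~\ref{lem:triinner-labdist} with the isoperimetric bounds on the twenty-five face groups recorded in Proposition~\ref{thm:table_std} and Proposition~\ref{thm:table_nonstd}, together with Lemma~\ref{maxdehn}. First I would recall that Lemma~\ref{lem:triinner-labdist} produces a subdivision of the loop $\partial\Delta(a,b,c)$ into exactly twenty-five subloops, one for each $2$-cell of the algebraic triangle $\Delta$, where the subloop indexed by a $2$-cell $C$ lies in the Cayley graph of the face group $K_C$ assigned to $C$ by Figure~\ref{fig:tri-2cell-label}, and where each of the (at most six) geodesic segments making up this subloop has length at most $4D$. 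Hence the total length of each such subloop is at most $6 \cdot 4D = 24D$.

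The next step is to bound the area needed to fill each of these twenty-five subloops inside the corresponding face group. By Proposition~\ref{thm:table_std} and Proposition~\ref{thm:table_nonstd}, every face group $K_C$ is isomorphic to a direct product of some of the groups $G_{\alpha_i}$ and some free abelian groups $A_{\alpha_j}$ (in the standard cases, a product of three such factors; in the non-standard cases, a product of two). In all cases $K_C$ is a finitely presented group which is (at worst) a subproduct of $G = \prod_{i=1}^{2n} G_i$ together with free abelian factors, so by Lemma~\ref{maxdehn} its Dehn function is dominated by $\delta_G$ (the free abelian factors contribute only quadratic behavior, which is absorbed, and each $G_{\alpha_i} = \prod_{j \in \alpha_i} G_j$ is a subproduct of $G$, hence $\delta_{G_{\alpha_i}} \preccurlyeq \delta_G$). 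Consequently each of the twenty-five subloops, having length at most $24D$, bounds a van Kampen diagram over its face group of area at most $\delta_{K_C}(24D) \leq \delta_G(24D)$, after absorbing the $\simeq$-constants. Patching the twenty-five van Kampen diagrams together along their common edges, as described in Definition~\ref{def:actual-alg-tri}, yields a van Kampen diagram for $\partial\Delta(a,b,c)$ whose area is at most the sum of the twenty-five pieces, namely $25\,\delta_G(24D)$.

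The main subtlety — and the step I would be most careful about — is the passage from "$K_C$ is a subproduct of $G$ up to free abelian factors" to "$\delta_{K_C}(24D) \leq \delta_G(24D)$" on the nose rather than merely up to the $\preccurlyeq$-equivalence, since the statement of the lemma is phrased with an explicit constant $25$ and explicit argument $24D$. In the write-up I would either invoke Lemma~\ref{lemret} (each $G_{\alpha_i}$ is a retract of $G$, and free abelian groups of the relevant rank embed as retracts of suitable products as well, so $\delta_{K_C} \preccurlyeq \delta_G$) and then note that the lemma's conclusion is itself only asserted up to the standing $\simeq$-conventions on Dehn functions, so the constants $25$ and $24$ are to be read in that spirit; or, more carefully, observe that the quadratic contribution $n^2$ from the abelian factors satisfies $n^2 \preccurlyeq \delta_G(n)$ since $G$ is an infinite finitely presented group (so its Dehn function is at least quadratic, or it is a free product of finite groups which does not occur here), so that $\delta_{K_C} \preccurlyeq \delta_G$ with $K_C$ playing no worse role than $G$ itself. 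Everything else is bookkeeping: counting the $2$-cells (twenty-five), counting the segments per subloop (at most six, each of length at most $4D$ by Lemma~\ref{lem:triinner-labdist}), and summing.
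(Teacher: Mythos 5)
Your proposal follows essentially the same route as the paper's proof: Lemma~\ref{lem:triinner-labdist} bounds every segment by $4D$, so each of the twenty-five subloops has perimeter at most $24D$, and filling each one in its face group and summing the areas gives the bound $25\,\delta_G(24D)$. You are in fact somewhat more careful than the paper about justifying the step $\delta_{K_C}(24D) \leq \delta_G(24D)$ (the paper simply asserts the $\delta_G(24D)$ bound for each region), but this is a refinement of, not a departure from, the same argument.
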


\begin{proof}
In Lemma~\ref{lem:triinner-labdist} we concluded that the lengths of all words labeling the edges of the triangle are bounded above by $4D$ and hence each boundary loop for a bounded region has perimeter at most $24D$. We thus obtain a filling whose area is bounded by $\delta_G(24D)$ for each of these twenty-five bounded regions. Thus, we conclude that the area of the triangle is bounded by $25\delta_G(24D)$. %But since clearly $|w| = D$, 
Then $\mathrm{Area}(\Delta(a,b,c)) \leq 25\delta_G(24D) \,$.

\end{proof}

\section{Calculation of the Dehn function}\label{sec:dehnfunction}

In this section, we  show that $\Ker(\Phi)$ is finitely presented and establish bounds for the Dehn function of $\Ker(\Phi)$ by utilizing the machinery that we have developed over the course of the article and taking inspiration from the logical structure of the proof of \cite[Theorem 3.2]{KLI}. %A table will be given that specifies exactly which changes to make to that proof to give our result.

However, we need one final technical lemma to ensure that we can fill the entirety of our initial loop; as seen in Figure~\ref{fig:farey-tess}, the subdivision of our initial loop into triangles potentially leaves a frill of bigons which must be accounted for; by showing that these bigons all have a uniformly bounded perimeter, we can uniformly bound their area and simultaneously use this characterization to create a finite presentation for $\Ker(\Phi)$. This lemma is parallel to Remark 3.6 of \cite{KLI}. Adopting the terminology from \cite{MR3651586} and \cite{KLI}, we call the path along a side of our algebraic triangle a \emph{spanning path}, the explicit construction of which is given in Lemma~\ref{lem:triside-labdist}.

\begin{lem}\label{lem:bound6}
If two vertices $a, b \in \Ker(\Phi)$ have $d(a,b) = 1$, then the spanning path between them has length at most $6$.
\end{lem}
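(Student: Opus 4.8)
The plan is to trace through the seven-segment construction of the spanning path $\partial\Delta(a,b)$ given in Lemma~\ref{lem:triside-labdist} and show that, when $d(a,b)=1$, each of the seven segments has length at most $1$. Recall that $d(a,b) = \sum_{i=1}^{2n} d_i(a_i, b_i)$ is the $\ell^1$-metric on $G = \prod G_i$ coming from the kernel-section generating sets, so $d(a,b) = 1$ means $a$ and $b$ differ in exactly one coordinate $i_0$ by a single generator, and agree in all other coordinates. In particular, for each $1 \le i \le 6$ we have $d_{\alpha_i}(a,b) = 0$ unless $i_0 \in \alpha_i$, in which case $d_{\alpha_i}(a,b) = 1$; and since the $\alpha_i$ partition $\{1,\dots,2n\}$, exactly one value of $i$ gives $d_{\alpha_i}(a,b) = 1$ and the rest give $0$.

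First I would invoke the table of upper bounds recorded in Remark~\ref{record1}: the seven segment lengths are bounded by $d_{\alpha_6}(a,b)$, $d_{\alpha_6}(a,b)$, $d_{\alpha_{12}}(a,b)$, $d_{\alpha_{345}}(a,b)$, $d_{\alpha_{34}}(a,b)$, $d_{\alpha_5}(a,b)$, and $d_{\alpha_5}(a,b)$ respectively. Each $d_{\alpha_\sigma}(a,b) = \sum_{i \in \sigma} d_{\alpha_i}(a,b)$, and since at most one summand is nonzero (equal to $1$) and the rest vanish, every one of these seven quantities is at most $1$. Hence each of the seven segments is a geodesic path of length at most $1$ in its respective edge group, and so the total spanning path $\partial\Delta(a,b)$ has length at most $7$. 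This gives a bound of $7$, not $6$, so a small additional observation is needed to sharpen it.

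To get the sharp bound of $6$, observe that the only segments whose bounds could simultaneously be nonzero are those indexed by a common $\alpha_i$: segments $1$ and $2$ both have bound $d_{\alpha_6}(a,b)$, and segments $6$ and $7$ both have bound $d_{\alpha_5}(a,b)$. I would argue that in fact at most two of the seven segments can be nonzero, and moreover the two nonzero segments (when they occur) cannot sum to more than... no — more carefully: if $i_0 \in \alpha_6$, then segments $1$ and $2$ each have length at most $1$ while segments $3,4,5,6,7$ all have length $0$, giving total at most $2 \le 6$; symmetrically if $i_0 \in \alpha_5$. If $i_0 \in \alpha_{12}$ then only segment $3$ is nonzero (bound $1$), and segments $4,5$ are bounded by $d_{\alpha_{345}}, d_{\alpha_{34}}$ which vanish — wait, one must check $i_0 \notin \alpha_{34} \cup \alpha_{345}$, which holds since $\alpha_{12}$ is disjoint from $\alpha_{345}$. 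Thus in every case the total length is at most $2$, which is certainly at most $6$. So the real content is simply that a single-generator difference lies in exactly one $\alpha_i$, and the seven-segment decomposition localizes the work to at most two adjacent segments.

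The main (and essentially only) subtlety to verify carefully is that the segment-length \emph{upper bounds} in Remark~\ref{record1} are genuine upper bounds on the geodesic lengths in the edge groups — in particular that the linear-algebra-correction segments (segments $2$, $4$, $6$, and the $P_4' = P_4$ reconciliation) really are controlled by the same $d_{\alpha_i}(a,b)$ quantities, which is already established in the proof of Lemma~\ref{lem:triside-labdist}. Granting that, the argument is a direct case analysis on which $\alpha_i$ contains the unique coordinate of disagreement, and the bound $6$ (indeed $2$) follows immediately; I would state it with the bound $6$ to match the comparison with Remark~3.6 of \cite{KLI} and because that is all that is needed downstream. The expected obstacle is purely bookkeeping: making sure no linear-algebra correction term secretly contributes length when $d_{\alpha_i}(a,b) = 0$, which is guaranteed because such a correction only arises in response to a nonzero coordinate change in the corresponding block.
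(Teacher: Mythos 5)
There is a genuine gap, and it lies in your very first sentence: you take $d(a,b)=1$ to mean the $\ell^1$-metric on $G=\prod_{i=1}^{2n}G_i$, so that $a$ and $b$ differ in exactly one coordinate $i_0$. But in the setting where Lemma~\ref{lem:bound6} is applied (the proof of Theorem~\ref{thm:main}), $a$ and $b$ are adjacent vertices of a loop in $\mathrm{Cay}(\Ker(\Phi),\overline{X})$, so $d$ is the word metric on $\Ker(\Phi)$ with respect to the vector generating set $\overline{X}$ of Definition~\ref{def:kersecgens}. That generating set contains, besides the $\overline{Y}_i$, the section generators $\overline{Z}^j_{i,k}$ of Definition~\ref{def:vector_gen}, and each such generator is nontrivial in \emph{two} coordinates of $G$ (a lift $s^j_i(b)$ in coordinate $i$ and the compensating lift $s^j_k(-b)$ in coordinate $k$). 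Hence $d(a,b)=1$ only gives $\sum_{t=1}^{6} d_{\alpha_t}(a,b)\le 2$, with the two unit contributions possibly falling in two different $\alpha$-blocks (or both in one block). Your claims that ``exactly one value of $i$ gives $d_{\alpha_i}(a,b)=1$'' and that ``in every case the total length is at most $2$'' are therefore false: for instance, a $\overline{Z}$-generator whose support meets both $\alpha_5$ and $\alpha_6$ gives, by your own use of the bounds in Remark~\ref{record1}, a spanning-path bound of $2d_{\alpha_6}+d_{\alpha_{345}}+2d_{\alpha_5}=5$, not $2$.

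The conclusion of the lemma is still recoverable from your bookkeeping, and this is essentially what the paper does: sum the seven segment bounds of Lemma~\ref{lem:triside-labdist} into a single expression in the $d_{\alpha_t}(a,b)$ (the paper records it as $3d_{\alpha_{12}}+d_{\alpha_{34}}+2d_{\alpha_5}+3d_{\alpha_6}$), observe that a single generator of $\Ker(\Phi)$ makes at most two of these distance terms nonzero, and note that each block enters with coefficient at most $3$, so the total is at most $6$. So the fix is not a new idea but a correction of the hypothesis you are using: replace ``exactly one coordinate differs'' by ``at most two coordinates differ, hence $\sum_t d_{\alpha_t}(a,b)\le 2$,'' and then run the coefficient count rather than the single-block case analysis. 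As written, though, your proof rests on a misidentification of the metric, which is precisely the subtlety this lemma exists to handle.
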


\begin{proof}
Consider the two vertices $a$ and $b$. From the segment bound computations in Lemma~\ref{lem:triside-labdist}, we get that the spanning path between them has length bounded by
    \[
    3d_{\alpha_{12}}(a,b) + d_{\alpha_{34}}(a,b) + 2d_{\alpha_5}(a,b) + 3d_{\alpha_6}(a,b) \, .
    \]
Now, since $a$ and $b$ differ by a single generator of $\Ker(\Phi)$, at most two terms in the distance bound are non-zero. Hence, the bound is $6$.
\end{proof}

% \begin{rem}[perimeter and area of bigons]\label{rem:perimbigon}
% We note that the bigons achieved in \cite[Figure 3]{KLI} necessarily have a distance of $1$ along the boundary of our initial loop, we can conclude that the spanning path which constitutes the other ``side'' of the bigon has a length of at most $6$, and so the bigon itself has a perimeter of at most $7$. In particular, this means that any bigon which appears in the subdivision has a universal maximal area of $\delta_G(7)$.
% \end{rem}

This previous statement together with Lemma~\ref{lem:triside-labdist} allows us to explicitly state a finite presentation for $\Ker(\Phi)$.

% \begin{lem}
% Assume the conditions of the basic setup as in Definition~\ref{def:setup}. Define $\bar{X} \coloneqq \bar{Y} \cup \bar{Z}$, with $\bar{Y}$ and $\bar{Z}$ as defined in Remark~\ref{rem:kerfingen}. Additionally, define $\bar{R}$ to be the following subset of lifts of all relations $r \in R_i$ from the groups $G_i$ to $G$, together with all words in $\bar{Y} \cup \bar{Z}$ of length at most $\delta_G(7)$. Then $\langle \bar{X} \;|\; \bar{R} \rangle$ is a finite presentation for $\Ker(\Phi)$.
% \end{lem}

\begin{defn}[Finite presentation of $\Ker(\Phi)$]\label{def:kerfinpres}
Suppose that for $n > 2$, the $2n$ groups $G_i \cong \langle X_i \,|\, R_i \rangle$ and homomorphisms $\varphi_i : G_i \to \Z^m$ satisfy the conditions of the basic setup of Definition~\ref{def:setup}, where $X_i$ is a set of kernel-section generators for $G_i$. The proof of Theorem~\ref{thm:main} establishes and uses the following explicit finite presentation for $\Ker(\Phi)$,
    \[
    \Ker(\Phi)\; \cong \; \langle \, \overline{X} \;|\; \overline{R} \; \rangle \, ,
    \]
    where 
\begin{itemize}
    \item (Generators.) $\overline{X}$ is the union of sets of all vector generators for $\mathcal{L}$ (Definition~\ref{def:standard1}), of the other twelve standard subgroups of $\Ker(\Phi)$ (Definition~\ref{defn:stdsub}), and of the twelve non-standard subgroups of $\Ker(\Phi)$ (Definition~\ref{defn:nonstdsub}). 
    \item (Relations.) $\overline{R}$ is composed of two types of relations.  
    \begin{itemize}
        \item (Commutation relations and vector versions of the $R_i$ relations.) Vector versions of the $R_i$ relations, and vector versions of the commutation relations between elements of various $X_i$ and $X_j$ generating sets. 
        
        For example, the subgroup $G^{36}_N$ is a non-standard subgroup isomorphic to $G_{\alpha_3} \times G_{\alpha_6}$, itself isomorphic to a product 
            \[
            \prod_{j \in \alpha_{36}} G_j \, ,
            \]
        and so the relations include specific non-standard lifts of the finite presentations of $G_j$ for $j \in \alpha_{36}$ in addition to all the commutation relations between the lifts of the generating sets $X_j$ of these $G_j$. Similar statements hold for the other twenty-four face subgroups. 

        Note that a given subgroup $G_j$ might have several different lifts to $\Ker(\Phi)$. For example, the groups $G_j$ for $j \in \alpha_6$ lift in a non-standard way to $G^{36}_N$ as above, but also in a standard way as a subgroup of $G^{126} \cong \prod_{j \in \alpha_{126}} G_j$. 
        \item (Bigon relations.) These consist of all words in $F(\overline{X})$ of length at most 7 which represent the identity in $\Ker(\Phi)$. These account for the bigons at the boundary of the Farey tesselation of the loop in the Cayley graph of $\Ker(\Phi)$ in the proof of Theorem~\ref{thm:main} below. The key point is that this is a finite set of relations.
    \end{itemize}
  
\end{itemize}
\end{defn}

Finally, we arrive at our main theorem. %which extends Theorems 3.2 and 4.2 of \cite{KLI}.

\begin{thm}\label{thm:main}

Let $n > 2$ be an integer. For each $1 \leq i \leq 2n$, let $G_i$ be a finitely presented group such that
    \[\begin{tikzcd}
    1 \arrow[r] & N_i \arrow[r] & G \arrow[r,"\varphi_i"] & \Z^m \arrow[r]  & 1
    \end{tikzcd}\]
is an $n$-split short exact sequence. Let $G = \prod_{i=1}^{2n} G_i$, and let $\Phi : G \to \Z^m$ be defined by $\Phi(g) = \sum_{i=1}^{2n} \varphi_i(\grpproj_i(g))$.

Then the kernel $\Ker(\Phi)$ in the short exact sequence
    \[\begin{tikzcd}
    1 \arrow[r] & \Ker(\Phi) \arrow[r] & G \arrow[r,"\Phi"] & \Z^m \arrow[r]  & 1
    \end{tikzcd}\]
is finitely presented and its Dehn function satisfies $\delta_G(n) \dehnleq \delta_{\Ker(\Phi)}(n) \dehnleq \overline{\delta_G}(n) \cdot \log(n)$. If, moreover, $\frac{\delta_G(n)}{n}$ is superadditive, then $\delta_G(n) \simeq \delta_{\Ker(\Phi)}(n)$.

\iffalse

Let $n > 1$ be an integer. For each $1 \leq i \leq 2n$, let $G_i$ be a finitely presented group such that
%$1 \xrightarrow{\hphantom{\varphi_i}} N_i \xrightarrow{\hphantom{\varphi_i}} G_i \xrightarrow{\varphi_i} \Z^m \xrightarrow{\hphantom{\varphi_i}} 1$
     \tikzset{ampersand replacement=\&}%
    \[\begin{tikzcd}
    1 \arrow[r] \& N_i \arrow[r] \& G_i \arrow[r,"\varphi_i"] \& \Z^m \arrow[r]  \& 1
    \end{tikzcd}\]
is an $n$-split short exact sequence, and set $G = \prod_{i=1}^{2n} G_i$ to be their product with coordinatewise projections $\grpproj_i : G \to G_i$. Let $\Phi : G \to \Z^m$ be defined by
    \[
    \Phi(g) = \sum_{i=1}^{2n} \varphi_i(\grpproj_i(g)) \, .
    \]
Let $\delta_G$ denote the Dehn function of $G$, and $\overline{\delta_G}$ denote its superadditive closure. Then $\Ker(\Phi)$ is finitely presented and its Dehn function satisfies $\delta_G(n) \dehnleq \delta_{\Ker(\Phi)}(n) \dehnleq \overline{\delta_G}(n) \cdot \log(n)$. If, moreover, $\frac{\delta_G(n)}{n}$ is superadditive, then $\delta_G(n) \simeq \delta_{\Ker(\Phi)}(n)$.

\fi
\end{thm}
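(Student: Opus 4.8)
The plan is to combine the machinery built in Sections~\ref{sec:alg-setup}--\ref{sec:con-algtri} (the algebraic triangle and its actualizations) with a Farey tessellation argument, following the logical skeleton of \cite[Theorem~3.2]{KLI}. The argument splits into four parts: (i) a lower bound $\delta_G \dehnleq \delta_{\Ker(\Phi)}$; (ii) finite presentability of $\Ker(\Phi)$; (iii) an upper bound $\delta_{\Ker(\Phi)} \dehnleq \overline{\delta_G}(n)\log(n)$; and (iv) the conclusion $\delta_G \simeq \delta_{\Ker(\Phi)}$ under the superadditivity hypothesis on $\delta_G(n)/n$.

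First I would handle the lower bound. The standard arrangement $\mathcal{L} \cong \Z^m$ sits inside $\Ker(\Phi)$, and each factor $G_i$ embeds (via the standard vector generators, as in the $G^{123}$-type subgroups) so that $G = \prod_i G_i$ appears as a retract of an appropriate subgroup of $\Ker(\Phi)$; more directly, the diagonal-type construction shows $G$ is a retract of $\Ker(\Phi) \times \Z^m$ or one can argue $\Ker(\Phi)$ retracts onto a copy of each $G_i$ and invoke Lemma~\ref{lemret} together with Lemma~\ref{maxdehn}. The cleanest route: $\Ker(\Phi)$ contains the standard face subgroup $G^{123} \cong G_{\alpha_1}\times G_{\alpha_2} \times G_{\alpha_3}$ and, summing over all the triple-superscript subgroups, one realizes each $G_i$ as a subgroup with a retraction from $\Ker(\Phi)$ onto it; then $\delta_{G_i} \dehnleq \delta_{\Ker(\Phi)}$ for each $i$, and since $\Ker(\Phi)$ contains $\Z^m$ it has at least quadratic Dehn function, so $\delta_G = \max\{n^2, \delta_i\} \dehnleq \delta_{\Ker(\Phi)}$ by Lemma~\ref{maxdehn}.

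For parts (ii) and (iii), which I expect to be the main obstacle, I would take an arbitrary word $w \in F(\overline{X})$ of length $\ell$ representing the identity in $\Ker(\Phi)$, realize it as a loop based at $e$ in $\mathrm{Cay}(\Ker(\Phi),\overline{X})$, and choose $\ell$ equally spaced vertices $v_0=e, v_1, \ldots, v_{\ell}=e$ along it. Using a triangular Farey-type subdivision (as in \cite{MR3651586} and \cite[\S3]{KLI}), one builds a binary-tree-like decomposition of the $\ell$-gon into actualizations $\Delta(a,b,c)$ of the algebraic triangle, where at each level the triple $(a,b,c)$ consists of vertices whose pairwise distances are controlled. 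By Lemma~\ref{lem:area}, each such triangle has area at most $25\,\delta_G(24D)$ where $D = d_G(a,b)+d_G(b,c)+d_G(c,a)$. At level $k$ of the recursion the relevant vertices are roughly $\ell/2^k$ apart along the loop, hence at $\ell^1$-distance $\le$ (const)$\cdot \ell/2^k$ in $G$, there are about $2^k$ triangles at that level, and there are $\log_2(\ell)$ levels, so summing gives
\[
\mathrm{Area}(w) \;\le\; C \sum_{k=0}^{\log_2 \ell} 2^k\,\delta_G\!\left(C'\,\frac{\ell}{2^k}\right) \;\le\; C''\,\overline{\delta_G}(C'\ell)\,\log(\ell),
\]
where the passage to $\overline{\delta_G}$ uses that $\sum_k 2^k \delta_G(\ell/2^k)$ is, up to constants, a sum of the form appearing in Definition~\ref{def:superadd} (partition $\ell$ into $2^k$ pieces of size $\ell/2^k$ for each $k$ and take the max). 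The spanning paths along the sides of triangles at the finest level have length at most $6$ by Lemma~\ref{lem:bound6}, so the frill of bigons left at the boundary of the tessellation consists of loops of length $\le 7$, each of which is a relator in the finite presentation of Definition~\ref{def:kerfinpres}; this both shows the presentation $\langle \overline{X} \mid \overline{R}\rangle$ is finite and accounts for the remaining area with a uniformly bounded contribution. Finite generation of $\overline{X}$ is Corollary~\ref{cor:kerfingen}, and finite presentability then follows since every relator is filled by the van Kampen diagram just described, built from finitely many face-group relators (Proposition~\ref{thm:table_std} and Proposition~\ref{thm:table_nonstd} identify these face groups as finitely presented products) plus the bigon relations.

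Finally, part (iv): suppose $\delta_G(n)/n$ is superadditive. Then for a partition $\sum n_i = n$ one has $\sum_i \delta_G(n_i) = \sum_i n_i \cdot \frac{\delta_G(n_i)}{n_i} \le n \cdot \frac{\delta_G(n)}{n} = \delta_G(n)$ whenever each $n_i \le n$, which gives $\overline{\delta_G}(n) \simeq \delta_G(n)$; hence the upper bound becomes $\delta_{\Ker(\Phi)}(n) \dehnleq \delta_G(n)\log(n)$. To remove the logarithmic factor one argues as in \cite{KLI}: superadditivity of $\delta_G(n)/n$ forces $\delta_G(n) \succcurlyeq n^2$, and a sharper accounting of the Farey tessellation — grouping the triangles so that the total ``input size'' fed into $\delta_G$ at each scale telescopes rather than merely being bounded scale-by-scale — shows $\sum_k 2^k \delta_G(\ell/2^k) \dehnleq \delta_G(\ell)$ directly. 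Combined with the lower bound from part (i), this yields $\delta_G(n) \simeq \delta_{\Ker(\Phi)}(n)$. The delicate point throughout is bookkeeping the side-length bounds of the nested triangles so that Lemma~\ref{lem:area} can be applied with the $D$ at each level genuinely comparable to the current scale; this is exactly where Lemma~\ref{lem:triinner-labdist} (the $4D$ bound on all edge labels) and Lemma~\ref{lem:bound6} do the essential work.
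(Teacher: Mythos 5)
Your proposal is correct and follows essentially the same route as the paper's proof: the same Farey-type triangular subdivision of the loop filled by actualizations $\Delta(a,b,c)$ via Lemma~\ref{lem:area}, the same treatment of the boundary bigons via Lemma~\ref{lem:bound6} and the presentation of Definition~\ref{def:kerfinpres} (giving finite presentability), the same level-by-level summation yielding $\overline{\delta_G}(n)\cdot\log(n)$ in general, and the same removal of the logarithm when $\delta_G(n)/n$ is superadditive — the mechanism being simply that this hypothesis gives $\delta_G(x)\le\tfrac14\delta_G(2x)$, so the per-level contributions $2^k\delta_G(\ell/2^k)\le 2^{-k}\delta_G(\ell)$ form a geometric series (your "telescoping by regrouping" phrasing is vaguer than needed, but the inequality you assert is exactly this). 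The only real deviation is the lower bound: the paper retracts $\Ker(\Phi)$ onto embedded copies of $G_i\times G_j$ and applies Lemma~\ref{lemret} and Lemma~\ref{maxdehn}, whereas you retract onto single factors and get the quadratic term from the $\Z^m$ subgroup, which tacitly invokes the subquadratic-implies-hyperbolic gap theorem; both work, but the paper's pairwise retraction stays entirely within its own toolkit.
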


\begin{proof}
%We first remark that the machinery developed in this article actually cannot handle the cases $n = 1$ and $n = 2$, as our constructions depend heavily on the decomposition of $\Z^m$ into at least three orthogonal components. However, as they are proven directly in \cite{KLI} as Theorem 3.2 and Theorem 4.2, respectively, we include those cases in this theorem for ease of reference.

\iffalse

We will accomplish this by constructing an epimorphism $\Psi : F(\bar{X}) \to \Ker(\Phi)$ with $\Ker(\Psi) = \llangle \bar{R} \rrangle_{\Ker(\Phi)}$. Since the ambient group of the normal closure does not change, we will shorten $\llangle \bar{R} \rrangle_{\Ker(\Phi)}$ to $\llangle \bar{R} \rrangle$ for the duration of the proof.

First, as noted in Remark~\ref{rem:kerfingen}, $\bar{X} \subseteq \Ker(\Phi)$ is a generating set for $\Ker(\Phi)$, and so there is a natural epimorphism $\Psi : F(\bar{X}) \to \Ker(\Phi)$. So all that remains is to show that $\Ker(\Psi) = \llangle \bar{R} \rrangle$, demonstrated by dual subset containment.

To show that $\llangle \bar{R} \rrangle \subseteq \Ker(\Psi)$, it suffices to show that $\bar{R} \subseteq \Ker(\Psi)$. But by construction, every element of $\bar{R}$ represents a loop in $\mathrm{Cay}(\Ker(\Phi), \bar{X})$, and so must represent the identity of $\Ker(\Phi)$, hence must reside in $\Ker(\Psi)$.

We now show geometrically that $\Ker(\Psi) \subseteq \llangle \bar{R} \rrangle$ by iteratively using the algebraic triangle construction. 

\fi

Let $\hat{\gamma} \in \Ker(\Phi)$. Then $\hat{\gamma}$ represents a loop in $\mathrm{Cay}(\Ker(\Phi),\bar{X})$ of length $n \geq 3$. We choose $k \in \mathbb{Z}$ such that $3 \cdot 2^{k-1} \leq n \leq 3 \cdot 2^k$. Now define a loop $\gamma : [0,3 \cdot 2^k] \rightarrow \mathrm{Cay}(\Ker(\Phi),\bar{X})$ by adding a constant path at the end to the loop $\hat{\gamma}$. Algebraically, this is just using the fact that Dehn functions are monotone increasing, so if $\delta_{\Ker(\Phi)}(|\gamma|) \dehnleq \delta_G(n) \cdot \log(n)$ (or $\dehnleq \delta_G(n)$), then so is $\delta_{\Ker(\Phi)}(|\hat{\gamma}|)$. Geometrically, this is simply noticing that (except for the central triangular region) any of the triangles or bigons of Figure~\ref{fig:farey-tess} may be degenerate: thus the argument below may over-count the number of each type of region, but does not under-count them, and so the area estimate for $\gamma$ below is, at worst, an over-estimate of the area of $\hat{\gamma}$. Hence, we will show that the loop $\gamma$ has area $\dehnleq f(n)$ for the appropriate function $f$, which will complete the proof.

%Denote the disk in Figure~\ref{fig:farey-tess} by $D$. This process may be repeated inductively until $\partial{D}$ is completely subdivided into segments of length $1$. An example subdivision of $\partial{D}$ is seen below in Figure~\ref{fig:farey-tess}.

We now explain the process for subdividing $\hat{\gamma}$ into triangles and bigons, which, together with previous work, will be used subsequently to calculate an upper bound on $\Area(\hat{\gamma})$.

Denote the disk in Figure~\ref{fig:farey-tess} by $D$. By construction, $|\partial{D}| = 3 \cdot 2^k$, so we may subdivide $\partial{D}$ exactly into thirds and subdivide each resulting segment precisely in half, up to $k$ times. We label the first three points of subdivision $a_0$, $b_0$, and $c_0$, and join these points pairwise by paths within $D$ to create a loop, bounding a triangular region, which we call $\Delta_0$.

We can then further subdivide each resulting segment of $\partial{D}$ into halves, and each point $v$ of this subdivision can be labeled as $a_1$, $b_2$, or $c_3$ in such a way that the points on $\partial{D}$ adjacent to $v$ do not share letter labels with $v$; that is, if $v$ is adjacent to $c$ and $a$, then we label $v$ as $b_2$.

We may then repeat this process for subdividing $\partial{D}$ inductively as follows.

If $\partial{D}$ is a $1$-complex with vertices $\partial{D}^{(0)}$, then we define the {\em depth} of a vertex $v$, written $d(v)$, to be the number of subdivisions of $\partial{D}$ that occurred before $v$ appeared as a vertex. Thus, for example, $a_0$, $b_0$, and $c_0$ appear in the first subdivision of $\partial{D}$, and thus have no subdivisions beforehand; thus, if $v \in \{a_0, b_0, c_0\}$, then $d(v) = 0$. Similarly, if $v \in \{a_1, b_2, c_3\}$, then $d(v) = 1$.

We continue to subdivide $\partial{D}$ and label the vertices at which these subdivisions occur as follows. Each point $v$ of subdivision of $\partial{D}$ is given a label of the form $x_\beta$, where $x \in \{a, b, c\}$ and $\beta$ is a finite sequence taking values in $\{1, 2, 3\}$. We assign such a label to $v$ as follows. By construction, $v$ is adjacent to two vertices $v'$ with label $x'_{\beta'}$ and $v''$ with label $x''_{\beta''}$, where $x', x'' \in \{a, b, c\}$ and $x' \neq x''$. Set $x = \{a, b, c\} \cap \{x', x''\}^c$. Then $\beta$ is determined as such: one can easily see that $d(v') \neq d(v'')$ due to the Farey nature of the diagram and the fact that $d(v) > 1$; furthermore, exactly one of the equations $d(v) = d(v') + 1$ or $d(v) = d(v'') + 1$ is true; assume without loss of generality that the former equation is true. Then define
    \[
    t = \begin{cases}
    1 & \text{if } x = a \\
    2 & \text{if } x = b \\
    3 & \text{if } x = c
    \end{cases} \, ,
    \]
and set $\beta$ to be $\beta'$ with $\{t\}$ subsequently concatenated. Then we can give $v$ the label $x_\beta$.

Note that with this labeling scheme, if the vertex $v$ is given the labeling $x_\beta$, then $d(v) = |\beta|$. Additionally, this labeling scheme tells us the sequence of reflections one must make from the original triangle $\Delta_0$ to arrive at the vertex $v$; for example, the vertex $v = a_{3,1}$ can be attained by beginning with $\Delta_0$, reflecting the $c$ vertex across its opposite edge, and then reflecting the $a$ vertex similarly, corresponding to the sequence $\{3, 1\}$; then the vertex $a_{3,1}$ is the vertex of the resulting triangle corresponding to the vertex $a_0$ of $\Delta_0$.

A sample labeling of $\partial{D}$ via this process including vertices up to depth $2$ can be seen in Figure~\ref{fig:farey-tess} below.

We now connect these vertices to create triangular regions $\Delta_\beta$ in $D$ by reflecting the central triangle $\Delta_0$ across the edges opposite its vertices as specified by the sequence $\beta$; for example, given $a_1$, $b_2$, or $c_3$, respectively, we would reflect $\Delta_0$ across the edge opposite the $a_0$, $b_0$, and $c_0$ vertex, respectively, and the resulting triangular regions $\Delta_1$, $\Delta_2$, and $\Delta_3$ in $D$ would have vertices at $(a_1, b_0, c_0)$, $(a_0, b_2, c_0)$, and $(a_0, b_0, c_3)$, respectively. Figure~\ref{fig:farey-tess} shows an example of the triangles including vertices of depth at most $2$ using this labeling scheme.

    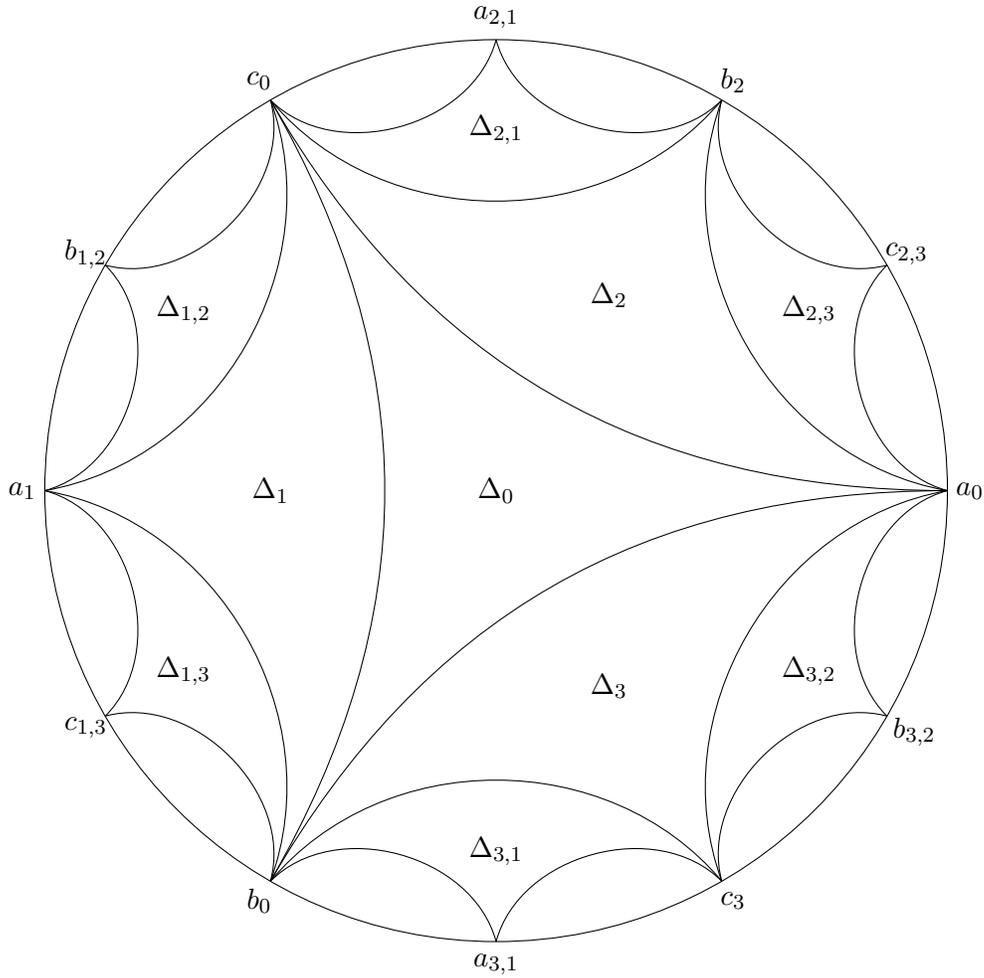
\begin{figure}[h]
    \centering
        \begin{tikzpicture}[scale = 6]
            \foreach \i in {0,1,2,3}{
                \foreach \j in {0,1,2}{
                    \foreach \k in {0,1,2}{
                        \foreach \l in {0,1,2}{
                        \node (pt-\i-\j-\k-\l) at ($({\i*120 + \j*60 + \k*30 + \l*15}:1)$){};
                        }
                    }
                }
            }
        \draw (0,0) circle (1);
        \foreach \i [evaluate=\i as \w using int(\i+1)] in {0,1,2}{
        \path ($(pt-\i-0-0-0)$) edge[bend left=30] ($(pt-\w-0-0-0)$);
            \foreach \j [evaluate=\j as \x using int(\j+1)] in {0,1}{
            \path ($(pt-\i-\j-0-0)$) edge[bend left=50] ($(pt-\i-\x-0-0)$);
                \foreach \k [evaluate=\k as \y using int(\k+1)] in {0,1}{
                \path ($(pt-\i-\j-\k-0)$) edge[bend left=60] ($(pt-\i-\j-\y-0)$);
                    %\foreach \l [evaluate=\l as \z using int(\l+1)] in {0,1}{
                    %\path ($(pt-\i-\j-\k-\l)$) edge[bend left=65] ($(pt-\i-\j-\k-\z)$);
                    %}
                }
            }
        }
        \node (a) at ($(0:1.05)$){$a_0$};
        \node (b) at ($(-120:1.05)$){$b_0$};
        \node (c) at ($(-240:1.05)$){$c_0$};
        \node (D0) at ($(0:0)$){$\Delta_0$};

        \node (c3) at ($(-60:1.05)$){$c_3$};
        \node (a1) at ($(-180:1.05)$){$a_1$};
        \node (b2) at ($(-300:1.05)$){$b_2$};
        \node (D13) at ($(-60:0.5)$){$\Delta_3$};
        \node (D11) at ($(-180:0.5)$){$\Delta_1$};
        \node (D12) at ($(-300:0.5)$){$\Delta_2$};

        \node (b21) at ($(-30:1.07)$){$b_{3,2}$};
        \node (a21) at ($(-90:1.05)$){$a_{3,1}$};
        \node (c21) at ($(-150:1.05)$){$c_{1,3}$};
        \node (b22) at ($(-210:1.05)$){$b_{1,2}$};
        \node (a22) at ($(-270:1.05)$){$a_{2,1}$};
        \node (c22) at ($(-330:1.05)$){$c_{2,3}$};
        \node (D21) at ($(-30:0.8)$){$\Delta_{3,2}$};
        \node (D22) at ($(-90:0.8)$){$\Delta_{3,1}$};
        \node (D23) at ($(-150:0.8)$){$\Delta_{1,3}$};
        \node (D24) at ($(-210:0.8)$){$\Delta_{1,2}$};
        \node (D25) at ($(-270:0.8)$){$\Delta_{2,1}$};
        \node (D26) at ($(-330:0.8)$){$\Delta_{2,3}$};

        \end{tikzpicture}
    \caption{A $3$-fold Farey tesselation of a loop into triangles and bigons, along with a labeling of the vertices and triangular subregions.}
    \label{fig:farey-tess}
    \end{figure}

Similarly to the vertices, we may define the {\em depth} of a triangular region $\Delta_\beta$, again written $d(\Delta_\beta)$, to be the maximum among the depths of its vertices; this is equivalent to $d(\Delta_0) = 0$ and $d(\Delta_\beta) = |\beta|$ otherwise.

Now since $n = |\gamma| = 3 \cdot 2^k$, where $n$ and $k$ are integers, $k = \log_2(n/3)$; then since this process requires $k+1$ subdivisions before $\partial{D}$ is subdivided into segments of length $1$, this process only requires $\log_2(n/3) + 1$ subdivisions. 

Elementary counting arguments show that the number of triangles that $D$ is subdivided into is
    \[
    1 + \sum_{i=0}^{k+1} 3 \cdot 2^i = 3 \cdot 2^{k+2} - 2 \leq 3 \cdot 2^{k+2} = 4n \, .
    \]

Now, for each individual triangle with three given boundary points $x, y, z$ in this subdivision of $D$, we can tile the said triangle by the actualization of the algebraic triangle $\Delta(x,y,z)$ of Definition~\ref{def:actual-alg-tri}. At the same time, since the distance between any two adjacent vertices of $\gamma$ is $1$, Lemma~\ref{lem:bound6} tells us that each peripheral bigon in this subdivision has perimeter of at most $7$. Thus, there exists a constant $M > 0$ such that the area of each bigon in our subdivision of $D$ is at most $M$. Thus, from the above tiling of the Farey tesselation by algebraic triangles and Corollary~\ref{cor:kerfingen}, one concludes that $\Ker(\Phi)$ admits a finite presentation as defined in Definiton~\ref{def:kerfinpres}.

We are now ready to give the desired upper bound for the area enclosed by $\gamma$.
 %and thus is labeled by an element of $\bar{R}$ -- more specifically, by an element of the form described in item $(3)$ of Definition~\ref{def:kerfinpres}.

We first do the case when $\frac{\delta_G(n)}{n}$ is superadditive. Recall from previous work that the perimeter of a depth-$i$ triangle is at most $2^{k-i+1}$; additionally, there are (except for $\Delta_0$, of which there is only one) exactly $3 \cdot 2^{i-1}$ of these depth-$i$ triangles. Thus, since Remark~\ref{rem:record-vertlength} tells us that
    \[
    \mathrm{Area}(\Delta(a,b,c)) \leq 25\delta_G(24|\partial{\Delta}(a,b,c)|) \, ,
    \]
we conclude that
    \begin{align*}
    \mathrm{Area}(\gamma) &\leq \underbrace{25 \delta_G(24 \cdot 3 \cdot 2^k)}_{\text{area of } \Delta_0} + \sum_{i=1}^k \underbrace{3 \cdot 2^{i-1}}_{
    \substack{\text{number of} \\ \text{depth-$i$ triangles}}} \cdot \underbrace{25 \delta_G(24 \cdot 2^{k-i+2})}_{\substack{\text{area of a} \\ \text{depth-$i$ triangle}}} + \underbrace{3 \cdot 2^k}_{\substack{\text{number} \\ \text{of bigons}}} \cdot \underbrace{M}_{\substack{\text{area of} \\ \text{a bigon}}} \\
    &\leq 25 \cdot \delta_G(24 \cdot 3 \cdot 2^k) + 3 \cdot 25 \cdot \delta_G(24 \cdot 2^{k+2}) + M \cdot 3 \cdot 2^k \\
    &\leq 25 \cdot \delta_G(24 \cdot 4 \cdot 2^k) + 3 \cdot 25 \cdot \delta_G(24 \cdot 4 \cdot 2^k) + M \cdot 3 \cdot 2^k \\
    &= 100 \cdot \delta_G(8 \cdot 4 \cdot 3 \cdot 2^k) + M \cdot 3 \cdot 2^k \\
    &= 100 \cdot \delta_G(32n) + Mn \\
    &\dehnleq \delta_G(n) \, ,
    \end{align*}
as desired.

Now, suppose $\frac{\delta_G(n)}{n}$ is not superadditive. Once more using that $|\partial{D}| = 3 \cdot 2^k$, the same bounds on the perimeter of depth-$i$ triangles, and Remark~\ref{rem:record-vertlength}, we get:
    \begin{align*}
    \mathrm{Area}(\gamma) &\leq 25 \cdot \delta_G(24 \cdot 3 \cdot 2^k) + \sum_{i=1}^k 3 \cdot 2^{i-1} \cdot 25 \cdot \delta_G(24 \cdot 2^{k-i+2}) + M \cdot 3 \cdot 2^k \\
    &\leq 25 \cdot \overline{\delta_G}(24 \cdot 3 \cdot 2^k) + \sum_{i=1}^k 3 \cdot 2^{i-1} \cdot 25 \cdot \overline{\delta_G}(24 \cdot 2^{k-i+2}) + M \cdot 3 \cdot 2^k \\
    &\leq 25 \cdot \overline{\delta_G}(24 \cdot 3 \cdot 2^k) + \sum_{i=1}^k 3 \cdot 25 \cdot \overline{\delta_G}(24 \cdot 2^{k-1}) + M \cdot 3 \cdot 2^k \\
    &\leq 25 \cdot \overline{\delta_G}(24 \cdot 3 \cdot 2^k) + k \cdot 3 \cdot 25 \cdot \overline{\delta_G}(24 \cdot 3 \cdot 2^k) + M \cdot 3 \cdot 2^k \\
    &= (75k + 25) \overline{\delta_G}(24 \cdot 3 \cdot 2^k) + M \cdot 3 \cdot 2^k \\
    &= (75 \log_2(n/3) + 25) \overline{\delta_G}(24n) + Mn \\
    &\dehnleq \overline{\delta_G}(n) \log(n) \, .
    \end{align*}
This establishes the upper bounds.

Finally, for the lower bound, one first observes that there are retractions $\Ker(\Phi) \rightarrow G_i \times G_j$ for all $i \neq j$. So, by Lemma~\ref{lemret}, $\delta_{G_i \times G_j} (n) \preccurlyeq \delta_{\Ker(\Phi)} (n)$, for all $i \neq j$. Now, by Lemma~\ref{maxdehn}, the maximum of the Dehn functions of $G_i \times G_j$ is equivalent to that of $G$. Hence, $\delta_G (n) \preccurlyeq \delta_{\Ker(\Phi)}(n)$.
\end{proof}

\bigskip

\bibliographystyle{alpha} %harvard, unsrt, alpha
\bibliography{bibfile}

\end{document}